\newtheorem{thm}{Theorem}
\newtheorem{prop}{Proposition}[section]
\newtheorem{lma}[prop]{Lemma}
\newtheorem{cor}[prop]{Corollary}
\theoremstyle{definition}
\newtheorem{df}[prop]{Definition} %[subsection]
\theoremstyle{remark}
\newtheorem{rmk}[prop]{Remark} %[subsection]
\newcommand{\R}{{\mathbb{R}}}
\newcommand{\Z}{{\mathbb{Z}}}
\newcommand{\F}{{\mathbb{F}}}
\newcommand{\C}{{\mathbb{C}}}
\newcommand{\D}{{\mathbb{D}}}
\newcommand{\K}{{\mathbb{K}}}
\newcommand{\bK}{{\mathbb{K}}}
\newcommand{\bs}{\bigskip}
\newcommand{\del}{\partial}
\newcommand{\sm}[1]{C^\infty(#1)}
\newcommand{\bb}[1]{{\mathbb #1}}
\newcommand{\ul}[1]{{\underline  #1}}
\newcommand\sign{\operatorname{sign}}
\newcommand{\ol}[1]{\overline{#1}}
\newcommand{\sS}{A}
\newcommand{\til}[1]{\widetilde{#1}}
\newcommand{\wh}[1]{\widehat{#1}}
\newcommand{\zp}{{\mathbb{Z}/p\mathbb{Z}}}
\newcommand{\m}{m}
\renewcommand{\l}{k}
\newcommand{\re}{\mrm{Re}}
\newcommand{\ip}{{(p)}}
\newcommand{\rp}{{R_p}}
\newcommand{\hrp}{{\widehat{R}_p}}
\def\mrm#1{{\mathrm{#1}}}
\def\bb#1{{\mathbb{#1}}}
\def\cl#1{{\mathcal{#1}}}
\def\ul#1{{\underline{#1}}}
\newcommand{\om}{\omega}
\newcommand{\al}{\alpha}
\newcommand{\Om}{\Omega}
\newcommand{\eps}{\epsilon}
\newcommand{\cA}{\mathcal{A}}
\newcommand{\cB}{\mathcal{B}}
\newcommand{\cC}{\mathcal{C}}
\newcommand{\cF}{\mathcal{F}}
\newcommand{\cH}{\mathcal{H}}
\newcommand{\cJ}{\mathcal{J}}
\newcommand{\cK}{\mathcal{K}}
\newcommand{\cP}{\mathcal{P}}
\newcommand{\sP}{\mathscr{P}}
\newcommand{\cZ}{\mathcal{Z}}
\newcommand{\cR}{\mathcal{R}}
\newcommand{\cS}{\mathcal{S}}
\newcommand{\cM}{\mathcal{M}}
\newcommand{\sQ}{\mathscr{Q}}
\newcommand{\id}{{\mathrm{id}}}
\newcommand{\coker}{\mathrm{Coker}}
\newcommand{\fix}{\mathrm{Fix}}
\DeclareMathOperator{\thetam}{\theta_M}
\DeclareMathOperator{\Det}{Det}
\DeclareMathOperator{\Ham}{\mathrm{Ham}}
\DeclareMathOperator{\Hom}{\mathrm{Hom}}
\DeclareMathOperator{\Symp}{\mathrm{Symp}}
\DeclareMathOperator{\Ker}{\mathrm{Ker}}
\DeclareMathOperator{\spec}{\mathrm{Spec}}
\DeclareMathOperator{\loc}{\mathrm{loc}}
\DeclareMathOperator{\ima}{\mathrm{Im}}
\def\H2{H^{(2)}}
\DeclareFontFamily{U}{mathx}{\hyphenchar\font45}
\DeclareFontShape{U}{mathx}{m}{n}{
      <5> <6> <7> <8> <9> <10>
      <10.95> <12> <14.4> <17.28> <20.74> <24.88>
      mathx10
      }{}
\DeclareSymbolFont{mathx}{U}{mathx}{m}{n}
\DeclareMathAccent{\widecheck}{0}{mathx}{"71}
\DeclareMathAccent{\wideparen}{0}{mathx}{"75}
\title{The $\zp$-equivariant product-isomorphism \\ in fixed point Floer cohomology}
\author{Egor Shelukhin, Jingyu Zhao}
\date{}
\begin{document}

\maketitle

\begin{abstract}
%{\color{red} Just a suggestion, we'll make it look better in process}
    Let $p \geq 2$ be a prime, and $\bb F_p$ be the field with $p$ elements. We construct an isomorphism between the Floer cohomology of an exact or Hamiltonian symplectomorphism $\phi,$ with $\bb F_p$ coefficients, and the $\zp$-equivariant Tate Floer cohomology of its $p$-th power $\phi^p.$ This extends a result of Seidel for $p=2.$ The construction involves a Kaledin-type quasi-Frobenius map, as well as a $\zp$-equivariant pants product: an equivariant operation with $p$ inputs and $1$ output. Our method of proof involves a spectral sequence for the action filtration, and introduces a new key component: a local $\zp$-equivariant coproduct providing an inverse on the $E^2$-page. This strategy has the advantage of accurately describing the effect of the isomorphism on filtration levels. We describe applications to the symplectic mapping class group, as well as develop Smith theory for the persistence module of a Hamiltonian diffeomorphism $\phi$ on symplectically aspherical symplectic manifolds. We illustrate the latter by giving a new proof of the celebrated no-torsion theorem of Polterovich, and by relating the growth rate of the number of periodic points of the $p^k$-th iteration of $\phi$ and its distance to the identity. Along the way, we prove a sharpening of the classical Smith inequality for actions of $\zp.$
\end{abstract}

\tableofcontents

\section{Introduction}

Equivariant cohomology with respect to the action of the cyclic group of order $p,$ and the resulting Smith-type inequalities (see \cite{Hsiang-Transformation, Borel-Transformation, Bredon-Transformation} for the classical theory) have been obtained and used to great effect in various Floer theories in recent years, see for example \cite{Hend,Hendricksetal,Seidel,SeidelSmith,ManolescuLidman,TreumannLipshitz}. In the case of symplectic fixed point Floer cohomology $HF^*(\phi)$ of a symplectic automorphism $\phi$ of a Liouville manifold, and the $\Z/2\Z$-equivariant cohomology of its second iterate $\phi^2,$ Smith theory was most recently studied by Seidel \cite{Seidel}, after previous work of Hendricks \cite{Hend}. In particular, he proves that under suitable assumptions the following analogue of the Smith inequality holds true for fixed point Floer cohomology:
\begin{equation} \label{eqn:Smith_ineq}
\dim_{\F_2} HF^*(\phi) \leq \dim_{\F_2} HF^*(\phi^2)^{\Z/2\Z} \leq \dim_{\F_2} HF^*(\phi^2).
\end{equation}
The proof uses a remarkable cohomological operation coming from the $\Z/2\Z$ symmetry of the pair of pants with boundary conditions given by $\phi$ on the two input cylinders, and by $\phi^2$ on its output cylinder. 

In this paper, we extend the work of Seidel to all primes $p \geq 2,$  to fixed point Floer cohomology in an generic action window $I = (a,b),$ $a,b \in \R \cup \{\pm \infty \},$ and to local Floer cohomology. In the last case, a Smith-type inequality was obtained by more elementary methods in \cite{CineliGinzburg} during the preparation of this paper. We note that the case of action windows and that of local Floer cohomology are of interest already for Hamiltonian diffeomorphisms of symplectically aspherical symplectic manifolds. 

We record sample applications, proven in Section \ref{sec: applications}. Let $\phi$ be a Hamiltonian diffeomorphism of a symplectically aspherical symplectic manifold, and $\phi^p$ its $p$-th iterate. Assume that ${p}\cdot{a}$ and ${p}\cdot{b}$ are not critical values of the Hamiltonian action functional corresponding to $\phi^p$ on the free homotopy class of contractible loops. Then the Floer cohomology of $\phi$ in the interval $I = (a,b)$ and that of $\phi^p$ in the interval $p\cdot{I} = (p\cdot{a},p\cdot{b})$ are related by the following Smith-type inequality: 
\begin{equation}\label{eq: Smith in interval} \dim_{\F_p} HF^*(\phi)^{I} \leq \dim_{\F_p} (HF^*(\phi^p)^{p\cdot I})^{\zp} \leq \dim_{\F_p} HF^*(\phi^p)^{p\cdot I}.\end{equation} 
We remark that these cohomology groups are defined by perturbing $\phi$ by a sufficiently $C^2$-small Hamiltonian diffeomorphism to $\phi_1,$ and using the fact that the endpoints of the interval are not in the spectrum. In this case, we can choose a perturbation $\phi_1$ so that $\phi_1^p$ is a sufficiently $C^2$-small Hamiltonian perturbation of $\phi^p.$

There are two essentially immediate consequences of the above Smith-type inequalities \eqref{eqn:Smith_ineq} and \eqref{eq: Smith in interval}. Similarly to the results obtained by Hendricks \cite{Hend} and Seidel \cite{Seidel} in the case of $p=2$, one first obtains the following corollary, that is proven as Corollary \ref{app:MCG} in Section \ref{sec: applications}, for the $p$-th iterates of $\phi$ in the symplectic mapping class group.\\

\begin{cor} Given an exact symplectic manifold $W$ which is cylindrical at infinity, and a compactly supported exact symplectomorphism $\phi,$  if $\dim_{\F_p} HF^*(\phi) > \dim_{\F_p} H^*(W),$ then $[\phi^{p^k}] \neq 1$ in the symplectic mapping class group of $W$ for all $k\geq 0.$ 
\end{cor}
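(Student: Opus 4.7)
The plan is a direct contradiction argument based on iterating the Smith-type inequality for $HF^*$ established in the main body of the paper (the prime-$p$ generalization of \eqref{eqn:Smith_ineq}). Suppose for contradiction that $[\phi^{p^k}] = 1$ in the symplectic mapping class group of $W$ for some $k\geq 0$. Then $\phi^{p^k}$ is isotopic to the identity through compactly supported exact symplectomorphisms, and by the invariance of fixed point Floer cohomology under such isotopies we obtain $HF^*(\phi^{p^k}) \cong HF^*(\id_W)$. The PSS-type identification in the exact, cylindrical-at-infinity setting gives $HF^*(\id_W) \cong H^*(W;\F_p)$, so that
\[
\dim_{\F_p} HF^*(\phi^{p^k}) \;=\; \dim_{\F_p} H^*(W;\F_p).
\]

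Next, applying the inequality $\dim_{\F_p} HF^*(\psi) \leq \dim_{\F_p} HF^*(\psi^p)$ successively to $\psi = \phi,\phi^p,\ldots,\phi^{p^{k-1}}$ produces a telescoping chain
\[
\dim_{\F_p} HF^*(\phi) \;\leq\; \dim_{\F_p} HF^*(\phi^p) \;\leq\; \dim_{\F_p} HF^*(\phi^{p^2}) \;\leq\; \cdots \;\leq\; \dim_{\F_p} HF^*(\phi^{p^k}).
\]
Combining with the previous display yields $\dim_{\F_p} HF^*(\phi) \leq \dim_{\F_p} H^*(W;\F_p)$, contradicting the standing hypothesis $\dim_{\F_p} HF^*(\phi) > \dim_{\F_p} H^*(W)$. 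Therefore $[\phi^{p^k}] \neq 1$ for every $k\geq 0$, as claimed. The case $k=0$ is in fact the direct contrapositive of the hypothesis, and the general case is what requires the Smith inequality.

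The logical structure of the argument is almost trivial once the main Smith-type inequality is in hand; the substantive work is in verifying that its hypotheses hold at each iteration. Specifically, one needs that the class of compactly supported exact symplectomorphisms of $W$ is closed under taking $p$-th powers, so that each $\phi^{p^j}$ qualifies as an input; that the perturbations required to define the $\zp$-equivariant pants product can be arranged coherently for every iterate $\phi^{p^j}$; and that the identification $HF^*(\id_W)\cong H^*(W;\F_p)$ holds in the compactly supported exact, cylindrical-at-infinity setting. These are routine given the machinery developed in the main text, and the main technical point is really the construction of the prime-$p$ Smith inequality itself, which has been carried out earlier in the paper.
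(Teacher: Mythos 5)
Your argument is correct and is essentially the paper's own proof: iterate the Smith-type inequality of Theorem \ref{thm: Smith exact} to get $\dim_{\F_p} HF^*(\phi) \leq \dim_{\F_p} HF^*(\phi^{p^k})$, then use isotopy-invariance of $HF^*$ together with $HF^*(\id)\cong H^*(W;\F_p)$ to derive the contradiction. The only difference is presentational (contradiction versus the paper's remark that $\dim_{\F_p} HF^*(\phi^{p^k})$ is non-decreasing in $k$), which is immaterial.
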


Furthermore, with the help of the action-filtered version of the Smith-type inequality \eqref{eq: Smith in interval}, we also provide a new proof of a well-known theorem of Polterovich \cite{Polterovich-groups}, stating that the group of Hamiltonian diffeomorphisms of a closed symplectically aspherical symplectic manifold contains no non-trivial torsion elements. It is stated below as Theorem \ref{thm: Polterovich} together with our new proof. \\

\begin{cor}[Polterovich \cite{Polterovich-groups}]
Let $\phi \in \Ham(M,\om)$ be a Hamiltonian diffeomorphism of a symplectically aspherical symplectic manifold, such that $\phi^k = 1$ for some $k \in \Z_{>1}.$ Then $\phi = \id.$
\end{cor}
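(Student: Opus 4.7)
The plan is to combine the filtered Smith-type inequality \eqref{eq: Smith in interval} with the classical non-degeneracy of the spectral norm $\gamma$ on symplectically aspherical manifolds (Schwarz--Oh): the goal is to show $\gamma(\phi) = 0$, from which $\phi = \id$ follows immediately. A routine induction on $k$ reduces to the prime case $\phi^p = \id$: given $\phi^k = \id$ with $k > 1$ and any prime $p \mid k$, setting $\psi = \phi^{k/p}$ gives $\psi^p = \id$; the prime case then yields $\psi = \id$, which strictly reduces the order of $\phi$ and closes the induction.

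Assume henceforth that $\phi^p = \id$, and fix $\eps > 0$. Choose a sufficiently $C^2$-small Hamiltonian perturbation $\phi_1$ of $\phi$ making $\phi_1$ non-degenerate; as noted in the text, we may arrange for $\phi_1^p$ to be a $C^2$-small Hamiltonian perturbation of $\phi^p = \id$, generated in the convention compatible with \eqref{eq: Smith in interval} by a Hamiltonian of $C^0$-norm at most $\eps$. Consequently the action spectrum $\spec(\phi_1^p)$ is contained in an interval of length $O(\eps)$. Apply \eqref{eq: Smith in interval} to the window $I = (a, +\infty)$ with $pa > \sup\spec(\phi_1^p)$: since $HF^*(\phi_1^p)^{pI} = 0$, we deduce $HF^*(\phi_1)^I = 0$, forcing $c(\alpha, \phi_1) \leq a$ for every class $\alpha \in H^*(M;\F_p)$. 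Symmetrically, taking $I = (-\infty, b)$ with $pb < \inf\spec(\phi_1^p)$ gives $c(\alpha, \phi_1) \geq b$. Combining, every spectral invariant of $\phi_1$ is confined to an interval of width $O(\eps)/p$, and the spectral norm satisfies $\gamma(\phi_1) \leq C\eps/p$ for some constant $C$ independent of $\eps$.

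Letting $\eps \to 0$ and invoking the $C^0$-continuity of $\gamma$ on $\Ham(M, \om)$, we obtain $\gamma(\phi) = 0$, whence $\phi = \id$ by the non-degeneracy of $\gamma$. The main technical subtlety is the alignment of action conventions in \eqref{eq: Smith in interval}: the Hamiltonian for $\phi_1^p$ used there is the $p$-fold concatenation of a Hamiltonian generating $\phi_1$, which a priori need not be $C^0$-small even when $\phi_1^p$ is $C^\infty$-close to $\id$. Verifying that its action spectrum can nonetheless be made $O(\eps)$-narrow in the relevant convention is precisely the substantive content of the parenthetical remark in the excerpt, and is the main obstacle one must dispatch carefully to make the argument rigorous.
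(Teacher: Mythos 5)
Your argument is correct in substance and rests on the same two pillars as the paper's proof of Theorem \ref{thm: Polterovich} -- the filtered Smith-type inequality (Theorem \ref{thm: Smith exact filtered}) and Schwarz's theorem that $\phi \neq \id$ if and only if $c_+(\phi) > c_-(\phi)$ -- but it takes a quantitative, limiting route where the paper argues by a direct contradiction with no perturbation and no limit. In the paper: assuming $\phi \neq \id$ and $\phi^p = \id$, some infinite bar of $\phi$ starts at a nonzero point, so one can choose a window $I=(a,b)$ with closure in $\R\setminus\{0\}$, non-spectral endpoints, and $\dim_{\F_p} HF^*(\phi)^I > 0$; since with the mean-zero normalization $\spec(\id)=\{0\}$ and $0 \notin p\cdot I$, one has $HF^*(\phi^p)^{p\cdot I} = HF^*(\id)^{p\cdot I} = 0$, contradicting \eqref{eq: Smith in interval}. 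This sidesteps entirely the issue you flag (narrowness of the spectrum of $\phi_1^p$ in the relevant convention) and needs neither the $C^0$-continuity of $\gamma$ from \cite{BHS} nor Hofer continuity; it only uses that the filtered cohomology of the identity vanishes in any window avoiding $0$. Your version pays for the extra analytic input with a small quantitative byproduct, namely $\gamma(\phi_1) = O(\eps)/p$ for the perturbation, which the paper's argument does not produce.

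The step you leave open -- that $\spec(\phi_1^p)$, in the convention entering \eqref{eq: Smith in interval}, is contained in an $O(\eps)$-interval -- is a genuine obligation, but it closes by standard facts and you should spell it out. Write $\phi_1 = \phi^1_G \circ \phi$ with $G$ mean-normalized and $C^2$-small; since $\phi^p = \id$, one has $\phi_1^p = \prod_{j=0}^{p-1} \phi^j \phi^1_G \phi^{-j} = \prod_{j=0}^{p-1} \phi^1_{G\circ \phi^{-j}}$, so $\phi_1^p$ is generated by a mean-normalized Hamiltonian of $C^2$-norm $O(\eps)$ (constants depending on $\phi$); its $1$-periodic orbits are then small contractible loops, cappings inside Darboux balls have small area, and by asphericity every action value is $O(\eps)$. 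Moreover, on a closed symplectically aspherical manifold the action spectrum and filtration with mean-normalized Hamiltonians depend only on the time-one map \cite{SchwarzAspherical}, so this small spectrum is the same one governing the filtration of $HF^*(\phi_1^p)^{p\cdot I}$, whether $\phi_1^p$ is generated by the $p$-fold concatenation of a Hamiltonian for $\phi_1$ or by the small Hamiltonian above. With this paragraph inserted your proof is complete, though longer than the paper's.
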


%\red{enquiry: add growth corollary}
Furthermore, we apply \eqref{eq: Smith in interval} together with additional combinatorial arguments, as well as arguments of \cite{SZ-92}, to prove the following result regarding Hamiltonian diffeomorphisms $\phi$ that are not torsion, but whose iterations $\phi^{p^k}$ approach the identity in various natural distances, such as the spectral distance $\gamma(\phi)$ \cite{SchwarzAspherical}, the $C^0$ distance, or the Hofer distance. It is proven as Theorem \ref{thm: growth rate}  in Section \ref{sec: applications}. We recall that a fixed point $x$ of a Hamiltonian diffeomorphism $\phi \in \Ham(M,\om)$ is called {\em contractible} if the loop $\{\phi^t x\}$ for any Hamiltonian isotopy $\{ \phi^t \}$ with $\phi^1 = \phi$ is contractible in $M.$ It is a standard consequence of Floer theory that this property does not depend on the choice of such an isotopy. 

\bs

\begin{cor}\label{cor: gamma small}
Let $\phi \in \Ham(M,\om)$ be a Hamiltonian diffeomorphism of a closed symplectically aspherical symplectic manifold, such that for all $k \geq 0,$ $\phi^{p^k}$ is non-degenerate. Then setting $N(\phi^{p^k})$ for the number of contractible fixed points of $\phi^{p^k}$ we have \[ \liminf_{k\to \infty} N(\phi^{p^k}) \cdot \gamma(\phi^{p^k}) / p^k > 0. \] 
\end{cor}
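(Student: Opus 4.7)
The plan is to argue by contradiction, combining the iterated action-filtered Smith inequality \eqref{eq: Smith in interval} with the action bounds of \cite{SZ-92} and a combinatorial partition argument. Writing $n_k := N(\phi^{p^k})$ and $d_k := \gamma(\phi^{p^k})$, I suppose toward contradiction that $n_{k_i}\, d_{k_i}/p^{k_i} \to 0$ along some subsequence. Applying \eqref{eq: Smith in interval} inductively to the pairs $(\phi^{p^j}, \phi^{p^{j+1}})$ for $j = 0,\ldots,k-1$ (the non-degeneracy hypothesis guarantees the required genericity of endpoints at each intermediate stage) yields
\[
\dim_{\F_p} HF^*(\phi^{p^l})^I \;\leq\; \dim_{\F_p} HF^*(\phi^{p^k})^{p^{k-l} I}, \qquad 0 \leq l \leq k,
\]
for every $I=(a,b)$ whose rescalings avoid $\spec(\phi^{p^j})$ at each relevant stage.

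Next, fix a mean-normalized Hamiltonian $H$ generating $\phi$. The action estimates of Salamon--Zehnder \cite{SZ-92} give $|A_{H^{\#p^k}}(x)| \leq p^k \|H\|_{\mathrm{osc}}$ for every contractible $p^k$-periodic orbit $x$, so the generators of $CF^*(\phi^{p^k})$ lie in a common action interval $[A_k^-, A_k^+]$ of length at most $2 p^k \|H\|_{\mathrm{osc}}$. Because $\phi^{p^k}$ is non-degenerate for every $k$, $\phi$ is not the identity, so there exist two fixed points $x_1, x_2$ of $\phi$ with distinct actions $a_1 \ne a_2$; their $p^k$-th iterates contribute $p^k a_1, p^k a_2 \in \spec(\phi^{p^k})$, forcing $A_k^+ - A_k^- \geq c_1\, p^k$ for $c_1 := |a_1 - a_2| > 0$ depending only on $\phi$.

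I then partition $[A_k^-, A_k^+]$ into $M_k$ disjoint half-open intervals $J_m^k$ of length $\ell_k := d_k + \varepsilon_k$, choosing $\varepsilon_k > 0$ small and generic so that all the endpoint conditions invoked above hold. Thus $M_k \geq c_1\, p^k / \ell_k$. Disjointness of the $J_m^k$ combined with the fact that each generator of $CF^*(\phi^{p^k})$ contributes to at most one window gives
\[
\sum_{m=1}^{M_k} \dim_{\F_p} HF^*(\phi^{p^k})^{J_m^k} \;\leq\; n_k,
\]
which, combined with the iterated Smith inequality of the first paragraph, produces $\sum_m \dim_{\F_p} HF^*(\phi^{p^l})^{J_m^k/p^{k-l}} \leq n_k$ for every $0 \leq l \leq k$.

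The hard part will be to bound this last sum below by a fixed positive multiple of $M_k$, which would yield $n_k \geq c\, p^k / d_k$ and contradict the assumption. The naive choice $l = 0$ fails, since the rescaled windows $J_m^k/p^k$ have tiny length $\sim d_k/p^k$ and miss all but finitely many bars of $\phi$. I expect the correct strategy to take intermediate $l$ and to descend inductively down the tower $\phi^{p^k} \to \phi^{p^{k-1}} \to \cdots \to \phi$: at stage $l$ the subwindows $J_m^k/p^{k-l}$ have length $(d_k + \varepsilon_k)/p^{k-l}$, and the Salamon--Zehnder scaling $\spec(\phi^{p^l}) \supset p^l \cdot \spec(\phi)$, combined with a pigeonhole on how the iterated fixed points of $\phi$ populate the partition, should force a uniform fraction of the windows to carry a generator of $CF^*(\phi^{p^l})$. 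A careful bookkeeping of this density, exploiting the local $\zp$-equivariant coproduct introduced in the paper (which inverts the Smith map on the $E^2$-page and so lets one transfer local non-vanishing back to $\phi^{p^k}$), should provide the required uniform lower bound and close the argument.
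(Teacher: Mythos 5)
Your setup (contradiction, iterated Smith inequality \eqref{eq: Smith in interval}, a partition of the action range into $M_k \gtrsim p^k/\gamma(\phi^{p^k})$ windows, and the bound $\sum_m \dim HF^*(\phi^{p^k})^{J^k_m} \leq N(\phi^{p^k})$) is sound as far as it goes, but the proof is not complete: the entire content of the theorem is concentrated in the step you defer, namely a lower bound of the form $\sum_m \dim HF^*(\phi^{p^k})^{J^k_m} \geq c\, M_k$. The mechanism you sketch for it does not work as stated. First, nonvanishing of $HF^*(\phi^{p^l})^{J}$ requires a \emph{bar endpoint} of the barcode of $\phi^{p^l}$ inside (or astride) $J$, not merely a spectral value of $\phi^{p^l}$ in $J$; generators in a window can cancel, so a pigeonhole on how $p^l\cdot\spec(\phi)$ populates the partition proves nothing about filtered cohomology. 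Second, for any fixed $l$ the set $\spec(\phi^{p^l})$ has at most $N(\phi^{p^l})$ elements, while under your contradiction hypothesis $M_k/p^k \to \infty$, so the fraction of windows meeting $\spec(\phi^{p^l})$ tends to $0$; to get a uniform fraction you must let $l$ grow with $k$, and then you are back to needing a quantitative statement about how the bar endpoints of high iterates spread over an action range of size $\sim p^k$ --- which is exactly what is missing. The appeal to the local equivariant coproduct is also misplaced: it is an ingredient in the proof of the Smith inequality itself and gives nothing beyond \eqref{eq: Smith in interval} at this stage.

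The paper closes precisely this gap by working with the persistence barcode rather than window-by-window dimensions. Applying \eqref{eq: Smith in interval} to the half-infinite windows $(-\infty,t)$ and $(t,\infty)$ and summing shows that the multiplicity $m(t,\cB(\phi))$ of finite bars through a generic $t$ satisfies $m(t,\cB(\phi)) \leq m(pt,\cB(\phi^p))$; integrating in $t$ gives $\beta_{\mrm{tot}}(\phi^p) \geq p\,\beta_{\mrm{tot}}(\phi)$ for the total length of finite bars, hence $\beta_{\mrm{tot}}(\phi^{p^k}) \geq c\,p^k$ once $\beta_{\mrm{tot}}(\phi^{p^{k_0}})>0$ for some $k_0$ (guaranteed because $N(\phi^{p^k})$ is unbounded by the argument of \cite{SZ-92}, so eventually exceeds the total Betti number). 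Combining $K(\phi^{p^k})\,\beta(\phi^{p^k}) \geq \beta_{\mrm{tot}}(\phi^{p^k})$ with $\gamma(\phi^{p^k}) \geq \beta(\phi^{p^k})$ from \cite{KS-bounds} then yields $N(\phi^{p^k})\,\gamma(\phi^{p^k}) \gtrsim p^k$ directly, with no partition or descent down the tower needed. If you want to salvage your route, you would in effect have to prove this growth of total finite-bar length first --- at which point the window partition becomes superfluous.
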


We note that Corollary \ref{cor: gamma small} implies that if $\liminf_{k \to \infty} \gamma(\phi^{p^k}) = 0,$ then $N(\phi^{p^k})$ grows super-linearly in $p^k.$ The same consequence holds if $\liminf_{k \to \infty} d_{C^0}(\phi^{p^k},\id) = 0,$ or $\liminf_{k \to \infty} d_{\mrm{Hofer}}(\phi^{p^k},\id) = 0.$ It is a well-known conjecture that Hamiltonian diffeomorphisms for which these limits vanish should not exist. Our result is a new step in this direction. 

\bs

To prove our main theorem, we use a cohomological operation coming from a branched cover of a cylinder that has $p$ inputs and $1$ output, and its $\zp$-symmetry, as in \cite{Seidel} for $p=2.$ However, showing that this $\zp$-equivariant product map is an isomorphism on the associated Tate cohomology groups requires substantially more complicated tools. Indeed, for $p=2$ the local contributions can be deduced from a few specific examples, including the period-doubling bifurcation, as discussed in \cite[Section 6]{Seidel}. However, for $p>2$ there is a shortage of such examples, and Seidel has remarked that a more refined approach is necessary. We proceed by providing a local inverse map for the product in terms of an equivariant coproduct operation with $1$ input and $p$ outputs, inspired by the approach briefly outlined for $p=2$ by Seidel in \cite[Remark 6.10]{Seidel}. It is curious to note that the classical Wilson theorem from number theory ultimately plays an important role in the calculation leading to local invertibility. We emphasize that besides the generalization to $p>2,$ our approach differs from the one suggested by Seidel in that we discuss the coproduct in local Floer cohomology, instead of defining the coproduct globally in the aspherical setting. While this situation is slightly more analytically difficult, since there is in general no inverse pair of PSS isomorphisms in local Floer cohomology, proceeding this way simplifies a few topological arguments and has the advantage of applying to Floer cohomology in action windows, which is interesting for Hamiltonian dynamics. It is also more flexible for extensions to the non-aspherical case. To implement this approach, we prove a general crossing energy result to define and discuss local equivariant Floer cohomology. This also allows us to remove certain ``general position" assumptions present in Seidel's paper.  Finally, the local-to-global argument proceeds by the use of the action-filtration spectral sequence.

Besides the approach of local coproduct and product-coproduct operations, that is found in Section \ref{sec:locally invertible}, other technical innovations in this paper include the following. First, in Section \ref{subsec: alg spec sequence} we present new algebraic arguments that allow us to improve on the classical Smith inequality. Second, in Section \ref{subsec: estimates} we give an elementary proof of Proposition \ref{prop: monotonicity}, which is a very general crossing energy argument that clarifies the phenomenon (statements of this kind are usually proved using considerably more advanced techniques such as the target-local Gromov compactness of Fish \cite{Fish-compactness}). It is this result that allows us to define local equivariant Floer cohomology, and to reduce our consideration to individual fixed points, removing extra ``general position" assumptions made in \cite{Seidel}. Finally, Appendix \ref{app:signs and or} contains a discussion of signs and orientations necessary for working with coefficients in $\F_p.$

To give a taste of the algebra involved in the proof, we record our main technical result, Theorem \ref{thm: main}, from which inequality \eqref{eq: Smith in interval} follows purely algebraically. In fact a stronger inequality follows (see Remark \ref{rmk: sharp Smith}). We refer to Sections \ref{sec: group_coho}, \ref{sec:Floer_coho}, \ref{sec:equiv-Floer-coho}, and \ref{sec:prod-coprod} below, for detailed definitions of all the notions involved in the formulation of this theorem. At the moment we just remark that Tate cohomology, on the level of $\zp$ vector spaces, is a cohomology theory that vanishes on free $\zp$ vector spaces. In this paper we use certain more complicated versions of this construction, wherein it corresponds roughly to discarding contributions from simple $p$-periodic points of $\phi$, i.e. the fixed points of $\phi^p$ that are not fixed points of $\phi$.

%\red{MAIN THEOREM}

\bs

\begin{thm}\label{thm: main}
Let $\phi$ be an exact symplectic automorphism of a Liouville domain, or a Hamiltonian diffeomorphism of a closed symplectically aspherical symplectic manifold. For a generic interval $I = (a,b)$ with $a<b,$ $a,b \in \R \cup \{\pm \infty\},$ and a prime $p \geq 2,$ working with coefficients in $\F_p,$ there exists an algebraically defined (quasi-Frobenius) isomorphism of Tate cohomology groups 
\[F: \wh{H}^*(\zp, HF^*(\phi)^I)^{(1)} \to \wh{H}^*(\zp, (CF^*(\phi)^{\otimes p}))^{p\cdot I}\] 
and a Floer-theoretically defined (product) map between $\zp$ group cohomology and the $\zp$-equivariant Floer cohomology group
\[\cP: H^*(\zp, CF^*(\phi)^{\otimes p})^{p\cdot I} \to HF^*_{\zp}(\phi^p)^{p\cdot I}\] 
that becomes an isomorphism of Tate cohomology groups
 \[\cP: \wh{H}^*(\zp, (CF^*(\phi)^{\otimes p}))^{p\cdot I} \to \wh{HF}^*_{\zp}(\phi^p)^{p\cdot I}\] after tensoring with $\F_p((u))$ over $\F_p[[u]].$ Here 
 \[\wh{H}^*(\zp, HF^*(\phi)^I)^{(1)} \cong HF^*(\phi)^I \otimes_{\F_p} \F_p((u))\left< \theta \right>,\] where $u$ and $\theta$ are formal variables of degree $2$ and $1$ respectively, $\F_p((u)) = \F_p[u^{-1},u]]$ denotes the formal Laurent power series in $u,$ $\left< \theta \right>$ denotes an exterior algebra on $\theta,$ and the superscript $^{(1)}$ denotes the Tate twist.
\end{thm}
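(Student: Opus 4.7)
The theorem has four distinct assertions, and my plan is to address them in sequence while signposting the main technical engine.

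First, the identification $\wh{H}^*(\zp, HF^*(\phi)^I)^{(1)} \cong HF^*(\phi)^I \otimes_{\F_p} \F_p((u))\langle \theta \rangle$ is essentially formal: since $\zp$ acts trivially on $HF^*(\phi)^I$, Tate cohomology factors as a tensor product with $\wh{H}^*(\zp, \F_p) \cong \F_p((u))\langle \theta \rangle$, and the Tate twist $^{(1)}$ records a Frobenius twist by an automorphism of $\zp$. To construct the quasi-Frobenius isomorphism $F$, I would follow Kaledin's approach: the chain-level power map $x \mapsto x^{\otimes p}$ from $CF^*(\phi)^I$ into $(CF^*(\phi)^{\otimes p})^{p \cdot I}$ is $\zp$-equivariant (for the cyclic action on the tensor factors), it induces a well-defined map on Tate cohomology by a Kaledin-type lemma, and that map is an isomorphism of the required form.

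Second, the product map $\cP$ arises from Floer trajectories on a $p$-fold cyclic branched cover of the cylinder with a single branch point, carrying a natural $\zp$-action by deck transformations. Together with $u$-family perturbations it yields a $\zp$-equivariant chain map from the Borel complex of $CF^*(\phi)^{\otimes p}$ to the Borel complex of $CF^*(\phi^p)$ under cyclic reparametrization, and passage to cohomology gives $\cP: H^*(\zp, CF^*(\phi)^{\otimes p})^{p\cdot I} \to HF^*_{\zp}(\phi^p)^{p \cdot I}$. To prove that $\cP$ becomes an isomorphism after tensoring with $\F_p((u))$ over $\F_p[[u]]$, my plan is to filter both sides by the action filtration. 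The resulting $E^1$-page is a sum of $\zp$-equivariant local Floer cohomologies around the fixed points of $\phi^p$; simple $p$-orbits contribute free $\zp$-summands that vanish under the Tate construction, reducing the claim to showing that $\cP$ induces an isomorphism on local Tate cohomology at each fixed point of $\phi$.

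Third, for this local invertibility I plan to construct a Floer-theoretic local coproduct $\cQ$ with $1$ input and $p$ outputs, defined on local equivariant Floer cohomology and coming from the opposite branched cover. Defining $\cQ$ requires the crossing-energy estimate of Proposition \ref{prop: monotonicity} to guarantee compactness of the local moduli spaces. Once $\cQ$ is in hand, the compositions $\cP \circ \cQ$ and $\cQ \circ \cP$ should, on local Tate cohomology, reduce to multiplication by $(p-1)! \equiv -1 \pmod{p}$ via Wilson's theorem, which is nonzero in $\F_p$ and hence invertible. Combining this local isomorphism with a comparison of the action-filtration spectral sequences then delivers the global statement. I expect the main obstacle to be precisely this local computation: unlike the $p=2$ case in \cite{Seidel}, where the required identity can be read off from specific bifurcation examples, for general $p$ the argument demands an intrinsic Floer-theoretic construction of $\cQ$, careful $\zp$-equivariant orientation and sign bookkeeping at the branch point, and the arithmetic input from Wilson's theorem to show that the scalar one obtains is a unit in $\F_p$.
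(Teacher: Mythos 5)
Your overall architecture coincides with the paper's: Kaledin-type quasi-Frobenius map on Tate complexes, the product $\cP$ from the $p$-fold branched cover with $S^{\infty}$-parametrized Floer data, the action-filtration spectral sequence whose $E^1$-page is a sum of local equivariant groups (with simple $p$-orbits contributing free $\F_p[\zp]$-modules that die in Tate cohomology), and a local coproduct, defined via the crossing-energy estimate of Proposition \ref{prop: monotonicity}, whose composition with $\cP^{\loc}_x$ is computed using Wilson's theorem. So the route is essentially the paper's route, but your key local claim is stated incorrectly, and it is exactly at the step you yourself flag as the main obstacle.

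The gap: you assert that $\cP\circ\cQ$ and $\cQ\circ\cP$ reduce on local Tate cohomology to multiplication by $(p-1)!\equiv -1 \pmod p$, ``nonzero in $\F_p$ and hence invertible.'' This cannot be right as stated, by degree reasons alone: the coproduct shifts degree by $2n(p-1)$ (its moduli spaces have dimension $\sum_k |x^-_k|-|x^+|+i-\alpha-2n(p-1)$), so the composition at a non-degenerate fixed point is forced to be a multiple of $u^{n(p-1)}$, and the paper proves $\cC^{\loc}_x\circ\cP^{\loc}_x=(-1)^n u^{(p-1)n}\cdot\id$ \eqref{eq:key local identity}. This factor of $u^{n(p-1)}$ is precisely why $\cP$ is only an isomorphism after tensoring with $\F_p((u))$; were your scalar claim true, $\cP$ would already be invertible at the Borel level, which is false. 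Wilson's theorem enters not as a bare scalar but through the Euler class computation $c_{n(p-1)}(V)=\bigl(\prod_{\chi\neq 1}a_\chi\bigr)^n u^{n(p-1)}=(-1)^n u^{n(p-1)}$ in the Morse/Betz--Cohen model. Moreover, the mechanism that makes this computation possible is missing from your plan: the paper replaces $\cC^{\loc}_x\circ\cP^{\loc}_x$ (up to chain homotopy) by the incidence-constraint operation $\cl Z^{\loc}_x$, whose invariance under isolated deformations of the germ of $\phi$ is insensitive to the chambers $\mrm{Sp}(2n,\R)^{p*}$, then uses the two-fixed-point model to equate the constants on the two components of $\mrm{Sp}(2n,\R)^{*}$, and finally performs a Floer-to-Morse collapse (Lemma \ref{lemma:Floer-to-Morse}, via Floer's convexity argument in $T^*\Delta$) to land in the topological model where the Euler class computation applies. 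Without this chain of reductions (or a substitute), ``careful sign bookkeeping at the branch point'' does not by itself produce the value of the composition; note also that only the one-sided identity is needed, since $\dim_{\cK}\hrp<\infty$ upgrades it to invertibility of $\cP^{\loc}_x$ over $\cK=\F_p((u))$.
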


\bs

We observe that this result has the following immediate corollary. 
\medskip

\begin{cor}\label{cor: PF isomorphism}
The composition \[\cP \circ F: HF^*(\phi)^I \otimes_{\F_p} \F_p((u))\left< \theta \right> \to \wh{HF}^*_{\zp}(\phi^p)^{p\cdot I}\] is an isomorphism of $\F_p((u))$ vector spaces.
\end{cor}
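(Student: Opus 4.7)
The plan is to observe that this Corollary follows essentially formally from Theorem \ref{thm: main} by composing the two maps it provides, so the proof would consist of little more than unpacking the statement and verifying compatibility of the $\F_p((u))$-module structures across the composition.

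First, I would use the final identification in Theorem \ref{thm: main} to rewrite the domain as
\[ HF^*(\phi)^I \otimes_{\F_p} \F_p((u))\langle \theta \rangle \;\cong\; \wh{H}^*(\zp, HF^*(\phi)^I)^{(1)}. \]
The theorem states that $F$ is an isomorphism of Tate cohomology groups between this target and $\wh{H}^*(\zp, CF^*(\phi)^{\otimes p})^{p\cdot I}$; I would note that since Tate cohomology is naturally a module over $\wh{H}^*(\zp,\F_p) \supset \F_p((u))$ and $F$ is constructed as a (quasi-)Frobenius, it is automatically $\F_p((u))$-linear.

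Next, I would invoke the second assertion of Theorem \ref{thm: main}: after tensoring with $\F_p((u))$ over $\F_p[[u]]$, the product map $\cP$ descends to an isomorphism
\[ \cP: \wh{H}^*(\zp, CF^*(\phi)^{\otimes p})^{p\cdot I} \;\xrightarrow{\sim}\; \wh{HF}^*_{\zp}(\phi^p)^{p\cdot I}. \]
Since the target of $F$ already has $u$ inverted, no further localization is needed to compose. The composition $\cP \circ F$ is then a composition of $\F_p((u))$-linear isomorphisms, hence an isomorphism of $\F_p((u))$-vector spaces, as required.

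The only point requiring verification beyond the statement of the theorem is the $\F_p((u))$-linearity of both maps. This I expect to be straightforward but should not be elided: for $F$ it follows from the construction of the Kaledin quasi-Frobenius as a map of modules over the group cohomology of $\zp$, and for $\cP$ it follows because the equivariant pair-of-pants operation is defined as a cap-product with classes pulled back from the equivariant cohomology of a point, so it intertwines the $u$- and $\theta$-actions on source and target. Given these compatibilities, which are built into the definitions in Sections \ref{sec: group_coho}--\ref{sec:prod-coprod}, the Corollary is immediate, and I do not anticipate any real obstacle.
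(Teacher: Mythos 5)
Your proposal is correct and matches the paper's treatment: the paper states this as an immediate corollary of Theorem \ref{thm: main}, obtained exactly as you do by composing the quasi-Frobenius isomorphism $F$ (which is $\F_p((u))\langle\theta\rangle$-linear by Lemma \ref{lma:quasi_Frob}) with the Tate-level isomorphism induced by $\cP$ after tensoring with $\F_p((u))$ over $\F_p[[u]]$, using the identification of the domain given in the theorem. No further argument is needed.
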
 

Studying the dimensions of the vector spaces from Corollary \ref{cor: PF isomorphism}, we deduce \eqref{eq: Smith in interval}.

Since the case of $p=2$ amounts essentially to a repetition, with perhaps a very slight extension, of results of \cite{Seidel}, for brevity we omit the discussion of this case and assume throughout that $p>2.$ However, we note that our result for $p=2,$ due to our use of slightly more complicated algebraic tools is a bit stronger than the one in \cite{Seidel}, in view of the inequality \eqref{eq: summary bound}.

We add that a generalization of a part of the results of this paper to the monotone case, as well as further applications to dynamics, can be found in \cite{S-HZ}.

\section*{Acknowledgements}
We thank Mohammed Abouzaid, Viktor Ginzburg, Basak G\"{u}rel, Kristen Hendricks, Paul Seidel, Leonid Polterovich, Netanel Rubin-Blaier, Nicholas Wilkins, Dingyu Yang, and Jun Zhang for very useful discussions. In particular, Theorem \ref{thm: growth rate} was inspired by numerous discussions with Leonid Polterovich. Part of this project was carried out while both authors were postdoctoral members at the Institute for Advanced Study, supported by NSF grant No. DMS-1128155. We thank the IAS for a great research atmosphere. At the University of Montr\'{e}al E.S. was supported by an NSERC Discovery Grant and by the Fonds de recherche du Qu\'{e}bec - Nature et technologies. At Harvard CMSA, J.Z. was supported by  the Simons Foundation grant $\#385573$, Simons Collaboration on Homological Mirror Symmetry. Finally, we thank the referees for numerous useful comments on the exposition.

\section{Group cohomology and Tate cohomology}\label{sec: group_coho}
%!TEX root = pantsproduct5.tex

\indent In this section, we recall the preliminary definitions of group cohomology and Tate cohomology for vector spaces and cochain complexes endowed with $G=\zp$ actions. \\
\indent For a fixed prime $p,$ we let $\K$ be a field of characteristic $p$ and consider $\Z/2\Z$- or $\Z$-graded vector spaces or cochain complexes $V^*$ defined over $\K.$ In this paper, we mainly consider the case when $\K = \bb F_p.$ 

We add that in this section, and in this paper in general, we make extensive use of the notion of a spectral sequence. Since it is quite standard nowadays, we refer to \cite[Chapter 5]{Weibel} for all the relevant preliminary material, sometimes mentioning specific relevant results for the convenience of the reader.

%and $R=\Lambda_0^{\K}$, where $\Lambda_0^{\K}$ is the \textit{Novikov ring} in the ground field $\K$ 
%\begin{equation}
%\Lambda_0^{\K}:=\{\sum a_i T^{\lambda_i} \mathbin{|} a_i \in \K, \  \lambda_i \geq 0 \text{ and } \displaystyle\lim_{i \rightarrow \infty} \lambda_i =+ \infty\}. \nonumber
%\end{equation}  
%\cdots  \xrightarrow{N} \K[G]  \xrightarrow{1-\sigma} \K[G] \xrightarrow{N} \K[G]  \xrightarrow{1-\sigma} \K[G] \xrightarrow{\epsilon} \K \rightarrow 0,\\
%$P^{\bullet}:=(\cdots  \xrightarrow{N} \K[G]  \xrightarrow{1-\sigma} \K[G] \xrightarrow{N} \K[G]  \xrightarrow{1-\sigma} \K[G]).$

An action of the cyclic group $G=\zp$ on a (graded) vector space $V$ is given by a (degree-preserving) linear transformation $\sigma \colon V \rightarrow V$ such that $\sigma^p =id.$ Alternatively, the $G$-action on $V$ is equivalent to a (graded) $\K[G]$-module structure on $V.$ Given such a $G$ -action, the $G$-invariants and the $G$-coinvariants are defined as follows
\begin{eqnarray}
&& V^G= \Ker(1-\sigma) =  \{x \in V \mathbin{|} g\cdot x=x \text{ for all } g \in G \} \\
&& V_G=V/\ima(1-\sigma),
\end{eqnarray}
where $\ima(1-\sigma):=\langle 1-\sigma \rangle V$ is the $\K[G]$-submodule generated by $g\cdot x$ for $g=1-\sigma$ and $x \in V.$ The group homology and cohomology can be then defined as the (derived) $G$-coinvariants and $G$-invariants of the $G$-action
\begin{eqnarray}
&& H_i(G; V):=Tor_i^{\K[G]}(\K, V),\\
&& H^i(G; V):=Ext^i_{\K[G]}(\K, V).
\end{eqnarray}
To compute these (co)homology groups explicitly for cyclic groups $G:=\Z/p\Z,$ one takes the free resolution of the ground field $\K$ as a $\K[G]$-module given by
\begin{equation}\label{eqn:res_P}
0 \leftarrow \K \xleftarrow{\epsilon} \K[G] \xleftarrow{1 - \sigma} \K[G] \xleftarrow{N} \K[G] \xleftarrow{1-\sigma} \K[G] \xleftarrow{N} \cdots
\end{equation}
where $\epsilon(\sum_i a_ig_i)=\sum_i a_i$ is the augmentation map, and $N=id+\sigma+\sigma^2+\cdots +\sigma^{p-1}$ is the norm map of the $G$-action. Let $P_{\bullet}:=(\K[G] \xleftarrow{1 - \sigma} \K[G] \xleftarrow{N} \K[G] \xleftarrow{1-\sigma} \K[G] \xleftarrow{N} \cdots).$ The group homology and cohomology can be computed explicitly as the homology of
\begin{equation}
P_{\bullet}\otimes_{\K[G]}V \text{ and } \mrm{Hom}_{\K[G]}(P_{\bullet}, V) \nonumber.
\end{equation}
\indent One the other hand, the Tate invariants and coinvariants are the kernels and cokernels of the norm map $N$ acting on ordinary invariants and coinvariants,
\begin{equation}
\wh{H}_0(G;V)=\Ker(N)/\ima(1-\sigma), \ \ 
\wh{H}^0(G;V)=\Ker(1-\sigma)/\ima(N).
\end{equation}
Extending the previous free resolution \eqref{eqn:res_P} two-periodically, one obtains the Tate ``resolution"
\begin{equation} \label{eqn:res_Q}
Q_{\bullet}:= ( \cdots \xleftarrow{1-\sigma} \underbrace{\K[G]}_{-1} \xleftarrow{N} \underbrace{\K[G]}_{0} \xleftarrow{1-\sigma} \underbrace{\K[G]}_{1} \xleftarrow{N}\underbrace{\K[G]}_{2} \xleftarrow{1-\sigma} \cdots )
\end{equation}

%Q^{\bullet}:=(\cdots \xrightarrow{N} \underbrace{\K[G]}_{-1}  \xrightarrow{1-\sigma} \underbrace{\K[G]}_{0} \xrightarrow{N} \underbrace{\K[G]}_{1}  \xrightarrow{1-\sigma} \underbrace{\K[G]}_{2} \rightarrow \cdots ).

The Tate homology and cohomology are defined to be the (derived) Tate invariants and coinvariants, 
\begin{eqnarray}\label{eq: Tate basic 1}
&& \wh{H}_i(G; V):= H_i(Q_{\bullet} \otimes_{\K[G]}V),\\
\label{eq: Tate basic 2} && \wh{H}^i(G; V):= H_i({\Hom}_{\K[G]}(Q_{\bullet}, V)) \cong H_i( (Q^{\vee})^{\bullet}\otimes_{\K[G]}V),
\end{eqnarray}
where $(Q^{\vee})^{\bullet}:=\Hom_{\K[G]}(Q_{\bullet}, \K[G])$ is the dual resolution.\\%=\Lambda_0^{\K}

%Let $u$ and $\theta$ denote formal variables of degrees $2$ and $1$ respectively throughout the paper.

For a cochain complex $(V, d_V)$ of $\bK[G]$-modules, so that the degree of $d_V$ is $1,$ the equivariant (co)homology can be defined as the homology of the equivariant (co)chain complex
\begin{eqnarray}\label{eq: equivariant complexes 1}
{C}_k(\zp; V)=\bigoplus_{i-j=k} P_i \otimes V^{-j}, \ \ {d}=d_{\dagger}+(-1)^id_V,\\ \label{eq: equivariant complexes 2}
{C}^k(\zp; V)=\bigoplus_{i+j=k} (P^{\vee})^i \otimes V^j, \ \ {d}=d_{\dagger}+(-1)^id_V,
\end{eqnarray}
for $(P^{\vee})^{\bullet}:=\Hom_{\K[G]}(P_{\bullet}, \K[G])$ and the Tate (co)homology can be defined as the homology of the Tate (co)chain complex
\begin{eqnarray}\label{eq: Tate complexes 1}
\wh{C}_k(\zp; V)=\bigoplus_{i-j=k} Q_i \otimes V^{-j}, \ \ \wh{d}=d_{\dagger}+(-1)^id_V,\\ \label{eq: Tate complexes 2}
\wh{C}^k(\zp; V)=\bigoplus_{i+j=k} (Q^{\vee})^i \otimes V^j, \ \ \wh{d}=d_{\dagger}+(-1)^id_V,
\end{eqnarray}
where on $\wh{C}_k(\zp; V),$ $d_{\dagger}=d_{0}:=N=1+\sigma+\cdots +\sigma^{p-1}$ if $i$ is even and $d_{\dagger}=d_{1}:=1-\sigma$ if $i$ is odd, whereas on $\wh{C}^k(\zp; V),$ $d_{\dagger}=(d_0)^{\vee}=1-\sigma$ for $i$ even and $d_{\dagger}=(d_1)^{\vee}=N$ for $i$ odd. Note that in \eqref{eq: Tate complexes 1} we look at the {\em chain} complex $V^{-\ast},$ whose differential is of degree $(-1),$ naturally obtained from the cochain complex $V^{\ast}.$ This distinction is important in the $\Z$-graded setting. We remark that the complexes appearing in \eqref{eq: Tate complexes 1} and \eqref{eq: Tate complexes 2} are tensor products of the complexes $Q_{\bullet}$ and $(Q^{\vee})^{\bullet}$ respectively with $V,$ and are hence completely analogous to \eqref{eq: Tate basic 1} and \eqref{eq: Tate basic 2} but in the monoidal category of chain complexes. 

One can therefore rewrite the Tate homology complex with coefficients in a cochain complex $(V, d_V)$ as
\begin{equation}
\big(V \otimes_{\K}\K((u))\langle\theta\rangle, \wh{d} \big),
\end{equation}
where the differential $\wh{d}$ with respect to the splitting $V \otimes_{\K}\K((u)) \otimes_{\K} 1\; \oplus\; V \otimes_{\K}\K((u)) \otimes_{\K} \theta$ is written as follows \begin{equation}
\wh{d}(x\otimes 1)= d_V(x) \otimes 1 + u^{-1} N\, x \otimes \theta,\;\; 
\wh{d}(x\otimes \theta)= -d_V(x) \otimes \theta + (1-\sigma)\, x \otimes 1.
\end{equation}

%\begin{equation}
%\wh{d}(x\otimes 1)=\begin{pmatrix}
%d_V & 0 \\
%N & d_V
%\end{pmatrix},
%\ \ 
%\wh{d}(x\otimes \theta)=
%\begin{pmatrix}
%-d_V &  u^{-1}(1-\sigma) \\
%0 & -d_V
%\end{pmatrix}.
%\end{equation}

Here, $u, \theta$ are formal variables of degrees $2$ and $1$ respectively, $\langle \theta \rangle$ is the exterior algebra generated over $\bK$ by $\theta,$ and $\bK((u))$ is the field of Laurent series in $u.$ We observe that strictly speaking rewriting the complex would feature the ring $\bK[u^{-1},u]$ of Laurent polynomials in $u,$ but for our purposes it is convenient to consider its $u$-adic completion, which is $\bK((u)).$

Similarly, the Tate cohomology complex with coefficients in $(V, d_V)$ is defined as the complex $V \otimes_{\K}\K((u))\langle\theta \rangle$ with the differential 
\begin{equation}
\wh{d}(x\otimes 1)= d_V(x) \otimes 1 + (1-\sigma)\, x \otimes \theta,\;\;
\wh{d}(x\otimes \theta)= - d_V(x) \otimes \theta + u N\, x \otimes 1.
\end{equation}

%\begin{equation}
%\wh{d}(x\otimes 1)=\begin{pmatrix}
%d_V & 0 \\
%1-\sigma & d_V
%\end{pmatrix},
%\ \ 
%\wh{d}(x\otimes \theta)=
%\begin{pmatrix}
%-d_V &  uN\\
%0 & -d_V
%\end{pmatrix}.
%\end{equation}

The Tate homology $\wh{H}_*(G; V)$ and cohomology $\wh{H}^*(G; V)$ are the (co)homology of the above complexes (we remind the reader that we have considered the $u$-adically completed versions). We observe that if the $G$-action on $V$ is trivial then \[\wh{H}^*(G; V) \cong H^*(V) \otimes _{\K}\K((u))\langle\theta \rangle.\] In this paper, we mainly use Tate cohomology. We prove its functorial properties in the following lemma.

\medskip

\begin{lma} \label{lem:properties}
	Let $(V, d_V)$ and $(W, d_W)$ be cochain complexes over $\K$ equipped with $G$-actions.
	\begin{enumerate}[label = (\arabic*)]
		\item Suppose that $H^*(V)\cong 0,$ then the group and Tate cohomology groups are also zero:
		\begin{equation}
		H^*(G; V) \cong 0 \text{ and } \wh{H}^*(G; V) \cong 0. \nonumber
		\end{equation}
		\item Suppose that there is an $G$-equivariant chain map $f \colon (V,d_V) \rightarrow (W, d_W)$ that induces a quasi-isomorphism $f_* \colon H^*(V) \xrightarrow{\cong} H^*(W),$ then one has
		$$f_* \colon H^*(G; V) \xrightarrow{\cong} H^*(G; W) \text{ and }f_* \colon \wh{H}^*(G; V) \xrightarrow{\cong} \wh{H}^*(G; W).$$
		\item Given a short exact sequence of cochain complexes and $G$-equivariant maps between them
		$$0 \rightarrow V_1 \rightarrow V_2 \rightarrow V_3 \rightarrow 0,$$
		there is an induced long exact sequence in group or Tate cohomology groups
		$$\cdots \rightarrow H^*(G; V_1) \rightarrow H^*(G; V_2) \rightarrow H^*(G; V_3) \rightarrow H^{*+1}(G; V_1) \rightarrow \cdots$$
		$$  \cdots \rightarrow \wh{H}^*(G; V_1) \rightarrow \wh{H}^*(G; V_2) \rightarrow \wh{H}^*(G; V_3) \rightarrow \wh{H}^{*+1}(G; V_1) \rightarrow \cdots$$
		\item\label{case: Tate} Let $V$ be a free $\bK[G]$-module. Then $\wh{H}^*(G;V) = 0.$
	\end{enumerate}
	
\end{lma}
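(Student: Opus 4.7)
The plan is to derive all four parts from the standard double-complex machinery applied to the definitions \eqref{eq: equivariant complexes 2} and \eqref{eq: Tate complexes 2}, together with one direct computation on the regular representation. I would carry out the parts in the order (3), (1), (2), (4), since (3) is formal, (1) is a spectral sequence argument that is invoked in (2) via a mapping cone, and (4) is an independent calculation on $\K[G]$.

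For (3), the point is that each $(P^\vee)^i$ and $(Q^\vee)^i$ is a free, and in particular flat, $\K[G]$-module. Tensoring the short exact sequence $0 \to V_1 \to V_2 \to V_3 \to 0$ with $(P^\vee)^\bullet$ (respectively $(Q^\vee)^\bullet$) over $\K[G]$ therefore yields a short exact sequence of double complexes, hence of total complexes, whose associated long exact sequence in cohomology is exactly the one claimed.

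For (1), I would filter $C^*(\zp;V)$ (respectively $\wh{C}^*(\zp;V)$) by the group cohomology degree $i$. The vertical differential is $(-1)^i d_V$, so the $E_1$-page of the resulting spectral sequence is $(P^\vee)^i \otimes H^j(V)$ (resp.\ $(Q^\vee)^i \otimes H^j(V)$), which vanishes identically under the hypothesis $H^*(V) \cong 0$, and the spectral sequence converges to the group (resp.\ Tate) cohomology. Part (2) then follows formally: the mapping cone $\mathrm{Cone}(f)$ is $G$-equivariant and acyclic when $f$ is a quasi-isomorphism, so (1) yields $H^*(\zp;\mathrm{Cone}(f)) = 0 = \wh{H}^*(\zp;\mathrm{Cone}(f))$, and the long exact sequence from (3) applied to the standard short exact sequence $0 \to W \to \mathrm{Cone}(f) \to V[1] \to 0$ forces $f_*$ to be an isomorphism in both theories.

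For (4), a free $\K[G]$-module has the form $V \cong \K[G] \otimes_\K W$ for some $\K$-vector space $W$, and the Tate complex of such a tensor product is the Tate complex of $\K[G]$ tensored over $\K$ with $W$, reducing the claim to $\wh{H}^*(\zp;\K[G]) = 0$. On $\K[G]$ itself one checks directly that $\Ker(1-\sigma) = \K \cdot N = \ima(N)$ and $\Ker(N) = I = \ima(1-\sigma)$, where $I$ denotes the augmentation ideal; by the two-periodicity of the Tate resolution $Q_\bullet$ this forces the vanishing of $\wh{H}^*$ in every degree. The most delicate point in the whole plan is convergence of the Tate spectral sequence used in (1), as it is unbounded in the $i$-direction; this is handled by the $u$-adic completeness already built into the definition of $\wh{C}^*(\zp;V)$ recalled above, while the remaining steps are routine homological algebra.
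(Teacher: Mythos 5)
Your proposal is correct and follows essentially the same route as the paper: parts (1)–(2) rest on the spectral sequence of the filtration by group-cohomology degree (the paper's vertical filtration, with convergence for the Tate complex secured by the $u$-adic completeness, exactly the point you flag), part (3) is the standard long exact sequence from tensoring the short exact sequence with the free resolution, and part (4) is the explicit computation on $\K[G]$ where $\Ker(1-\sigma)=\ima(N)$ and $\Ker(N)=\ima(1-\sigma)$. The only minor deviation is in (2), where you reduce to (1) via the mapping cone and the long exact sequence of (3) instead of the paper's direct comparison of the two filtration spectral sequences (quasi-isomorphism on $E^1$-pages plus the comparison theorem); both arguments are standard and equally valid.
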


\begin{proof}
	For (1), one defines the zero chain maps $F \colon C^*(G; V) \rightarrow 0$ and $\wh{F} \colon \wh{C}^*(G;V) \rightarrow 0.$ There are vertical filtrations on $C^*(G; V)$ and $\wh{C}^*(G;V)$ defined by
	\begin{equation}
	F^kC^*(G;V)=\oplus_{i \geq k , j} P^i \otimes_{\bK[G]}V^j \text{ and }\wh{F}^k\wh{C}^*(G;V)=\oplus_{i \geq k , j} Q^i \otimes_{\bK[G]}V^j.
	\end{equation}
	The condition $H^*(V) \cong 0$ implies that the chain maps $F$ and $\wh{F}$ induce quasi-isomorphisms on the associated spectral sequences converging to $H^*(G;V),$ $\wh{H}^*(G;V),$ and the zero group respectively. By comparison of spectral sequences \cite[Theorem 5.5.11]{Weibel}, one obtains the result.\\
	The proof of (2) follows similarly from the fact that the map of spectral sequences associated to the vertical filtrations on $C^*(G; V)$ and $C^*(G;W)$ induces a quasi-isomorphism on $E^1$-pages. The proof of (3) is standard homological algebra. Finally, (4) follows by explicit calculation for $V = \bK[G],$ since $(Q^{\vee})^{\bullet}$ is exact in all degrees.
\end{proof}

\begin{rmk}\label{rmk: Morita for group coho}
Note that Lemma \ref{lem:properties} implies that for a chain complex $V$ over a field $\K,$ $H^*(G,V^{\otimes p}) \cong H^*(G,H(V)^{\otimes p}),$ and $\wh{H}^*(G,V^{\otimes p}) \cong \wh{H}^*(G,H(V)^{\otimes p}),$ where the action on $V^{\otimes p}$ is by cyclically permuting the factors with suitable signs defined precisely in Equation \eqref{eq: sign perm} below, and similarly for $H(V)^{\otimes p}.$ Indeed as $V$ is quasi-isomorphic to $H(V),$ $V^{\otimes p}$ is $G$-equivariantly quasi-isomorphic to $H(V)^{\otimes p}.$
\end{rmk}

\section{Quasi-Frobenius maps}
Let $(V, d)$ be a graded chain complex over a perfect field $\K$ of characteristic $p,$ that is, a field for which the Frobenius automorphism $\K \to \K,$ $k \mapsto k^p$ is invertible. Our main example is $\K = \F_p.$
The Tate twist $V^{(1)}$ of $V$ is defined to be $V$ as an abelian group, but with the following structure of a $\K$-module: 
\begin{equation}
a \colon \K \times V^{(1)} \rightarrow V^{(1)},  \ \ a(k,x)= g(k) \cdot x,
\end{equation}
where $\cdot$ is the original action of $\K$ on $V,$ and $g:\K \to \K$ is the inverse of the Frobenius automorphism. The differential on $V^{(1)}$ is induced by that of $V.$ Finally, we note that if $\K = \F_p$ then $g = \id$ and hence $V^{(1)}$ coincides with $V$ as a vector space.

Having defined the Tate twist $V^{(1)}$, one considers the Tate complexes associated to the trivial $\zp$-action on $V^{(1)}$ and the $\zp$ action $V^{\otimes p}$ given by 
\begin{equation}\label{eq: sign perm}  \sigma \cdot x_0 \otimes  \cdots \otimes x_{p-1}=(-1)^{|x_{p-1}|(|x_0|+\cdots + |x_{p-2}|)}x_{p-1} \otimes x_0 \otimes \cdots \otimes x_{p-2}.\end{equation}
There is a natural map \[V^{(1)} \to V^{\otimes p},\;\; x \mapsto x^{\otimes p}\] which induces the so-called \textit{quasi-Frobenius map} on the associated Tate complexes
\begin{equation}
F \colon \wh{C}^*(\zp, V^{(1)}) \rightarrow \wh{C}^*(\zp, V^{\otimes p}).
\end{equation}
The name of the quasi-Frobenius map has originated in the study of the non-commutative analogue of the Frobenius map $a \mapsto a^p$ for associative algebras \cite[Section 4]{Kaledin}. At a first glance, the morphism $F$ is not a chain map, because it fails to be additive. However, we will prove that this map descends to an isomorphism of $\K$-modules in homology. The following result and its proof are a slightly more explicit version of \cite[Lemma 4.1]{Kaledin} and its proof.\\

\begin{lma} \label{lma:quasi_Frob}
	Let $(V,d)$ be a graded cochain complex over a perfect field $\K$ of characteristic $p.$ Then there is an isomorphism of $\K((u))\langle \theta \rangle$-modules
	$$F \colon \wh{H}^*(\zp, V^{(1)}) \rightarrow \wh{H}^*(\zp, V^{\otimes p})$$ 
\end{lma}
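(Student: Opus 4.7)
The plan is to follow the strategy of Kaledin \cite[Lemma 4.1]{Kaledin}, reducing first to the formal case of a complex with zero differential and then to a basis-level computation that exploits the ``freshman's dream'' in characteristic $p$.

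As a preliminary step, I would reduce to the case $d_V = 0$. By Remark \ref{rmk: Morita for group coho}, there is a $\zp$-equivariant quasi-isomorphism $V^{\otimes p} \to H(V)^{\otimes p}$; moreover, the Tate twist changes only the $\K$-action and not the underlying complex, so $V^{(1)}$ is quasi-isomorphic to $H(V)^{(1)}$. The assignment $x \mapsto x^{\otimes p}$ is natural with respect to $\zp$-equivariant chain maps, so Lemma \ref{lem:properties}(2) allows me to replace $V$ with $H(V)$. Henceforth I treat $V$ as a graded $\K$-vector space with zero differential.

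The core computation decomposes $V^{\otimes p}$ as a $\zp$-module. Pick a homogeneous basis $\{e_i\}$ of $V$; the cyclic group acts on the tensor monomials $e_{i_1} \otimes \cdots \otimes e_{i_p}$ by \eqref{eq: sign perm}, and since $p$ is prime each orbit has size $1$ or $p$. The diagonal monomials $e_i^{\otimes p}$ form the size-$1$ orbits, on which $\sigma$ acts with sign $(-1)^{|e_i|^2(p-1)} = 1$ (using $p > 2$); hence their $\K$-span $D$ carries the trivial $\zp$-action. The remaining monomials span a complementary $\zp$-submodule $R$ that is free over $\K[\zp]$. Lemma \ref{lem:properties}(4) gives $\wh{H}^*(\zp, R) = 0$, so
\[ \wh{H}^*(\zp, V^{\otimes p}) \;\cong\; \wh{H}^*(\zp, D) \;\cong\; D \otimes_\K \K((u))\langle \theta \rangle, \]
where the second isomorphism uses that $1-\sigma$ and $N = p$ both vanish on $D$. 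On the source side, $V^{(1)}$ has trivial action, so $\wh{H}^*(\zp, V^{(1)}) \cong V^{(1)} \otimes_\K \K((u))\langle \theta \rangle$.

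The final step is to show that $F$ realises the isomorphism. Writing $x = \sum_i a_i e_i$ and $y = \sum_i b_i e_i$, the freshman's dream $(a+b)^p = a^p + b^p$ in $\K$ gives
\[ \pi((x+y)^{\otimes p}) \;=\; \sum_i (a_i + b_i)^p e_i^{\otimes p} \;=\; \pi(x^{\otimes p}) + \pi(y^{\otimes p}), \]
where $\pi \colon V^{\otimes p} \to D$ is the projection; hence $\pi \circ F \colon V^{(1)} \to D$ is additive. For $\K$-linearity, the definition of the Tate twist yields $(a \cdot_{V^{(1)}} x)^{\otimes p} = (g(a) x)^{\otimes p} = g(a)^p x^{\otimes p} = a \cdot x^{\otimes p}$, where $g$ is the inverse Frobenius. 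So $\pi \circ F$ is a $\K$-linear bijection sending $\{e_i\}$ to $\{e_i^{\otimes p}\}$. Since $F - \pi \circ F$ takes values in $R$, whose Tate cohomology vanishes, the map on Tate cohomology induced by $F$ agrees with that induced by $\pi \circ F$, and is therefore a $\K$-linear isomorphism; extending by $\K((u))\langle \theta \rangle$-linearity gives the claimed result. The main obstacle is the conceptual one of justifying that the non-additive map $x \mapsto x^{\otimes p}$ descends to a well-defined morphism on Tate cohomology; this is resolved by locating the failure of additivity in the free summand $R$, which is invisible to $\wh{H}^*$.
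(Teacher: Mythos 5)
Your proposal is correct and takes essentially the same route as the paper's proof: reduce to $H(V)$ with zero differential, single out the constant (diagonal) monomials, observe that the non-constant monomials span a free $\K[\zp]$-module and hence are invisible to Tate cohomology, and use the freshman's dream together with the Tate twist to obtain additivity and $\K$-linearity of the induced map. The paper establishes the vanishing on the non-constant part by an explicit kernel/image computation with $1-\sigma$ and $N$, but itself remarks that Lemma \ref{lem:properties}(4) yields the same conclusion, which is exactly the shortcut your decomposition $V^{\otimes p}=D\oplus R$ exploits.
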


\begin{proof}
	We will prove the case when $(V, d)$ has trivial differential, as the general case then follows from Remark \ref{rmk: Morita for group coho} above. For a graded vector space $V$ over $\K,$ as the $\zp$-action on $V^{(1)}$ is trivial, this amounts to proving that for each degree $i$, there is an isomorphism of $\K$-modules 
	\begin{equation}
	V^{(1)} \cong  \wh{H}^i(\zp, V^{\otimes p}) \nonumber.
	\end{equation}
	Since for the Tate twist $V^{(1)},$ the quasi-Frobenius map $F$ is $\K$-equivariant (that is, $F(k \cdot v) = k \cdot F(v)$ for all $v \in V^{(1)},$ $k \in \K$), it suffices to check that $\ima(F) \subset \Ker(\wh{d})$ and that $F$ descends to an \textit{additive} isomorphism in homology. First, it is clear that $\ima(F) \subset \Ker(\wh{d})$ since $x^{\otimes p} \in \Ker(1-\sigma) \cap \Ker N$ for all $x \in V^{(1)}.$ Now, let us show that $F$ is additive in homology. Set $\underline{n}:=\{0,1, \cdots, n-1\}.$ If a basis $\{ v_j\}_{j=0}^{n-1}$ of $V^{(1)}$ as a $\K$-module is given, then the induced basis of $V^{\otimes p}$ is of the form \[\{ v_f = v_{f(0)} \otimes \cdots \otimes v_{f(p-1)} \mathbin{|}  f\colon \zp \rightarrow \underline{n}\}.\] Denote by $[f]$ the equivalence class of $f$ in the quotient $\Phi(\zp,\ul{n})$ of $\{ f \colon \zp \rightarrow \underline{n}\}$ by the $\zp$-action of cyclically permuting the inputs. We denote by $\Phi(\zp,\ul{n})^{\rm{nc}} \subset \Phi(\zp,\ul{n})$ the subset of classes $[f]$ with $f$ non-constant. For $i$ even, one has that 
	\begin{eqnarray}
	&& \sum_{f \colon \zp \rightarrow \underline{n} } C_f  \cdot v_{f(0)} \otimes \cdots \otimes v_{f(p-1)} \in \Ker(1-\sigma) \Longrightarrow \nonumber \\
	&&  \sum_{\substack{f \colon \zp \rightarrow \underline{n}, \\ f \text{ is non-constant}  }} C_f  \cdot v_{f(0)} \otimes \cdots \otimes v_{f(p-1)}=N \big(\sum_{f \in \Phi(\zp,\ul{n})^{\rm{nc}}} C_f  \cdot v_{f(0)} \otimes \cdots \otimes v_{f(p-1)} \big). \nonumber
	\end{eqnarray}
	%where $[f] \colon \underline{p} \rightarrow \underline{n}$ denotes the maps $[f]$ from the unordered set $\underline{p}:=\{0,1 \cdots, p-1\}$ to $\underline{n}.$  
	The condition that $f$ is nonconstant is required, as $v^{\otimes p} \in \Ker(1-\sigma)$ and $v^{\otimes p} \notin \ima(N).$ To see that this holds similarly for $i$ odd, one notices that $(1-\sigma)^{p-1}=N$ over a field of characteristic $p$. This implies that on $V^{\otimes p}_{\text{non-const}} =  \{v_f \mathbin{|}  f\colon \zp \rightarrow \underline{n}, \text{ is non-constant}\},$  $\Ker(N) \cong\ima(1-\sigma)$ and $\ima(N) \cong \Ker(1-\sigma)$, and the proof when $i$ is odd follows from the case when $i$ is even. This shows that the quasi-Frobenius map $F$ becomes a $\K$-linear map in homology, since for all $x,y \in V^{(1)},$ \[(x+y)^{\otimes p} = x^{\otimes p} + y^{\otimes p} + c(x,y)\] where $c(x,y) \in V^{\otimes p}_{\text{non-const}} \cap \Ker(1-\sigma) \cap \Ker(N).$ Similarly, we obtain that $F: V^{(1)} \to \wh{H}^i(\zp, V^{\otimes p})$ is an isomorphism. Indeed by the above calculation, or by Lemma \ref{lem:properties}, property (4), \[\wh{H}^i(\zp, V^{\otimes p}) \cong V^{\otimes p}_{\text{const}} = \{v_f \mathbin{|}  f\colon \zp \rightarrow \underline{n}, \text{ is constant}\},\] which is also $\ima(F).$ Moreover $\dim_{\bK} V^{\otimes p}_{\text{const}} = \dim_{\bK} V = \dim_{\bK} V^{(1)}.$ This finishes the proof.
\end{proof}

%	 and $V^{(1)} \cong \wh{H}^i(\zp; V)$ for all $i$.
%For a finite-dimensional cochain complex $(V, d_V)$ defined over the Novikov ring $R:=\Lambda_0^{\K}$ over a field $\K$ of characteristic $p$. The Tate twist $V^{(1)}$ is defined as
%\begin{equation}
%a \colon \Lambda_0^{\K} \times V^{(1)} \rightarrow V^{(1)}, \ \ a(k T^{\lambda}, x)= k^p T^{p \lambda} \cdot x.
%\end{equation}
%For cochain complexes defined over $R,$ we reprove Lemma \ref{lma:quasi_Frob} as follows.
%
%\begin{proof}[Proof of Lemma \ref{lma:quasi_Frob} for complexes defined over $\Lambda_0^{\K}$]
%	{\color{red}(ADD)}
%\end{proof}

\section{Morse functions on classifying spaces} \label{sec:Morse}
%!TEX root = pantsproduct5.tex

Let us consider the Hilbert space
\begin{equation}
\mathcal{H}= L^2(\Z_{\geq 0},\C) =  \{ z=( z_k )_{k \in \Z_{\geq 0}} \mathbin{|} z_k \in \C, \sum_{k \in \Z_{\geq 0}}|z_k|^2 <\infty\}\end{equation} with the standard inner product $\langle z, w \rangle = \sum_{k \in \Z_{\geq 0}} \, z_k \ol{w}_k.$

Let $\C^{\infty} \subset \cl{H}$ denote the (non-closed) subspace \[\C^{\infty} = \{ z = (z_k)_{k\in \Z_{\geq 0}} \in \cl{H}\;|\; z_k = 0   \;\text{for all}\;k\; \text{sufficiently large}\},\] and define $S^{\infty} \subset \C^{\infty}$ by \[S^{\infty}=\{z\in \C^{\infty}\;|\sum_{k \in \Z_{\geq 0}}|z_k|^2=1\}.\] There is a free action of $\zp$ on $S^{\infty}$ given by 
\begin{equation}
(\m \cdot z)_k=e^{2\pi i \m/p}\cdot z_k\text{ for } \m \in \zp.
\end{equation}
Consider the standard Morse-Bott function 
\begin{equation}
{f} \colon S^{\infty} \rightarrow \R,  \;\; {f}(z):=\sum_k k\cdot|z_k|^2.\end{equation}
It is invariant under the $\zp$-action on $S^{\infty}$ and descends to a Morse-Bott function $f_0$ on $B\zp:=S^{\infty}/ \zp$ with critical submanifolds being the $S^1$-fibers of the fibration $\pi\colon B\zp \rightarrow B S^1.$ In fact ${f}$ is invariant under a natural $S^1$-action on $S^{\infty}$ and descends to a Morse function $\ol{f}$ on $\C P^{\infty}= BS^1 = S^{\infty}/S^1.$ Now the critical manifolds of $f_0$ are given as preimages under $\pi$ of the critical points of $\ol{f},$ and those of $f$ are given as the preimages of these critical points by the natural projection $S^{\infty} \to BS^1.$ The latter critical manifolds are described as follows: for each $l \in \Z_{\geq 0}$ there is precisely one critical submanifold $S^1_l$ of $f$ of Morse-Bott index $2l,$ 
\begin{equation}
S^1_l=\{ (z_k) _{k \in \Z_{\geq 0}} \mathbin{|} |z_l|=1, z_k=0, k \neq l \}.
\end{equation} 

Note that the coindex of each critical submanifold is infinite, while its index is finite. There is an embedding $\tau$ of $S^{\infty}$ into itself such that $\tau^*f=f+1$ defined by
\begin{equation}
\tau(z_0, z_1, z_2, \cdots)=(0, z_0, z_1, \cdots).
\end{equation}
This map is compatible with the $\zp$-action on $S^{\infty}$, which yields an automorphism of $B\zp$ that sends the critical submanifold $S^1_l$ to $S^1_{l+1}$.
By choosing a small $\zp$-invariant perturbation of the Morse-Bott function $f$ near each critical submanifold $S^1_l$, one obtains a $\F_p$-perfect Morse function $F$ on $B \zp$. As the symmetry group $\zp$ is discrete, one can lift the perfect Morse function $F$ to a $\zp$-invariant Morse function $\tilde{F}$ on $S^{\infty}$ such that there are exactly $p$ critical points of index $i$ on $S^{\infty}$, denoted by $Z_i^0, \hdots, Z_i^{p-1}$, lying over each critical point of index $i$ of $F$. We also require $\til{F}$ to satisfy $\tau^* \til{F} = \til{F} + 1.$ In fact, we can use the following explicit $\zp$-invariant Morse perturbation $\til{F}$ of $f:$ \[ \til{F}(z) = f(z) + \eps \cdot \sum_k \re(z_k^p) = \sum_k \left(k \cdot |z_k|^2 + \eps\cdot \re(z_k^p)\right),\] for $\eps > 0$ a sufficiently sufficiently small constant. Its critical points of index $2l+1$ are $z \in S^1_l$ with $z_l \in \mu_p$ and those of index $2l$ are $z \in S^1_l$ with $z_l \in \mu_{2p}\setminus \mu_p = - \mu_p,$ where for an integer $d\geq 1,$ $\mu_d \subset S^1 \subset \C$ denotes the set of roots of unity of order $d.$

One can also choose a Riemannian metric $g$ on $B \zp$ and lift it to a $\zp$-invariant metric $\tilde{g}$ on $S^{\infty}$. We choose the metric to satisfy $\tau^* \til{g} = \til{g}.$ Furthermore we require that the multiplication of each coordinate of $S^{\infty} \subset \C^{\infty}$ by $\zeta \in \mu_p$ is an isometry of $\til{g}.$ Under these conditions, it is easy to see that the gradient of $\til{F}$ is tangent to the submanifolds $S^{2l+1} \subset S^{\infty}.$ Furthermore, by standard transversality methods in Morse theory (cf. \cite{Schwarz-book}), applied inductively to unions $\bigcup_{k \in \Z_{\geq 0}} \tau^k S^{2l+1}$ for increasing $N,$ one can choose $\til{g}$ satisfying the above conditions in such a way that $(\til{F},\til{g})$ is a Morse-Smale pair (that is a Morse-Smale pair on each $S^{2l+1} \subset S^{\infty}$). A useful point of view on this situation is provided by its relation to the cascades complex for Morse-Bott functions (see \cite[Section 5]{BanyagaHurtubise-MB}).

Now due to the $\zp$-invariance of the Morse function $F$ and the Riemannian metric $\tilde{g}$, there is a $\zp$-action on the critical points of index $i$, given by $\m \cdot Z_i^{j} \mapsto Z^{j+ \m \  \mathrm{mod} \  p}_i$, making each cochain group of degree $i$ into a free $\zp$-module of rank $1$. Under this identification, the Morse cochain complex of $\til{F}$ can be written as 
\begin{equation} \F_p[\zp] \xrightarrow{1- \sigma} \F_p[\zp] \xrightarrow{N} \F_p[\zp] \xrightarrow{1- \sigma} \F_p[\zp] \xrightarrow{N} \cdots
\end{equation}
where $\F_p[\zp]$ is the group ring of $\zp$ with coefficients in $\bK = \F_p,$ and $\sigma$ is the action of $1 \in \zp.$ We recall that $N$ is defined as $N=id+\sigma+\sigma^2+\cdots +\sigma^{p-1}.$ The homology of this complex is hence $\F_p$ in degree $0,$ and vanishes in all other degrees. The Morse complex of $F$ is given by tensoring by $\F_p$ over $\F_p[\zp]:$

\begin{equation}
\F_p \xrightarrow{0} \F_p \xrightarrow{0} \F_p \xrightarrow{0} \F_p \xrightarrow{0} \cdots
\end{equation}

The cohomology ring, for $p>2,$ can be identified with $\F_p[u]\langle \theta \rangle$ for a formal degree $2$ variable $u$ and formal degree $1$ variable $\theta,$ so $\theta^2 = 0.$ Here $u$ corresponds to the generator of $H^*( BS^1,\F_p)$ under $\pi ^*,$ for the natural projection $\pi: B \zp \to B S^1,$ and $\theta$ is the preimage of $u \in H^2(B\zp; \F_p)$ under the Bockstein isomorphism $H^1(B\zp;\F_p) \xrightarrow{\sim} H^2(B\zp;\F_p).$ The class $\theta$ evaluates to $1$ on the $\F_p$-homology class of the fiber of $B\zp \to B S^1.$ However, as before, we prefer to complete all complexes $u$-adically, and in this case the homology can be identified as \[R_p = \F_p[[u]]\left< \theta \right>,\] a notation that we keep for the rest of the paper. Furthermore, we set \[\hrp = \F_p((u)) \left<\theta \right> = \rp \otimes_{\F_p[[u]]} \F_p((u))\] for its version with $u$ inverted. This corresponds to $\rp$ being the (completed) $\F_p$ group  cohomology of $\zp,$ and $\hrp$ being the (completed) $\F_p$ Tate cohomology of $\zp.$ Note that $\hrp$ is a vector space of dimension $2$ over $\cK = \F_p((u)).$ Finally, for $p=2,$ the cohomology ring becomes $\F_2[h]$ for a formal variable $h$ of degree $1,$ its completion is $R_2 = \F_p[[h]],$ and its Tate version is $\wh{R}_2 = \F_2((h)).$

%For the rest of the paper we set $\rp = \bK[[u]]\left<\theta \right>$ for the cohomology of $B(\zp),$ 

\indent We denote by $\sP^{i, \m}_0$ the moduli space of parametrized flow lines $w \colon \R \rightarrow S^{\infty}$ satisfying 
\begin{equation}\label{eqn:grad_flow}
\partial_sw(s)+ \nabla \tilde{F}(w)=0
\end{equation}
\begin{equation}
\displaystyle\lim_{s \rightarrow -\infty} w(s)=Z_i^{\m}, \text{ and }\displaystyle\lim_{s \rightarrow \infty} w(s)=Z_0^0. \nonumber 
\end{equation}
Similarly, we denote by $\sP^{i, \m}_1$ the parametrized flow lines that satisfy \eqref{eqn:grad_flow} and have asymptotic behaviors $\displaystyle\lim_{s \rightarrow -\infty} w(s)=Z_i^{\m}$ and $\displaystyle\lim_{s \rightarrow \infty} w(s)=Z_1^0$. There are free $\R$ actions on $\sP^{i, \m}_0$, $\sP^{i, \m}_1$ defined by translations 
\begin{equation}
r \cdot w(s) \mapsto w(s+r).
\end{equation} 
Their quotients are defined as $\sQ^{i, m}_0=\sP^{i, \m}_0/\R$ and $\sQ_1^{i, \m}=\sP^{i, \m}_1/\R$ respectively. Below, we shall use the notations $\sP^{i, \m}_{\al}$ and $\sQ^{i, m}_\al$ for $\al \in \{0,1\}.$

The standard compactification of the moduli space of unparametrized Morse flow lines, together with the $\tau$-invariance, provides that 
\begin{equation}
\overline{\sQ}^{i, \m}_{\alpha_0} \cong \bigsqcup \sQ^{i_1, \m_1}_{\alpha_1} \times \sQ^{i_2, \m_2}_{\alpha_2} \times \cdots \times \sQ^{i_n, \m_n}_{\alpha_n},
\end{equation}
%\red{where the union is taken over $\sum_{j=1}^n i_j=i,$ $\sigma_1 + \sigma_2 + \ldots + \sigma_n=\sigma$ and $\alpha_0=\alpha_1+\cdots + \alpha_n \text{ mod }2$.} \red{\bf Moreover, the number $\alpha_j \in \{0,1 \}$ is in fact determined by $\alpha_j= \alpha_{j+1}+\cdots+\alpha_n \ \mathrm{mod} \ 2$ for all $j=1, \cdots, n-1$.} \red{This looks suspicious! Shouldn't it be replaced by the following?}\\ 

%{\color{blue} 
where the union is taken over all tuples of triples $(i_1,\alpha_1,\m_1),\ldots,(i_n,\alpha_n,\m_n),$ where $\alpha_j \in \{0,1\}$ for all $1 \leq j \leq n,$ such that $\m_1 + \m_2 + \ldots + \m_n=\m$ in $\zp,$ $\alpha_1 =  \alpha_0,$ $i_n = i \;(\mrm{ mod } \ 2),$ and $\sum_{j=1}^n (i_j - \alpha_j) =i - \alpha_0.$ Indeed, for two critical points $Z,Z'$ of $\til{F},$ denote by $\sP(Z',Z)$ the space of parametrized flow lines, and by  $\sQ(Z',Z) = \sP(Z',Z)/\R$ the space of unparametrized flow lines $w(s)$ with $\displaystyle\lim_{s \rightarrow -\infty} w(s)=Z$ and $\displaystyle\lim_{s \rightarrow \infty} w(s)=Z'.$ Then, by standard Morse theory \[\overline{\sQ}^{i, \m}_{\alpha_0} = \bigsqcup \sQ(Z^{[1]}, Z^{[2]}) \times \ldots \times \sQ(Z^{[n]}, Z^{[n+1]})\] the union running over $(n+1)$-tuples of critical points $\{ Z^{[1]},\ldots,Z^{[n+1]} \}$ of $\til{F},$ with $Z^{[n+1]} = Z^{m}_i,$ $Z^{[1]} = Z^0_{\alpha_0}$, and \[\til{F}(Z^{[1]}) < \ldots < \til{F}(Z^{[n+1]}),\] \[\lambda(Z^{[1]}) < \ldots < \lambda(Z^{[n+1]}),\] where $\lambda$ denotes the Morse index. Now write $Z^{[j]} = Z^{m'_j}_{i'_j}$ for $1 \leq j \leq n+1,$ so that $i'_j = \lambda(Z^{[j]}),$ and observe that \[\sQ(Z^{[j]},Z^{[j+1]}) = \sQ(Z^{m'_{j}}_{i'_{j}},Z^{m'_{j+1}}_{i'_{j+1}}) \cong \sQ(Z^{0}_{\alpha_{j}},Z^{m'_{j+1}-m'_{j}}_{i_j}) = \sQ^{i_j,m_j}_{\alpha_j},\] where $m_j = m'_{j+1}-m'_{j}$ in $\zp,$ $i_j - \alpha_j = i'_{j+1} - i'_j,$ and $i_j = i'_{j+1}\;(\mrm{ mod } \ 2),$ by the $\tau$ translation invariance and the $\zp$-invariance of the Morse-Smale data. Observe that $m_1 + \ldots + m_n = m$ in $\zp,$ $\alpha_1 = i'_1 =\alpha_0,$ $i'_{n+1} = i,$ $i_n = i\;(\mrm{ mod } \ 2),$ and $\sum_{j=1}^n (i_j - \alpha_j) = \sum_{j=1}^{n} i'_{j+1} - i'_j = i'_{n+1} - i'_1 = i - \alpha_0.$ Vice versa, given an $n$-tuple of triples $\{(i_j,\alpha_j,m_j)\}_{1 \leq j \leq n}$ as above, we can reconstruct the $(n+1)$-tuple $\{ Z^{[j]} \}_{1 \leq j \leq n+1}.$

 %\\ }\\
%$\alpha_1 + \ldots + \alpha_n = \alpha_0 \ \mrm{mod} \ 2$

\indent A parametrized flow line of the Morse function $\tilde{F}$ on $S^{\infty}$ is a unparametrized flow line of the Morse function $\Psi+ \tilde{F}$ on $\R \times S^{\infty}$, where $\Psi: \R \rightarrow \R$ is a Morse function which has a unique maximum at $r=1$ and a unique minimum at $r=0$. Then compactifying the space of Morse flow lines on $\R \times S^{\infty}$ between $(1, Z^{\m}_i)$ and $(0, Z^{0}_0)$ yields a compactification of the moduli space of parametrized flow lines
\begin{equation}\label{eqn:para_comp}
\overline{\sP}^{i, \m}_{\alpha_0}=\bigsqcup \sQ^{i_1, \m_1}_{\alpha_1} \times \cdots \times\sQ^{i_{d-1}, \m_{d-1}}_{\alpha_{d-1}} \times \sP^{i_{d}, \m_{d}}_{\alpha_{d}} \times \sQ^{i_{d+1}, \m_{d+1}}_{\alpha_{d+1}} \times \cdots \times \sQ^{i_n, \m_n}_{\alpha_n},    
\end{equation}
where the union is taken over the same indexing set of triples $\{(i_j,\alpha_j,\m_j)\}.$ In the subsequent sections, we will use these geometric moduli spaces to define the $\zp$-equivariant Floer cohomology and the $\zp$-equivariant product and coproduct maps correspondingly.

\section{Fixed point Floer cohomology} \label{sec:Floer_coho}
%!TEX root = pantsproduct5.tex

Given a symplectomorphism $\phi$ of a symplectic manifold $(M, \omega)$, we recall a few equivalent definitions of its fixed point Floer cohomology. While these definitions are equivalent, each one highlights different aspects of the theory, which turns out to be useful. A few references for this section are \cite{DostoglouSalamon, DostoglouSalamon-corr, SeidelThesis,Seidel-4d} and \cite{Ono-ICM} for a more general setup.

We assume throughout that out symplectic manifold $(M,\omega)$ is exact or symplectically aspherical. In the case when $(M,\omega)$ is exact we assume that it is a Liouville domain. We consider a suitable class of symplectomorphisms in each case such that the conditions on the symplectic manifold and the symplectomorphism imply that we can work over a ground field $\bK,$ without the presence of a Novikov field. We make no assumption on grading, since our main isomorphisms are essentially those of ungraded filtered homologies.

Similar definitions work in the case when $(M,\omega)$ is closed or tame at infinity and weakly monotone, and symplectomorphisms are monotone, as in \cite{SeidelMCG}. However, these cases necessitate the introduction of Novikov coefficients, which is not the focus herein. We simply note that if $M$ is monotone and simply-connected, then all symplectomorphisms of $M$ are automatically monotone. 

For a time-dependent Hamiltonian $H \in \cl{H} = \sm{\R/\Z \times M, \R}$ we denote by $\phi^t_H$ the time-$t$ Hamiltonian diffeomorphism generated by the time-dependent Hamiltonian vector field $X^t_H,$ produced by the Hamiltonian construction \[ \iota_{X^t_H} \om = - d H_t,\] for $H_t(-) = H(t,-).$ As long as $X^t_H$ is integrable as a time-dependent vector field, this construction is in fact defined for all $t \in \R$ by considering $H(t,x)$ to be a smooth function on $\R \times M,$ $1$-periodic in the $t$ variable. 

%
% there are various conditions that one needs to assume for the pseudo-holomorphic curves in $(M, \omega)$ to be well-behaved due to noncompactness of $M.$ In this paper, we focus on the cases that $(M, \omega)$ is an exact Liouville domain, or it is a monotone sympelctic manifold possibly with a contact type boundary $\partial M,$ precisely, this means that the pair $(M, \phi \in \mathrm{Sym}(M))$ should satisfy either of the following conditions,

We recall in detail the hypotheses on $(M,\om)$ and $\phi \in \Symp(M,\om)$ that we consider.

\begin{enumerate}[label=(\alph*)] 
	\item \label{case: exact}(Exact) In this case the symplectic form is $\omega=d\thetam,$ and the Liouville vector field $Z,$ defined by $i_{Z}\omega=\thetam,$ points strictly outwards along the boundary $\partial M,$ which is equivalent to the condition that $\alpha:=\thetam|_{\partial M}$ is a contact form. The symplectomorphism $\phi \colon M \rightarrow M$ is {\em exact}, that is, one has that $\phi^*\thetam=\thetam+ dG_{\phi}$ for some $G_{\phi} \in C^{\infty}_c(M).$ For a small Hamiltonian perturbation $H_{\epsilon},$ which we shall typically choose to be of the form $\epsilon \cdot r$ near $\partial M,$ wherein $r$ is the Liouville radial coordinate, one can always assume that $\phi_{H_{\epsilon}}^1 \circ \phi$ has nondegenerate isolated fixed points. 
	
	%\red{careful with perturbation at infinity}
%\red{This implies that $\phi$ is in fact Hamiltonian. False!} 	

\item \label{case: symp asph}(Symplectically aspherical) In this case $M$ is closed and $\omega(A) = 0$ for all classes $A \in H_2^S(M;\Z)$ in the image of the Hurewicz homomorphism $\pi_2(M) \to H_2^S(M;\Z).$ In this case we consider Hamiltonian symplectomorphisms $\phi$ of $M,$ that is, the time-one maps $\phi^1_H$ all Hamiltonian isotopies generated by $H \in \cl{H}.$ Moreover, we pick the normalization condition $\int_M H(t,-) \,\om^n = 0$ for all $t \in [0,1]$ on $H.$ Furthermore, here we work with Floer cohomology in the free homotopy class of contractible loops. 

%generated by a Hamiltonian vector field $X^t_H$ given by $\iota_{X^t_H} \om = - d(H(t,-)),$ for $H \in \cl H = \sm{[0,1]\times M, \R}/\sm{[0,1],\R},$ 

\end{enumerate}
\bs

\begin{rmk}
In case \ref{case: symp asph} one may define fixed point Floer cohomology over $\bK$ under less stringent assumptions. For example one may define it for those $\phi \in \Symp(M,\om)$ for which $\int_C \om = 0$ for all cylinders $C:S^1 \times [0,1] \to M$ with $C(s,0) = \phi(C(s,1))$ for all $s \in S^1$ and $C(0,t) = x_0$ for all $t \in [0,1]$ (such a cylinder represents a loop in the twisted loop space $\cl L_{\phi} M$ described below based at a fixed point $x_0$ of $\phi$). Whenever both $\phi$ and $\phi^p$ satisfy such a condition, our main result Theorem \ref{thm: main}, and its corollary \eqref{eq: Smith in interval} apply. We note, however, that in general this condition requires the symplectic manifold to be symplectically atoroidal, that is, $\om(A) = 0$ for all $A$ represented by continuous maps from $T^2$ to $M,$ at least those representing loops in certain free homotopy classes of loops, and is furthermore not in general preserved under iteration. 

%At the same time the two more special conditions\footnote{We thank Mohammed Abouzaid for bringing them to our attention.} below considered in \cite{AbouzaidSmith-torus} and \cite{Seidel-qsurface} are indeed preserved under iteration, and in fact arbitrary products. First, we can consider symplectomorphisms acting trivially on $H_1(M;\Z)$ with zero flux: for each $1$-cycle $C$ in $M,$ $\phi(C)$ is homologous to $C,$ and the symplectic area of the null-homology vanishes. Second, assuming that the cohomology class $[\om]$ symplectic form is integral, we may consider {\em balanced} symplectomorphisms: fixing a prequantization $S^1$-bundle ${\pi: V \to M,}$ with prequantization $1$-form $\alpha,$ satisfying $d\alpha = \pi^* \om,$ we require that $\phi \in \Symp(M,\om),$ which lifts to a map $\wh{\phi}: V \to V$ as $\phi^*[\om] = [\om],$ admits such a lift with $\wh{\phi}^* \alpha - \alpha$ being an exact $1$-form. Implications of our results in these two settings shall be considered in forthcoming publications.
\end{rmk}

%We consider the special class $\Symp^{we}(M,\om)$ of {\em weakly exact} symplectomorphisms: those for which $\int_C \om = 0$ for all cylinders $C:S^1 \times [0,1] \to M$ with $C(s,0) = \phi(C(s,1))$ for all $s \in S^1$ and $C(0,t) = x_0$ for all $t \in [0,1]$ (such a cylinder represents a loop in the twisted loop space $\cl L_{\phi} M$ below based at a fixed point $x_0$ of $\phi$). It is easy to see that all Hamiltonian diffeomorphisms of $M,$ with compact support, are weakly exact in this sense.
	
%	\item[(b)](Monotone) Given the symplectic manifold $(M, \omega),$ we have that \red{$H^1(M)=0$} and there is a constant $\lambda \geq 0$ such that $\omega(A)=\lambda c_1(A)$ for all $A \in \pi_2(M).$ If $\partial M \neq \emptyset$, then one assumes that $\theta|_{\partial M}=\alpha$ is a contact form. The symplectomorphism $\phi \colon M \rightarrow M$ and its prime iterations $ \phi^{p^n}$ are also assumed to have isolated non-degenerate fixed point for all $n$.  {\color{red}{This assumption may be removed later by local Floer}}

\bs

For a pair constisting of a symplectic manifold $(M, \omega)$ and symplectorphism $\phi \in \Symp(M,\omega)$, as above, we will describe the fixed point Floer cohomology $HF^*(\phi)$ in the following three ways. In case \ref{case: symp asph}, the cohomology $HF^*(\phi)$ is isomorphic to $H^*(M;\bK),$ and the main interest of our results lies in the associated {\em filtered} cohomology theory.

\subsection{Twisted loop space}\label{subsec: FH twisted}

%\red{define spectrum!!}

%if the boundary is nonempty, 

\indent First consider an exact symplectic manifold $(M, \omega)$ and $\phi \in \mathrm{Symp}(M)$, as in (a) above. The flow of the Liouville vector field $Z$ near $\partial M$ gives rise to a trivialization of the collar neighborhood of the boundary 
\begin{align}
\Psi\colon (-\epsilon, 0] \times& \partial M \rightarrow M,\\ \nonumber (r,& y)\mapsto \phi^r_Z(y).
\end{align}
This implies that for $R=e^r$ one has $R|_{\partial M}=1$ and $Z\cdot R= R$ near $\partial M$. On the collar neighborhood $(-\epsilon,0] \times \partial M,$ the symplectomorphism $\phi$ satisfies
\begin{equation}
\phi^*\thetam-\thetam=dG_{\phi},
\end{equation}
where $G_{\phi}$ is a smooth function on $M$ which vanishes near the boundary $\partial M$, which ensures that near the boundary $\partial M$ we have $\phi^*R=R$. Consider time-dependent $\om$-compatible almost complex structures $J_t$ for $t \in \R$ satisfying \begin{equation}\label{eqn:J invariant} J_t=\phi_*J_{t+1}.\end{equation} Observe that the condition
\begin{equation}\label{eqn:adapted}
dR\circ J_t=-\thetam \text{ on } (-\epsilon, 0] \times \partial M
\end{equation}
is preserved under replacing $J_t$ by its push-forward $\phi_{\ast} J_t = (\phi^{-1})^* J_t$ by $\phi.$ We denote the space of almost complex structures satisfying \eqref{eqn:J invariant} and \eqref{eqn:adapted} for all $t \in \R$ by $\mathcal{J}_{\phi}$.

Given such a symplectomorphism $\phi \in \mathrm{Symp}(M)$, the mapping torus of $\phi$ is defined by
\begin{equation}
M_{\phi}:=\R \times M/(t,\phi(x)) \sim (t+1, x).
\end{equation} 
By construction, there is a natural projection map $\pi \colon M_{\phi} \rightarrow S^1.$ The twisted loop space is defined as
\begin{equation}\label{eq: twisted loop space}
\mathscr{L}_{\phi}M:=\{ x \in C^{\infty}(\R,M) \mathbin{|} x(t)=\phi(x(t+1))\}.
\end{equation}
Furthermore, $\mathscr{L}_{\phi}M$ is identified with the space of smooth sections of $\pi.$

Given $\phi \in \mathrm{Symp}(M),$ we can associate a $1$-form $\alpha_{\phi}$ on the twisted loop space 
\begin{equation} \label{eqn:action_formula}
\alpha_{\phi}(x)(\xi)= -\int_0^1 \omega(\xi(t), \frac{\partial x}{\partial t})\,dt.
\end{equation}
In our exact case, this one-form is given as the differential of the action functional \[\mrm{A}_{\phi}:\cl L_{\phi} \to \R,\] \[\mrm A_{\phi}(x) = -\int_0^1 x^*\thetam - G_{\phi}(x(1)).\]

The critical points of the action functional $\mrm{A}_{\phi}$ are therefore the constant paths in $\cl L_\phi$ at fixed points of $\phi.$ For this reason we shall identity these paths with $\fix(\phi).$ For tracking action values of fixed points, it will be convenient for us to use the functional \[\cl{A}_{\phi} = - \mrm{A}_{\phi}.\] We call the set of critical values of $\mathcal{A}_{\phi}(\fix(\phi))$ the spectrum $\spec(\ul \phi)$ of $\ul \phi = (\phi, G_{\phi}).$ While $G_{\phi}$ is uniquely determined by $\phi$ in our situation, we prefer to keep it in the notation.

Each time-dependent $J_t$ in $\mathcal{J}_{\phi}$ defines a $L^2$-metric on the twisted free loop space $\mathscr{L}_{\phi}M$. With respect to this metric, negative gradient flow lines of $\mrm{A}_{\phi}$ which are asymptotic to fixed points $x_0, x_1$ are in bijection with solutions $u: \R^2 \rightarrow M$ to Floer's equation
\begin{eqnarray} \label{eqn:floer}
&& \partial_s u+ J_t \partial_t u=0;\\
&& u(s,t)=\phi(u(s,t+1));\\
&& \displaystyle\lim_{s\rightarrow -\infty} u(s,t)=x_0(t),\ \ \displaystyle\lim_{s\rightarrow +\infty} u(s,t)=x_1(t).
\end{eqnarray}
The maximum principle applied to the subharmonic function $R(u)$ ensures that no solutions of \eqref{eqn:floer} reach the boundary $\partial M$. We denote the solutions to \eqref{eqn:floer} up to translations in the $s$-direction by $\cM(x_0, x_1)$. For generic choice of $J_t$, the moduli space $\cM(x_0,x_1)$ is a smooth finite dimensional manifold of dimension
\begin{equation}
\dim \cM(x_0,x_1)=|x_0|-|x_1|-1
\end{equation}
The Floer cochain complex is now defined by
\begin{equation}
CF^i(\phi):= \bigoplus_{|x|=i} \K \langle o_x \rangle,
\end{equation}
with differential given by \[d_{\phi}(x_1) = \sum_{|x_0|=|x_1|+1} \# \cl M(x_0,x_1) \, x_0.\] Here the notation $o_x$ is the orientation line associated to the generator $x \in \mathrm{Fix}(\phi)$ defined in Appendix \ref{app:signs and or}, which we shall henceforth omit from the notation. For questions regarding signs in the differential $d,$ we also refer the reader to Appendix \ref{app:signs and or}. Since we work with cohomology, the fixed point $x_1$ should be considered to be the ``input", and the fixed point $x_0$ shall be considered to be the ``output". In fact, throughout the paper we adopt the general convention that postitive cylindrical ends correspond to inputs and negative cylindrical ends correspond to outputs.

It is readily verified that \[\cl A_{\phi}(d_{\phi}x) < \cl A_{\phi}(x),\] where \[\displaystyle \cl A_{\phi} \left(\sum a_j x_j\right) = \max \{\cl A_{\phi}(x_j)\,|\, a_j \neq 0\},\] \[ \cl{A}_{\phi}(0) = -\infty.\]

%\red{add a possible perturbation term}

%%CF^i(\phi):= \bigoplus_{|x|=i} \Lambda_0^{\K} \langle o_x \rangle.

In the case when the manifold $(M,\omega)$ is closed and symplectically aspherical, and $\phi \in \Ham(M,\om),$ writing $\phi = \phi^1_H$ for $H \in \cl H,$ the twisted loop space $\cl L_{\phi} M$ is identified with the usual free loop space $\cl LM$ by the map $D_H:\cl L_{\phi} M \to \cl LM,$ $z(t) \mapsto \phi^t_H z(t).$ It is easy to see that \[(D_H^{-1})^* \mrm A_{\phi} = \cl A_H,\] \[(D_H^{-1})^* \cl A_{\phi} = -\cl A_H,\] for \[\cl A_H: \cl LM \to \R,\] \[\cl A_H (x) = \int_0^1 H(t,x(t))\,dt - \int_{\overline{x}} \om,\] where $\overline{x}: \D \to M$ is a map with boundary values $\overline{x}(e^{2\pi i t}) = x(t).$ Note that the differential in Floer {\em cohomology} for $H$ decreases $(-\cl{A}_H),$ as it increases $\cl{A}_H.$

%The unusual sign in the action functional is consistent with the above definition, and with the requirement that the differential in {\em cohomology} decrease the action filtration.

In both the exact and the sympectically aspherical case, we denote by $\ul \phi$ the tuple consisting of $\phi$ and the data required to define the action functional $\cl A_{\phi},$ and call it a filtered symplectic brane. In the first case, this means the primitive $G_{\phi}$ of $\phi^*\thetam - \thetam.$ In the second case it can be considered to be either (i) the choice of a base-point of the connected component of $\cl L_{\phi}M$ corresponding to the component $\cl L_{pt} M$ of contractible loops in $\cl LM,$ or (ii) in the $\cl LM$ description: a choice of a Hamiltonian $H \in \cl H$ generating $\phi.$ Observe that for each contractible Hamiltonian loop $\eta:S^1 \to \Ham(M,\om),$ $\eta^0 = \id,$ the map $D_{\eta}:\cl LM \to \cl LM$ given by $z(t) \mapsto \eta^t z(t)$ satisfies $ \cl A_{H} = D_{\eta}^{*} \cl A_{H \# K},$ where $K \in \cl H$ is the normalized Hamiltonian generating the loop $\eta.$ 

Finally, let $\phi$ be non-degenerate, that is, $\ker(D(\phi)_x -\id) = 0$ for all $x \in \fix(\phi).$ Let $I$ be an admissible action window, that is, $I = (a,b)$ with $a<b,$ such that $a,b \in (\R\setminus \spec(\ul \phi)) \cup \{\pm \infty\}.$ We define $HF^*(\phi)^I$ as the homology of the quotient complex \[CF^*(\phi)^I = CF^*(\phi)^{<b}/ CF^*(\phi)^{<a},\] where $CF^*(\phi)^{<c}$ is the subcomplex spanned by generators $x$ of action value $\cA(x) < c.$ These chain complexes and homologies admit natural comparison maps $CF^*(\phi)^{I_1} \to CF^*(\phi)^{I_2}$ for $I_1 = (a_1,b_1), I_2=(a_2,b_2)$ with $a_1 \leq a_2,$ $b_1 \leq b_2.$ When $\phi$ is degenerate, then for $a,b$ as above, we perturb it slightly to a non-degenerate symplectomorphism $\phi_{1,G} = \phi^1_{G} \phi,$ where $\phi^1_G$ is the time-$1$ map of a sufficiently $C^2$-small Hamiltonian $G.$ Then it still satisfies $a, b \in (\R\setminus \spec(\ul \phi)) \cup \{\pm \infty\},$ and for all $G$ sufficiently $C^2$-small the homologies $HF^*(\phi_{1,G})^I$ are canonically isomorphic, whence we define $HF^*(\phi)^I$ as the colimit of the associated indiscrete groupoid\footnote{This notion is known under different names in the literature: ``simple connected system", for example. It comprises a category with precisely one morphism between each two objects. These morphisms are all isomorphisms.}.

%we can define the filtered 

%To ensure that all the fixed points of $\phi$ are non-degenerate, we add a small Hamiltonian perturbation $\psi_G^1$ to the original symplectomorphism so that $\psi^1_G \circ \phi$ has all non-degenerate fixed points . 

%the symplectic isotopy is given by the flow of G $\psi_G^t$ and such Hamiltonian perturbation does not change the resulting fixed point Floer cohomology, i.e. 

Considering Hamiltonian isotopies $\{\phi^t_{G}\}$ induced by Hamiltonians $G,$ it is classical to show that $HF^*(\phi^1_G \circ \phi)$ does not depend on $G.$  This is due to the fact that each symplectic isotopy $\{ \psi^t \}$ of $M$ generated by $1$-forms $b_t$ such that $b_{t+1}=\phi^*(b_t)$ and whose flux $\int_0^1 b_t dt$ satisfies
\begin{equation} \label{eqn:flux_cond}
\int_0^1 b_t dt \in \mathrm{Im}(\phi^*-id) \subset H^1(M; \mathbb{R})
\end{equation}
induces a canonical isomorphism between fixed point Floer cohomologies $HF^*(\psi^1 \circ \phi) \cong HF^*(\phi)$ (see \cite{SeidelMCG}). When $\psi^1 =\psi^1_G$ for some Hamiltonian perturbation, this allows us to define $HF^*(\phi).$ 

%In this case, the symplectic isotopy is given by the flow of G $\psi_G^t$ and such Hamiltonian perturbation does not change the resulting fixed point Floer cohomology, i.e. $HF^*(\psi^1_G \circ \phi) \cong HF^*(\phi).$ Hence without loss of generality, we will always assume our symplectomorphism $\phi$ has all non-degenerate fixed points and suppress the Hamiltonian perturbation $\psi_G^1$ in our notation.

% Hence without loss of generality, we will always assume our symplectomorphism $\phi$ has all non-degenerate fixed points and suppress the Hamiltonian perturbation $\psi_G^1$ in our notation.

\subsection{Symplectic fibrations}\label{subsec: fibrations}

Given a symplectic manifold $(M,\om)$ we let a symplectic fibration $\pi: E \to B$ over a base manifold $B$ with fiber $(M,\om)$ be a smooth fibration with a closed two-form $\Om$ on $E,$ such that for all $z \in B,$ setting $E_z = \pi^{-1}(z),$ $(E_z, \Om|_{E_z})$ is a symplectic manifold symplectomorphic to $(M,\om).$ This is not quite the standard terminology: for example in \cite[Chapter 8]{McDuffSalamon-BIG} symplectic fibrations satisfy a more general condition, while the fibrations we consider are called locally Hamiltonian. Denote by $Vert = \ker(D\pi) \subset TE$ the vertical subbundle of vectors tangent to the fibers. Furthermore, let \[Hor = \{v \in TE\,:\, \iota_v \Omega|_{Vert} = 0\}\] be the horizontal subbundle of $TE.$ It is transverse to $Vert$ and isomorphic to $\pi^*(TB)$ via $D\pi,$ whence it induces an Ehresmann connection on $E \to B.$ An important feature of such fibrations is that the holonomy of this connection over each loop in the base is a symplectomorphism of the fiber, which is Hamiltonian if the loop is contractible. Finally, we let $\Pi: TE \to Vert$ be the projection parallel to $Hor,$ and define the vertical two-form $\Om^{v}$ on $E$ by $\Om^v(\xi,\eta) = \Om(\Pi(\xi),\Pi(\eta))$ for $\xi, \eta \in T_e E.$

When the base $B=S$ is a surface endowed with a complex structure $j_S,$ we call an almost complex structure $J$ on $E$ {\it compatible with the fibration,} and more specifically with $\Om,$ if $J(\ker d\pi) = \ker d\pi$ and $d\pi \circ J = j_S \circ d\pi.$ That is, $J$ preserves the fibers and makes the projection map holomorphic with respect to the complex structure $j_S$ on $S,$ and furthermore $J|_{\ker d\pi}$ is compatible with the symplectic form $\Om|_{\ker d\pi}.$

We note that the mapping torus $M_{\phi} \to S^1$ of a symplectomorphism $\phi \in \Symp(M,\om)$ is a symplectic fibration over $S^1$ with fiber $(M_\phi)_t = M$ over each $t\in S^1$: the form $\om$ on $M$ naturally extends to a closed form $\Om = \om_{\phi}$ on $M_{\phi}$ with $\Om|_{(M_\phi)_t} = \om$ for all $t \in S^1.$  Furthermore, $\fix(\phi)$ is in bijective correspondence with the flat sections $\mathscr{P}_{\phi}$ of $M_{\phi} \to S^1.$ We note that almost complex structures from Section \ref{subsec: FH twisted} satisfying condition \eqref{eqn:J invariant} induce $\Om$-compatible almost complex structures on $\R \times M_{\phi}.$ We shall denote the latter space of almost complex structures $\cl{J}_{M_{\phi}}.$

%\red{enquiry: is this the right formula for the energy?}
Following \cite{SeidelThesis}, we can define \[\displaystyle CF^i(\phi) = \bigoplus_{|x|=i} \bK \langle o_x \rangle,\] where now $x$ ranges over the set $\mathscr{P}_{\phi}$ of flat sections of $M_{\phi} \to S^1.$ For the differential, set $Z = \R \times S^1$ with the standard complex structure, and let $\pi_{E_{\phi}}: E_{\phi} = \R \times M_{\phi} \to Z$ be the pullback symplectic fibration of $M_{\phi} \to S^1$ by the natural projection $Z \to S^1.$ Then the differential counts isolated solutions (modulo the $\R$-action by translation) of finite energy $\int_Z u^*\Om^v$ to the equation \begin{align}\label{eq: differential with fibrations}
&u: Z \to E_{\phi},\; \pi_{E_{\phi}} \circ u = \id_Z\\ 
& (du)^{(0,1)} = 0 \nonumber\\
& u(s,t) \xrightarrow{s \to \pm \infty} \sigma_{x_{\pm}}(s,t),\nonumber
\end{align}

where $\sigma_x$ for $x$ a flat section of $M_{\phi} \to S^1$ is the induced flat section of $E_{\phi}.$ Here the convergence is exponential in suitable trivializations over the ends and the $(0,1)$-part is taken with respect to an $\Om$-compatible almost complex structure. The comparison between this definition and the one in Section \ref{subsec: FH twisted} is rather straightforward: it essentially amounts to the well-known Gromov graph trick \cite[Chapter 8]{McDuffSalamon-BIG}. 

%\red{add pert term}

\indent Similarly to the previous section, one can consider a class of perturbations to the symplectic connection associated to the symplectic fibration $\pi_{E_{\phi}},$ which is induced by smooth families of $1$-forms $b_t$ satisfying  $b_{t+1}=\phi^*b_t$ and the condition \eqref{eqn:flux_cond}. Every such family $b_t$ is equivalent to an exact $2$-form $B$ on $M_{\phi}$ that pulls back to $dt \wedge b_t$ on $\R \times M.$ Now in the case that $\phi$ has degenerate fixed points, there is an open dense subset $\mathscr{B}_{reg}$ of such $b_t$ considered above such that if we define the new symplectic fibration to be
\begin{equation}
(M_{\phi}, \ \ \tilde{\Omega}=\Omega+B), \nonumber
\end{equation}
then the condition \eqref{eqn:flux_cond} implies that the monodromy of this symplectic fibration becomes $\psi^1 \circ \phi.$ Then the map \begin{align} \Psi \colon \mathscr{P}_{\phi} &\rightarrow  \mathscr{P}_{\psi^1 \circ \phi}, \nonumber \\ \Psi(\gamma)(t)& =\psi^t \circ \gamma(t) \nonumber \end{align} gives rise to the canonical isomorphism $HF^*(\phi) \cong HF^*(\psi^1 \circ \phi).$ Again, it suffices in our case to choose some Hamiltonian isotopy $\psi^t:=\phi_K^t$ generated by some $K \in \sm{[0,1] \times M, \R}$ to ensure the non-degeneracy, and set $b_t = -d(K_t).$

%\red{finish\; here}

\subsection{Lagrangian graph construction}

Alternatively, we may consider the symplectic manifold $M^{-} \times M = (M \times M, - \om \oplus \om)$ and let \[HF^*(\phi) = HF(\mrm{graph}(\phi),\Delta),\] where \[\mrm{graph}(\phi) = \{(\phi (x), x)\;|\; x \in M \} \subset M^- \times M,\]  \[\Delta = \mrm{graph}(\id_M) \subset M^- \times M\] are (weakly) exact Lagrangian submanifolds. The comparison between this approach and the one in Section \ref{subsec: FH twisted} is again straightforward, and has to do with choosing product type almost complex structures on $M^- \times M.$ We refer for example to \cite{Hend} for details, remarking that there $\mrm{graph}(\phi)$ is defined as $\mrm{graph}(\phi) = \{(x, \phi(x))\;|\; x \in M \} \subset M \times M^{-}.$

%\red{add pert term}
For the above Floer cohomology to be well-defined, one needs that the graph of $\phi$ intersect the diagonal Lagrangian transversally. This can always be achieved for example by adding a small Hamiltonian perturbation to $\phi,$ or by introducing a small Hamiltonian perturbation into the Floer equation for the differential. Indeed the non-degeneracy of the fixed points of $\psi_K^1 \circ \phi$ is equivalent to the graph of the perturbed symplectomorphism $\psi_K^1 \circ \phi$ being transverse to the diagonal Lagrangian $\Delta$ in $M^- \times M.$

\subsection{The $\Z/k\Z$-action on $HF(\phi^k)$}

For any integer $k$, we define the fixed point Floer cohomology associated to $\phi^k$ following \cite{Seidel}. Given the function $G_{\phi}$ such that $\phi^* \thetam-\thetam=dG_{\phi}$, one can choose the corresponding function for $\phi^k$ to be
\begin{equation}
G_{\phi^k}=(\phi^*)^{k-1}G_{\phi}+\cdots + \phi^* G_{\phi}  +G_{\phi}.
\end{equation}
so that $(\phi^*)^{k} \thetam-\thetam=dG_{\phi^k}$. Consider the twisted loop space of period $k$ for $\phi,$
\begin{equation}
\mathscr{L}_{\phi,k}M:=\{ x \in C^{\infty}(\R,M) \mathbin{|} x(t)=\phi^k(x(t+k)) \}.
\end{equation}

Note that this space is not the same as $\cl{L}_{\phi^k}$ as defined in \eqref{eq: twisted loop space} in Section \ref{subsec: FH twisted}. However, there is a natural diffeomorphism $\cl{L}_{\phi^k} \to \cl{L}_{\phi,k}$ given by $x(t) \mapsto x(t/k)$ with inverse $\cl{L}_{\phi,k} \to \cl{L}_{\phi^k}$ given by $y(t) \mapsto y(kt).$

On this space one defines the action functional $\mathrm{A}_{\phi,k}: \mathscr{L}_{\phi,k}(M) \rightarrow \R$ given by \[\mrm A_{\phi,k}(x) = - \int_0^k x^*\thetam - G_{\phi^k}(x(1)).\] Set \[\cl A_{\phi,k} = - \mrm{A}_{\phi,k}.\]
%in equation \eqref{eqn:action_formula}.

There is a $\Z/k\Z$ action on $\mathscr{L}_{\phi,k}(M)$ defined by
\begin{align}
\Z/k\Z &\times \mathscr{L}_{\phi,k}(M) \rightarrow \mathscr{L}_{\phi,k}(M),\\ \nonumber (\m&, x(t)) \mapsto \phi^{\m}(x(t+\m)).
\end{align}
The fixed point set of the $\Z/k\Z$-action is precisely $\mathscr{L}_{\phi} M$ seen as a subset of $\mathscr{L}_{\phi,k} M.$ For $x(t) \in \mathscr{L}_{\phi} M$, we have 
\begin{equation}
\mathcal{A}_{\phi,k}(x(t))=k\,\mathcal{A}_{\phi}(x(t)).
\end{equation}
We denote the space of compatible almost complex structures corresponding to $\phi^k$ by 
\begin{equation}
\mathcal{J}_{\phi,k}=\{(J_t)_{t \in \R} \mathbin{|}J_t \text{ is } \omega\text{-compatible and } J_t=\phi^k(J_{t+k})\}.
\end{equation}
For any integer $k \in \Z,$ there is an action of the finite group $\Z/k\Z$ on $\cJ_{\phi,k}$ defined as follows
\begin{align}\label{eq: action-acs}
\til{\rho} \colon \Z/k\Z &\times \cJ_{\phi,k} \rightarrow \cJ_{\phi,k},  \\ \nonumber (\m&, J_t) \mapsto (\phi^{\m})_*(J_{t+\m}), \forall \m \in \Z.
\end{align}

We call the almost complex structure $J$ in $\mathcal{J}_{\phi,k}$ {\em symmetric} if it is invariant under the $\Z/k\Z$-action defined above, and set for $J \in \cJ_{\phi,k},$ \begin{equation}\label{eq: rho star}\rho_* J = \til{\rho}(1,J).\end{equation}

%\red{We next define the compactly supported Hamiltonian perturbations needed in the construction of the $\zp$-equivariant product and coproduct in Section \ref{sec:prod-coprod} later. 
%\indent In the monotone case (b), following the ideas of Seidel \cite{Seidel}, we define the normalized energy and index formula in the monotone case as follows. Given $(M, \phi)$ such that $[\omega]=\lambda c_1(M)$ for some $\lambda >0$ and $H^1(M)=0,$ the action function}

Next we describe a $\Z/k\Z$-action in Floer cohomology $HF^*(\phi^k)$ for each $k \in \Z.$ Similar to the $\Z/2\Z$ case in \cite[Section 4]{Seidel} and the $\Z/k\Z$ case considered in \cite{PolterovichS},\cite{PSS-stabilization},\cite{Zhang-roots}, and in \cite{Tonkonog-commuting}, \cite[Section 3]{CineliGinzburg}, where the signs and orientations were made explicit, the chain map which induces the generator of the $\Z/k\Z$-action in cohomology is the following composition of maps for $J \in \cl{J}_{\phi,k}$
\begin{equation}
\sigma \colon (CF^*(\phi^k), d_{J}) \xrightarrow{c}  (CF^*(\phi^k), d_{\rho_*J}) \xrightarrow{\rho^*} (CF^*(\phi^k)), d_{J}),  \nonumber
\end{equation}
where $d_{J}$ and $d_{\rho_*J}$ denotes the differentials of the fixed point Floer cochain complex defined using the almost complex structure $J_{\phi^k} \in \mathcal{J}_{\phi^k}$ and its image $\rho_*J_{\phi^k}$ under the action of $1 \in \Z/k\Z.$ The first cochain map $c \colon (CF^*(\phi^k), d_{J_{\phi^k}}) \rightarrow  (CF^*(\phi^k), d_{\rho_*J_{\phi^k}})$ is given by a standard continuation map for the almost complex structures $J_{\phi^k}$ and $\rho_*J_{\phi^k}$, and the second map is induced by applying $\rho_*$ in \eqref{eq: rho star} to the Floer data defining $(CF^*(\phi^k), d_{J_{\phi^k}})$ and identifying the generators with those of $(CF^*(\phi^k), d_{\rho_*J_{\phi^k}}).$ Since we work in cohomology, the map goes in the specified direction. It can be checked that both of these maps are quasi-isomorphisms, and so is their composition. We denote the induced map in cohomology by
\begin{equation}
\sigma_* \colon HF^*(\phi^k) \rightarrow HF^*(\phi^k). \nonumber
\end{equation}
It is straightforward to see that it satisfies $(\sigma_*)^k=id.$ The same definition goes through for cohomology groups in all admissible action windows $I,$ giving a map \begin{equation}
\sigma_* \colon HF^*(\phi^k)^I \rightarrow HF^*(\phi^k)^I,\nonumber
\end{equation}
satisfying $(\sigma_*)^k=id$ and commuting with the interval comparison maps.

%\red{enquiry: correct $\rho_{\ast}$}

%To describe the action of $\Z/k\Z$ on $HF^*(\phi^k),$ we study the equation that defines the map $\sigma$ in more detail.
In fact, the following alternative cochain-level description of the map $\sigma_{\ast},$ which we still denote by $\sigma,$ will define a homotopical action of $\Z/k\Z$ on $CF^*(\phi^k),$ which is more closely related to equivariant Floer cohomology, as considered in this paper (see Section \ref{sec:equiv-Floer-coho}). One first considers the interpolation of almost complex structures to define the continuation map $\sigma.$ For $J \in \cl{J}_{\phi,k}$ set
\begin{equation}\label{eq: alt descr of zp action}
J_{s,t}=\begin{cases}
J_t, \ \ \ \ \ \  s \gg 1\\
\rho_* J_t, \ \ \ s \ll -1
\end{cases} \nonumber
\end{equation}

%\red{enquiry: what is the goal of what is below?}

Define $M_{\phi,k}$ to be the period $k$ mapping torus of $\phi,$ \[ M_{\phi,k} = (\R \times M)/ (t,\phi^k(x)) \sim (t+k,x).\] This space differs from $M_{\phi^k}$ as defined above, but is diffeomorphic to it. Observe that $J_{s,t}$ gives an almost complex structure $\ol{J} \in \cl{J}_{M_{\phi,k}}.$ That is $\ol{J}$ is compatible with the structure of symplectic fibration on $\R \times M_{\phi,k}$ over $Z_k = \R \times \R/k\Z.$ 

Then one considers section solutions $u \colon Z_k \rightarrow \R \times M_{\phi,k}$ to the following continuation equation:
\begin{equation} \label{eqn:shift_moduli}
\begin{cases}
(du)^{(0,1)}_{J_{s,t}}=0 \\
\displaystyle \lim_{s \rightarrow \infty}u(s,t)= x_0(t) \;\text{ and } \displaystyle \lim_{s \rightarrow -\infty}u(s,t)= \phi(x_1(t+1)).
\end{cases}
\end{equation}
When $|x_0|=|x_1|$, one defines the map $\sigma$ by counting rigid solutions to the above equation. This replaces the continuation map $c$ in the first definition above by the slightly more complicated asymptotic condition at $s \to -\infty.$ See Section \ref{sec:equiv-Floer-coho} for further discussion of related notions.

Let $\gamma$ be a flat section of $M_{\phi}.$ Denote $\gamma$ traversed $k$ times by $\gamma^k.$ Let $\gamma^{(k)}$ be the natural lift of $\gamma^k$ to a flat section of $M_{\phi,k}.$ Now suppose that $x_0 = x_1 = \gamma^{(k)}$ for a flat section $\gamma$ of $M_{\phi}.$ It is convenient to think of such flat sections $\gamma$ of $M_{\phi}$ and $\gamma^{(k)}$ of $M_{\phi,k}$ as Reeb orbits of the Reeb vector field of a natural stable Hamiltonian structure $(\om, dt)$ on $M_{\phi}$ and respectively $M_{\phi,k}.$ Note that $\gamma^k$ is also a Reeb orbit on $M_{\phi}.$ We discuss this further in Section \ref{subsec: def of FH eq}. It has been shown in  \cite[Lemma 6.7]{Zhao} that depending on the parity of the index difference $\mu_{CZ}(\gamma^k)-\mu_{CZ}(\gamma)$ and parity of $k$, the induced map in cohomology $\sigma_*$ can have either trivial or non-trivial signs. Specifically, the linearization of the equation \eqref{eqn:shift_moduli} is given by the following operator 
\begin{equation}
T\colon W^{1,p}(Z_k, \R^{2n+2}) \rightarrow L^p(Z_k, \R^{2n+2}), \ \ \partial_s +J_0\partial_t +S_k(t+\beta(s)), \nonumber
\end{equation} 
where $S_1(t)$ is the loop of symmetric matrices associated to the orbit $\gamma$ defined in equation \eqref{eqn:symmetric_matrix} in the Appendix, $S_k(t)=S_1(kt)$, and the function $\beta \colon \R \rightarrow [0,1]$ is a smooth cut-off function such that 
\begin{equation}
\displaystyle\lim_{s \rightarrow - \infty} \beta(s)=1, \ \ \displaystyle\lim_{s \rightarrow \infty} \nonumber \beta(s)=0.
\end{equation}
Now it is shown in \cite[Lemma 6.7]{Zhao} that this operator acts as an orientation reversing map on $\Det(\mathscr{O}(S_k, S))$\footnote{The definition of the determinant line associated to the Fredholm operator $T$ is in Appendix \ref{app:signs and or}.} if and only if $k$ is even and the difference of Conley-Zehnder indices $\mu_{CZ}(\gamma^k)-\mu_{CZ}(\gamma)$ is odd. In this case the Reeb orbit $\gamma$ is called bad in the literature.

\section{The $\zp$-equivariant Floer cohomology}\label{sec:equiv-Floer-coho}
%!TEX root = pantsproduct5.tex

\subsection{Definitions}\label{subsec: def of FH eq}

\indent The $\Z/2\Z$-equivariant Floer cohomology $HF^*_{\Z/2\Z}(\phi^2)$ has been considered in \cite{Seidel}. In this section, we fix a prime number $p$ and consider the analogous constructions for the $\zp$-equivariant Floer cohomology $HF^*_{\zp}(\phi^p)$.

\indent Let $(M, \phi)$ be a pair satisfying conditions (a) or (b) listed in Section \ref{sec:Floer_coho}. As in Section \ref{subsec: FH twisted} that the mapping torus of $\phi$ is defined as 
\begin{equation}
M_{\phi}:=\R \times M/(t, \phi(x)) \sim (t+1, x). \nonumber
\end{equation}
The symplectic form $\omega$ on $M$ induces a two-form of maximal rank on $M_{\phi}$, which we now denote by $\omega$ by a slight abuse of notation. We consider $\R \times M_{\phi}$ as a locally Hamiltonian fibration over $Z = \R \times S^1,$ as above, and discuss holomorphic sections. However, we can think of this situation in a slightly different way, which is of use for intuition. The pair $(\omega, dt)$ defines a stable Hamiltonian structure on the mapping torus $M_{\phi},$ which is a pair $(\lambda,\omega)$ of a one-form $\lambda,$ and a two-form $\omega,$ such that $\omega$ is of maximal rank, $\lambda \wedge \omega^{n} > 0,$ and $\ker \omega \subset \ker(d\lambda).$ We refer to \cite{CV stab Ham top} and references therein for further discussion of this notion. The symplectization of this stable Hamiltonian structure, which is a symplectic manifold in its own right, is 
\begin{equation}
(\R \times M_{\phi}, \ \ \tilde{\omega}=\omega+ds \wedge dt). 
\end{equation}
As before any almost complex structure $J_t \in \cl{J}_{\phi},$ extends to an $\om$-compatible almost complex structure $\hat{J}_{t}$ on the horizontal fibration $\ker(dt)$ of $M_{\phi} \to \R/\Z.$ In turn, it extends to an $\til{\om}$-compatible almost complex structure $\tilde{J}_{t}$ on $\R \times M_{\phi}$ canonically by seeing the symplectization as
\begin{equation}
\R \times M_{\phi}=\C \times M/(s,t,\phi(x)) \sim (s,t+1,x).
\end{equation}

We let $\cJ(M_{\phi})$ be the space of the former almost complex structures $\hat{J}_t$ and $\cJ(\R \times M_{\phi})$ be the space of the latter $\tilde{\omega}$-compatible almost complex structures $\tilde{J}_t$ obtained by such an extension. 

Given a fixed prime number $p,$ we consider as before the following model of the mapping torus $M_{\phi^p}$: \begin{equation}
M_{\phi,p} =\R \times M/(t, \phi^p(x)) \sim (t+p, x). \nonumber
\end{equation}
This is the pull-back of $M_{\phi} \to S^1$ under the obvious covering map \[S^1_p = \R/p\Z \to S^1 = \R/\Z.\] It is also endowed with a natural stable Hamiltonian structure. Moreover, each $J_t \in \cl{J}_{\phi,p}$ extends to an almost complex structure $\hat{J}_{t}$ on the horizontal fibration $\ker(dt)$ of $M_{\phi,p} \to \R/p\Z,$ and to an almost complex structure $\tilde{J}_{t}$ on the symplectization $\R \times M_{\phi,p}.$ We let $\cl{J}(M_{\phi,p})$ be the space of the former almost complex structures, and $\cl{J}(\R \times M_{\phi,p})$ be the space of the latter almost complex structures.

To define the equivariant differential, we define the following class of almost complex structures on $\R \times M_{\phi,p}$ parametrized by $S^{\infty}.$ Let $\tilde{J}_t \in \cJ(\R \times M_{\phi,p})$ be an almost complex structure on $\R \times M_{\phi,p}.$ We extend $\tilde{J}_t$ to an almost complex structure $\tilde{J}_{t,z}=\tilde{J}_{s,t,x,z}$ (depending trivially on $s \in \R$) on $\R\times {M}_{\phi,p}$ parametrized by $z \in S^{\infty},$ satisfying the following properties
\begin{enumerate}
	\item[$\bullet$](Locally constant at critical points): $\tilde{J}_{t, z}=\tilde{J}_{t} \text{ for $z$ in a neighbourhood of }Z_i^0$ $\text{ for each } i.$
	\item[$\bullet$]($\zp$-equivariance): for all $\m \in \zp$ and $z \in S^{\infty},$ one has that
	$\tilde{J}_{t, \m \cdot z}=\phi^{\m}_*\tilde{J}_{t+\m,z}.$ In particular $\tilde{J}_{t, z}=\phi^{\m}_*\tilde{J}_{t+\m}\text{ for $z$ in a neighbourhood of }Z_i^\m$  for each $i$ and $\m$ in $\zp.$ 
	
	\item[$\bullet$](Invariance under shift $\tau$): $\tilde{J}_{t,z}=\tilde{J}_{t,\tau(z)} \text{ for all $z$ in } S^{\infty}.$
\end{enumerate}

We denote by $\mathcal{J}^{\zp}(\R \times M_{\phi,p})$ the set of all almost complex structures on $\R\times M_{\phi,p}$ parametrized by $S^{\infty}$ satisfying these properties.

Recall that fixed points of $\phi^p$ are in bijection with flat sections $\sP_{\phi,p}$ of $M_{\phi,p} \to S^1,$ which are the Reeb orbits $\gamma$ of the stable Hamiltonian structure $M_{\phi,p}$ with ``period" $\int_{\gamma} dt = p.$ 

For fixed a non-negative integer $i$ and a group element $\m \in \zp.$ The moduli space $\widetilde{\cM}^{i,\m}_{\alpha}(x_0,x_1),$ where $\alpha \in \{0,1\},$ consists of solutions $u\colon \R\times \R/p\Z \rightarrow \R \times M_{\phi,p}$ with $\pi_{\phi,p} \circ u = \id,$ for $\pi_{\phi,p}:  \R \times M_{\phi,p} \to \R \times S^1$ the natural projection, and $w\colon \R \rightarrow S^{\infty}$ to the following parametrized $J$-holomorphic equations 
\begin{equation}  \label{eqn:para}
\begin{cases}
du \circ j=J_{t,w(s)} \circ du,\\
\partial_sw(s)+\nabla \tilde{F}(w)=0,
\end{cases}
\end{equation}

with asymptotic behavior %\red{plus or minus $m$ here?}
\begin{equation}
\displaystyle\lim_{s \rightarrow - \infty}(u(s, t), w(s)) = (\phi^{\m}(x_0(t+m)), Z_i^{\m}), \ \ \ \displaystyle\lim_{s \rightarrow \infty}(u(s, t), w(s)) = (x_1(t), Z_{\alpha}^0).
\end{equation}

Here the input orbit is $x_1 \in \cl{P}_{\phi,p}$ and the output orbit is $\phi^{\m}(x_0(t+m)) \in \cl{P}_{\phi,p}.$ As we consider the fixed points as flat sections, or Reeb orbits in the mapping torus, the latter notion requires a definition. In terms of fixed points $x_0 \in \cl{P}_{\phi,p}$ corresponds to a fixed point $x_0(0) \in M \times \{0\} \subset M_{\phi,p} \cong M$ of $\phi^p.$ Then $\phi^{\m}(x_0(t+m)) \in \cl{P}_{\phi,p}$ corresponds to the fixed point $\phi^{\m}(x_0(0)) \in M$ of $\phi^p.$ In other words, in terms of the twisted loop space, given the twisted loop $x_0(t) \in \cl{L}_{\phi,p} M,$ we have the twisted loop $\phi^{\m}(x_0(t+m)) \in \cl{L}_{\phi,p} M.$ We are using a slight abuse of notation: for a twisted loop $x_0(t)$ we denote the section of $M_{\phi,p}$ that it induces again by $x_0(t),$ while in fact it is given at $t \in \R/p\Z$ by $[(t,x_0(t))] \in M_{\phi,p}.$ Similarly, the section obtained from $\phi^{\m}(x_0(t+m))$ is given at $t \in \R/p\Z$ by $[(t,\phi^{\m}(x_0(t+m)))] \in M_{\phi,p}.$

%In terms of the stable Hamiltonian structure, $\phi^{-\m}(x_0(t+m))$ is given as the time-$(-\m)$ flow of the Reeb vector field applied to $x_0(t+m).$ In particular $\phi^{-m}(x_0(t+m)) \in p^{-1}(\{t\})$ for $t \in \R/p\Z.$ In terms of symplectic fibrations, $\phi^{-\m}(x_0(t+m))$ is the parallel transport of $x_0(t+m) \in p^{-1}(t+m)$ to $p^{-1}(t)$ along the path $\eta:[0,1] \to \R/p\Z,$ $\eta(r) = t + (1-r)\m$ for $r \in [0,1].$ Observe that since $\phi^{-p}(x_0(t+p)) = \phi^{-p} \phi^{p} (x_0(t)) = x_0(t),$ $\phi^{-\m}(x_0(t+\m))$ is well-defined for $\m \in \zp.$

%\red{state precisely what are $x_0, x_1$ $x_0(0), x_1(0)$}
%
%Note that since $\phi^p(x_0) = x_0,$ $\phi^{\m}(x_0(t+m))$ is well-defined for $\m \in \zp.$ Of course here we consider fixed points as Reeb orbits in the symplectization.

%%\red{understand the following better:}

There is a free $\R$-action on $\widetilde{\cM}^{i,\m}_{\alpha}(x_0,x_1)$ given for $r \in \R$ by 
\begin{equation}
r \cdot (u(s,t),w(s)) \mapsto (u(s+r,t),w(s+r)).
\end{equation}
We denote the quotient space by this action by $\cM^{i,\m}_{\alpha}(x_0,x_1):=\widetilde{\cM}^{i,\m}_{\alpha}(x_0,x_1)/\R$. If $\phi^p$ is non-degenerate, that is if $\Ker(D(\phi^p)_x - \id) = 0$ for each fixed point $x$ of $\phi^p,$ then for generic choice of almost complex structure $\til{J}_{t,z} \in \cl{J}^{\zp}(\R \times M_{\phi,p})$ this moduli space is a smooth finite dimensional manifold of dimension
\begin{equation} \label{eqn:index}
\mathrm{dim} \cM^{i,\m}_{\alpha}(x_0,x_1)=|x_0|-|x_1|+i-\alpha -1 \text{ for all }\alpha.
\end{equation}
For $|x_0|=|x_1|-i+\alpha+1$,  one can define $d_{\alpha}^{i,\m}: CF^*(\phi^p)\rightarrow CF^{*+1-i+\alpha}(\phi^p)$ by %{\color{blue} corrected} %\red{should have another parameter in addition to $i$ and its parity}
\begin{eqnarray}
&& d_0^{i,\m}(x_1)=\sum_{x_0: |x_1|=|x_0|-i+1} \#\cM^{i,\m}_0(x_0,x_1)x_0, \\
&& d_1^{i,\m}(x_1)=\sum_{x_0: |x_1|=|x_0|-i+2} \#\cM^{i,\m}_1(x_0,x_1)x_0.
\end{eqnarray}

Let $u$ be a formal variable of degree $2$ and $\theta$ be a formal variable of degree $1$ so that $\theta^2=0$ as in Section \ref{sec: group_coho}. We set $d_{\alpha}^i=d_{\alpha}^{i,0}+d_{\alpha}^{i,1}+\cdots+d_{\alpha}^{i,p-1}$. The equivariant differential \begin{equation}\label{eq: equivariant differential} d^{\zp} = d^{\zp}_{\phi^p} \colon CF^*(\phi^p)[[u]]\langle \theta \rangle \rightarrow CF^{*+1}(\phi^p)[[u]]\langle \theta \rangle \end{equation} can be written as 
\begin{eqnarray}
d^{\zp}(x\otimes 1) & = & d_0^{0}(x) \otimes 1 + u\, d_0^{2}(x) \otimes 1+ u^2\, d_0^{4}(x) \otimes 1 + \ldots \nonumber\\
& +& d_0^{1}(x) \otimes \theta + u\, d_0^3(x) \otimes \theta + u^2\, d_0^5(x) \otimes \theta + \ldots \nonumber\\
 d^{\zp}(x\otimes \theta) & = & d^{1}_1(x) \otimes \theta + u d^{3}_1 (x) \otimes \theta+ u^2 d^{5}_1(x) \otimes \theta + \ldots \nonumber\\
 & +& u\, d_{1}^2(x) \otimes 1 + u^2\, d_1^4(x) \otimes 1 + u^3\, d_1^6(x) \otimes 1 + \ldots \nonumber
\end{eqnarray}

We abbreviate this to \[ d^{\zp}(x \otimes 1) = D^{1}_{1}(x) \otimes 1 + D^{\theta}_{1}(x) \otimes \theta\] \[ d^{\zp}(x \otimes \theta) = D^{1}_{\theta}(x) \otimes 1 + D^{\theta}_{\theta}(x) \otimes \theta.\]   

There is a natural projection map
\begin{equation}
\cM^{i,\m}_{\alpha}(x_0,x_i) \rightarrow \sQ^{i, \m}_{\alpha} \text{ induced by } (w, u) \mapsto w.
\end{equation} 
Considering the codimension $1$ strata in the compactifications of the unparametrized moduli spaces ${\sQ}^{i, \m}_{\alpha}$ described in \eqref{eqn:para_comp}, gives, by standard Gromov-Floer compactness, transversality, and gluing arguments, that $(d^{\zp})^2=0$ (for a discussion of the analytic issues see \cite[Section 4.2]{Seidel}). 

We call $CF^*_{\zp}(\phi^p) = CF^*(\phi^p) \otimes \rp$ with the differential $d^{\zp}$ the $\zp$-equivariant cochain complex of $\phi^p,$ and its homology $HF^*_{\zp}(\phi^p)$ the $\zp$-equivariant cohomology of $\phi^p.$ Furthermore, we observe that $CF^*_{\zp}(\phi^p)$ and $HF^*_{\zp}(\phi^p)$ are modules over $\rp.$ We call the homology $\wh{HF}^*_{\zp}(\phi^p)$ of $\wh{CF}^*_{\zp}(\phi^p) = CF^*_{\zp}(\phi^p) \otimes_{\rp} \hrp$ with the differential $\wh{d}^{\zp} = d^{\zp} \otimes \id$ the $\zp$-equivariant Tate homology of $\phi^p.$

%, as well as the parametrized moduli spaces $\mathcal{P}^{i, \m}_{\alpha},$

\medskip

We outline a selected technical aspect.

\medskip

\begin{lma}\label{lma: trans fixed}
	For generic choice of almost complex structures, all elements in $\widetilde{\cM}^{i, \m}_{\alpha}(x_0, x_1)$ are regular for all $i, \m$ and $\alpha$.
\end{lma}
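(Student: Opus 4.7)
The plan is to apply a parametric Sard--Smale argument, taking $\cJ^{\zp}(\R \times M_{\phi,p})$ as the parameter space. Introduce a Banach completion $\cJ^{\zp,\ell}$ of this space (say with $C^\ell$ regularity and the Floer $\epsilon$-norm to allow a later Taubes-type upgrade to smooth), and form the universal moduli space
\begin{equation*}
\widetilde{\cM}^{i,\m}_{\alpha}(x_0,x_1)^{\mrm{univ}} := \{(u, w, \til J)\,:\, (u,w)\ \text{solves \eqref{eqn:para} with its asymptotics},\ \til J \in \cJ^{\zp,\ell}\}.
\end{equation*}
This is the zero set of a smooth section of a Banach bundle whose Fredholm linearization $D_{(u,w,\til J)}$ is the direct sum of the linearized Floer operator in the fiber direction and the linearized Morse operator for $\til F$ in the Morse direction, coupled by derivatives of $\til J$ with respect to $w$.

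The main step is surjectivity of $D_{(u,w,\til J)}$ onto the full cokernel. The Morse factor is automatically onto by the Morse--Smale property of $(\til F, \til g)$ arranged in Section \ref{sec:Morse}, so it suffices to cover the Floer factor by varying $\til J$ alone. Fix $(u,w,\til J)$ and choose $s_0 \in \R$ with $z_0 = w(s_0)$ a regular point of $\til F$; such $s_0$ exists because between any two consecutive critical limits $w$ must traverse the complement of the critical set. The crucial geometric observation is that the combined orbit $\{m \cdot \tau^k(z_0)\,:\, m \in \zp,\ k \in \Z\}$ is discrete in $S^\infty$: the $\tau$-shift takes $z_0$ to a point with $\til F(\tau^k z_0) = \til F(z_0) + k$, hence separates the orbit by action levels, and the $\zp$-action on $S^\infty$ is free, producing $p$ distinct points at each level. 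Consequently we may pick a small neighbourhood $U$ of $z_0$ disjoint from its nontrivial $\langle \zp, \tau \rangle$-translates and from all critical submanifolds; perturbations of $\til J_{t,z}$ supported in $U$ can be extended to all of $S^\infty$ by the equivariance and $\tau$-invariance axioms without further constraint.

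Since the holomorphic section $u(s_0,\cdot): \R/p\Z \to M_{\phi,p}$ is an embedding (sections of a fibration are automatically so), the classical somewhere-injective argument of McDuff--Salamon, applied at the slice $\{s_0\} \times \R/p\Z$ and propagated via $U$ in the $S^\infty$-direction, shows that the $\til J$-variation image exhausts the cokernel. Therefore $\widetilde{\cM}^{i,\m}_{\alpha}(x_0,x_1)^{\mrm{univ}}$ is a smooth Banach manifold, the projection to $\cJ^{\zp,\ell}$ is Fredholm, and Sard--Smale produces a residual set of regular values. Intersecting over the countable collection of tuples $(i,\m,\alpha,x_0,x_1)$ and then applying a standard Taubes argument to pass from $C^\ell$ to $C^\infty$ yields a residual subset of $\cJ^{\zp}(\R \times M_{\phi,p})$ on which every element of every $\widetilde{\cM}^{i,\m}_\alpha(x_0,x_1)$ is regular. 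The only real obstacle is the interplay between the three defining axioms of $\cJ^{\zp}(\R \times M_{\phi,p})$ and the freedom to perturb; this is exactly what the orbit-discreteness of regular points of $\til F$ under $\langle \zp,\tau\rangle$ resolves.
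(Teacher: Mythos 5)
Your scheme is fine for the solutions of nonzero energy, and there it coincides in substance with what the paper does (which simply cites \cite[Proposition 6.7.7]{MSbook} for that case). The genuine gap is at the zero-energy solutions: the pairs $(u,w)$ in which $u$ is the constant section over a fixed point $x$ of $\phi^p$ while $w$ is a nonconstant trajectory in $\sP^{i,\m}_{\alpha}$. These cannot be discarded; they are exactly the configurations producing the terms $d^{1}_0$ and $d^{2}_1$ of the equivariant differential (inducing $1-\sigma$ and the norm map $N$), so the lemma must cover them. But for such a $u$ the fiber component of $du$ vanishes identically ($\partial_s u=\partial_t u=0$ in the twisted-loop-space picture, since \eqref{eqn:para} carries no Hamiltonian term), and the derivative of the Cauchy--Riemann operator with respect to a variation $Y$ of $\til{J}_{t,z}$ is proportional to $Y_{t,w(s)}\,\partial_t u$, hence is identically zero along these solutions. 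Consequently the $\til{J}$-variations contribute nothing to the image of the universal linearization at constant solutions, and your key assertion that the ``$\til{J}$-variation image exhausts the cokernel'' fails there no matter how you localize the perturbation on $S^{\infty}$; the discreteness of the $\langle \zp,\tau\rangle$-orbit of $w(s_0)$ is beside the point. Relatedly, the somewhere-injectivity input the McDuff--Salamon argument needs is not that the slice $u(s_0,\cdot)$ is injective, but that the relevant (vertical) derivative of $u$ is nonzero somewhere, which is precisely what distinguishes the nonconstant solutions.

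What the paper does instead for the constant solutions is prove automatic regularity, with no genericity required: at a non-degenerate fixed point $x$ the fiberwise linearization is $D_u\xi=\partial_s\xi+J_{t,w(s)}\partial_t\xi$ acting on $\xi$ with the twisted periodicity $\xi(s,t)=D\phi^p_x\,\xi(s,t+p)$; this operator has index $0$, and the identity
\begin{equation*}
0=\int |D_u\xi|^2+2\int \xi^*\om_x=\int\bigl(|\partial_s\xi|^2+|\partial_t\xi|^2\bigr),
\end{equation*}
with the $\om_x$ term vanishing by Stokes, shows it is injective, hence surjective; together with the Morse--Smale regularity of $w$ this makes the pair $(u,w)$ regular. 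If you wish to keep your Sard--Smale framework, you must split off this constant case and handle it by such an automatic-transversality argument rather than by perturbing $\til{J}$.
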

\begin{proof}
	All solutions to equation \eqref{eqn:para} except for the constant solutions can be made regular by choosing generic almost complex structure as in \cite[Proposition 6.7.7]{MSbook}. For an energy zero solution $w=(a,u) \colon \R \times \R/p\Z \rightarrow \R \times M_{\phi,p},$ after composing with the projection to $M,$ the map $\pi_{M}(w)=u$ is the constant map whose image is some fixed point $x$ of $\phi^p$. The linearization of the Cauchy-Riemann equation of $u$ at $x \in M$ is of the form
	\begin{equation} \label{eqn:linear_para}
	D_u \colon W^{k,p}(\R^2, T_x M) \rightarrow W^{k-1,p}(\R^2, T_x M),\ \ 
	D_u(\xi)=\partial_s \xi + J_{t, w(s)} \partial_t \xi
	\end{equation}
	for some $\xi \colon \R \rightarrow T_xM$ satisfying $\xi(s,t)=D\phi_x^p(\xi(s,t+p))$. As $x$ is a non-degenerate fixed point of $\phi^p$, the linear operator $D_u$ is Fredholm. The index of $D_u$ is 
	\begin{equation}
	\dim{\Ker(D_u)}-\dim{\coker(D_u)}+i=|x|-|x|+i=i.
	\end{equation}
	To achieve surjectivity of $D_u$, it hence suffices to show that the linearization $D_u$ is injective for any almost complex structure $J_{t, w(s)}$. Suppose $D_u \xi=0.$ In particular, we have $||D_u \xi||_{W^{1,2}(\R \times [0,1])}=0$ for $(k,p)=(2,2)$ in \eqref{eqn:linear_para}. With respect to the metric on $T_xM$ defined by $J_{t, w(s)}$ that we chose, one computes
	\begin{equation}
	0=\int_{\R \times [0,1]}|D_u \xi|^2+ 2\int_{\R \times [0,1]} \xi^*\omega_{x} =\int_{\R \times [0,1]} |\partial_s \xi|^2+ |\partial_t \xi|^2.
	\end{equation}
	The second term $\int_{\R \times [0,1]} \xi^* \omega_{x}$ on the left-hand side vanishes by Stokes' Theorem. Hence we can conclude that $\xi=0$ by above equation, which completes the proof.
\end{proof}

%While, essentially by Lemma \ref{lma: trans fixed}, the following is not necessary for the definition of the equivariant differential, for compatibility with operations defined below, we prefer to note that one could introduce Hamiltonian perturbations to usual Floer equation \eqref{eqn:floer} and to the parametrized Floer equation \eqref{eqn:para}. We can do this as follows. 

%Consider a class of Hamiltonian perturbations $\cH({M_{\phi}})$ consisting of Hamiltonians $H_t$ with $H(t+1,x) = H(t,\phi(x))$ so that they correspond to $s$-independent functions $\til{H}$ on $\R \times M_{\phi}.$ by the following conditions
%\begin{enumerate}
%	\item[$\bullet$] $\mathrm{Supp}(H_t) \subset \bigcup_{\gamma: \gamma(t+l)=\gamma(t)} N(\gamma),$ where $\gamma$ is any Reeb orbit of period $l$ and $N(\gamma)$ is a tubular neighborhood of such Reeb orbit $\gamma.$
%	\item[$\bullet$] The norm of the Hamiltonian $||H_t||_{C^{10}} \leq \epsilon_l$ for some sufficiently small constant $\epsilon_l >0$ that depends on $l.$ 
%\end{enumerate}

%For a pair of Floer data $(\tilde{J}_t, \tilde{H}_t)$ in $\cJ_{M_{\phi,p}} \times \red{\cH(M_{\phi,p})}$, 

%\begin{equation}  \label{eqn:para}
%\begin{cases}
%(du-X_{H_t} \otimes dt) \circ j=J_{t,w(s)} \circ (du-X_{H_t} \otimes dt),\\
%\partial_sw(s)+\nabla \tilde{F}(w)=0,
%\end{cases}
%\end{equation}

By \eqref{eq: alt descr of zp action} and unwinding the definition of $d^{\zp},$ the following statement is evident.

\medskip

\begin{lma}\label{lma: d10 and d21}
The differentials $d^1_0, d^2_1: CF^*(\phi^p) \to CF^*(\phi^p)$ are chain maps, and induce \[[d^1_0] = (1-\sigma_*): HF^*(\phi^p) \to HF^*(\phi^p),\] \[[d^2_1] = N_* = (1+\sigma_* + \ldots + \sigma_*^{p-1}): HF^*(\phi^p) \to HF^*(\phi^p)\] on cohomology. 
\end{lma}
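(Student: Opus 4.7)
The plan is to deduce the chain-map properties of $d^1_0$ and $d^2_1$ from the identity $(d^{\zp})^2 = 0$, and then to identify the induced maps on cohomology by using the Morse-theoretic structure on $S^{\infty}$ together with an explicit comparison with the continuation-map description of $\sigma$ from Section~\ref{sec:Floer_coho}. The key preliminary step is to compute $d^1_1 = \sum_{\m} d^{1,\m}_1$ at the chain level. The projection $\cM^{1,\m}_1(x_0,x_1) \to \sQ^{1,\m}_1$ lands in the set of Morse flow lines between two index-$1$ critical points of $\widetilde{F}$, which is empty for $\m \neq 0$ and consists of the constant flow at $Z^0_1$ for $\m = 0$. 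By the local constancy condition $\widetilde{J}_{t,Z^0_1} = \widetilde{J}_t$ near critical points, $\cM^{1,0}_1(x_0,x_1)$ reduces to the ordinary Floer moduli $\cM_{\mathrm{Floer}}(x_0,x_1)$; an orientation comparison through Appendix~\ref{app:signs and or} then yields $d^1_1 = -d^0_0$ on the chain level, in accord with the Tate differential $\widehat d(x\otimes \theta) = -d_V x \otimes \theta + uNx \otimes 1$ of \eqref{eq: Tate complexes 2}. Reading the coefficient of $u^0 \otimes \theta$ in $(d^{\zp})^2(x \otimes 1) = 0$ produces $d^1_0 d^0_0 + d^1_1 d^1_0 = 0$, which with $d^1_1 = -d^0_0$ becomes $[d^0_0, d^1_0] = 0$; the coefficient of $u \otimes 1$ in $(d^{\zp})^2(x \otimes \theta) = 0$ produces $d^0_0 d^2_1 + d^2_1 d^1_1 = 0$, hence $[d^0_0, d^2_1] = 0$.

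To identify the induced maps on cohomology, I decompose $d^1_0 = \sum_{\m \in \zp} d^{1,\m}_0$ and use the Morse-Smale structure of Section~\ref{sec:Morse}: the Morse differential from $S^1_1$ to $S^1_0$ reproduces the entry $1 - \sigma$ of the Tate resolution \eqref{eqn:res_Q}, so exactly two values of $\m$ give nonzero signed counts, with opposite signs. For the ``identity'' value of $\m$, the flow $w$ stays inside $S^1_0$, and $\widetilde{J}_{t,w(s)}$ equals $\widetilde{J}_t$ at both ends by local constancy, so $d^{1,\m}_0$ is an ordinary continuation map and induces $\mathrm{id}$ on $HF^*(\phi^p)$. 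For the other $\m$, the asymptotic orbit at $s \to -\infty$ is $\phi^{\m}(x_0(t+\m))$ and the asymptotic almost complex structure is $\phi^{\m}_* \widetilde{J}_{t+\m}$, which together reproduce exactly the cochain-level description of $\sigma$ via equations \eqref{eq: alt descr of zp action} and \eqref{eqn:shift_moduli}; hence $d^{1,\m}_0$ induces $\sigma_*$ on cohomology. Combining with the Morse signs gives $[d^1_0] = 1 - \sigma_*$. The same approach is applied to $d^2_1 = \sum_{\m} d^{2,\m}_1$: the $0$-dimensional spaces $\sQ^{2,\m}_1$ each contribute with sign $+1$ for every $\m \in \zp$, realizing the norm $N = 1 + \sigma + \cdots + \sigma^{p-1}$ of the Tate resolution, and the same asymptotic-shift argument shows that $d^{2,\m}_1$ induces $\sigma_*^{\m}$; summing yields $[d^2_1] = N_*$.

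The main obstacle is the coherent sign bookkeeping: the Morse-theoretic orientations on the $\sQ^{i,\m}_\alpha$ (which must algebraically produce the Tate resolution differentials $1 - \sigma$ and $N$) need to be reconciled with the Floer-theoretic orientations on the $\cM^{i,\m}_\alpha$ supplied by Appendix~\ref{app:signs and or}, so that the composite signs land in the Tate convention of \eqref{eq: Tate complexes 2}. The crucial sign identity is $d^1_1 = -d^0_0$, which both matches the Tate convention and drives the chain-map property of $d^1_0$ and $d^2_1$; the remaining sign choices propagate from this one through the relations imposed by $(d^{\zp})^2 = 0$.
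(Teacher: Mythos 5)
Your proposal is correct and is essentially the paper's own argument: the paper obtains the chain-map property of $d^1_0$ and $d^2_1$ from $(d^{\zp})^2=0$ (as noted again in Section \ref{subsec: alg spec sequence}) and regards the identifications $[d^1_0]=1-\sigma_*$, $[d^2_1]=N_*$ as ``evident'' from the shifted-asymptotics description \eqref{eq: alt descr of zp action}, \eqref{eqn:shift_moduli} of $\sigma$ together with the Morse trajectory counts on $S^{\infty}$, which is precisely what you unwind term by term in $m$. Two minor remarks: the relevant index-$1$ and index-$0$ critical points of $\widetilde{F}$ both lie on the circle $S^1_0$ (so ``from $S^1_1$ to $S^1_0$'' is a slip, though the $1-\sigma$ count you use is the right one), and the precise sign $d^1_1=-d^0_0$ is an orientation convention the paper likewise leaves implicit in Appendix \ref{app:signs and or}, so asserting it without carrying out the comparison is no less rigorous than the paper itself.
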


%\red{Add lemma: $d^{1}_0$ and the $\zp$-action: $1-\tau$}\\
%\red{Add lemma: $d^{2}_1$ and the $\zp$-action: $N = 1+\ldots+\tau^{p-1}$}\\

%\red{Add actual definition of equivariant cohomology, and also the Tate version of the equivariant cohomology}\\

%\red{Add equivariant cohomologies in action windows}\\

Finally, all constructions and statements in this section have analogues for admissible action windows $I$ for $\phi^p.$ Indeed, by an index argument and the finiteness of $\fix(\phi),$ in the non-degenerate case, $d^{\zp}$ contains only a finite number of terms. Hence for each admissible window $I = (a,b)$ we can choose the perturbation data for the equivariant differential in such a way as to have $CF^*_{\zp}(\phi^p)^{<a}$ and $CF^*_{\zp}(\phi^p)^{<b}$ be subcomplexes with respect to $d^{\zp},$ and then set $HF^*_{\zp}(\phi^p)^I$ as the homology of the quotient complex  \[CF^*_{\zp}(\phi^p)^I = CF^*_{\zp}(\phi^p)^{<a}/CF^*_{\zp}(\phi^p)^{<b}.\] One shows that this does not depend on the choice of perturbations, provided that they are sufficiently small. Finally, for $\phi^p$ degenerate, we perturb it as $\phi_1^p,$ so that $\phi_1, \phi_1^p$ are non-degenerate, and sufficiently $C^2$ close to $\phi, \phi^p.$ Then the interval stays admissible, and making the perturbations for the differential sufficiently small, we can show that $HF_{\zp}(\phi_1^p)^I$ does not depend, up to canonical isomorphism, on $\phi_1$ provided that it is sufficiently close to $\phi.$ The same applies to $\wh{HF}_{\zp}(\phi_1^p)^I.$

%Another way to consider this perturbed version of Floer homology is to introduce Hamiltonian perturbations directly into the Floer and parametrized Floer equation. Let $\cl{H}(M_{\phi}),$ respectively $\cl{H}(M_{\phi,p}),$ denote the spaces of smooth functions $\til{H}: M_{\phi} \to \R,$ respectively $\til{H}: M_{\phi,p} \to \R,$ coming from smooth functions $H:\R \times M \to \R$ that satisfy $H(t+1,x) = H(t,\phi(x))$ and respectively $H(t+p,x) = H(t, \phi^p(x)).$ Observe that $\cl{H}(M_{\phi})$ can be naturally seen as a subspace of $\cl{H}(M_{\phi,p}).$
%
%We describe the equivariant case. For $H \in \cl{H}(M_{\phi}) \subset \cl{H}(M_{\phi,p})$ the perturbed Floer equation reads instead of \eqref{eqn:para}:
%\begin{equation}  \label{eqn:para H}
%\begin{cases}
%(du-X_{H_t} \otimes dt) \circ j=J_{t,w(s)} \circ (du-X_{H_t} \otimes dt),\\
%\partial_sw(s)+\nabla \tilde{F}(w)=0,
%\end{cases}
%\end{equation}
%with $w \in \cl{P}^{i,m}_{\alpha},$ $\alpha \in \{0,1\},$ and with asymptotic conditions \red{calculate modification} \begin{equation}
%\displaystyle\lim_{s \rightarrow - \infty}(u(s, t), w(s)) = (\phi^{-\m}(x_0(t+m)), Z_i^{\m}), \ \ \ \displaystyle\lim_{s \rightarrow \infty}(u(s, t), w(s)) = (x_1(t), Z_{\alpha}^0).
%\end{equation}

%(or $\phi_1$ after a small perturbation)

\subsection{The algebraic spectral sequence}\label{subsec: alg spec sequence}

For the purposes of this section, consider the following grading on $CF^*(\phi^p)\otimes \rp$ that we call the {\em algebraic degree}. Recall that $\rp = \F_p[[u]] \langle \theta \rangle.$ The algebraic degree of $1 \in \rp$ is $0,$ that of $u$ is $2,$ that of $\theta$ is $1,$ and elements of $CF^*(\phi^p)$ have degree zero. Requiring that degrees of non-zero products add up, we extend this degree to $CF^*(\phi^p)\otimes \rp.$ 

%\red{$\theta$ is both a form on a Liouville manifold and a formal variable! correct} 

%\red{does it actually converge? Paul is not clear about this in his paper}

Consider the decreasing filtration $\cF^k_{\mrm{alg}} = \cF^k_{\mrm{alg}} ( CF^*(\phi^p)\otimes \rp)$ generated as an $\rp$-module by the elements of $CF^*(\phi^p)\otimes \rp$ of algebraic degree at least $k.$ It is easy to check that for all $k \geq 0,$ \[d^{\zp}(\cF^k_{\mrm{alg}}) \subset \cF^k_{\mrm{alg}},\] and hence $\cF^k_{\mrm{alg}} $ forms a decreasing filtration on $(CF^*(\phi^p)\otimes \rp, d^{\zp}),$ which is complete. We call it the algebraic filtration. The same applies when we fix an admissible action window $I = (a,b),$ $-\infty \leq a < b \leq \infty,$ and consider the algebraic filtration on the complex $(CF^*(\phi^p)^{I}\otimes \rp, d^{\zp})$ in action window $I.$ In each case, this filtration, being complete and exhaustive, gives a regular spectral sequence (see \cite[Definition 5.2.10]{Weibel}) which converges to $HF_{\zp}(\phi^p)^{I},$ as $\rp$ is already complete with respect to this filtration. We start by describing the $E^1$-page of this {\em algebraic spectral sequence}. The only terms in $d^{\zp}$ that do not increase filtration are $d^0_0$ and $d^1_1,$ both corresponding to the usual Floer differential on $CF(\phi^p).$ Hence the $E_1$-page is given by \[E_1 = HF(\phi^p) \otimes R_p.\] Furthermore, the differential in this page is given by \begin{eqnarray}d_{E_1}(x \otimes 1) &= [d^1_0](x) \otimes \theta,\\ d_{E_1}(x \otimes \theta) &= u [d^2_1](x) \otimes 1,
\end{eqnarray}
observing that $(d^{\zp})^2 = 0$ implies that $d^1_0$ and $d^2_1$ are in fact chain maps. Furthermore, by Lemma \ref{lma: d10 and d21}, $[d^1_0] = 1 -\sigma_*,$ and $[d^2_1] = 1 + \sigma_* + \ldots + (\sigma_*)^{p-1},$ for $\sigma_*: HF(\phi^p) \to HF(\phi^p)$ the generator of the $\zp$-action. This means that the $E_2$-page of the spectral sequence is given by \[E_2 = H^*(\zp,HF(\phi^p))\] as an $\rp$-module. %The edge homomorphism with respect to the decreasing filtration specializes at the second page to a map \[HF_{\zp}(\phi^p) \to H^0(\zp, HF(\phi^p)) = HF(\phi^p)^{\zp}.\] \red{and so? better take the first two columns instead...} 
We claim that there exists a differential $d_{2,\infty}$ of $\rp$-modules on $H^*(\zp,HF(\phi^p))$  whose homology calculates $H^*_{\zp}(\phi^p).$ Indeed, by an application of the homological perturbation lemma \cite[BPL]{Markl-ideal} to the initial complex, making the homology subspaces and projection operators invariant with respect to multiplication by $u,$ there exists a differential $d_{1,\infty}$ on $E_1$ of the form \begin{equation}\label{eq: perturbed differential on E_1}d_{1,\infty} = d_{E_1} + D_1,\end{equation} where $D_1$ consists of maps of order $\geq 2$ in the algebraic filtration. In a similar way, a second application of the homological perturbation lemma with respect to the splitting \eqref{eq: perturbed differential on E_1}, produces the required differential. Consequently, tensoring with $\bK((u))$ over $\bK[[u]],$ we obtain a differential $\wh{d}_{2,\infty}$ of $\hrp$-modules on $\wh{H}^*(\zp,HF(\phi^p)),$ whose homology is $\wh{H}^*_{\zp}(\phi^p).$ In particular, we obtain that \begin{equation}\label{eq: dim Tate bound} \dim_{\bK((u))}  \wh{H}^*_{\zp}(\phi^p) \leq \dim_{\bK((u))} \wh{H}^*(\zp,HF(\phi^p)).\end{equation}

Furthermore, the same applies to cohomology in all admissible action windows.

\section{Action and energy estimates}\label{subsec: estimates}

In this section we collect two classical results on action and energy of solutions to the Floer equation that we resort to throughout the paper.

Let $\bar{S}$ be a closed Riemann surface with complex structure $j_S,$ and with $k_{-} + k_{+}$ marked points $\Gamma: I \to \bar{S},$ where $I = I_{-} \sqcup I_{+},$ $I_{-} = \{1,\ldots,k_{-}\},$ $I_{+} = \{1,\ldots,k_{+}\}.$ We denote by $S = \overline{S} \setminus \Gamma(I)$ the associated punctured Riemann surface. Let $S$ be equipped with holomorphically embedded cylindrical ends $\epsilon^{-}_j: (-\infty,-1] \times S^1 \to S$ around $\Gamma(j),$ $j\in I_{-},$ and $\epsilon^{+}_j: [1,\infty) \times S^1 \to S$ around $\Gamma(j),$ $j\in I_{+}.$ 

%We include a general action-energy estimate for a solution $u:S \to X,$ $(X,\om_X)$ being a symplectic manifold, endowed with an $\om_X$-compactible almost complex structure $J,$ of the Floer equation, determined by Floer data ...

Let $u: S \to E$ be a section of a locally Hamiltonian symplectic fibration $\pi: E \to S,$ with cylindrical trivializations over the cylindrical ends of $E$ and connection form $\Om.$ 

Assume that $u$ converges to sections $x^{\pm}_j$ of $E_{j^{\pm}} \to S^1$ for $j^{\pm} \in I_{\pm}$ in the cylindrical ends corresponding to $\Gamma(j).$ Assume that $u$ satisfies the Floer equation $(du)^{(0,1)} = 0$ with respect to an $\Om$-compatible almost complex structure on $E.$ Then the energy $E(u) = \int_S u^* \Om^{v}$  satisfies \[E(u) = \int_S u^* \Om - \int_S u^* R(\Om),\] where $R(\Om)$ is the curvature form of the locally Hamiltonian fibration (cf. \cite[Lemma 8.2.9]{McDuffSalamon-BIG}). Furthermore, if we equip $E_{j^{\pm}} \to S^1$ for $j^{\pm} \in I_{\pm}$ with the structure of a filtered symplectic brane, in a way that is compatible with $(E,\Om),$ so that \[\int_S u^* \Om = \sum_{j \in I_{-}} \cA_{j}(x^-_{j}) - \sum_{j \in I_{+}} \cA_{j}(x^+_{j}),\] we obtain the action identity 

%\red{enquiry: correct formula?}
%\begin{prop}\label{prop: main action estimate} 
%\[\sum_{j \in I_{-}} \cA_{j}(x^-_{j}) - \sum_{j \in I_{+}} \cA_{j}(x^+_{j}) = E(u) + \int_S u^* R(\Om).\]

 %\end{prop}

\begin{equation}\label{eq: main action estimate} 
\sum_{j \in I_{-}} \cA_{j}(x^-_{j}) - \sum_{j \in I_{+}} \cA_{j}(x^+_{j}) = E(u) + \int_S u^* R(\Om).\end{equation}

 This way, lower bounds on $E(u),$ the most elementary of which is of course $E(u) \geq 0,$ combined with lower bounds on the curvature term $\int_S u^* R(\Om)$ uniform in $u,$ yield upper bounds on the action shift $\sum_{j \in I_{+}} \cA_{j}(x^+_{j}) - \sum_{j \in I_{-}} \cA_{j}(x^-_{j}).$ Typically, we will be able to make our curvature terms arbitrarily small.

Secondly, to describe a slightly more sophisticated lower bound on $E(u),$ we adapt a well-known monotonicity lemma \cite[Proposition 4.3.1.(ii)]{Sikorav} to show the following quite general ``crossing energy" type argument. One may prove the same statement in a different way (following \cite{GG-revisited,GG-hyperbolic,McLean-geodesics}), applying the target-local Gromov compactness of Fish \cite{Fish-compactness}, but we choose to present a more elementary argument, which is sufficient for our purposes.

%\red{Introduce symplectic fibrations and reformulations in terms of them. State clearly that all symplectic fibrations used in this paper are almost Hamiltonian, that is the connection $2$-form $\Omega$ that restricts to $\om_{z}$ on the fiber over $z$ is {\em closed} as a two-form on the total space of the fibration.}

%\red{refer to section above}
Suppose that our Riemann surface $S,$ as well as the (locally Hamiltonian) symplectic fibration $E$ over it (see Section \ref{subsec: fibrations}), is obtained by a branched cover from an $s$-invariant fibration over the standard cylinder $Z = \R \times S^1.$ By $s$-invariance we mean that for a symplectic fibration $E_0 \to S^1,$ the fibration $E_1 \to Z$ satisfies $E_1 = \pi_{S^1}^* E_0,$ for the projection $\pi_{S^1}: Z \to S^1$ to the $S^1$-factor. Furthermore, we assume that the Hamiltonian perturbation term is sufficiently small. Then, if the asymptotic conditions of a solution $u$ are distinct, this solution must satisfy an energy monotonicity statement: its energy is bounded from below by $\eps > 0,$ depending only on $J,$ the asymptotic Floer data, and on the isolating neighborhoods of the asymptotics. These estimates shall be used later in defining various flavors of local Floer cohomology and related operations. 

In the statement below, we consider Floer data $(J,K)$ where $K\in \Om^2(S,\sm{E,\R})$ is exact, and $J$ is compatible with $\Om_K = \Om + K.$

\medskip

\begin{prop}\label{prop: monotonicity}
%{\color{green} add the case of all asymptotics at $x$}
%{\color{green} discuss curvature}

Let $p: S \to Z = \R \times S^1$ be a branched covering with finite branch set. 
Let $(E,\Om)$ be a symplectic fibration over $S,$ obtained by base-change by $p$ from
a symplectic fibration over $Z$ of the form $(E_1,\Om_1) = \pi_{S^1}^* (E_0,\Om_0),$ where
$(E_0,\Om_0)$ is a symplectic fibration over $S^1.$ Let $J_0$ be an almost complex structure compatible with the fibration $p: S \to Z,$ and let $x,y: S^1 \to E_0$ be two different flat sections of $E_0.$ 

Then there exist $\eps > 0$ depending only on $x,y,$ and $(E,\Om)$ such that for all Floer data $(J,K)$ sufficiently close to $(J_0,{0})$ in the $C^{\infty}$ topology the following holds. Each section $u: S \to E$ of $E \to S$  which is a solution to the Floer equation $du^{(0,1)} = 0$ with respect to $(J,K)$ with horizontal asymptotics at the cylindrical ends of $S,$ at least two of which are $x,y,$ satisfies \[E(u) \geq \epsilon > 0.\] Furthermore, there exists a small neighborhood $U_x$ of $\sigma_x,$ such that if all the asymptotics of $u$ are at $x,$ then the image of $u$ is contained in $U_x.$
\end{prop}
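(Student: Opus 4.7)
The plan is to apply Sikorav's monotonicity lemma \cite[Prop.~4.3.1(ii)]{Sikorav} to $u$ at a point where its image meets the boundary of a tubular neighborhood separating the different asymptotic flat sections. The fibered and branched-cover structures play no role in the local estimate, so the argument essentially takes place in a small ball in $E$.

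First I would fix a Riemannian metric on $E$ compatible with $\Om$ and $J_0$, and pick pairwise disjoint closed tubular neighborhoods $\overline{V}_x,\overline{V}_y,\ldots$ in $E$ of the (finitely many) asymptotic horizontal sections, obtained as pullbacks of fiberwise neighborhoods in $E_0$. Since $u$ converges to $\sigma_x$ on one end and to $\sigma_y$ on another, while $u(S)$ is connected, there exists $z_0\in S$ with $u(z_0)\in\partial V_x$. I would then choose $r>0$ so that the metric ball $B=B_r(u(z_0))\subset E$ lies in a trivializing chart, is disjoint from $\overline{V}_y$ and from the other asymptotic tubes, and is disjoint from the images of all cylindrical ends of $u$ outside a compact part of $S$. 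Consequently $u|_{u^{-1}(B)}$ is a proper $(J,K)$-holomorphic section in $B$ passing through its center and exiting through $\partial B$.

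Next, to apply Sikorav's estimate to a genuine pseudoholomorphic curve equation, I would invoke the Gromov graph trick: the perturbed Floer equation on $E$ is equivalent to an unperturbed $\tilde J$-holomorphic section equation with respect to a modified almost complex structure $\tilde J$ that is $C^\infty$-close to $J_0$ uniformly for Floer data $(J,K)$ in a small $C^\infty$-neighborhood of $(J_0,0)$, and the geometric energy $E(u)=\int_S u^\ast \Om^v$ controls the corresponding $\tilde J$-holomorphic area on $u^{-1}(B)$ up to an additive curvature term that is $o(1)$ as $(J,K)\to (J_0,0)$. Sikorav's inequality then yields
\[
E(u)\;\geq\;\int_{u^{-1}(B)} u^\ast\Om^v\;\geq\;C\,r^2\;=:\;\epsilon\;>\;0,
\]
with $C$ depending only on $(E,\Om,J_0)$ and $r$ on $\sigma_x,\sigma_y$, and in particular independent of $(J,K)$ and of $u$.

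For the second assertion, I would run the same argument in reverse. Given any prescribed small neighborhood $W$ of $\sigma_x$, I pick $U_x\subset W$ whose boundary is at distance at least some $r>0$ from $\sigma_x$ in the fixed metric. If $u$ has all asymptotics at $\sigma_x$ but its image is not contained in $U_x$, then since each end enters $U_x$, the image $u(S)$ must meet $\partial U_x$; applying the monotonicity argument at a crossing point produces the same uniform lower bound $E(u)\geq C r^2$, which is precluded by the energy bounds operative in the settings (local equivariant Floer cohomology) where this proposition is invoked. I expect the main technical obstacle to be the last ingredient of the previous paragraph, namely ensuring that the monotonicity constant $C$ can be made genuinely independent of $(J,K)$ in a $C^\infty$-neighborhood of $(J_0,0)$: this requires carefully controlling the size of the Hamiltonian perturbation $K$ and the twist of $J$ relative to the horizontal-vertical splitting in the chart around $u(z_0)$, so that the graph-trick almost complex structure $\tilde J$ remains uniformly close to $J_0$ and the corresponding isoperimetric constants remain bounded away from zero.
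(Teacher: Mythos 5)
There is a genuine gap at the central inequality $\int_{u^{-1}(B)} u^*\Om^v \geq C r^2$. Sikorav's monotonicity, applied via the graph trick to the section $u$ viewed as a $\tilde J$-holomorphic curve in the total space, bounds from below the area of $u$ with respect to a \emph{nondegenerate} taming form on $E$, and here such a form must contain a positive multiple of the pullback of an area form on the base: since $E$ is pulled back from a fibration over $S^1$, the curvature of $\Om$ vanishes, so $\Om=\Om^v$ is degenerate along the horizontal distribution and cannot tame any almost complex structure by itself. For a section, the base contribution over $u^{-1}(B_r)$ is itself of order $r^2$, so the monotonicity bound is already saturated by curves with essentially no vertical energy. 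Concretely, through any point of $\partial V_x$ there is a local flat (horizontal) section, which passes through the center of your ball, exits through its boundary, and satisfies $\int u^*\Om^v=0$; monotonicity is a purely local statement and cannot distinguish your solution from such a horizontal curve inside $B$. Hence the claimed lower bound on the vertical energy does not follow from the argument as proposed. The paper's proof localizes differently: it trivializes a fiberwise tubular neighborhood of the \emph{entire} section $\sigma_x$, writes $\Om^v_{triv}=\om\oplus 0+\eta$ with $\|\eta\|=O(|w|)$, shows that the Floer equation forces the pointwise estimate $v^*\Om^v_{triv}(\del_s,\del_t)\geq C(\delta)\,|\del_s v|^2$ for the fiber component $v$, and then runs Sikorav's isoperimetric argument on that fiber component as it crosses the shell $\tfrac12\delta\leq|w|\leq\tfrac32\delta$ — which it must, because one end limits to $\sigma_x$ and another to $\sigma_y$, disjoint from the tube. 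It is this forced \emph{vertical} crossing, not passage through a ball in $E$, that produces the uniform lower bound on $E(u)$, with an explicit perturbation analysis ($\Theta$, $B_{w,z}$ terms) giving uniformity in $(J,K)$ near $(J_0,0)$.

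Your treatment of the second assertion is also not self-contained: you rule out the crossing by appealing to ``energy bounds operative in the settings where the proposition is invoked,'' but the proposition must supply that bound itself. The paper closes this internally: if the image of $u$ left $U_x$, the shell argument would give $E(u)\geq\epsilon$; on the other hand, because the curvature of $\Om$ vanishes and all asymptotics are at $x$ (so the action terms in \eqref{eq: main action estimate} cancel), the energy of any such solution is arbitrarily small once $(J,K)$ is close to $(J_0,0)$, a contradiction. You would need to incorporate this curvature-zero/action-identity step (or an equivalent a priori energy estimate) rather than defer to the application.
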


%\red{check if this is actually the general case} the answer is no, because in general this is not flat.

\begin{proof}
Let us first deal with the case of at least two different asymptotics $x,y.$ To begin, we assume that $(J,K) = (J_0,0).$ Since our fibration is obtained from $(E_1,\Om_1) = \pi_{S^1}^* (E_0,\Om_0),$ the sections $x,y$ of $(E_0,\Om_0)$ give us flat sections $\sigma_x,\sigma_y : S \to E.$ The asymptotic conditions on $u$ yield that $u$ is asymptotic to $\sigma_x$ on a cylindrical end $\epsilon_x$ of $S,$ and to $\sigma_y$ on cylindrical end $\epsilon_y$ of $S.$ Clearly, as $x,y$ are flat and different, they have disjoint images and so do $\sigma_x,\sigma_y.$ Consider a small tubular $2\delta$-neighborhood $U_x(2\delta)$ of $\ima(\sigma_x)$ inside $E$ that is disjoint from $\ima(\sigma_y),$ and admits a trivialization $\Phi: U_x(2\delta)  \xrightarrow{\sim} D^{2n}(2\delta) \times S$ as a symplectic fibration, with $\Phi(\sigma_x(z)) = (0,z)$ and $D\Phi_{\sigma_x(z)} (Vert_{\sigma_x(z)}) = T_0(D^{2n}(2\delta)) \oplus 0 = Vert^{triv}_{(0,z)}$ for all $z \in S.$ Let $S_x(\delta) = \overline{U}_x(\frac{3}{2} \delta)\setminus U_x(\frac{1}{2} \delta) \subset U_x(2\delta)$ be a shell around $\ima(\sigma_x).$ Note that $S_x(\delta) = \Phi^{-1}(S^{2n}(\delta) \times S),$ where $S^{2n}(\delta) = \overline{D}^{2n}(\frac{3}{2} \delta) \setminus {D}^{2n}(\frac{1}{2} \delta).$ We claim that there exists $\delta > 0,$ and $\epsilon > 0,$ as in the formulation of Proposition \ref{prop: monotonicity}, such that \[ E(u) \geq \int_{u^{-1}(S_x(\delta))} u^* \Om^v \geq \epsilon >0. \] We start with the obvious observation that \[\int_{u^{-1}(S_x(\delta))} u^* \Om^v = \int_{(\Phi \circ u)^{-1}(S^{2n}(\delta) \times S)} (\Phi \circ u)^* \Om^v_{triv},\] where now $\Om^v_{triv} = \Phi_*(\Om^v)$ is a certain symplectic connection form on the trivial fibration $D^{2n}(2\delta) \times S.$ Note that at the points of $\{0\} \times S,$ $\Om^v_{triv} = \om \oplus 0,$ since $\sigma_x$ was a flat section. This means $\Om^v_{triv} = \om \oplus 0 + \eta,$ for a two-form $\eta$ on $D^{2n}(2\delta) \times S$ such that $||\eta_{w,z}|| \leq C_4 \cdot |w|$ for $(w,z) \in D^{2n}(2\delta) \times S,$ and $C_4 \geq 0$ a constant.  

%$\Om^v_{triv} = \om \oplus 0$ is the trivial vertical two-form on $D^{2n}(2\delta) \times S.$ 

Set $v = \Phi \circ u.$ Note that $J'_0 = \Phi_*(J_0)$ preserves the fibers, and is of the form \[J'_0 (w,z) = \begin{bmatrix}
J^v_{w,z}       & A_{w,z} \\
0       & j_{z} 
\end{bmatrix},\] where $j = j_S$ is a complex structure on $S,$ $J^v_{z,w}$ is a $z$-dependent almost complex structure on $D^{2n}$ compatible with $\om,$ and $A_{w,z}$ is a $(j_z,J^v_{w,z})$ anti-complex map $T_z S \to T_w D^{2n}(2\delta).$ By definition, it is clear that $\{J^v_z\}_{z \in S}$ is contained in a compact set of almost complex structures on $D^{2n}.$ Consider the standard Riemannian metric $g$ on $D^{2n}.$ Then there exists a uniform constant $C_1$ such that $\om(\xi,J^v_{z,w} \xi) \geq C_1 \cdot g_w(\xi, \xi)$ and $||\om || \leq C_2$ for all $z \in S, w \in D^{2n}.$ Furthermore, as $A_{0,z} = 0$ for all $z \in S,$ again by definition, there is a uniform constant $C_3$ such that $||A_{w,z}|| \leq C_3 \cdot |w|.$ Furthermore, we may choose these constants for $\delta = \delta_0,$ and keep them for all $\delta \leq \delta_0.$ Choosing locally two vectors $\del_s,\del_t$ tangent to $S$ with $j \del_s = \del_t,$ we obtain that \begin{IEEEeqnarray}{C} \label{eq: estimate on Om vert} v^*\Om^v_{triv}(\del_s,\del_t) = v^*\Om^v_{triv}(\del_s,j\del_s) =\\ \nonumber \\ \nonumber  =\om_{v(z)} (\del_s v, J^v_{z,v(z)} \del_s v) + \om_{v(z)} (\del_s v, A_{z,v(z)} \del_s v)  + \eta_{z,v(z)}(\del_s v, (J^v_{z,v(z)} + A_{z,v(z)})\del_s v) \geq \\ \nonumber \\ \nonumber
 \geq C(\delta) |\del_s v|^2,\end{IEEEeqnarray} where \[C(\delta) = C_1  - C_2 C_3  \delta- 2C_2 C_1^{-1}C_4 \delta - 4C_3 C_4 \delta^2.\]

 Similarly, \[v^*\Om^v_{triv}(\del_s,\del_t)  \geq C(\delta) |\del_t v|^2.\] Set $\delta_1 = \min\{\delta_0, \delta_{\ast}\},$ where $C(\delta) \geq C_1/2$ for all $\delta \leq \delta_{\ast}$ and choose $\delta \in [\frac{1}{2} \delta_1, \delta_1].$ Now for a compact submanifold with boundary $B \subset v^{-1} (S^{2n}(\delta) \times S),$ we have $\mrm{Area}_g(v|_{B}) \leq C_5 \int_B v^*\Om^v_{triv},$ for a suitable constant $C_5.$ From now on, for generic $\delta \in [\frac{1}{2} \delta_1, \delta_1],$ the argument of Sikorav \cite[Section 4.3]{Sikorav} applies without change to show that $\int_{v^{-1}(S^{2n}(\delta) \times S)} v^* \Om^v_{triv} \geq \epsilon > 0,$ for $\epsilon$ that depends only on the geometric situation: $x,y, J_0, E.$ 

To obtain the result for general data $(J,K)$ sufficiently close to $(J_0,0)$ one may for example apply the appropriate version of Gromov compactness \cite[Section 12]{SeidelThesis}, or argue in an elementary way as above, with a few extra estimates. Indeed, taking $(J,K)$ to be sufficiently $C^{\infty}$-close to $(J_0,0),$ and denoting by $\Om_K$ and $\Om^v_K$ the corresponding connection forms, and by $Hor_K$ the new horizontal distribution, we obtain that $\Om^v_{K,triv} = \Phi_*(\Om^v_K)$ in the trivialization $\Phi$ above near $\sigma_x$ satisfies $\Om^v_{K,triv} = \Om^v_{triv} + \Theta,$ for a two-form $\Theta$ with $||\Theta||<\delta_1,$ while $J' = \Phi_*(J)$ still preserves the fibers, and is of the form \[J'(w,z) = \begin{bmatrix}
J^v_{w,z}       & A_{w,z} + B_{w,z} \\
0       & j_{z} 
\end{bmatrix}  = J'_0(w,z) + \begin{bmatrix}
0     &  B_{w,z} \\
0       &  0
\end{bmatrix} \] where $||B_{w,z}|| < \delta_2,$ for arbitrary pre-fixed $\delta_1, \delta_2 > 0.$ Similarly to \eqref{eq: estimate on Om vert} we obtain the bound \[v^* \Om^v_{triv,K}(\del_s, \del_t) \geq C'(\delta,\delta_1,\delta_2) |\del_s v|^2\] and a similar one in terms of $|\del_t v|^2,$ where now \[C'(\delta,\delta_1,\delta_2) = C(\delta) - C_2 \delta_2 - 2 C_3 \delta \delta_2 - C_2 C_1^{-1} \delta_1 - 2 C_3 \delta_1 \delta - \delta_1 \delta_2.\] Hence there exist $\delta_{1,\ast}, \delta_{2,\ast} > 0,$ and $\delta'_{\ast} >0,$ such that for all $\delta_1 \leq \delta_{1,\ast},$ $\delta_2 \leq \delta_{2,\ast},$ and $\delta\leq \delta'_{\ast},$ we have $C'(\delta,\delta_1, \delta_2) \geq C_1/2.$ Now continuing as above yields the desired estimate.

%or carry out a similar analysis to the one above, where now the small parameter is not $\delta,$ but rather the $C^2$-norm of $(J,K)$ considered as a perturbation of $(J_0,0).$

Finally, for the case of all asymptotics identical, if $u$ is not contained in $U_x = U_x(2\delta),$ then the same argument as above applies to show that $E(u) \geq \epsilon > 0.$ However, $E$ being a branched cover of a cylinder, the curvature of $\Om$ is zero. Therefore, for $(J,K)$ close to $(J_0,0),$ by formula \eqref{eq: main action estimate} the energy $E(u)$ is arbitrarily close to $0.$ This is a contradiction.
%coordinates $(s,t) \in B \subset S,$ for a small disk $B,$ 
%= v^*\Om^v_{triv}(-j\del_t,\del_t) = \om_{v(z)} (\del_t v, J^v_{z,w} \del_t v) + \om_{v(z)} (\del_t v, A_{z,w} \del_t v) \geq \] \[
%The case of $(J,K)$
\end{proof}

\section{$\zp$-equivariant pants product and coproduct}\label{sec:prod-coprod}

%!TEX root = pantsproduct5.tex

\subsection{The product}

In this section we define the $\zp$-equivariant pants product. It generalizes Seidel's definition \cite{Seidel} of the equivariant pair-of-pants product for $p=2.$ Its Morse analogue, described in Section \ref{subsec:Betz-Cohen}, is related to the Steenrod $p$-th power operations on the cohomology of a compact manifold. Broadly speaking, this product relates the Floer cohomology of $\phi$ and the equivariant Floer cohomology of $\phi^p$ for a prime $p.$ The key point is that the constant ``$p$-legged pants" (see Figure \ref{Figure: prod curve}) with $p$ inputs all given by $x,$ a fixed point of $\phi,$ and $1$ output $x^{(p)}$ being $x$ considered as a fixed point of $\phi^p,$ is in general not regular even for generic choices of auxiliary data, because it may have negative index. However, when set up suitably and counted in a positive-dimensional family, in this case coming from the space of negative flow-lines of the $\zp$-invariant Morse function $\til{F}$ on $S^{\infty},$ the constant ``$p$-legged" pants do contribute to the product. In fact, one of our main technical results, detailed in Section \ref{sec:locally invertible}, states, in rough terms, that their contributions are non-trivial. We refer to \cite{Seidel} for further introduction to this notion in the case $p=2.$ 

We proceed with technical definitions, which by the abundance of parameters that need to be taken into account are really quite elaborate. To help the reader follow them we now outline their meaning. We wish to take the $\zp$-symmetry of the ``$p$-legged pants" curve $S_{\cP}$ defined below into account. This $\zp$-symmetry rotates the output cylindrical end and cyclically permutes the input cylindrical ends, and we choose the auxiliary data accordingly. Furthermore, to be compatible with the Borel-type construction of equivariant cohomology that we described above, the auxiliary data is parametrized by $S^{\infty}$ and is required to satisfy natural $\zp$-equivariance properties. Finally, to simplify the complexes that we consider, as we have done in the definition of equivariant Floer cohomology, we require the auxiliary data to be invariant under the shift map $\tau: S^{\infty} \to S^{\infty}.$ Finally, to define the product operation we consider solutions to the Floer equation with the auxiliary data evaluated on negative gradient trajectories of $\til{F}$ in $\sP_{\alpha}^{i,m}.$ In Section \ref{subsec: chain map} we explain that the product indeed provides a chain map between suitable equivariant complexes.

Let $h \colon S_{\cP} \rightarrow \R \times S^1$ be the branched cover of $\R \times S^1$ at $(0,0) \in \R \times S^1$ of ramification index $p$ defined explicitly via the commutative diagram, where the horizontal arrows are isomorphisms:
\[
\xymatrix@C+2pc{
S_{\cP} \ar[d]^{h} \ar[r] & \C \backslash \{e^{\frac{2\pi i\m}{p}} \} \ar[d]^{y=1-z^p} \\
 \R \times S^1 \ar[r]^-{\psi^{-}}  & \C^*,
}
\] 
where for $(s,t) \in \R \times S^1,$ $\psi^{-}$ is given by \[ \psi^{-}(s,t) = e^{\large{-}2\pi (s+i t)} .\] The covering transformation group $\zp$ acts on $S_{\cP}$. For each positive puncture of $S_{\cP}$, there is a trivialization of the cylindrical ends of $S_{\cP}$ over $s \geq 1,$
\begin{eqnarray} \label{eqn:pos_cyn}
&& \epsilon_i^+ \colon [1, \infty) \times S^1 \rightarrow S_{\cP}, \ \ i \in \zp,\\
&&  h(\epsilon_i^+(s,t))=(s,t) \in \R \times S^1, \ \ \m\cdot (\epsilon_i^+(s,t))= \epsilon_{i+\m \ \mathrm{mod}\  p}^+(s,t) \text{ for } \m\in \zp. \nonumber
\end{eqnarray}
Recall that we denote $S^1_p = \R/p\Z.$ For $t \in S^1_p$ we shall usually denote its class $[t] \in S^1,$ where $S^1 = \R/\Z,$ again by $t.$ For the negative puncture, one has cylindrical trivializations for $s \leq -1$ given by
\begin{eqnarray} \label{eqn:neg_cyn}
&& \epsilon^-_i \colon (-\infty, -1] \times S^1_p \rightarrow S_{\cP}, \ \  i \in \zp,\\
&& h(\epsilon^-_i(s,t))=(s, t), \ \ \m\cdot (\epsilon^-_i(s,t))= \epsilon^-_i(s,t+\m) = \epsilon^-_{i+m}(s,t), \; \m\in \zp 
\end{eqnarray}

Since by the last property, the trivializations $\epsilon^-_{i}$ are equivalent, we work with the fixed choice $\epsilon^{-}_0$ of such a negative end. The curve $S_{\cl{P}}$ and its cylindrical ends are described in Figure \ref{Figure: prod curve}. To streamline the exposition, we already mention that in this paper we adopt the standard convention that positive punctures, namely those equipped with a positive cylindrical end, correspond to inputs in Floer-cohomological operations, and negative punctures, those with negative cylindrical ends, correspond to outputs.

\begin{figure}[htb]
	\centerline{\includegraphics{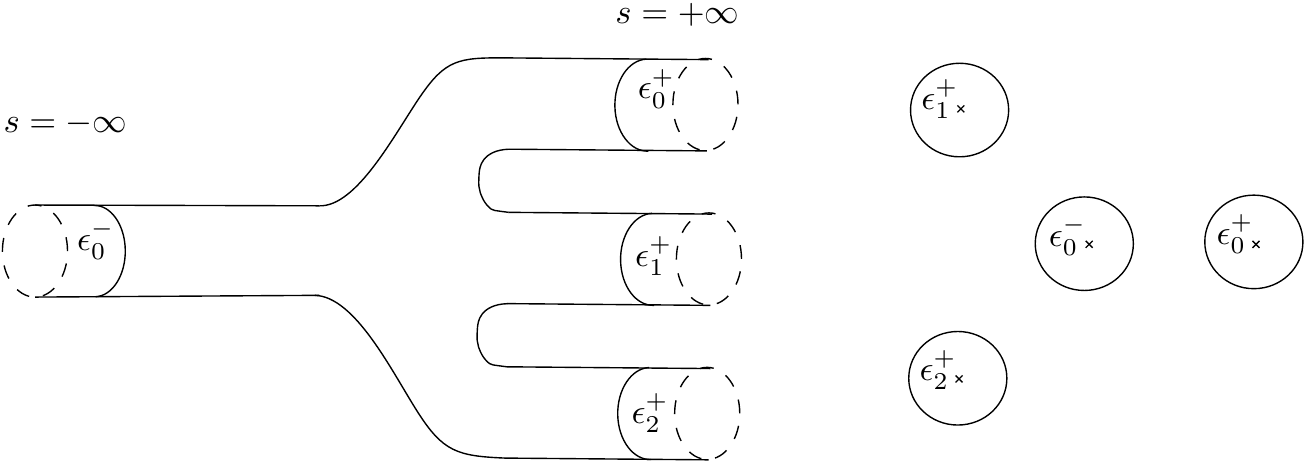}}
	\caption{The curve $S_{\cl{P}}$ and its cylindrical ends for $p=3.$ Left: schematic picture emphasizing the curve structure. Right: schematic picture emphasizing the $\Z/3\Z$ symmetry; this is a $\Z/3\Z$-symmetric configuration of $4$ punctures and cylindrical ends, $3$ positive and $1$ negative, on $\C P^1 \cong \C \cup \{ \infty \}.$ }\label{Figure: prod curve}
\end{figure}

Explicitly, choosing a branch of the logarithm on $\C^*$ around $z=1$ for \eqref{exp:root1} and around $z=-1$ for \eqref{exp:root2}, we write
\begin{eqnarray}
&& {S_{\cP}}=\{ (s,t,y) \in \R \times S^1 \times \C \mathbin{|} y^p=1-e^{-2\pi(s+it)} \} \nonumber \\
&& \label{exp:root1} \epsilon^+_k(s,t)=(s, t, e^{2\pi ki/p}(1-e^{-2\pi(s+it)})^{\frac{1}{p}});  \\
&& \label{exp:root2} \epsilon^-_k(s,t)=(s, t, e^{2\pi ki/p}e^{-2\pi(s+i t)/p}(e^{2\pi(s+i t)}-1)^{\frac{1}{p}}); \\
&&  {\m}\cdot (s,t,y)=(s,t, e^{-2\pi i m/p}\cdot y),\; m \in \zp \nonumber
%&& \alpha(s,t,y)=(s,t+1,y),\ \ \tilde{\m}(s,t,y)=(s,t, e^{2\pi i /p}\cdot y).\nonumber
\end{eqnarray}
Note that in \eqref{exp:root1} the variables are $(s,t) \in [1,\infty) \times S^1,$ while in \eqref{exp:root2} we have $(s,t) \in (-\infty,-1] \times S^1_p.$

Having defined the domain Riemann surface $S_{\cP},$ we prescribe the families of $\zp$-equivariant Floer data 
$$(J_{z, w}, Y_{z}) \in C^{\infty}(S_{\cP} \times S^{\infty}, \cJ(M_{\phi})) \times \Omega^1(S_{\cP}, T(\R \times M_{\phi}))$$
that are parametrized by $S_{\cP} \times S^{\infty}$ and $S_{\cP}$ respectively. Let $\cl{H}(M_{\phi}),$ respectively $\cl{H}(M_{\phi,p}),$ denote the spaces of smooth functions $\til{H}: M_{\phi} \to \R,$ respectively $\til{H}: M_{\phi,p} \to \R,$ coming from smooth functions $H:\R \times M \to \R$ that satisfy $H(t+1,x) = H(t,\phi(x))$ and respectively $H(t+p,x) = H(t, \phi^p(x)).$

One first chooses $(\tilde{J}_{t,w}, \tilde{H}_t)$ in $\mathcal{J}^{\zp}(\R \times M_{\phi,p}) \times  \mathcal{H}(M_{\phi,p})$ and $(\tilde{J}_t^i, \tilde{H}_t^i)$ in $\mathcal{J}(\R \times M_{\phi}) \times \mathcal{H}(M_{\phi})$ for $i=0,1, \cdots,p-1$ as the initial Floer data, with $\til{H}_t \equiv 0,$ $\tilde{H}_t^i \equiv 0.$ Recall that $\cl{J}(M_{\phi})$ can be naturally seen as a subspace of $\cl{J}(M_{\phi,p}).$ Then one can define almost complex structures $J_{s,t,w}^-$ on $\R \times M_{\phi}$ and $J_{s,t,w}^{+, i}$ on $\R \times M_{\phi,p}$ parametrized by $\R \times S^1 \times S^{\infty}$ as follows. For $s \leq 1,$ one requires $J^-_{s,t,w}$ to satisfy the following properties

\begin{enumerate}
\item[$\bullet$](Prescribed on the cylindrical ends): $J_{s,t, w}^-=\tilde{J}_{t,w}$ for $s \leq -2$ and $w \in S^{\infty}.$ 
\item[$\bullet$] (Interpolation): $J_{s,t,w}^-\in \mathcal{J}(M_{\phi})$ for $s \in [-1, 1]$.
\item[$\bullet$]($\zp$-equivariance):  $J_{s,t, \m \cdot w}^-=\phi^{\m}_*J_{s, t+\m,w}^-$ for all $\m\in \zp.$ 
\item[$\bullet$] (Invariance under shift): $J_{s,t,w}^-=J_{s,t,\tau(w)}^-$ for all $w$ in $S^{\infty}.$
\end{enumerate}
%\red{enquiry: what this means?}
% for $\R \times M_{\phi} \to \R \times S^1$ the natural projection,

Similarly if $s \geq -1$, we ask that
\begin{enumerate}
\item[$\bullet$](Prescribed on the cylindrical ends): $J_{s,t,w}^{+,i}=\tilde{J}_t^i$ for $s \geq 2$ and for all $w \in S^{\infty}.$
\item[$\bullet$] (Interpolation): $J_{s,t,w}^{+,i} = J_{s,t,w}^- \in \mathcal{J}(M_{\phi})$ for $s \in [-1, 1]$ and all $i \in \zp,\; w \in S^{\infty}$.
\item[$\bullet$]($\zp$-equivariance):  $J_{s,t, \m \cdot w}^{+,i}=J^{+,i+m}_{s, t,w}$ for all $\m\in \zp,$ $i \in \zp$ and $w \in S^{\infty}.$ 
\item[$\bullet$] (Invariance under shift): $J_{s,t,w}^{+,i}=J_{s,t,\tau(w)}^{+,i}$ for all $w$ in $S^{\infty}$ and all $i \in \zp.$
\end{enumerate}

Now, consider the symplectic fibration $E_{\cP} = h^*(\R \times M_{\phi}) \to S_{\cP}$ over $S_{\cP}$ obtained from $\R \times M_{\phi} \to \R \times S^1$ by pull-back by $h: S_{\cP} \to \R \times S^1.$ Write $\pi_{\cP}: E_{\cP} \to S_{\cP}$ for the projection map.

Let $w\colon \R \rightarrow S^{\infty}$ be a parametrized Morse flow line  of $\tilde{F} \colon S^{\infty} \rightarrow \R$ in $\sP^{i,\m}_{\alpha}$ defined in Section \ref{sec:Morse} for $\alpha=0,1$. One can then define a family $J_{z,w}$ of domain-dependent almost complex structures parametrized by $z \in {S_{\cP}}$ by setting
\begin{eqnarray}
&& J_{z,w}=h^* J^{+,i}_{s,t,w(s)}=h^* J^-_{s,t,w(s)} \text{ if } z \in \pi^{-1}([-1,1] \times S^1);\\
&& J_{z,w}=h^* J^{+,i}_{s,t,w(s)} \text{ if } z=\epsilon^+_i(s,t) \text{ for all}\; i=0,1,\cdots, p-1;\\
&& J_{z,w}=J^-_{s,t,\m \cdot w(s)} \text{ for all $\m \in \zp$ and } z=\epsilon^-_{\m}(s,t). 
\end{eqnarray}

Note that over the negative end, $E_{\cP} \to S_{\cP}$ is isomorphic to $\R \to M_{\phi,p} \to \R/p\Z.$ Also note that thanks to the Interpolation property, we can consider $J^-_{s,t,w(s)}$ for $s \in [-1,1]$ to be a complex structure in $\cJ(M_{\phi}),$ whence $h^*J^-_{s,t,w(s)}$ is well-defined, and moreover $J_{z,w}$ is smooth along $\{s = 1\},$ and $\{s = -1\}.$

Similarly, one chooses the domain-dependent perturbation term $Y_z \in \Omega^1(S_{\cP}, T(\R \times M_{\phi}))$ that satisfies the following conditions 
\begin{enumerate}
\item[$\bullet$](Constant on the cylindrical ends): $Y_{\epsilon_i^+(s,t)}=X_{H^{i,+}_t} \otimes dt$ and $Y_{\epsilon_{\m}^-(s,t)}=X_{H_{t+\m}^-} \otimes dt$ for our $H^{i,+}_t \in \cH(M_{\phi})$ and $H_t^- \in \cH(M_{\phi,p})$.
\item[$\bullet$](Compactly supported near the ends): $Y_z \equiv 0$ outside the images of the cylindrical parametrizations $\epsilon_i^+$ and $\epsilon^-_j$ for $i,j \in \zp.$ In fact we may assume that $Y_z \equiv 0$ on the image of $\eps^-_j.$
\item[$\bullet$]($\zp$-invariance): $Y_{m \cdot z} =Y_z$ for all $m \in \zp,$ acting as the covering transformation of $h.$
\end{enumerate}
%Observe that the map $\R \times M \to \R \times M,$ $(t,x) \mapsto (t, \phi(x))$ descends to a map $\til{\phi}: M_{\phi} \to M_{\phi}.$ 
For any Morse flow line $w \colon \R \rightarrow S^{\infty}$ that is asymptotic to $Z_i^{\m}$ at $-\infty$ and $Z_{\alpha}^0$ at $\infty$ for $\alpha=0,1$, one can choose $J_{z,w}$ and $Y_z$ as above. Then we consider the moduli space $\cM^{i,\m}_{\cP,{\alpha}}(x^-,x_0^+, \cdots, x_{p-1}^+)$ of solutions $(w,u)$ consisting of $w \in \sP^{i,m}_{\alpha}$ and a section $u \colon S_{\cP}\rightarrow E_{\cP},$ $\pi_{\cP} \circ u = \pi_{\cP},$ to the parametrized Cauchy-Riemann equation \begin{align}  \label{eqn:Jholpower}
(du-Y_z) \circ j &=J_{z,w}\circ (du-Y_z)
\end{align}
satisfying the asymptotic conditions\begin{equation}
\displaystyle\lim_{s \rightarrow - \infty}u(\epsilon^-_0(s,t))=x^-(t), \ \  \displaystyle\lim_{s \rightarrow - \infty}u(\epsilon^+_k(s,t))=x^+_k(t), \text{ for } k \in \zp.
\end{equation}
We remark that in this case $\displaystyle\lim_{s \rightarrow - \infty}u(\epsilon^-_m(s,t))= \lim_{s \rightarrow - \infty}u(m \cdot \epsilon^-_0(s,t)) = \phi^{m}(x^-(t+m)).$
%\red{Where is the symmetry condition??} with asymptotic behavior 
%\red{plus or minus m?}
% $u(\epsilon^{-}_m(s,t)) = u(\epsilon^{-}_0(s,t+m)) = $
%\red{enquiry: \; looks \; wrong
%\red{enquiry: or \; + i/p?}

\begin{rmk}\label{rmk: only M}
One can equivalently formulate the above equation without mentioning symplectic fibrations, as in \cite{Seidel}. We found the above description more easily accessible, but we sketch the other definition now. Consider the $\Z$-cover $\pi_Z: \R \times \R \to \R \times S^1,$ and let $\til{S}_{\cP} = (\pi_Z)^*(S_{\cP})$ be the fiber product of $\pi_Z:\R \times \R \to \R \times S^1$ and $h: S_{\cP} \to \R \times S^1$ along $\R \times S^1.$ Then $\til{S}_{\cP} \to S_{\cP}$ is a $\Z$-cover, with deck transformations generated by $\theta: \til{S}_{\cP} \to \til{S}_{\cP}$ corresponding to $\theta_{Z}: \R \times \R \to \R \times \R,$ $(s,t) \mapsto (s,t+1).$ The curve $\til{S}_{\cP}$ has $p$ {\em disjoint} negative ends $\til{\epsilon}_{-}^m: (-\infty,-1] \times \R \to \til{S}_{\cP},$ and $p$ disjoint positive ends $\til{\epsilon}_{+}^m: [1,\infty) \times \R \to \til{S}_{\cP}$ for $\m \in \zp$ on which $\theta$ acts as follows: $\theta(\til{\epsilon}_{+}^m(s,t)) = \til{\epsilon}_{+}^m(s,t+1),$  $\theta(\til{\epsilon}_{-}^m(s,t)) = \til{\epsilon}_{-}^{m-1}(s,t+1).$ Considering the almost complex structure $J'_{z,w} = J_{\pi(z),w}$ as an almost complex structure on $M$ with $\phi(J'_{\theta(z),w}) = J'_{z,w},$ we write the equation on $u: \til{S}_{\cP} \to M,$ $\phi(u(\theta(z))) = u(z),$ \[(du-Y_z) \circ j =J'_{z,w}\circ (du-Y_z)\] for a Hamiltonian perturbation described above, with asymptotic conditions \[\displaystyle\lim_{s \rightarrow - \infty}u(\til{\epsilon}^-_0(s,t))=x^-, \ \  \displaystyle\lim_{s \rightarrow - \infty}u(\til{\epsilon}^+_k(s,t))=x^+_k,\] for $k \in \zp.$ It is not hard to see that the two definitions are equivalent. The second definition has the advantage of working directly inside $M.$
\end{rmk}

As noticed in \cite[Section 3c]{Seidel}, for constant solutions of \eqref{eqn:Jholpower} one cannot achieve transversality purely by varying the almost complex structures $J_{z,w}.$ This is the reason that we also introduce the Hamiltonian perturbation term $Y_z$ in \eqref{eqn:Jholpower}. For generic choice of $J_{z,w}$ and $Y_z$, the moduli space $\cM^{i,\m}_{\cP,{\alpha}}(x^-; x_0^+, \cdots, x_{p-1}^+) $ can be shown to be a smooth finite dimensional manifold of dimension 
\begin{equation}
\mathrm{dim} \cM^{i,\m}_{\cP,{\alpha}}(x^-;x_0^+, \cdots, x_{p-1}^+)= | x^-|-\sum_k |x^{+}_k|+i-\alpha.
\end{equation}
For each $\m \in \zp$ and $| x^-|=\sum_k |x^-_k|-i+\alpha$, one can define an operation by %\blue{added alphas and thetas}
\begin{eqnarray}
&& \cP^{i,\m}_{\alpha} \colon CF^*(\phi)^{\otimes p} \rightarrow CF^{*-i+\alpha}(\phi^p) \\
&& \cP^{i,\m}_{\alpha}(x^+_0, \cdots, x^+_{p-1})=\sum_{x^-} \#\cM^{i,\m}_{\cP,\alpha}(x^-;x_0^+, \cdots, x_{p-1}^+)\,x^-,
\end{eqnarray}
taking signs into account as in Appendix \ref{app:signs and or}. As before if we set $\cP^i_{\alpha}=\sum_{\m \in \zp}\cP^{i,\m}_{\alpha}$, then the $\zp$-equivariant product can be written as
\begin{eqnarray}
&& \cP \colon CF^*(\phi)^{\otimes p}[[u]] \langle \theta \rangle \rightarrow CF^*(\phi ^p)[[u]] \langle \theta \rangle\\
&& \cP(-\otimes 1)=(\cP_0^0 + u \cP^2_0 + \ldots ) \otimes 1+ (\cP^1_0 + u \cP^{3}_0 + \ldots ) \otimes \theta,\\
&& \cP(-\otimes \theta)=(u \cP^2_1 + u^2 \cP^4_1 + \ldots ) \otimes 1+ (\cP^1_1 + u \cP^{3}_1 + \ldots ) \otimes \theta.
\end{eqnarray}

For a generator $X = x_0 \otimes \ldots \otimes x_{p-1}$ of $CF^*(\phi)^{\otimes p},$ we abbreviate this to \[ \cl{P}(X \otimes 1) = P^{1}_{1}(X) \otimes 1 + P^{\theta}_{1}(X) \otimes \theta\] \[ \cP(X \otimes \theta) = P^{1}_{\theta}(X) \otimes 1 + P^{\theta}_{\theta}(X) \otimes \theta.\]   

\subsection{The coproduct}

The $\zp$-equivariant coproduct is analogous to the $\zp$-equivariant product that we defined above, with the difference that it now has $1$ input and $p$ outputs. Intuitively speaking it relates to the $\zp$-equivariant product in the same way as the inverse PSS isomorphism relates to the PSS isomoprhism.

To define the $\zp$-equivariant coproduct, we first define a Riemann surface with one positive puncture and $p$ negative punctures by the following fiber diagram
\[
\xymatrix@C+2pc{
{S_{\cC}} \ar[d]^{\tilde{h}} \ar[r] & \C \backslash \{e^{\frac{2\pi i\m}{p}} \} \ar[d]^{y=1-z^p} \\
 \R \times S^1 \ar[r]^-{\psi^{+}} & \C^*,}
\] where for $(s,t) \in \R \times S^1,$ $\psi^{+}$ is given by \[ \psi^{+}(s,t) = e^{2\pi(s+ i t)} .\]

Similarly to the case of $\zp$-equivariant product, one looks at the symplectic fibration $\pi_{\cC}: E_{\cC} = \til{h}^*(\R \times M_{\phi}) \to S_{\cC},$ and one chooses $p$ positive and $p$ negative cylindrical trivializations as $\tilde{\epsilon}_i^+$ and $\tilde{\epsilon}_j^-$ for $i, j=0,1, \cdots, p-1.$  

\begin{figure}[htb]
	\centerline{\includegraphics{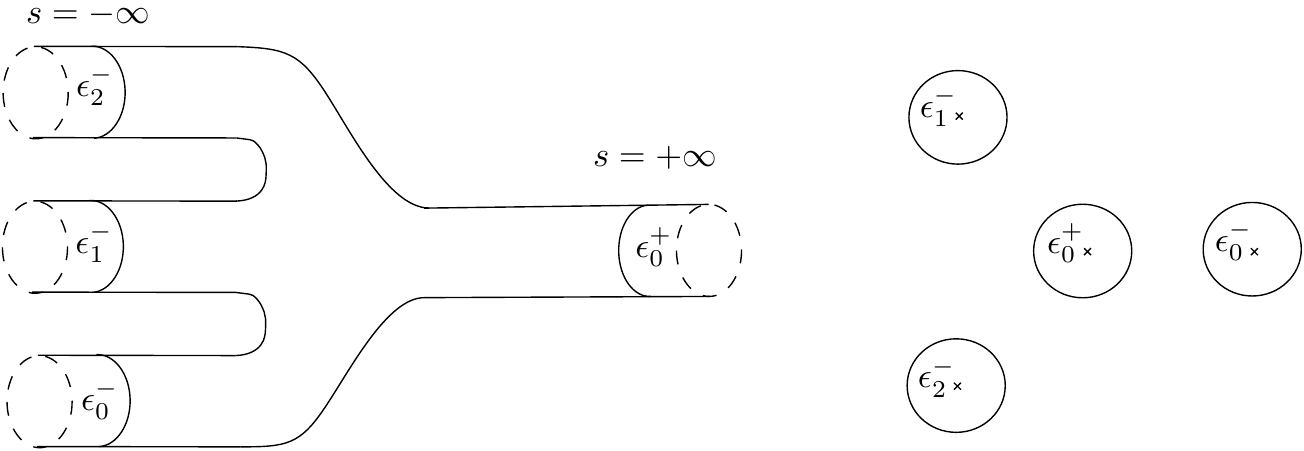}}
	\caption{The curve $S_{\cl{C}}$ and its cylindrical ends for $p=3.$ Left: schematic picture emphasizing the curve structure. Right: schematic picture emphasizing the $\Z/3\Z$ symmetry; this is a $\Z/3\Z$-symmetric configuration of $4$ punctures and cylindrical ends, $1$ positive and $3$ negative, on $\C P^1 \cong \C \cup \{ \infty \}.$ }\label{Figure: coprod curve}
\end{figure}

Then one choose similar families of Floer data $(J_{\tilde{z},w}, Y_{\tilde{z}})$ parametrized by ${S_{\cC}} \times S^{\infty}$ and $S_{\cC}$ separately with the direction of the $s$-parameter reversed relative to the product case.  Finally, one considers the moduli space $\cM^{i,\m}_{\cC,{\alpha}}(x_0^-, \cdots, x_{p-1}^-; x^+)$ of solutions $w \in \sP^{i,m}_{\alpha},$ $u \colon {S_{\cC}} \rightarrow E_{\cC},$ $\pi_{\cC} \circ u = \pi_{\cC},$ to the parametrized perturbed Cauchy-Riemann equation
\begin{equation} \label{eqn:coprod}
(du-Y_{\tilde{z}})\circ j =J_{\tilde{z},w} \circ (du-Y_{\tilde{z}}),
\end{equation}
with asymptotic behavior
\begin{equation}
\displaystyle\lim_{s \rightarrow - \infty} u(\tilde{\epsilon}^-_i(s,t))=x^-_i(t), \ \ \displaystyle\lim_{s \rightarrow - \infty} u(\tilde{\epsilon}^+_0(s,t))=x^+(t).
\end{equation}
%\text{ for } k \in \zp.
Again, a definition as in Remark \ref{rmk: only M} working entirely in $M$ is also available.

For generic choice of $J_z^w$ and $H^w$, the moduli space of non-constant solutions $(u,w)$ to \eqref{eqn:coprod} $\cM^{i,\m}_{\cC,{\alpha}}(x_0^-, \cdots, x_{p-1}^-;x^+)$ is a smooth finite dimensional manifold of dimension
\begin{equation}
\sum_k |x^-_k|-|x^+|+i-\alpha-2n(p-1)
\end{equation}
For each $\m \in \zp$ and $\sum_{k=0}^{p-1} |x^-_k|=| x^+|-i+\alpha + 2n(p-1)$, one can define an operation by %\blue{added alphas and thetas}
\begin{eqnarray}
&& \cC^{i,\m}_{\alpha}\colon C^*(\zp; CF^*(\phi^p)) \rightarrow C^{*-i+\alpha+2n(p-1)}(\zp; CF^*(\phi)^{\otimes p}) \\
&& \cC^{i,\m}_{\alpha}(x^+)=\sum_{x^-} \#\cM^{i,\m}_{\cC,{\alpha}}(x_0^-, \cdots, x_{p-1}^-;x^+)(x^-_0 \otimes \cdots \otimes x^-_{p-1}).
\end{eqnarray}
As before if we set $\cC^i_{\alpha}=\sum_{\m} \cC^{i,\m}_{\alpha},$ then the $\zp$-equivariant coproduct is given by
\begin{eqnarray}
&& \cC \colon CF^*(\phi^p)[[u]] \langle \theta \rangle \rightarrow CF^*(\phi)^{\otimes p}[[u]] \langle \theta \rangle\\
&& \cC(-\otimes 1)=(\cC_0^0 + u \cC^2_0 + \ldots ) \otimes 1+ (\cC^1_0 + u \cC^{3}_0 + \ldots ) \otimes \theta,\\
&& \cC(-\otimes \theta)=(u \cC^2_1 + u^2 \cC^4_1 + \ldots ) \otimes 1+ (\cC^1_1 + u \cC^{3}_1 + \ldots ) \otimes \theta.
\end{eqnarray}

\subsection{Chain-map property}\label{subsec: chain map}

Our next goal is to show that the $\zp$-equivariant product and coproduct maps $\cP$ and $\cC$ define chain maps. Both cases are treated similarly, so we focus on \[\cP:C^*(\zp, CF^*(\phi)^{\otimes p}) \to CF^*_{\zp}(\phi^p),\] where the complexes are taken with their respective differentials. The chain map relation follows by standard Gromov-Floer compactness, transversality, and gluing arguments, from looking at compactifications of the $1$-dimensional moduli spaces \[\cM^{i,\m}_{\cP,{\alpha}}(x^-;x_0^+, \cdots, x_{p-1}^+),\] coming from either Floer breaking in the interior, or  from codimension $1$ strata of $\sP^{i,\m}_{\alpha}$ at the boundaries (see \cite[Section 4.3]{Seidel} for a discussion of the analytical issues). 

Specifically, we show that for the differential \[d^{\otimes p, \zp}_{\phi} = d: C^*(\zp, CF^*(\phi)^{\otimes p}) \to C^*(\zp, CF^*(\phi)^{\otimes p})\] from \eqref{eq: equivariant complexes 2} and the differential \[d^{\zp}_{\phi^p}: CF^*_{\zp}(\phi^p) \to CF^*_{\zp}(\phi^p)\] from \eqref{eq: equivariant differential} we have the relation: \begin{equation}\label{eqn: chain map relation}
\cl{P} \circ d^{\otimes p, \zp}_{\phi} = d^{\zp}_{\phi^p} \circ \cl{P}.
\end{equation}

For $X = x_0 \otimes \ldots x_{p-1}$ an arbitrary generator of $CF^*(\phi)^{\otimes p},$ this is equivalent to the following four relations. The first two come from evaluating on $X \otimes 1,$ and the second two come from evaluating on $X \otimes \theta:$

%\[ \cl{P}( d_{\phi}^{\otimes p}(X) \otimes 1 + (1 - \sigma)(X) \otimes \theta) = d^{\zp}_{\phi^p} ( P^1_1(X) \otimes 1 + P^{\theta}_1(X) \otimes \theta)\]

\[ P^1_1(d_{\phi}^{\otimes p}(X)) + P^1_\theta((1-\sigma)(X)) = D^1_1\circ P^1_1 (X) + D^1_{\theta} \circ P^{\theta}_1(X)\]
\[ P^\theta_1(d_{\phi}^{\otimes p}(X)) + P^\theta_\theta((1-\sigma)(X)) = D^\theta_1\circ P^1_1 (X) + D^\theta_{\theta} \circ P^{\theta}_1(X)\]
\[ - P^1_\theta(d_{\phi}^{\otimes p}(X)) + u P^1_1(N(X)) = D^1_1\circ P^1_\theta (X) + D^1_{\theta} \circ P^{\theta}_\theta(X)\]
\[ - P^\theta_\theta(d_{\phi}^{\otimes p}(X)) + u P^\theta_1(N(X)) = D^\theta_1\circ P^1_\theta (X) + D^{\theta}_{\theta} \circ P^{\theta}_\theta(X)\]

%In terms of the operations $\cl{P}^{i,m}_{\alpha}: CF^*(\phi)^{\otimes p} \to CF^{*-i+\alpha}(\phi^p),$ this is equivalent to the relations: \begin{equation}\label{eqn: relations for chain map}
%	content...
%\end{equation}

Recall that the compactification $\ol{\sP}^{i,m}_{\alpha}$ of the space of parametrized gradient trajectories $\sP^{i,m}_{\alpha}$ has the following codimension $1$ strata: \[\sQ^{i_1,m_1}_{\alpha} \times \sP^{i_2,m_2}_{\alpha_2},\;\;\sP^{i_1,m_1}_{\alpha} \times \sQ^{i_2,m_2}_{\alpha_2},\] where $m_1 + m_2 = m$ in $\zp,$ $i_2 = i \;(\mrm{ mod } \ 2)$ and $i_1 + i_2 - \alpha_2 = i.$ Fixing $i$ and $\alpha,$ the relations above are obtained from the behavior of the compactification $\ol{\cM}^{i,\m}_{\cP,{\alpha}}(x^-;x_0^+, \cdots, x_{p-1}^+)$ of the corresponding $1$-dimensional moduli space over these strata in $\ol{\sP}^{i,\m}_{\alpha},$ for $x_0^+, \cdots, x_{p-1}^+ =X.$ The identities are obtained for pairs $(i,\alpha)$ for which $([i],\alpha) \in \Z/2\Z \times \Z/2\Z$ is $(0,0),$ $(1,0),$ $(0,1),$ $(1,1)$ respectively (in the order of appearance).

Let us explain the first case, for example. Fix $i =2k,$ $\alpha = 0.$ Then $i_2 = i \;(\mrm{ mod } \ 2),$  $i_1 = \alpha_2 \;(\mrm{ mod } \ 2),$ and $i_1 + i_2 - \alpha_2 = i.$ Then the solutions of the limiting Floer equation over strata $\sP^{i_1,m_1}_{0} \times \sQ^{i_2,m_2}_{\alpha_2}$ lead (after considering the various $i=2k,$ and summing over all suitable $m, m_1, m_2$) to the positive order components of the term $D^1_1 \circ P^1_1(X)$ for $\alpha_2 = 0,$ and to the term $D^1_{\theta} \circ P^{\theta}_1(X)$ for $\alpha_2 = 1.$ The strata $\sQ^{i_1,m_1}_{0} \times \sP^{i_2,m_2}_{\alpha_2}$ can contribute non-trivially only when $i_1 = 1,$ and hence $\alpha_2 = 1,$ since otherwise they do not give isolated solutions (see \cite[(4.114)]{Seidel}). This leads to the second term on the left hand side. The first term on the left hand side (as well as the zero-th order component $d_{\phi,p} \circ P^1_1(X)$ in the first term on the right hand side) is obtained from usual Floer breaking in the codimension $0$ strata.

\section{Local Floer cohomology and the action filtration}\label{sec:local-FH}
%!TEX root=pantsproduct5.tex

We describe the local Floer cohomology at an isolated fixed point of a symplectomorphism and its $\zp$-equivariant version. Furthermore, we discuss the action spectral sequence starting with the direct sum of local Floer cohomologies and converging to the total Floer cohomology. This spectral sequence shall subsequently be used to prove that the $\zp$-equivariant pants product is a {\em filtered} chain-homotopy equivalence between the two relevant filtered complexes, and in the proofs of our main applications. We refer to \cite{Ginzburg-CC, GG-local-gap} for more details on local Floer cohomology. %\red{include more, classical, references}.

\subsection{Local Floer cohomology}\label{subsec: loc FH}

%the cohomology, as computed inside $U,$ of all sufficiently $C^2$ small non-degenerate Hamiltonian perturbations $\phi'$ of $\phi$ is independent of the perturbation. 

Let $\phi$ be a symplectomorphism of a symplectic manifold $M$ as specified in Section \ref{sec:Floer_coho}. Given an isolated fixed point $x$ of $\phi,$ there exists an isolating neighborhood $U$ of $x$ (more precisely, of the image of the flat section $\sigma_x$ in $M_{\phi}$) for Floer cohomology. In particular, all Floer trajectories of each sufficiently $C^2$ small non-degenerate Hamiltonian perturbation $\phi'$ of $\phi$ between generators in $U$ are contained in $U,$ and the resulting Floer cohomology as computed inside $U$ is well-defined and independent of the perturbation. This cohomology is called the local Floer cohomology $HF^{\loc}(\phi,x)$ of $\phi$ at $x.$ Whenever the local Floer cohomology is considered as an {\em ungraded} $\bb K$-module it depends on no additional data. A similar statement and definition applies to an isolated Morse-Bott submanifold $X$ of fixed points $\phi$ (see e.g. \cite{McLean-geodesics, Pozniak, Floer-MorseWitten, Floer-spheres}).

%While we do not use this in the current paper, a
%\red{$\ul \phi$ means $\phi$ with the decorations necessary to define the action functional unambiguously; can also include some decorations for grading.} 

We recall the following additional properties of $HF^{\loc}(\phi,x).$ First, if $x$ is non-degenerate as a fixed point of $\phi,$ then as $\bb K$-modules, \[HF^{\loc}(\phi,x) \cong \bb K.\] Second, let $c \in \spec(\ul \phi)$ be an isolated action value, such that all $x \in \mathrm{Fix}(\phi)$ with $\cA_{\phi}(x) = c$ are isolated. In view of Section \ref{subsec: estimates}, for two distinct fixed points $x,y \in \fix(\phi),$ there exists $\epsilon_0 > 0,$ such that all Floer trajectories, or product structures considered in this paper, with $x,y$ among their asymptotics, carry energy of at least $\epsilon_0.$ Hence for $\epsilon > 0$ sufficiently small, \[HF(\ul \phi)^{(c-\eps, c+\eps)} \cong \bigoplus_{\substack {x \in \fix(\phi),\\ \cl A_{\phi} (x) = c}} HF^{\loc}(\phi, x) .\] Finally, the above ``building block" property implies that if all $x \in \fix(\phi)$ are isolated, and $M$ is aspherical or exact, then for each $a,b \in (\R \setminus \spec(\ul \phi)) \cup \{\pm \infty\},$ $a < b,$  there is a spectral sequence arising from the action filtration, that converges to $HF(\ul \phi)^{(a, b)}$ and has $E_1$-page given by  \[ \bigoplus_{\substack {x \in \fix(\phi),\\ a< \cA_{\phi}(x) < b}} HF^{\loc}(\phi, x),\] filtered by $\cl A_{\phi}.$

\subsection{Local equivariant Floer cohomology}\label{subsec: Local equiv}

%\red{careful - there is the $x \to \phi(x)$ action... hence local is not that local}

The above situation readily extends to the case of equivariant Floer cohomology. Indeed, supposing that all fixed points $\fix(\phi^p)$ of $\phi^p$ are isolated (or more generally belong to isolated connected Morse-Bott submanifolds), there is an upper and a lower bound depending only on $\phi$ and $\dim M$ on the possible indices of the fixed points of a sufficiently $C^2$-small non-degenerate Hamiltonian perturbation of $\phi^p.$  We choose this perturbation to be of the form $\phi_1^p,$ where $\phi_1$ is a sufficiently $C^2$-small non-degenerate perturbation of $\phi.$ In particular, the terms $d^i_{\alpha}$ of the equivariant differential vanish for all $i > i_0(\phi),$ independently of the choice of perturbation data. Therefore, the equivariant differential depends only on the perturbation data $J_{t,w}, H_t$ in a compact family corresponding to $w \in S^{i_0(\phi)}.$ Hence, Proposition \ref{prop: monotonicity} ensures that the perturbation data can be chosen in such a way that the trajectories of the equivariant differential between generators inside a sufficiently small isolating neighborhood $U_x$ of $x \in \fix(\phi^p)$ (again, more precisely of the image of the flat section $\sigma_x$ in $M_{\phi^p}$) stay inside $U_x.$ Furthermore, the same is true for neighborhoods $U_{\phi^{m}x}$ of $\phi^{m} x$ for $m \in \zp.$ Therefore, by definition of the equivariant differential, gluing, and compactness, the critical points of $\phi_1^p$ in $U = \bigcup_{m \in \zp} U_{\phi^{m}x}$ form a complex, and the cohomology of this complex is independent of the Hamiltonian perturbation $\phi_1^p$ of $\phi^p.$ We call this cohomology the equivariant local Floer cohomology $HF^{\loc}_{\zp}(\phi^p,\zp\, x)$ of the orbit $\zp \, x.$ In the special case when $x$ is an iterated fixed point, that is $\phi(x) = x,$ or $\zp\, x = \{x\},$ then $\phi^m (x) = x$ for all $m\in \zp$ and only one isolating neighborhood $U$ of $x$ with respect to $\phi^p$ is necessary, and we abbreviate $HF^{\loc}_{\zp}(\phi^p,\zp\, x)$ to $HF^{\loc}_{\zp}(\phi^p,x^{(p)}).$ We remark that in the case of simple $p$-periodic points, where the orbit $\zp\, x$ has $p$ elements, all the points $\phi^m(x)$ for $m \in \zp$ are distinct, hence the flat sections $\sigma_{\phi^m(x)}$ for $m \in \zp$ have disjoint images in $M_{\phi^p},$ and the isolating neighborhoods $U_{\phi^{m}x}$ can, and should, be chosen to be disjoint. 

The local equivariant Floer cohomology enjoys properties similar to those of usual local Floer cohomology. First if $x$ is non-degenerate as a fixed point of $\phi^p,$ then if $x$ is iterated, we have \[HF^{\loc}_{\zp}(\phi^p,x^{(p)}) \cong H^*(\zp,\bK) = \rp\] as $\rp$-modules, and if $x$ is simple, then $\zp$-action on $\zp\, x$ is free and transitive, and \[HF^{\loc}_{\zp}(\phi^p,\zp\,x) \cong H^*(\zp,\bK[\zp]) = \bK = \rp/\left< u, \theta \right>.\]

Second, let $c \in \spec(\ul \phi^p)$ be an isolated action value, such that all $x \in \mathrm{Fix}(\phi^p)$ with $\cA_{\phi^p}(x) = c$ are isolated. In view of Section \ref{subsec: estimates}, we again obtain that for $\epsilon > 0$ sufficiently small, \[HF^*_{\zp}(\ul \phi^p)^{(c-\eps, c+\eps)} \cong \bigoplus HF^{\loc}_{\zp}(\phi^p, \zp\, x),\] the sum running over the orbits of the $\zp$-action on $\{x \in \fix(\phi^p)\,|\,\cl A_{\phi^p} (x) = c\}.$ 

Furthermore, if all $x \in \fix(\phi^p)$ are isolated, and $M$ is aspherical or exact, then for each $a,b \in (\R \setminus \spec(\ul \phi)) \cup \{\pm \infty\},$ $a < b,$  there is a spectral sequence arising from the action filtration that converges to $HF_{\zp}^*(\ul \phi^p)^{(a, b)}$ and has $E_1$-page given by  \[ \bigoplus_{\substack O \in \{x \in \fix(\phi^p),\\ a< \cA_{\phi}(x) < b\} /\zp} HF^{\loc}_{\zp}(\phi, O),\] filtered by $\cl A_{\phi^p}.$

Finally, tensoring with $\cK = \bK((u))$ over $\bK[[u]]$ everywhere we obtain a similar spectral sequence for the Tate cohomology groups with $E_1$-page given in terms of the local Tate cohomology groups.

\subsection{Local product and coproduct operations}\label{subsec: local prod coprod}

%\red{the operations induce local versions}

Consider an isolated fixed point $x \in \fix(\phi),$ with isolating neighborhood $U$ of $\sigma_x$ in $M_{\phi}$ that extends to a neighborhood $U_{\cP}$ of $\sigma_x$ in $E_{\cP}$ and a neighborhood $U_{\cC}$ of $\sigma_x$ in $E_{\cC}.$ 

By Proposition \ref{prop: monotonicity}, and the argumentation of Section \ref{subsec: Local equiv}, for a sufficiently $C^2$-small Hamiltonian perturbation of $\phi,$ the moduli spaces defining the product and coproduct operations with all inputs and outputs restricted to lie inside $U$ involve only sections that lie inside $U_{\cP}$ and $U_{\cC}$ respectively. Furthermore, they define chain maps on the suitable local cohomology groups, and hence operations 

\[\cP^{\loc}_{x}: H^*(\zp; CF^{\loc}(\phi,x)^{\otimes p}) \to HF^{\loc}_{\zp}(\phi^p,x^\ip),\] \[\cC^{\loc}_{x}: HF^{\loc}_{\zp}(\phi^p,x^\ip) \to H^*(\zp; CF^{\loc}(\phi,x)^{\otimes p}).\]

Finally, choosing sufficiently small isolating neighborhoods, it is straightforward to deduce that the local Floer cohomology, its
equivariant version at an iterated fixed point, and the local products and coproducts $\cP^{\loc}_x, \cC^{\loc}_x$ depend only on the germ of $\phi$ at $x.$

\section{The local coproduct-product is invertible}\label{sec:locally invertible}

%!TEX root=pantsproduct5.tex

We first prove the assertion of Theorem \ref{thm: main} in the case of a non-degenerate fixed point $x$ of $\phi.$ The case of local Floer cohomology, as well as the general symplectically aspherical and exact cases, will follow directly by a spectral sequence argument. The argument in this section is the main technical novelty of the paper, allowing us to extend results of \cite{Seidel} to primes $p>2.$

%\red{consider the order of presentation!} %The monotone case requires slightly more careful spectral sequence considerations.

%
%\red{move definition of $\rp, \hrp$ to the beginning, because it is a very useful notation!}
%Set $\rp = \bK[[u]]\left<\theta \right>$ for the cohomology of $B(\zp),$ and $\hrp = \bK((u)) \left<\theta \right> = \rp \otimes_{\bK[[u]]} \bK((u))$ for its Tate version. Note that $\hrp$ is a vector space of dimension $2$ over $\cK = \bK((u)).$ Then

Fix for the duration of this section a non-degenerate fixed point $x \in \fix(\phi)$ and isolating neighborhoods $U$ of $x$ for Floer cohomology and $U^\ip$ of $x^\ip$ for $\zp$-equivariant Floer cohomology. Recall that \[HF^{\loc}(\phi,x) \cong \bK,\] 
\[HF^*(\zp; CF^{\loc}(\phi,x)^{\otimes p}) \cong \rp,\] \[HF^{*,\loc}_{\zp}(\phi^p,x^\ip) \cong \rp .\] 

Consider the local product and coproduct operators %\red{define carefully} \red{Prove that these are precisely the local contributions of $\cP$ (and maybe also $\cC$)}
%\red{

\[\cP^{\loc}_{x}: HF^*(\zp; CF^{\loc}(\phi,x)^{\otimes p}) \to HF^{*,\loc}_{\zp}(\phi^p,x^\ip),\] \[\cC^{\loc}_{x}: HF^{*,\loc}_{\zp}(\phi^p,x^\ip) \to HF^*(\zp; CF^{\loc}(\phi,x)^{\otimes p}).\]\\

%Jingyu: should be for group cohomology to $\zp$-equiv local Floer cohomology for $\cP$\[\cP^{\loc}_{x}: H^*(\zp; CF^{\loc}(\phi,x)^{\otimes p}) \to HF^*(\zp;CF^{\loc}(\phi^p,x^\ip))?\], \[\cC^{\loc}_{x}: HF^*(\zp; CF^{\loc}(\phi^p,x^\ip)) \to H^*(\zp; CF^{\loc}(\phi,x)^{\otimes p}).\]}

In the subsections below we show that \begin{equation} \label{eq:key local identity} \cC^{\loc}_x \circ \cP^{\loc}_x = (-1)^n u^{(p-1)n} \cdot \id,\end{equation} and hence it becomes invertible after tensoring with $\cK = \F_p((u)).$ Indeed, $u^{(p-1) n}$ is a unit in $\cK.$ Recall that $\hrp = \F_p((u))\langle \theta \rangle.$ Since $\dim_{\cK} \hrp < \infty,$ this implies that $\cP^{\loc}_x$ becomes invertible after extending coefficients to $\cK.$

\subsection{Invariance properties} \label{subsec:invar}

%isolated deformations

First, using invariance properties of $\cC^{\loc}_x \circ \cP^{\loc}_x$ under isolated deformations, we show that \begin{equation}\label{eq:weak key local identity}\cC^{\loc}_x \circ \cP^{\loc}_x = c_n \cdot u^{(p-1)n} \cdot \id,\end{equation} for a constant $c_n \in \bK.$  In Section \ref{subsec:Betz-Cohen} we calculate that $c_n = (-1)^n,$ using a reduction to the Morse-theoretic model of Betz-Cohen \cite{BetzCohen} type.

We follow the arguments of Seidel \cite[Section 6]{Seidel}, combined with the additional flexibility provided by an alternative interpretation of $\cC^{\loc}_x \circ \cP^{\loc}_x$ as an operation \[\cl Z^{\loc}_{x}: HF^*(\zp; CF^{\loc}(\phi,x)^{\otimes p}) \to HF^*(\zp; CF^{\loc}(\phi,x)^{\otimes p})\] obtained by counting $p$-tuples of Floer cylinders with a diagonal-type incidence constraint. Intuitively, one should think of the cup product with a suitable equivariant diagonal class. This corresponds to requiring our Floer cylinders to be incident when evaluated at a marked point in each domain curve. Technically speaking, this operation allows us to show a more general isolated-deformation invariance than that of $\cP_x^{\loc}$: indeed, chambers in the linear symplectic group defined by excluding $p$-th roots of unity as eigenvalues, as in \eqref{eq: chambers}, play no role for this new map. In turn, this is useful for reducing the question to Morse theory in the setting of local Floer cohomology (recall that there is in general no inverse pair of PSS isomorphisms in this setting, which presents an additional technical difficulty). 

\medskip

%\red{Important: include careful definition of $\cl Z_x^{\loc}$} 

\begin{df}\label{def:Z local}
	The chain-level operation \[\cl Z^{\loc}_{x}: CF^*(\zp; CF^{\loc}(\phi,x)^{\otimes p}) \to CF^*(\zp; CF^{\loc}(\phi,x)^{\otimes p})\] is defined as follows. Consider $p$ cylinders $C^\m = \R \times S^1,$ $\m \in \zp.$ Choose $p$ almost complex structures $J^{\m}_{s,t,w} \in \cJ_{\phi}$ each depending on the point $(s,t) \in C^\m,$ and $w \in S^{\infty}.$ We require that 
	\begin{eqnarray}
	&& J^{\m}_{s,t,\tau(w)} = J^{\m}_{s,t,w}, \;\text{for all}\; w \in S^{\infty}\\
	&& J^{\m,\pm}_{s,t,k\cdot w} = J^{\m+k,\pm}_{s,t,w}, \;\text{for all}\; k\in \zp \\
	&& J^{\m}_{s,t,w} = J_t,\;\text{for}\; |s|\geq 2 
	\end{eqnarray}
	
	%\red{define $J^w_z$ and the moduli spaces}
	
	For a Morse flow-line $w\colon \R \rightarrow S^{\infty}$ in $\sP^{i,\m}_{\alpha},$ for $\alpha \in \{0,1\},$ as defined in Section \ref{sec:Morse}, we set $J^{\m,w}_{s,t} = J^{\m}_{s,t,w(s)}.$
	
%\red{suitable covers...}	
	
%{\color{green} Clean\\
	
%\circ j =J^{\m,w}_{s,t} \circ (du_{\m} - Y_{\m}), \\
	
	Now as in the definitions in Section \ref{sec:prod-coprod}, introducing $C^2$-small Hamiltonian perturbations $H^{\m}_{s,t}$ with corresponding perturbation form $Y_{\m} = Y^{\m}_{s,t} \otimes dt$ compactly supported away from $(0,0) \in C^{\m},$ we look at the moduli spaces $\cM^{i,\m}_{\cl Z,\alpha}(x_0^{-},\ldots,x_{p-1}^{-};x_0^{+},\ldots,x_{p-1}^{+})$ of solutions $(w,(u_{\m})_{\m \in \zp}),$ to the following parametric Floer equation. Let $\til{C}^{\m} = \R \times \R$ be the universal cover of ${C}^{\m},$ and $\lambda(s,t) = (s,t+1)$ the deck transformation. Identifying $\zp = \{0,1,\ldots,p-1\}$ as a set, $u_{\m}: \til{C}^{\m} \to M$ and $w \in \sP^{\m,i}_{\alpha}$ satisfy \begin{equation} 
	\begin{cases}
	(du_{\m} - Y_{\m})^{(0,1)}_{J^{\m,w}_{s,t}} = 0\\
	u_\m(z)=\phi(u_\m(\lambda(z)))\\
	u_0(0,0) = u_1(0,0) = \ldots = u_{p-1}(0,0). 
	\end{cases}
	\end{equation}
	with asymptotic conditions 
	\begin{equation}
	\displaystyle\lim_{s \rightarrow - \infty}u_k(s,t)=x^-_k(t), \ \  \displaystyle\lim_{s \rightarrow \infty}u_k(s,t)=x^+_k(t)
	\end{equation}
where $\phi(x^{\pm}_k(t+1)) = x^{\pm}_k(t) \equiv x^{\pm}_k$ are the suitable fixed points, considered as twisted loops.
	%(\tilde{\epsilon}^-_k(s,t)
%\tilde{\epsilon}^+_k(	
\end{df}

\begin{figure}[htb]
	\centerline{\includegraphics{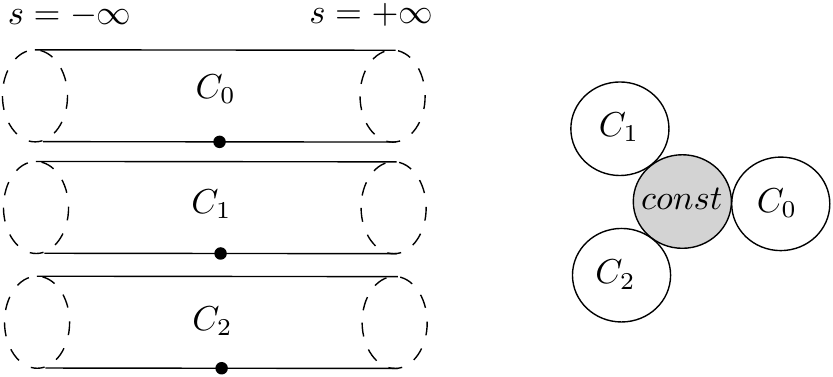}}
	\caption{Left: three Floer cylinders with incidence constraint at a marked point. Right: diagram manifesting the $\Z/3\Z$ symmetry, where the cylinders are viewed ``from the side", and the incidence constraint is viewed as a constant disk in the middle.}\label{Figure: Z map}
\end{figure}

\medskip

\begin{rmk}
It is not difficult to define an equivalent equation alternatively in terms of suitable symplectic fibrations. Indeed, by Section \ref{sec:Floer_coho}, we may consider each $u_{\m}$ as a section of a copy of $\R \times M_{\phi},$ with suitable perturbation and boundary data. Furthermore, the fiber over $(0,0) \in \R \times S^1$ of $M_{\phi}$ is naturally identified with $M.$ This allows us to write the necessary incidence condition. To work locally, we restrict attention to a neighborhood of the flat section $\sigma_x$ by Section \ref{subsec: estimates}.
\end{rmk}

\medskip

%for all $z$ in $L^N_p$ for each pre-fixed $N \in \Z_{\geq 0},$

It is straightforward to show that transversality for the moduli spaces obtained thus can be achieved by generic choice of $J^{\m}_{s,t,z}$ and $H^{\m}_{s,t},$ the latter being $C^2$-small, making $\cM^{i,\m}_{\cl Z,\alpha}(x^0_{-},\ldots,x^{p-1}_{-};x^0_{+},\ldots,x^{p-1}_{+})$ a smooth manifold of dimension \[ \sum_k |x^-_k|- \sum_k |x^+_k|+i-\alpha-2n(p-1).\] 

Furthermore, the dimension $0$ moduli spaces, corresponding to the condition $\sum_k |x^-_k| = \sum_k |x^+_k|-i+\alpha+2n(p-1),$ are compact by a standard Gromov-Floer compactness argument, since in the local and the weakly exact cases there are no holomorphic curves present. Hence they consist of a finite number of points. 

\medskip

Then $\cl Z_x^{\loc},$ which we abbreviate as $\cl Z,$ is given by the collection of operations %\blue{added alphas and thetas}
\begin{align}
&\cl Z^{i,\m}_{\alpha}\colon C^*(\zp; CF^*(\phi)^{\otimes p}) \rightarrow C^{*-i+\alpha+2n(p-1)}(\zp; CF^*(\phi)^{\otimes p}) \\
&\cl Z^{i,\m}_{\alpha}(x^+_0 \otimes \cdots \otimes x^+_{p-1})=\sum \#\cM^{i,\m}_{\cl Z,{\alpha}}(x_0^-, \cdots, x_{p-1}^-;x_0^+, \cdots, x_{p-1}^+)(x^-_0 \otimes \cdots \otimes x^-_{p-1})
\end{align}

Following the usual recipe,  if we set $\cl Z^i_{\alpha}=\sum_{\m} \cl Z^{i,\m}_{\alpha},$ then the $\zp$-equivariant $p$-cylinder map is given by
\begin{eqnarray}
&& \cl Z \colon CF^*(\phi^p)[[u]] \langle \theta \rangle \rightarrow CF^*(\phi)^{\otimes p}[[u]] \langle \theta \rangle\\
&& \cl Z(-\otimes 1)=(\cZ_0^0 + u \cZ^2_0 + \ldots ) \otimes 1+ (\cZ^1_0 + u \cZ^{3}_0 + \ldots ) \otimes \theta,\\
&& \cZ(-\otimes \theta)=(u \cZ^2_1 + u^2 \cZ^4_1 + \ldots ) \otimes 1+ (\cZ^1_1 + u \cZ^{3}_1 + \ldots ) \otimes \theta.
\end{eqnarray}

As for $\cC, \cP,$ the chain map relation for $\cZ$ follows, by Section \ref{subsec: estimates} and standard Gromov-Floer compactness, transversality, and gluing arguments, from looking at compactifications of the $1$-dimensional moduli spaces $\cM^{i,\m}_{\cZ,{\alpha}}(x_0^-, \cdots, x_{p-1}^-;x_0^+, \cdots, x_{p-1}^+).$ Furthermore the following identity holds on the chain level in the isolated non-degenerate case.

%... \red{write formula}

\medskip

\begin{lma}
	$\cC^{\loc}_x \circ \cP^{\loc}_x = \cl Z_x^{\loc}$
\end{lma}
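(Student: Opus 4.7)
\begin{pfs}
The plan is to prove this chain-level identity by a gluing/degeneration argument, exhibiting both operations as the two ends of a single parametrized moduli space.

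The starting observation is that, by the standard principle ``composition = gluing'' in Floer theory, the composition $\cC^{\loc}_x \circ \cP^{\loc}_x$ is computed, up to chain homotopy, by counting solutions on a glued Riemann surface with a long neck inserted between a copy of $S_\cP$ and a copy of $S_\cC$, joined along the negative end of $S_\cP$ and the positive end of $S_\cC$ (both of which are cylinders over $\R/p\Z$ carrying the $\zp$-action). Call such a glued surface $S_\cP \#_R S_\cC$, where $R \in [0,\infty)$ is the gluing parameter. The $\zp$-action extends, and as $R \to \infty$ the count on $S_\cP \#_R S_\cC$ recovers $\cC^{\loc}_x \circ \cP^{\loc}_x$.

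Next I would construct a 1-parameter family $\{S_R\}_{R \in [0,\infty]}$ of closed-domain Riemann surfaces (each with $p$ positive and $p$ negative cylindrical ends, and a $\zp$-symmetry) interpolating between $S_\cP \#_R S_\cC$ for large $R$ and the wedge $\bigvee_{m \in \zp} C^m$ of $p$ cylinders joined at a single marked point for $R = 0$. Explicitly, recalling that $S_\cP$ and $S_\cC$ are $p$-fold covers of $\R \times S^1$ cut out by $y^p = 1 - e^{\mp 2\pi(s+it)}$, the family can be realized by $y^p = g_R(z)$ for a continuous family of meromorphic functions $g_R$ whose branch divisor moves to a single double point as $R\to 0$. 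In this limit the two ramification points of $S_\cP$ and $S_\cC$ collide, the intermediate cylinder collapses, and the $p$ sheets of the cover meet at one point; the resulting nodal surface is precisely the domain of $\cl Z^{\loc}_x$, and the nodal identification becomes the incidence constraint $u_0(0,0) = \cdots = u_{p-1}(0,0)$ of Definition \ref{def:Z local}. I would then extend the $\zp$-equivariant Floer data $(J_{z,w}, Y_z)$ used to define $\cC$ and $\cP$ to a smooth family over $\{S_R\}$, arranging that $Y$ vanishes near the (collapsing) branch locus, so that the $R=0$ limit matches the data of Definition \ref{def:Z local}.

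The third step is the standard compactification argument. One considers the parametrized moduli spaces $\cM^{i,m}_{\alpha,\mrm{par}}(x^-_\bullet ; x^+_\bullet)$ of expected dimension one over $R \in [0,\infty]$. By Proposition \ref{prop: monotonicity}, the local data and $C^2$-smallness of the Hamiltonian perturbations force every solution to stay inside the chosen isolating neighborhood, and no sphere bubbling is possible in the local setting. Hence the only codimension-$1$ boundary contributions are: (i) Floer breaking at the input/output cylindrical ends, producing $d \circ (\cZ - \cC\circ\cP) \pm (\cZ - \cC \circ \cP)\circ d$; (ii) breaking of the Morse flow line $w$ on $S^\infty$, contributing terms that, summed with the equivariant differential structure, yield the chain-map relation; and (iii) the two endpoints $R=0$ and $R=\infty$, contributing $\cZ^{\loc}_x$ and $\cC^{\loc}_x \circ \cP^{\loc}_x$ respectively. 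Assembling these identities with the $\zp$-equivariant signs of Appendix \ref{app:signs and or} yields the lemma.

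The hard part will be the analysis at $R = 0$: identifying the nodal limit of solutions on $S_R$ with honest solutions to the $\cl Z$-equation requires a careful ``soft rescaling'' near the collapsing branch points, together with a verification that no non-trivial bubble concentrates at the node (again using the crossing-energy bound from Section \ref{subsec: estimates} and the exactness of the local setup). Once this limiting description is in place, the orientation signs match automatically because the $\zp$-symmetries on $\{S_R\}$ and on the configuration of $p$ marked cylinders are compatible by construction.
\end{pfs}
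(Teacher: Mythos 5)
Your overall strategy --- a one-parameter family of $\zp$-symmetric domains interpolating between the glued surface $S_{\cP}\#_R S_{\cC}$ and a degenerate configuration computing $\cl Z^{\loc}_x$, with the parametrized moduli space supplying the homotopy --- is the same as the paper's. The gap is at the degenerate end $R=0$, exactly the point you defer as ``the hard part''. For $p>2$ the configuration you propose, $p$ cylinders ``joined at a single marked point'', is not a nodal Riemann surface: a point where $p\geq 3$ sheets meet is a non-nodal singularity, it is not the stable limit of your family $y^p=g_R(z)$, and there is no off-the-shelf gluing or Gromov--Floer compactness theory at such a singularity, so the identification ``the $R=0$ boundary contributes $\cl Z^{\loc}_x$'' cannot be justified as you describe. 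Moreover, your assertion that ``no non-trivial bubble concentrates at the node'' points in the wrong direction: a bubble \emph{does} appear. When the two branch points collide, the stable limit of these branched covers is the nodal curve $\til{C}$ consisting of the $p$ cylinders $C^{\m}$ together with one ghost component $S\cong \C P^1$ (the connected $p$-fold cover of the base bubble containing the two colliding branch points), attached to the cylinders at the $p$ nodes $e^{2\pi i\m/p}\in S$. In the local exact setting the restriction of a limit solution to $S$ is constant, and it is precisely this constant sphere that encodes the incidence constraint $u_0(0,0)=\cdots=u_{p-1}(0,0)$ of Definition~\ref{def:Z local}.

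This is the paper's key observation in this proof: one must replace the incidence-constrained $p$-cylinder configuration by the nodal curve $\til{C}$ with the extra $\C P^1$ component, because that --- and not the wedge --- is the right domain on which to perform gluing. Once the family $\mathbf{\cR}$ is built with $\overline{\mathbf{\cR}}_0=\til{C}$ and $\overline{\mathbf{\cR}}_1$ the nodal union of $S_{\cC}$ and $S_{\cP}$ (the paper constructs the interpolation via $\zp$-invariant hyperbolic metrics), both endpoints are standard nodal degenerations, Proposition~\ref{prop: monotonicity} confines everything to the isolating neighborhood, and counting the parametrized solutions gives the operator $\cK$ with $\cC^{\loc}_x\circ\cP^{\loc}_x-\cl Z^{\loc}_x=d^{\zp,\loc}_x\cK-\cK d^{\zp,\loc}_x$, which is what the paper actually establishes (and what your boundary bookkeeping also yields once item (i) is written as $d\circ\cK\pm\cK\circ d$ rather than $d\circ(\cZ-\cC\circ\cP)\pm(\cZ-\cC\circ\cP)\circ d$). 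So your plan goes through only after inserting the ghost-sphere nodal model at $R=0$; with it, the ``soft rescaling'' you worry about reduces to standard gluing at nodes.
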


\begin{proof}
	The idea behind this proof consists in a degeneration-gluing argument. However, to carry it out, we must replace the $p$ trajectories $u_{\m}: \til{C}^{\m} \to M,$ incident at $u_{\m}(0,0),\; \m \in \zp,$ by a map $u:\til{C} \to M,$ where now $C$ is the nodal curve consisting of the curves $C^{\m}, \; \m \in \zp,$ and one genus zero curve $S \cong \C P^1 \cong \C \cup \infty,$ with nodes given by identifying $(0,0) \in C^{\m}$ with $e^{2\pi \m i /p} \in S.$ Of course in the local, and the weakly exact, cases the restriction $u_{\ast}$ of the holomorphic curve $u$ to $S$ will be constant. However, this turns out to be the right domain to perform gluing. Note that $C$ admits a holomorphic $\zp$-action, given by cyclically permuting the $C^{\m}$ under the natural identification and rotating $S$ by the corresponding $p$-th roots of unity. The degenerations are schematically described in Figures \ref{Figure: degeneration 1} and \ref{Figure: degeneration 2}.
	
	\begin{figure}[htb]
		\centerline{\includegraphics{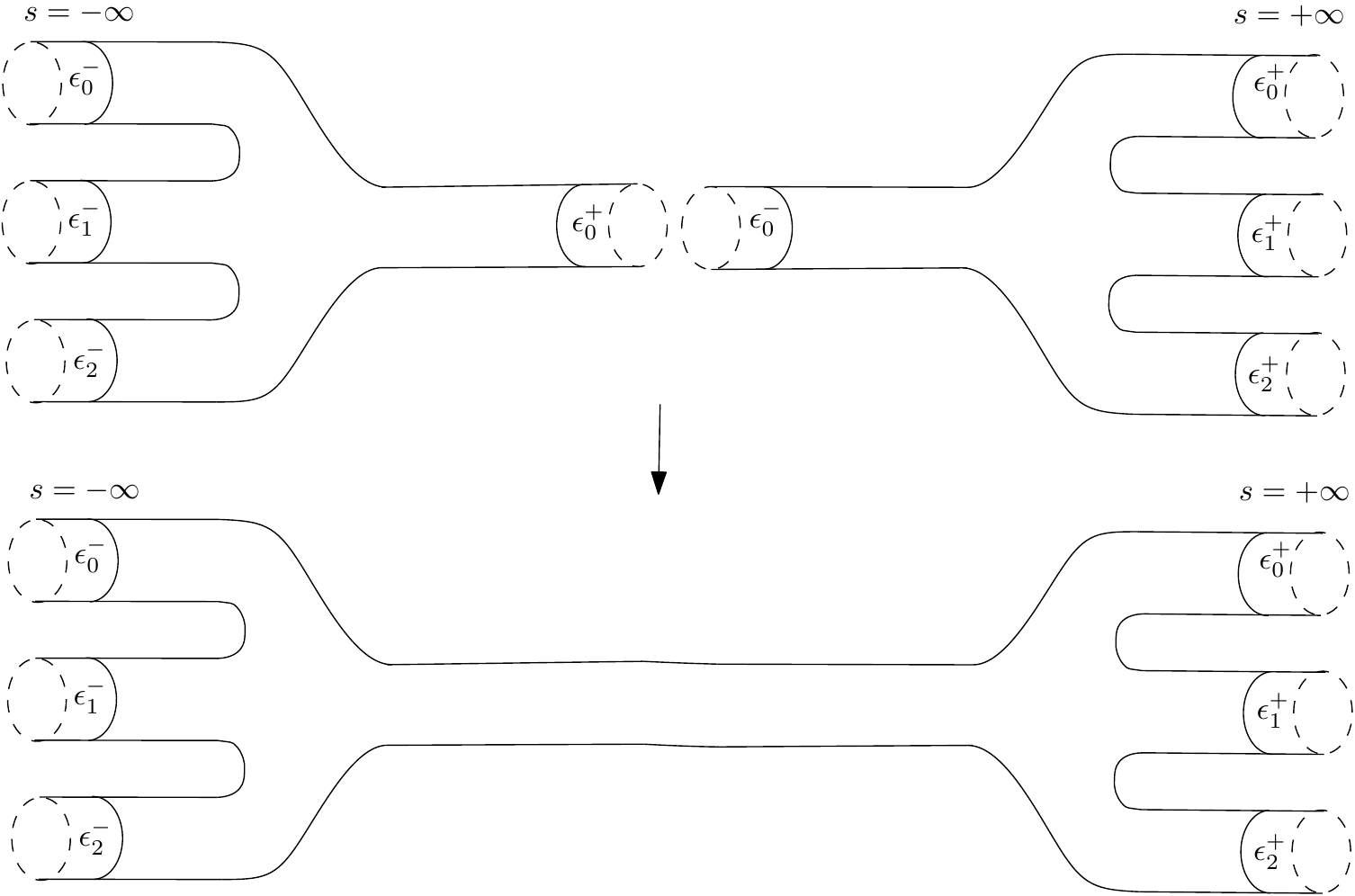}}
		\caption{Gluing of the coproduct and the product.}
			\label{Figure: degeneration 1}
			\bs
	%	\caption{The curve $S_{\cl{C}}$ and its cylindrical ends for $p=3.$ Left: schematic picture emphasizing the curve structure. Right: schematic picture emphasizing the $\Z/3\Z$ symmetry; this is a $\Z/3\Z$-symmetric configuration of $4$ punctures and cylindrical ends, $1$ positive and $3$ negative, on $\C P^1 \cong \C \cup \{ \infty \}.$ }
			\centerline{\includegraphics{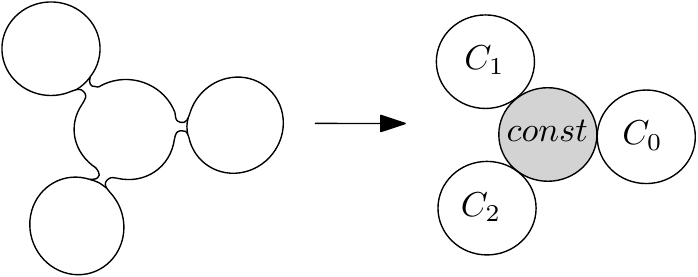}}
			\caption{Degeneration from the glued coproduct-product to $\cl{Z}_x^{\loc},$ ``side-view".}
	\label{Figure: degeneration 2}
	\end{figure}
	
	Consider the following family $\mathbf{\cR} \cong (0,1)$ of Riemann surfaces with $\zp$-action. Choose cylindrical ends $\epsilon^{\m}_{+}:[1,\infty) \times S^1 \to S$ at the points $e^{2\pi \m i /p} \in S$ that are equivariant with respect to the $\zp$-action, $\l \cdot \epsilon^{\m}_{+}(\zeta) = \epsilon^{\m+\l}_{+}(\zeta)$ for all $\zeta \in [1,\infty) \times S^1$ and $\l \in \zp,$ and $\epsilon^{\m}_{-}:(-\infty,1] \times S^1 \to C^{\m}$ that are identified under the isomorphisms $C^{\m} \cong \R \times S^1.$ Performing gluing with parameter $l_{+} \in [1,+\infty)$ we obtain one part of the family, $[1,\infty) \to \mathbf{\cR}$. Another part of the family, $(-\infty,1] \to \mathbf{\cR}$ is given by gluing $S_{\cP}$ and $S_{\cC}$ along $\epsilon_0^{-}: (-\infty,1] \times S^1 \to S_{\cP}$ and $\epsilon_0^{+}: [1,\infty) \times S^1 \to S_{\cC}$ with gluing parameter $-l_{-} \in [1,\infty),$ where $l_{-} \in (-\infty,1].$ Of course, by equivariance, this gives the same Riemann surface as gluing along $\epsilon_m^{-},\epsilon_m^{+}$ would give, for all $m \in \zp.$ Let $r$ denote the natural coordinate on $\R.$ The gluing above, after suitable reparametrization, gives a map $\{|r| \geq 1\} \to \cM_{0,p,p}^{\zp}$ to the moduli space of genus $0$ curves with $\zp$-action, with $p$ negative ordered marked points, and $p$ positive ordered marked points, each $p$-tuple being $\zp$-equivariant. Finally, for $r \in [-1,1],$ we choose an extension of the map ${\{|r| \geq 1 \}} \to \cM_{0,p,p}^{\zp},$ up to reparametrization, to a smooth map $\R \to \cM_{0,p,p}^{\zp}.$ This is indeed possible, by direct construction involving hyperbolic polygons: for example, representing each such complex structure by a hyperbolic metric with cusps at the marked points, and requiring that the metric be invariant under the $\zp$-action (which, we recall, acts freely transitively on the set of negative cusps, and also on the set of positive cusps), as well as under the orientation-reversing involution obtained from complex conjugation on $S$ and $(s,t) \mapsto (-s,t)$ on each $C^{\m},$ the parameter $r \in \R \cong \mathbf{\cR},$ is given, up to reparametrization, by $r = -\log(l),$ where $l$ is the length of the closed geodesic in the homotopy class determined by $\epsilon^{0}_{-}(\mrm{pt} \times S^1).$
	
	%\red{$-$ or $+$?? does it matter?} 
	
	Denote by $\mathbf{\cS} \to \mathbf{\cR}$ the universal Riemann surface. Note that $\mathbf{\cR}$ admits a natural compactification $\overline{\mathbf{\cR}} \cong [0,1],$ where $\overline{\cR}_0$ is given by $\til{C},$ and $\overline{\mathbf{\cR}}_1$ is given by a nodal surface with the complement of the node given by two connected components isomorphic to $S_{\cC}$ and $S_{\cP}$ respectively. Finally, we note that each $\cl S_r,$ $r \in \mathbf{\cR},$ admits a holomorphic map $\pi_r: \cl S_r \to Z = \R \times S^1,$ that is a branched cover with branch locus consisting of two points in $Z.$ Moreover we may choose cylindrical ends $\mathbf{\epsilon}_{-}^{\m}:(-\infty,-1] \times \mathbf{\cR} \to \mathbf{\cS},$ $\mathbf{\epsilon}_{+}^{\m}:[1,\infty) \times \mathbf{\cR} \to \mathbf{\cS},$ for $\m \in \zp,$ so that for each $r \in \mathbf{\cR},$  ${\epsilon}_{-,r}^{\m} = \mathbf{\epsilon}_{-}^{\m}(-,r),$ and ${\epsilon}_{+,r}^{\m} = \mathbf{\epsilon}_{+}^{\m}(-,r)$ satisfy $\pi_r \circ {\epsilon}_{-,r}^{\m} = \id_{(-\infty,-1] \times S^1},\; \pi_r \circ {\epsilon}_{+,r}^{\m} = \id_{[1,\infty) \times S^1},$ for all $\m \in \zp.$
	
	Counting solutions to the parametric Floer equation on the family $\mathbf{\cS} \to \mathbf{\cR}$ of Riemann surfaces, with Floer data depending on points in $S^{\infty},$ as above, provides a chain homotopy between $\cl Z_x^{\loc}$ and $\cl C_x^{\loc} \circ \cl P_x^{\loc}.$ We sketch the technical details below.

%\red{choose compatible perturbation data}\\
	
We choose Floer data $\{J_{z,w,r}\}_{r \in \mathbf{\cR}},$ $\{Y_{z,r}\}_{r \in \mathbf{\cR}},$ depending on $z \in \mathbf{\cS}_r,$ and $w \in S^{\infty},$ which with respect to the $z,w$ coordinates are constant on the cylindrical ends, and satisfy the interpolation (with respect to the $\pi_r$ map), $\zp$-invariance, and shift-invariance axioms (see Sections \ref{sec:equiv-Floer-coho}, \ref{sec:prod-coprod}). Furthermore, as in \cite[Chapter 9]{Seidel-book}, we choose this Floer data compatible with the compactification $\mathbf{\overline{\cR}}$ of $\mathbf{\cR}$ and choices of Floer data for $S_{\cP},S_{\cC}$ and for $\til{C}.$ Note that in fact, we should also take our fibrations $\pi_{S_r}: E_{S_r} \to S_r$ compatible with the compactification. This does not present a difficulty, as the fibrations in the definitions are merely auxiliary, and all the Floer equations we consider can be written in terms of maps from suitable surfaces to $M.$ %\red{enquiry: is this comment necessary in the SFT formulation??}
	
	%\red{write Floer equation}\\
	%\red{write moduli spaces}\\
	
Then for $w \colon \R \rightarrow S^{\infty}$ denoting a gradient flow line that is asymptotic to $Z_i^{\m}$ at $-\infty$ and $Z_{\alpha}^0$ at $\infty,$ one considers the moduli space 	$\cM^{i,\m}_{\mathbf{\cK},\alpha}(x_0^{-},\ldots, x_{p-1}^{-};x_0^{+},\ldots, x_{p-1}^{+}) $ of solutions $(r,w,u \colon {\mathbf{\cS}_r} \rightarrow E_{S_r}),$ $\pi_{S_r} \circ u = \pi_{S_r},$ where $r \in \mathbf{\cR},$ to the parametrized perturbed Cauchy-Riemann equation
	\begin{equation} \label{eqn:homot}
	(du-Y_{\tilde{z},r})\circ j =J_{\tilde{z},w,r} \circ (du-Y_{\tilde{z},r}),
	\end{equation}
	with asymptotic behavior
	\begin{equation}
	\displaystyle\lim_{s \rightarrow - \infty} u(\tilde{\epsilon}^-_k(s,t))=x^-_k(t), \ \  \displaystyle\lim_{s \rightarrow - \infty} u(\tilde{\epsilon}^+_k(s,t))=x^+_k(t) \text{ for } k \in \zp.
	\end{equation}
	For generic choice of $J_z^w$ and $H^w_t$, the moduli space of non-constant solutions to \eqref{eqn:homot} $\cM^{i,\m}_{\mathbf{\cK},\alpha}(x_0^{-},\ldots, x_{p-1}^{-};x_0^{+},\ldots, x_{p-1}^{+})$ is a smooth finite dimensional manifold of dimension
	\begin{equation}
	\sum_k |x^-_k|- \sum_k |x^+_k|+i-\alpha-2n(p-1)+1.
	\end{equation}
	For each $\m \in \zp$ and $\sum_{k=0}^{p-1} |x^-_k|=\sum_{k=0}^{p-1} |x^+_k|-i + \alpha+2n(p-1)-1$, one can define an operation by %\blue{added alphas and thetas}
	\begin{align}
	 &\cK^{i,\m}_{\alpha}\colon C^*(\zp; CF^*(\phi)^{\otimes p}) \rightarrow C^{*-i+\alpha+2n(p-1)-1}(\zp; CF^*(\phi)^{\otimes p}) \\
	 &\cK^{i,\m}_{\alpha}(x^+_0 \otimes \cdots \otimes x^+_{p-1})=\sum \#\cM^{i,\m}_{\cK,{\alpha}}(x_0^-, \cdots, x_{p-1}^-;x_0^+, \cdots, x_{p-1}^+)(x^-_0 \otimes \cdots \otimes x^-_{p-1})
	\end{align}

%	\red{discuss compactness}\\
%	\red{discuss transversality}\\
%	\red{discuss the outcome: this gives a homotopy}\\
	
Again, we set $\cl K^i_{\alpha}=\sum_{\m} \cl K^{i,\m}_{\alpha},$ and consider the map
\begin{eqnarray}
&& \cl K \colon CF^*(\phi^p)[[u]] \langle \theta \rangle \rightarrow CF^*(\phi)^{\otimes p}[[u]] \langle \theta \rangle\\
&& \cl K(-\otimes 1)=(\cK_0^0 + u \cK^2_0 + \ldots ) \otimes 1+ (\cK^1_0 + u \cK^{3}_0 + \ldots ) \otimes \theta,\\
&& \cK(-\otimes \theta)=(u \cK^2_1 + u^2 \cK^4_1 + \ldots ) \otimes 1+ (\cK^1_1 + u \cK^{3}_1 + \ldots ) \otimes \theta.
\end{eqnarray}	
	
By standard compactness, transversality, and gluing arguments, we obtain that \[\cC_x^{\loc} \circ \cP_x^{\loc} - \cl Z_x^{\loc} = d^{\zp,{\loc}}_x \cK - \cK d^{\zp,{\loc}}_x,\] as required.
	%\red{write moduli spaces, transversality, compactness.}
	
	%\red{write a formula??}

	%\red{$1$-dimensional moduli space $\cR \cong (0,1)$ of curves with holomorphic $\zp$-action... compactify by the $p$-nodal curve on one side, and the $1$-nodal curve on the other... }

\end{proof}

\begin{rmk}
The same definition of $\cl Z$ works in the general context of local Floer cohomology, as well as globally, in the symplectically aspherical and exact cases, and the above argument can be modified to prove that $\cl Z$ is chain-homotopic to $\cC \circ \cP.$ In general, however, these maps differ on the chain level. In case of Hamiltonian diffeomorphisms, this point is not important for us, since it is straightforward to see that $HF_{\zp}(\phi^p) \cong H^*(M) \otimes R_p$ by either a continuation map argument, or by a suitable equivariant PSS map (see \cite{Wilkins-PSS}). The local version of this map, however, is required for our main invertibility argument. We shall also use the fact that $\cl Z$ is a chain map in a very particular local case for the proof of Lemma \ref{lma: local equality pm}.
\end{rmk}

\medskip

\begin{rmk}
	The above curve $C$ suggests the configurations one should consider in the presence of holomorphic spheres (in the non-local, non weakly exact case). We believe that at least when the manifold is monotone this can be carried out, but we opt to leave this point for discussion elsewhere.
\end{rmk}

For a matrix $A \in \mrm{Sp}(2n,\R)$ and $\lambda \in \C$ set $\rho_{\lambda}(A) = \det(A-\lambda\cdot\id),$ and denote \[\mrm{Sp}(2n,\R)^* = \mrm{Sp}(2n,\R) \setminus \rho_1^{-1}(\{0\}).\] Note that $\mrm{Sp}(2n,\R)^*$ has two connected components, and \[\sign(\rho_1): \pi_0 (\mrm{Sp}(2n,\R)^*) \to \{ \pm 1 \}\] determines an isomorphism of sets. Similarly, let \begin{equation}\label{eq: chambers} \mrm{Sp}(2n,\R)^{p*} = \mrm{Sp}(2n,\R) \setminus \bigcup_{\lambda^p = 1} \rho_{\lambda}^{-1}(\{0\}).\end{equation} 

%For $A \in \mrm{Sp}(2n,\R)^{p*},$ set $\kappa_p(A) = CZ(\til{A}^p) - p \cdot CZ(\til{A}),$ for an arbitrary lift $\til{A}$ of $A$ to the universal cover $\til{\mrm{Sp}}(2n,\R).$

\medskip

\begin{lma}
	$\cl Z_x^{\loc} = c_{n,\pm} \cdot u^{(p-1)n} \cdot \id,$ where $c_{n,\pm}$ depends only on the connected component of $D\phi_x \in \mrm{Sp}(2n,\R)^*.$ 
\end{lma}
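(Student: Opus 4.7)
The plan is to establish three facts: (i) $\cl Z_x^{\loc}$ is an $R_p$-module endomorphism of $HF^*(\zp; CF^{\loc}(\phi,x)^{\otimes p})$; (ii) it is invariant under deformations of $\phi$ through symplectomorphisms for which $x$ remains a non-degenerate fixed point; and (iii) it necessarily has the claimed form $c_{n,\pm}\cdot u^{(p-1)n}\cdot \id$. The decisive point is that the definition of $\cl Z^{\loc}_x$, unlike that of $\cl P^{\loc}_x$ or $\cl C^{\loc}_x$, uses only Floer data for $\phi$ (on the $p$ cylinders $C^{\m}$), tied together by the diagonal incidence constraint at the origin; it never asks $x$ to be non-degenerate for $\phi^p$. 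This is precisely why the relevant chamber structure is that of $\mrm{Sp}(2n,\R)^*$ rather than $\mrm{Sp}(2n,\R)^{p*}$.

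For (i), the $R_p$-module structure on both source and target comes from the parametrization of the Floer data by the equivariant Morse-theoretic model of $B\zp$ from Section~\ref{sec:Morse}. Standard parametric gluing arguments, applied to the codimension-one boundary of the compactifications of the $1$-dimensional moduli spaces $\overline{\cM}^{i,\m}_{\cl Z,\alpha}$, show that the equivariant $u$- and $\theta$-actions intertwine with the corresponding operations on the source and target; this is completely analogous to the verification for $\cl P$ and $\cl C$ in Section~\ref{subsec: chain map}. For (ii), consider a smooth path $\{\phi_s\}_{s\in[0,1]}$ of symplectomorphisms with $x$ a non-degenerate fixed point throughout, i.e.\ with $D(\phi_s)_x\in\mrm{Sp}(2n,\R)^*$. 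Choose a generic family of Floer data interpolating those used for $\phi_0$ and $\phi_1$, and consider the associated one-parameter moduli spaces. By Proposition~\ref{prop: monotonicity}, for all sufficiently $C^2$-small Hamiltonian perturbations the trajectories remain inside the chosen isolating neighborhoods; since non-degeneracy of $x$ under $\phi_s$ is all that is required for compactness and transversality of the moduli spaces defining $\cl Z$, the usual cobordism argument produces a chain homotopy between the two versions of $\cl Z^{\loc}_x$. Hence $\cl Z^{\loc}_x$ depends only on the connected component of $D\phi_x$ in $\mrm{Sp}(2n,\R)^*$, yielding the constant $c_{n,\pm}$.

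For (iii), the dimension formula for $\cM^{i,\m}_{\cl Z,\alpha}(x^-_0,\ldots,x^-_{p-1};x^+_0,\ldots,x^+_{p-1})$ is
\[ \sum_k |x^-_k|-\sum_k |x^+_k|+i-\alpha-2n(p-1), \]
so isolated solutions with all asymptotes equal to $x$ require $i-\alpha=2n(p-1)$. Thus $\cl Z^{\loc}_x$ shifts the algebraic degree by $2n(p-1)$. Since for odd $p$ this degree is even, the source and target both being isomorphic to $R_p=\F_p[[u]]\langle\theta\rangle$ as $R_p$-modules, any $R_p$-linear endomorphism of degree $2n(p-1)$ is uniquely determined by its value on the generator $1$, and that value must lie in the one-dimensional subspace $\F_p\cdot u^{(p-1)n}\subset R_p$. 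Consequently $\cl Z^{\loc}_x=c_{n,\pm}\cdot u^{(p-1)n}\cdot\id$ for a scalar $c_{n,\pm}\in\F_p$ depending only on the component of $D\phi_x$.

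The main obstacle is really the cobordism step in (ii): one must ensure transversality and compactness for the moduli spaces along a path $\{\phi_s\}$ while keeping the Floer data equivariant and shift-invariant in the $S^{\infty}$-parameter, and one must appeal to Proposition~\ref{prop: monotonicity} uniformly in $s$ to confine all trajectories to the isolating neighborhoods. Granted that, the rest of the argument is an exercise in degree counting and $R_p$-linearity. The remaining work---identifying the constant $c_{n,\pm}$ explicitly---is postponed to the Morse-theoretic calculation of Section~\ref{subsec:Betz-Cohen}.
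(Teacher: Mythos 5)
Your proposal is correct and follows essentially the same route as the paper: reduce, by degree reasons and the uniqueness of the local generator, to the single count $\sum_{\m}\#\cl M^{2n(p-1),\m}_{\cl Z,0}(x,\ldots,x;x,\ldots,x)$ appearing with coefficient $u^{n(p-1)}$, and then prove invariance of this count under deformations of the germ of $\phi$ at $x$ keeping $D\phi_x\in\mrm{Sp}(2n,\R)^*$, via a parametric cobordism (this being exactly the point of using $\cl Z^{\loc}_x$ instead of $\cP^{\loc}_x$, so that only non-degeneracy for $\phi$, not $\phi^p$, is needed). The one place where you are much less explicit than the paper is the interior-boundary analysis of the one-parameter moduli spaces: where you invoke ``the usual cobordism argument produces a chain homotopy,'' the paper identifies the relevant boundary strata (Morse flow-line breaking in $S^{\infty}$, i.e.\ configurations over $\sQ^{1,m_1}_{0}$ and $\sQ^{2,m_2}_{1}$) and shows their contributions cancel after summing over $\m\in\zp$ by the mod-$p$ identities $1-1=0$ and $1+\cdots+1=p\equiv 0$ — equivalently, the vanishing of $1-\sigma$ and $N$ on the span of $x^{\otimes p}$ — which is the crux of why the cobordism closes up and is worth stating in your step (ii).
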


\begin{proof}
We follow \cite[Section 6]{Seidel}, the difference being that we consider the operator $\cl Z_x^{\loc},$ and hence we may deform the differential of the symplectomorphism inside $\mrm{Sp}(2n,\R)^*.$ First of all, by degree reasons, and since $CF^*(\phi)^{\loc}_x$ has, in the non-degenerate case that we consider, the unique generator $x,$ \[\cl Z_x^{\loc}(x^{\otimes p} \otimes 1) = \left(\sum \# \cl M^{2n(p-1),\m}_{\cl Z, 0} (x,\ldots, x; x,\ldots, x) \right) u^{n(p-1)} x^{\otimes p} \otimes 1.\]

It is sufficient to show that for an isolated deformation of the germ of $\phi$ at $x,$ keeping $x$ a non-degenerate fixed point, the count $\sum \# \cl M^{2n(p-1),\m}_{\cl Z, 0} (x,\ldots, x; x,\ldots, x)$ remains invariant. This is carried out precisely as in \cite[Section 6]{Seidel}, by a cobordism argument, which ultimately works because the structure of the compactification of the spaces of Morse flow lines $\sP^{i,m}_{\alpha_0}$ yields the cancellation of the signed counts of the boundary points of the compactified one-dimensional parametric moduli spaces, lying over the interior $(0,1)$ of the parameter space $[0,1],$ after summing over $m \in \zp.$ Specifically, given a point in the boundary of the compactification of a one-dimensional component of the space of solutions to the parametric equation occurring at $r \in (0,1),$ by considering the structure of the corresponding boundary curve, and an index calculation, it is seen as in \cite[Section 6]{Seidel} that the corresponding solution must correspond to the strata \[\sQ^{1,m_1}_{0} \times \sP^{2n(p-1),m_2}_{1},\;\;\sP^{2n(p-1)-1,m_1}_{0} \times \sQ^{2,m_2}_{1},\] where $m_1 + m_2 = m$ in $\zp.$ The principal component of our boundary solution in the compactified parametrized moduli space $\cl M^{2n(p-1),\m}_{\cl Z, 0} (x,\ldots, x; x,\ldots, x)_{\mrm{para}}$ sits therefore in $\cl M^{2n(p-1),m_2}_{\cl Z, 1} (x,\ldots, x; x,\ldots, x)_{\mrm{para}}$ or in $\cl M^{2n(p-1)-1,m_1}_{\cl Z, 0} (x,\ldots, x; x,\ldots, x)_{\mrm{para}},$ and can be seen to be regular. The boundary point is obtained from the principal component and constant non-principal components. However, fixing $m_2$ in the first case, and $m_1$ in the second case, by varying $m_2$ or respectively $m_1,$ we obtain $p$ contributions to the count of boundary points, each one for a different $m.$ The constant non-principal components appear with the same signs as the corresponding unparametrized negative gradient trajectories, and therefore the different contributions in each case cancel out modulo $p$ in the sum over all $m.$ Indeed, in the first case this is because $1-1 = 0\;(\mrm{mod}\; p),$ and in the second case, $1+\ldots+ 1 = 0\;(\mrm{mod}\; p).$

%, given by solutions to the
%$\sP^{2n(p-1),m_2}_{1}$
%($i_2 = 2n(p-1) = 0 \;(\mrm{ mod } \ 2)$ and $i_1 + i_2 - \alpha_2 = i.$)
%\red{add, and in fact correct and rewrite the whole proof}.

\end{proof}

\begin{lma}\label{lma: local equality pm}
	We have $c_{n,+} = c_{n,-}.$
\end{lma}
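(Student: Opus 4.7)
The plan is to establish $c_{n,+} = c_{n,-}$ by bringing both chambers into a common Morse-theoretic framework. Given any sufficiently $C^2$-small Hamiltonian $H$ on a Darboux chart around $x$ with a non-degenerate Morse critical point of Morse index $k$ at $x$, the time-$1$ map $\phi = \phi^1_H$ has $x$ as a non-degenerate fixed point, and a direct linearization calculation yields $\sign \rho_1(D\phi_x) = \sign \det(\mrm{Hess}(H)_x) = (-1)^k$. Thus even $k$ realizes the $+$ chamber and odd $k$ realizes the $-$ chamber, so that both chambers are accessible within a single uniform family of Morse--Hamiltonian models. Combined with the previous lemma, this reduces the problem to comparing $c_{n,\pm}$ within this class of models.

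The next step is a Floer-to-Morse localization. For $H$ sufficiently small, Proposition \ref{prop: monotonicity} together with standard $C^0$- and area-estimates forces every $\zp$-equivariant Floer section defining $\cl Z_x^{\loc}$ to remain inside the Darboux chart, where a standard rescaling of the equation identifies the solution space with pairs consisting of a Morse gradient trajectory of $H$ on $M$ and a Morse gradient trajectory of $\til F$ on $S^\infty$. Under this correspondence $HF^{\loc}(\phi,x)$ is canonically identified with the local Morse cohomology of $H$ at $x$ (concentrated in degree $k$), and $\cl Z_x^{\loc}$ becomes the Morse-theoretic operator of Betz--Cohen type to be analyzed in Section \ref{subsec:Betz-Cohen}.

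The main obstacle, and the final step, is verifying that the resulting Morse-theoretic count is invariant under changes in the Morse index $k$. This is where the hypothesis $p > 2$ enters crucially: since $p$ is odd, the cyclic-permutation signs acting on the $p$ input copies $x^{\otimes p}$ are trivial, so the only chamber-dependent contribution to the count is the sign $(-1)^k$ coming from the orientation of the local Floer generator in terms of the canonical Morse generator. This sign enters the count of constant sections symmetrically---once for each of the $p$ input copies and once for the single output copy---contributing a total factor $(-1)^{k(p+1)} = 1$, since $p+1$ is even. Hence the Morse-theoretic count is independent of $k$, giving $c_{n,+} = c_{n,-}$; the residual combinatorial count is then evaluated in Section \ref{subsec:Betz-Cohen}, where Wilson's theorem yields the common value $(-1)^n$.
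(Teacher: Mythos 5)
Your overall strategy---realize both chambers of $\mrm{Sp}(2n,\R)^*$ by small Morse--Hamiltonian local models of even and odd index $k$, collapse to Morse theory, and compute directly---is a legitimate alternative to the paper's argument; indeed the paper remarks after the lemma that one could ``calculate $c_{n,-}$ separately in the same way, proving our result without the above lemma.'' The paper itself instead avoids any second computation: it takes a local model with two non-degenerate fixed points $y,z$ lying in the two different chambers and cancelling in the Floer differential, and uses acyclicity together with the fact that $\cl Z$ is a chain map on the $E^{p+1}$-page of the action spectral sequence (via the quasi-Frobenius identification) to force $c_{n,+}=c_{n,-}$. Your route, if completed, would be fine---but it is not completed, and the missing piece is exactly the content the lemma is meant to supply.

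Concretely: the Floer-to-Morse collapse of Section \ref{sect: Floer to Morse} and the Betz--Cohen/Euler-class evaluation of Sections \ref{subsec:Betz-Cohen}--\ref{subsec: Wilson theorem} are carried out in the paper only for the model $H=\epsilon|z|^2$, i.e.\ a minimum, which lies in a single chamber. To conclude $c_{n,-}=(-1)^n$ along your lines you must redo that computation for an indefinite quadratic model of odd index $k$, and your step (3) does not do this. The sign bookkeeping you offer is off target in two ways. First, $\cl Z_x^{\loc}$ has $p$ inputs and $p$ outputs (it is $\cl C\circ\cl P$, not $\cl P$), so the phrase ``once for each of the $p$ input copies and once for the single output copy'' and the exponent $k(p+1)$ do not describe the operator; moreover any global sign attached to identifying the local Floer generator with the Morse generator cancels identically for an endomorphism, with no need to invoke the parity of $p+1$. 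Second, and more importantly, the potential chamber-dependence does not sit in such a global sign at all: it sits in the parametrized moduli count itself, i.e.\ in the orientation of the obstruction bundle over the family of gradient flow lines in $S^{\infty}$ for an indefinite Hessian, and in extending the equivariant diagonal/Euler-class identification of Section \ref{subsec: Wilson theorem} beyond the minimum. (One can in fact show independence of $k$---e.g.\ by noting that at the constant solutions each linearized edge operator $\partial_s+\mrm{Hess}(f_k)$ on the full line is an isomorphism for every nondegenerate Hessian, so the obstruction bundle is the representation bundle $V$ associated to $(\R^{2n})^p/\R^{2n}$, which does not see $k$---but some such argument must be made; it is not a consequence of the Koszul-sign observation.) As written, the proposal defers the ``residual combinatorial count'' back to Section \ref{subsec: Wilson theorem}, which only treats one chamber, so the argument has a genuine gap precisely at the point where the two chambers must be compared.
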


\begin{proof}
Here we follow \cite[Section 7]{Seidel}, but for $\cl Z_x^{\loc}.$ Consider the case when locally in a ball $B \subset (\R^{2n},\om_{st}),$ $\phi$ has two non-degenerate fixed points $y,z$ with $dy = z.$ For example, we may take a small Morse function $H$ with $y,z$ being critical points that lie in $B,$ with one gradient trajectory from $y$ to $z,$ and let $\phi$ be the Hamiltonian flow of $H$ for a small positive time $\epsilon >0.$ 

Hence $HF^{\loc}(\phi,B) = 0,$ and hence $H^*(\zp; CF^{\loc}(\phi,B)^{\otimes p}) = 0.$ Moreover, clearly, for the Tate cohomology $\wh{H}^*(\zp; CF^{\loc}(\phi,B)^{\otimes p}) = 0.$ Now, the Tate cohomology can be computed by the action spectral sequence, whose $E^{p+1}$-st page is given by the map induced from $d:\bK\left<y\right> \to \bK \left<z\right>,$ which becomes the identity map after identifying the domain and target with $\bK$ by means of the natural bases $\{y\},\{z\}$ by the natural quasi-Frobenius isomorphisms (see Lemma \ref{lma:quasi_Frob})  \[\wh{H}^*(\zp, CF^{\loc}(\phi,y)^{\otimes p}) \cong \hrp \otimes_{\bK} \bK\left< y \right>^{(1)},\] \[\wh{H}^*(\zp, CF^{\loc}(\phi,z)^{\otimes p}) \cong \hrp \otimes_{\bK} \bK\left< z \right>^{(1)}.\] We proceed to note that $D\phi(y), D\phi(z)$ lie in different components of $\mrm{Sp}(2n,\R)^*.$ Furthermore, since $\cl Z$ induces a chain map between the Tate complexes, it induces a map of the action spectral sequences, for sufficiently small perturbation data, and in particular it induces a chain map on the $E^{(p+1)}$-st page. This immediately yields $c_{n,+} = c_{n,-}.$   
\end{proof}

%\subsection{Degeneration}
%\subsection{Dependence on the linearization}
%\subsection{Invariance under isolated deformation}

Below, we apply a suitable Floer-to-Morse reduction to show that $c_{n,+} = (-1)^n,$ and hence by Lemma \ref{lma: local equality pm}, $c_{n,\pm} = (-1)^n.$ We could also calculate $c_{n,-}$ separately in the same way, proving our result without the above lemma.

%In the following section we apply a suitable version of the local equivariant PSS isomorphism to show that $c_{n,\pm} = (-1)^n.$

%\red{Remark: note that the invariance statement above allows us to use the equivariant PSS isomorphism either for the setting of $\cl Z_x^{\loc}$ or for that of $\cl P_x^{\loc},$ after bringing $D\phi_x$ sufficiently close to $\id.$ Question: in view of the obstruction bundle computations, one should indeed bring it close to identity; how to see this in terms of the PSS maps?} 

\subsection{Local Floer-to-Morse collapse}\label{sect: Floer to Morse}

%\red{ADD LEMMA}

%\begin{lma}
%	content...
%\end{lma}

Section \ref{subsec:invar} allows one to reduce the consideration of $\cl Z^{\loc}_x$ to the local case of a fixed ball $B \subset (\C^{n},\om_{st})$ of radius $1,$ and symplectomorphism $\phi$ generated by Hamiltonian $H = \epsilon \cdot |z|^2,$ where $|z|^2 = \frac{1}{\pi}\sum_{j=1}^n |z_j|^2,$ and $1 \gg \epsilon > 0$ chosen arbitrarily. Note that we may choose the complex structure to coincide with the standard one $J_{st}$ outside $\frac{19}{20} B,$ and to be $C^2$-close to $J_{st}$ on $B.$ In this section we reduce the calculation in this case to Morse theory.

%\red{NOTE: CAN DO THE COLLAPSE IN HAMILTONIAN THEORY ON $\C^n,$ BUT THEN HAVE TO DEAL WITH SPECIAL METRICS (GIVEN BY ACS)... SO SEEMS BETTER TO DO LAGR VERSION STILL}

%\red{Think how to rewrite to incorporate $F_s$ instead of $H_s$}

\medskip

We start with the classical observation that the local Floer complex of $\phi^1_H$ at $0,$ as computed with an $\om_{st}$-compatible almost complex structure $J$ coinciding with $J_{st}$ outside $\frac{19}{20} B,$ is isomorphic to the local Lagrangian Floer complex $CF^{\loc}(\Delta,H \oplus 0, x)$ of the Lagrangian diagonal \[\Delta \subset X,\] \[ X = \C^n \times (\C^n)^{-} \] at $(\phi^1_H \times \id)^{-1} \Delta \cap \Delta = \{x=(0,0)\},$ with Hamiltonian perturbation $H \oplus 0,$ and almost complex structure $J \oplus -J.$ Furthermore, denoting by $\Delta_H = (\phi^1_{H \oplus 0})^{-1}\Delta,$ under this identification \[\cl Z_{x}^{\loc} = \cl Z_{x}^{\loc,\mrm{Lagr}}: C^*(\zp; CF^{\loc}(\Delta,\Delta_H, x)^{\otimes p}) \to C^*(\zp; CF^{\loc}(\Delta,\Delta_H, x)^{\otimes p}),\] the latter being defined analogously to $Z_{x}^{\loc},$ yet in terms of Lagrangian Floer cohomology. 

Finally, we note that the complex structure $J_{st} \oplus -J_{st}$ on $X \cong T^*\Delta,$ coincides with the complex structure on $X,$ induced by the standard Riemannian metric $g$ on $\Delta.$ We recall that a Riemannian metric induces an $\om_{can} = d(\lambda_{can})$-compatible almost complex structure by identifying $T_{(p,q)} (T^*\Delta) \cong T^*_q \Delta \oplus T_q \Delta,$ via the Levi-Civita connection, and acting by $(\alpha,\xi) \mapsto (- G\xi, G^{-1}\alpha),$ where $G:T_q \Delta \to T^*_q \Delta$ is the isomorphism $G(\xi) = g_q(\xi,-).$ Furthermore, we recall that the symplectomorphism $\Theta: X \to T^* \Delta,$ with the standard symplectic structures, is given by the symplectic Cayley transform $(z,w) \mapsto (p,q),$ $p=\frac{w-z}{i\sqrt{2}}, q = \frac{z+w}{\sqrt{2}}.$ 

Let $\pi: T^*\Delta \to \Delta$ be the natural projection. We rewrite the $\cl Z^{\loc}_{x}$ map yet again as \[\cl Z_{x}^{\loc} = \cl Z_{x}^{\loc,\mrm{cot}}: C^*(\zp; CF^{\loc}(\Delta,\pi^*F, x)^{\otimes p}) \to C^*(\zp; CF^{\loc}(\Delta,\pi^* F, x)^{\otimes p})\] with $F\in \sm{\Delta,\R}$ such that \[\Gamma_{dF} = \Delta_H = (\phi^1_{H \oplus 0})^{-1}\Delta.\]

The Morse function $F$ is given in terms of $H$ by the Cayley transform. In particular, like $H,$ $F$ has a unique non-degenerate minimum at $x.$ More precisely, for $q \in \Delta \cong \C^n,$ \[F(q) = \Theta^{\#} H (q) = \int_0^1 \left<(\pi|_{\Delta_H})^{-1}(t \cdot q), q \right>\,dt.\] In other words $\Theta^{\#}H(q) = \int_{\{(\pi|_{\Delta_H})^{-1}(t\cdot q) \;\mathbin{|}\; t \in [0,1] \}} \lambda.$ This formula establishes a bijective correspondence between functions $H'$ that are $C^2$-close to $H$ and functions $F'$ that are $C^2$-close to $F,$ which is continuous in the $C^2$-topology. Moreover, the last observation applies to $H=0.$ Therefore by making $H$ sufficiently $C^2$ small in a suitable neighborhood of $0$ we can make $F$ as $C^2$-small as necessary in a neighborhood of $x.$ Hence we may consider the latter model for all our purposes.

%\red{note: this is correct up to sign; change conventions to $X = (\C^n)^{-} \times \C^n$?}

%\red{rewrite in terms of Lagrangians only; rewrite in terms of $F_s;$ then formula Lemma \ref{lemma:Floer-to-Morse} in terms of $F_s$}

%\red{care with almost complex structures}

\medskip

The above Lagrangian reformulation allows us somewhat more freedom in the choice of almost complex structures, while the reformulation in terms of the function $F$ allows us to relate our constructions to Morse theory. We shall use both, together with a classical convexity argument of Floer \cite{Floer-MorseWitten} to show the following result. 

\medskip

\begin{lma}\label{lemma:Floer-to-Morse}
There exists $\epsilon_0 > 0$ such that for Hamiltonian perturbations $F_s$ of $F$ that have $C^2$ norm smaller than $\epsilon_0,$ and coincide with $F$ outside $\frac{1}{20} B,$ all continuation maps \[C(\{F_s\}): CF^{\loc}(\Delta,\pi^*F, x) \to CF^{\loc}(\Delta,\pi^* F, x)\] along a family of Hamiltonians $\{F_s\}_{s\in \R},$ with $F_s = F$ for $|s|\gg 1,$ and almost complex structures $J_s$ suitably chosen, in fact {\em coincide} with the Morse continuation maps along $\{F_s\}$ considered as Morse functions, with suitable Riemannian metrics $g_s.$
\end{lma}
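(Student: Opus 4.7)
The plan is to adapt Floer's classical Morse-to-Floer argument from \cite{Floer-MorseWitten} to the continuation setting. First I would pick the family of almost complex structures $J_s$ on $T^*\Delta$ to be of Sasaki type associated to Riemannian metrics $g_s$ on $\Delta$, arranged so that outside $\frac{1}{20}B$ the metric $g_s$ equals a fixed flat metric and $J_s$ equals a fixed background $J$, and inside $\frac{1}{20} B$ the metric $g_s$ is $C^2$-close to flat. With this choice, graphs of closed $1$-forms on $\Delta$ in $T^*\Delta$ are automatically $J_s$-holomorphic if and only if the underlying $1$-form is the differential of a solution of the $g_s$-negative gradient equation; this is the standard computation underlying Floer's reduction.

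Next I would establish an a priori $C^0$-estimate: for $\epsilon_0$ small enough, every Floer continuation strip $u \colon \R \times [0,1] \to T^*\Delta$ with boundary on $\Delta$ and $\Gamma_{dF_s}$, both ends asymptotic to the constant strip at $x$, has image contained in an arbitrarily prescribed tubular neighborhood $V$ of the zero section inside $\frac{19}{20}B$. The key tools are a maximum-principle argument applied to the squared fiber norm, which is plurisubharmonic for the Sasaki $J_s$, together with the crossing-energy bound of Proposition \ref{prop: monotonicity}, which prevents escape of mass far from $x$ once $F_s$ is sufficiently $C^2$-small. In particular the Lagrangian boundary conditions $\Delta$ and $\Gamma_{dF_s}$ remain $C^1$-close throughout $V$, so $u$ may be written as the graph of a section of $T^*\Delta \to \Delta$ over a strip in $\Delta$.

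Given this confinement, I would then invoke Floer's argument directly: for $F_s$ sufficiently $C^2$-small and $J_s$ as above, the inhomogeneous Cauchy--Riemann equation for such a graph is \emph{literally identical}, not merely asymptotically equivalent under rescaling, to the $s$-dependent $g_s$-negative-gradient equation for $F_s$ on the base. Consequently the moduli spaces of Floer continuation trajectories and of Morse continuation trajectories are canonically identified as sets. Regularity is inherited, since the linearization of the Floer operator on graphs decomposes as the linearized gradient-flow operator plus a positive-definite contribution from the cotangent directions, hence is surjective whenever the Morse linearization is.

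Finally I would check that this bijection respects orientations, using the coherent-orientation comparison recalled in Appendix \ref{app:signs and or}: the Sasaki splitting of $T(T^*\Delta)$ along $\Delta$ into horizontal and vertical summands induces the standard orientation of the Morse moduli spaces from the Floer one. The main obstacle is the insistence on \emph{coincidence} of continuation maps rather than mere chain homotopy; this is what forces the rigid coordination between $J_s$ and $g_s$ via the Sasaki construction. Once that choice is in place, no auxiliary interpolation or perturbation is required, and the identification of the two continuation maps is a direct consequence of the identification of their defining equations on graphs.
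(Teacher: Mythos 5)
Your overall strategy---confine the continuation strips near the zero section and then reduce to Floer's identification of Floer and Morse trajectories for Sasaki-type $J_s$---is the same route the paper takes, but the decisive analytic step is missing, and the shortcut you substitute for it does not work. Containment of a strip $u$ in a small neighborhood of the zero section does not imply that $u$ can be written as a graph of a section of $T^*\Delta \to \Delta$; and even granting graphicality, the continuation equation for such a $u$ is a genuine PDE in $(s,t)$ and is \emph{not} ``literally identical'' to the $s$-dependent negative gradient ODE unless one first proves that $u$ is $t$-independent with identically vanishing fiber component. That is precisely the content of the Floer argument you invoke, and it must be proved, not asserted: the paper writes $u=(y,x)$ in a co-disk bundle $U$, sets $\eta(s)=\int_0^1 |y(s,t)|^2_{g_s}\,dt$, and establishes the convexity estimate $\eta''(s)\geq \delta\,\eta(s)$ with $\delta=\epsilon_0^2/2$ whenever $|F_s|_{C^2(U,g_s)}<\epsilon_0$; since $\eta\to 0$ at both ends, $\eta\equiv 0$, so $y\equiv 0$, and then \eqref{eq:perturbed Floer} forces $\partial_t x\equiv 0$, i.e.\ the strip is a solution of \eqref{eq:Morse continuation}. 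In your write-up the plurisubharmonicity of the fiber norm is used only for $C^0$ confinement, so nothing rules out $t$-dependent Floer solutions lying close to, but not on, the zero section; this is the gap, and it is exactly where the constant $\epsilon_0$ in the statement enters.

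Two smaller points. You mix the moving-boundary formulation (boundary on $\Delta$ and $\Gamma_{dF_s}$) with the Hamiltonian-perturbation formulation (inhomogeneous term $\pi^*F_s$, both boundary conditions on $\Delta$); the exact identification \`a la \cite{Floer-MorseWitten} is carried out in the latter, and in the former the fiber norm does not vanish on the $\Gamma_{dF_s}$ boundary, so the maximum-principle input changes and, in the adiabatic picture, one only gets a correspondence in a limit rather than an identity of equations. Also, the paper obtains confinement from the classical monotonicity argument of \cite{Sikorav} together with action estimates, after choosing $J_s$ $C^2$-close to $J_{st}\oplus -J_{st}$ and agreeing along the zero section with the metric-induced structure; your appeal to Proposition \ref{prop: monotonicity} is plausible but would need to be adapted to this Lagrangian setting. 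The remarks on regularity and orientations are reasonable, but they only become relevant once the identification of the two sets of solutions is actually established.
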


As a consequence, we obtain a Morse-theoretical description of $\cl Z_{x}^{\loc,\mrm{cot}},$ which we detail in slightly larger generality in Section \ref{subsec:Betz-Cohen}, and use in Section \ref{subsec: Wilson theorem}.

%Corollary \ref{cor:Z-Morse},

%
%It follows from a classical convexity argument due to Floer \cite{Floer-MorseWitten} that there exists $\epsilon_0 > 0$ such that for Hamiltonian perturbations $H_s$ of $H$ that have $C^2$ norm smaller than $\epsilon_0,$ and coincide with $H$ outside \red{$\frac{1}{2} B,$} all continuation maps along $\{H_s\}$ with $H_s = H$ for $|s|\gg 1,$ and almost complex structures $J_s$ suitably chosen \red{does one really have to pass to the Lagrangian formulation here, or something easier also works?}, in fact coincide with the Morse continuation maps along $\{H_s\}$ considered as Morse functions, with suitable Riemannian metrics $\{g_s\}.$ To have a bit more freedom in choosing the almost complex structures based on a choice of Riemannian metrics $\{g_s\},$ it is convenient, following Floer, to pass to the local Lagrangian Floer homology at $x = (0,0),$ of the Lagrangian  diagonal $\Delta$ in $B \times B^{-}$ \red{careful: choose a smaller domain?} with Hamiltonian perturbation given by $H_s \oplus 0.$    

\begin{proof}[Proof of Lemma \ref{lemma:Floer-to-Morse}]

We begin by describing the class of almost complex structures $J_s$ that we consider. Firstly, as long as a complex structure is sufficiently $C^2$-close to $J_{st} \oplus -J_{st},$ a classical monotonicity argument \cite[Section 4.3]{Sikorav}, together with standard action estimates (similar to \eqref{eq: main action estimate}, see \cite[Section (8g)]{Seidel-book}), shows that all relevant curves do not escape a given fixed neighborhood $U \subset B \times B$ of $x.$ We take as $U$ a symplectically embedded copy of $D^*(\frac{1}{\sqrt{2}} B),$ the unit co-disk bundle taken with respect to the standard metric on $B$ (indeed $x \in U$). Hence, given $g_s$ sufficiently $C^2$-close to $g,$ we may assume that in $U,$ ${J_{s}}$ concides at the zero section with the almost complex structure on the co-disk bundle induced by $g_s.$ In this case, each trajectory $\gamma : \R \to \C^n$ satisfying the Morse continuation equation \begin{equation}\label{eq:Morse continuation} \del_s\gamma + (\nabla F_s) \circ \gamma= 0,\end{equation} \[\gamma(s) \xrightarrow{s \to \pm \infty} x,\] is in fact also a solution of the Floer equation \begin{eqnarray}\label{eq:perturbed Floer}
\del_s u + J_{s} \circ u \;(\del_t u - X^t_{\pi^* F_s}\circ u) = 0,  \end{eqnarray} \[u(s,t) \xrightarrow{s \to \pm \infty} x,\]
on maps $u: \R \times [0,1] \to X.$ Moreover, given that $|F_s|_{C^2(U,g_s)} < \epsilon_0$ for all $s,$ each solution $u$ of \eqref{eq:perturbed Floer}, which necessarily lies inside $U,$ is in fact of the above form. Indeed, writing $u(s,t) = (y(s,t),x(s,t)),$ where $x(s,t) \in \frac{1}{\sqrt{2}} B,$ and $y(s,t) \in T^*_{x(s,t)} (\frac{1}{\sqrt{2}} B),$ and setting \[\eta(s) = \int_{0}^{1}\left |y(s,t) \right |^2_{g_s} \,dt,\] one shows closely following Floer \cite{Floer-MorseWitten} the convexity estimate \[\eta''(s) \geq \delta \cdot \eta(s)\] for a positive constant $\delta = \epsilon_0^2/2 >0.$ As $\eta(s) \xrightarrow{s \to \pm \infty} 0,$ the convexity estimate implies that $\eta \equiv 0,$ whence $y(s,t) \equiv 0.$ Now in view of \eqref{eq:perturbed Floer}, $\del_t x \equiv 0,$ and $\gamma(s) = x(s)$ satisfies \eqref{eq:Morse continuation}.
\end{proof}

%\red{check this! Maybe need $F_s$ obtained from $H_s$ by Cayley transform in \eqref{eq:Morse continuation}} 

%
%Set $X = \C^n \times (\C^n)^{-}.$ We choose the almost complex structure $\{J_{s}\}$ and $\{H_{s}\}$ to coincide with the standard complex structure $J_{st} \oplus -J_{st}$ and the initial Hamiltonian $H \oplus 0$ in an open neighborhood of $X \setminus U.$ Then a standard maximum principle argument for solutions of the Floer equation yields that all Floer curves we consider \red{details?} lie inside $U.$

%Note that $B \times B^{-}$ contains 

%Moreover, the time-one map $\phi^1_H \times \id$ of $H \oplus 0$ satisfies $(\phi^1_H \times \id)^{-1} \Delta \cap \Delta = \{x\} \subset U.$

%\red{present careful proof }

\subsection{The Betz-Cohen computation for Morse functions}\label{subsec:Betz-Cohen}

%{\color{green} Clean!

\indent In this subsection, we consider the analogue of Fukaya \cite{Fukaya93} and Betz-Cohen \cite{BetzCohen} (see also \cite{BetzThesis},\cite{CohenNorbury}) operations defined by counting of Morse trees with a $\zp$-symmetry. We remark that when $p=2,$ the analogue of our construction has been defined in the work of Seidel \cite{Seidel} and that of Wilkins \cite{Wilkins} recently. %Such parametrized trees can be viewed as continuous maps $h \colon (-\infty, 0] \cup \bigcup_{i=1}^p [0, \infty)^i \rightarrow \Gamma_p.$

\indent Given any smooth manifold $M$ and a Morse function $f \in C^{\infty}(M),$ for each fixed prime number $p \geq 2$, we consider the graph $\Gamma_p$ with $p$ inputs and one output, oriented and parametrized as $(-\infty, 0] \cup \bigcup_{i=1}^p [0, \infty)^i \xrightarrow{\sim} \Gamma_p.$ The edges $e_{out}$ and $e_{in}^i$ of tree are parametrized by half-infinite intervals $(-\infty, 0]$ and $[0, \infty)$ respectively. We can choose a domain-dependent perturbation $f_s^i$ of $f$ on each edge of $\Gamma_p$ such that \[f_s^i=f \; \text{for} \; |s| \gg 1 \; \text{in} \;  [0, \infty),\] and \[\; f^0_s = pf \; \text{for} \; |s| \gg 1 \; \text{in} \;  (-\infty, 0].\] The latter choice is convenient for our arguments, but in general one could pick $f^0_s = f$ for  $|s| \gg 1$ in $(-\infty, 0]$ as well.

\begin{figure}[htb]
	\centerline{\includegraphics{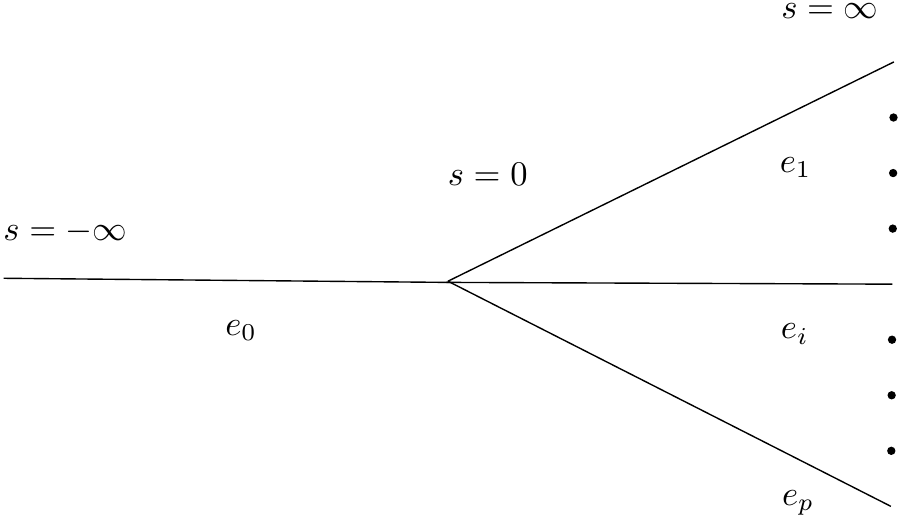}}
	\caption{The graph $\Gamma_p.$}\label{Figure: graph Gamma_p}
\end{figure}

%$|x|=n-\mathrm{Ind}_M(f).$

We let $C^*(M):=C^*_{Morse}(M)$ be the Morse cochain complex defined by $f.$ We grade the critical points of $f$ by the Morse index. For each prime $p,$ one can define the $p$-th product map on the Morse cochain complex by $\mathcal{P}_M \colon C^*(M)^{\otimes p} \rightarrow C^*(M)$ by
\begin{equation}
\mathcal{P}_M(x_1 \otimes x_2 \cdots \otimes x_p)=\sum_{x_0 \in \mathrm{Crit}(f)} \#\overline{\mathcal{M}}(\Gamma_p;f_s^i) x_0,
\end{equation}
where $\#\overline{\mathcal{M}}(\Gamma_p;f_s^i) $ denotes the signed counts of the virtual dimension zero part of the moduli space of Morse trajectories $u \colon \Gamma_p \rightarrow M$ satisfy the following conditions
\begin{enumerate}
	\item[(1)] $d(u|_{e_i})/ds=-\nabla f_s^i$ for all $i =0 ,1 \cdots,  p;$
	\item[(2)] $\displaystyle\lim_{s \rightarrow - \infty} u|_{e_{out}}(s)=x_0;$
	\item[(3)]$\displaystyle\lim_{s \rightarrow \infty} u|_{e_{in}^j}(s)=x_j$ for all $j=1,2, \cdots, p.$
\end{enumerate}
We remark that our perturbations $f_s^i$ of $f$ should be chosen satisfying suitable, generically satisfied, transversality conditions so that the moduli space of Morse trajectories $\mathcal{M}(\Gamma_p;f_s^i)$ are regular (we refer to \cite[Section 6]{CohenNorbury} for a discussion of transversality in a more general setting, and more specifically \cite[Appendix B]{Wilkins} for a detailed discussion of transversality for the case $p=2$ in the above construction). Now if all the inputs are the same, that is, one has $x_1=x_2=\cdots=x_p,$ then the $\zp$-action on the domain graph $\Gamma_p,$ which is free and transitive on the input edges, preserves the boundary conditions. We would like to study the equivariant operation that this symmetry produces.

A priori if one takes the perturbation $f_s^1=\cdots= f_s^p$ respectively, the moduli space $\mathcal{M}(\Gamma_p;f_s^i)$ also admits an action by $\zp.$ One may want to study its quotient and define the $\zp$-equivariant product by counting the virtual dimension zero part of $\mathcal{M}(\Gamma_p;f_s^i)/\zp$. However, there is a major problem that $\mathcal{M}(\Gamma_p;f_s^i)$ cannot be made regular for $\zp$-symmetric perturbations $f_s^i,$ since one cannot achieve $\zp$ transversality for elements in $\mathcal{M}(\Gamma_p;f_s^i)$. As a solution, we consider $\zp$-equivariant perturbations $\{f_{s,w}^i\}$ parametrized by $w \in S^{\infty}$ as before. For a given Morse trajectory $w \colon \R \rightarrow S^{\infty}$ which is asymptotic to a critical point $Z_i^{\m}$ of index $i$ when $s\rightarrow -\infty$ and another critical point $Z_{\alpha}^0$ of index $\alpha \in \{0,1\}$ as $s \rightarrow \infty,$ one has that the $\zp$-equivariant perturbation satisfy
\begin{enumerate}[label = (\arabic*)]
	\item \label{func cond 1}(Diagonal $\zp$-action) $f^{\m \cdot i}_{s, \m \cdot w} = f^{i}_{s, w}$ for all $i =1,2, \cdots, p$ and $f_{s, z}^0=f^0_{s,\m \cdot w}$ for all $\m \in \zp$ and $w \in S^{\infty},$ where $\m \in \zp$ acts on $\{1,\ldots,p\}$ by $\tau^m,$ for the cyclic permutation $\tau = (1\, 2 \, \ldots \,p)$ and $\m \cdot w = e^{2\pi im}\, w.$
	\item \label{func cond 2}(Invariant under shift)  $f_{s, \tau(w)}^i=f_{s, w}^i$ for all $i$ and $w \in S^{\infty}.$
\end{enumerate}

Given any parametrized Morse flow line $w \in {\sP}_{\alpha}^{i,\m}$ for fixed $i$ and $\m \in \zp,$ we define the operation $\mathcal{P}_{M,\alpha}^{i,\m} : C^*(M)^{\otimes p} \to C^{*-i+\alpha}(M)$ by
\begin{equation}
\mathcal{P}_{M,\alpha}^{i,\m}(x_1 \otimes \cdots \otimes x_p)=\sum_{x_0} \# \overline{\mathcal{M}}_{\alpha}^{i,\m}(M;f_{s,z}^j) \,x_0, \nonumber
\end{equation}
where $\# \overline{\mathcal{M}}_{\alpha}^{i,\m}(M;f_{s,z}^j) $ denotes the signed counts of the virtual dimension zero part of the moduli space of pairs $(u,w)$ satisfying $w \in {\sP}_{\alpha}^{i,\m}$ and
\begin{equation}
\begin{cases}
d(u|_{e_i})/ds=-\nabla f_{s,w(s)}^i  \text{ for all } i =0 ,1 \cdots,  p; \\
\displaystyle\lim_{s \rightarrow - \infty} u|_{e_{out}}(s)=x_0; \\
\displaystyle\lim_{s \rightarrow \infty} u|_{e_{in}^j}(s)=x_j \text{ for all } j=1,2, \cdots, p.
\end{cases}
\end{equation}
%\section{Spectral sequence comparison argument}\label{sec:spec seq}

It is then straightforward to check that, as in the definitions of Section \ref{sec:prod-coprod}, the operations $\cl P^{i,m}_{M,\alpha}$ combine to give a chain map \[\cl P_M : C^*(\zp; CM^*(f)^{\otimes p}) \to CM^*(p f) \otimes \rp.\] Observe that Condition \ref{func cond 1} of the perturbations is necessary for $\cl P_M$ to be a chain map, and Condition \ref{func cond 2} yields linearity with respect to multiplication by $u.$

%\red{write down the formulas!}
%}

%Finally following \red{???} we obtain that on homology $\cl P_M$ induces Steenrod's reduced $p$-th powers operations \ref{??}. 
%More precisely a homogeneous homology class $\alpha \in HM^*(M),$ $\deg(\alpha) = k,$ yields a well-defined class $\alpha^{\otimes p} = \alpha \otimes \ldots \alpha \in H(\zp, HM^*(M)),$ and \[[\cl P_M](\alpha^{\otimes p}) = \sum_{j\geq 0} u^{j(p-1)} Sq_p^{j}(\alpha) + \sum_{j\geq 0} u^{j(p-1)} \beta Sq_p^{j}(\alpha) + ???.\] Here $\beta$ is the Bockstein operation, and $Sq_p^j$ is Steenrod's $j$-th reduced power operation.
%
%In particular, $Sq_p^0 = \id,$ and if $k = 2i,$ then \[Sq_p^i(\alpha) = \alpha^p = \alpha \cup \ldots \cup \alpha,\] and $Sq_p^j(\alpha) = 0$ for $j>i.$

\subsection{Morse coproduct, and the local case}\label{subsec: Wilson theorem}
Similarly to the operation $\cl P_M$ defined above, we define operations \[\cl C_M: CM^*(f) \otimes \rp \to C^*(\zp; CM^*(f)^{\otimes p}),\] given by the graph $\overline{\Gamma}_p$ with one input and $p$ outputs, oriented and parametrized as $\bigcup_{i=1}^p (-\infty,0]^i \cup [0,\infty)  \xrightarrow{\sim} \overline{\Gamma}_p.$ 

\begin{figure}[htb]
	{\includegraphics{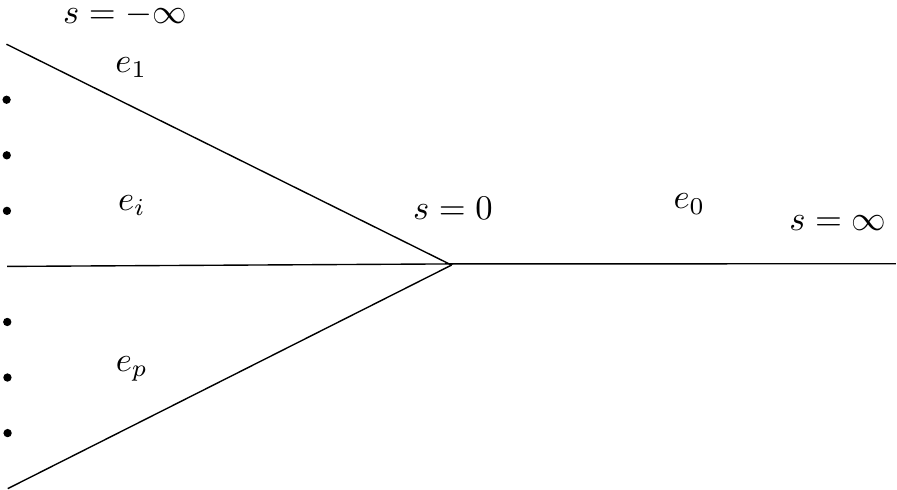}}
	\centering
	\caption{The graph $\overline{\Gamma}_p.$}\label{Figure: graph barGamma_p}
\end{figure}

Consider also the operation \[\cl Z_M: C^*(\zp; CM^*(f)^{\otimes p}) \to C^*(\zp; CM^*(f)^{\otimes p}),\] given by the graph $\Gamma_{p,p}$ with $p$ inputs and $p$ outputs parametrized and oriented as $\bigcup_{i=1}^p (-\infty,0]^i \cup \bigcup_{i=1}^p [0,\infty)^i \xrightarrow{\sim} {\Gamma}_{p,p}.$  The analysis in Section \ref{subsec:invar} above shows that $\cl Z_M$ is chain homotopic to $\cl C_M \circ \cl P_M.$

\begin{figure}[htb]
	{\includegraphics{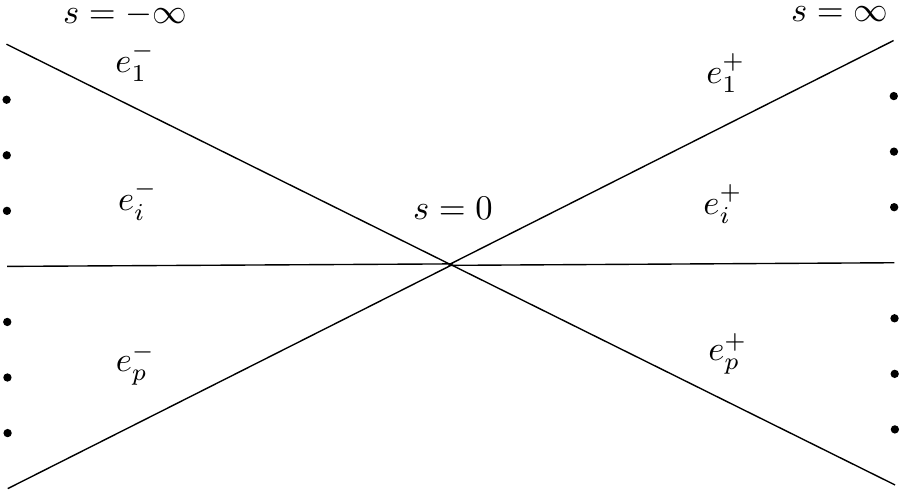}}
	\centering
	\caption{The graph $\Gamma_{p,p}.$}\label{Figure: graph Gamma_pp}
\end{figure}

%Finally, $\cl C_M,$ in view of \cite{BetzCohen} is given by $\cl C_M = PD^{\otimes p}_{\zp} (\cl P_M)^{\vee} PD^{-1}.$ Here $PD:HM^*(f,M) \to HM^*(-f,M,\partial M) = HM^{2n-*}(f)^{\vee}$ is Poincar\'{e} duality map \red{write more correctly, check the indices}, and \[PD^{\otimes p}_{\zp}: H^*(\zp, CM^*(f,M)^{\otimes p}) \to H^{2np-*}(\zp, CM^*(-f^{\oplus p},(M,\partial M)^{\times p})),\] \[ H_{2np-*}(\zp, CM^*(-f^{\oplus p},(M,\partial M)^{\times p})) = H_{2np-*}(\zp, CM^{*}(f)^{\otimes p})^{\vee}\] is the equivariant Poincar\'{e} duality map. Note that \[H(\zp, CM^*(f,M)^{\otimes p}) \cong H_{\zp}^*(M^{\otimes p}),\] \[H(\zp, CM^*(-f^{\oplus p},(M,\partial M)^{\times p})) \cong H_*^{\zp}((M,\partial M)^{\times p}).\]
%
%%H(\zp, CM^{2n-*}(f)^{\otimes p})
Considering the case of local Morse cohomology of a function $f(z)= \epsilon |z|^2$ on $\C^n$ at $0\in \C^n,$ we get that under the natural isomorphisms of the equivariant cohomology groups with $\rp,$ we have  \[\cl Z_M =  (-1)^n u^{n(p-1)} \cdot \id.\] In this case the construction of Section \ref{subsec:Betz-Cohen} for $\cl Z_M$ is in fact given, in the topological model of equivariant cohomology, by multiplication by the Euler class of the vector bundle $V= E/E^{\zp}$ over $B(\zp),$ where $E = (\R^{2n})^p \times_{\zp} S^{\infty},$ and $E^{\zp} \cong \R^{2n} \times B(\zp)$ is the subbundle in $E$ corresponding to the $\zp$-invariant subspace in $(\R^{2n})^p.$ Indeed the above construction of $\cl{Z}_M$ describes in this case the map dual to the cap product by the Euler class of $E,$ given by intersection product with the zero section in the total space of $E,$ taking into account that the latter, being the total space of a vector bundle, is deformation equivalent to the base (see also \cite{BetzCohen}).

We calculate this Euler class as follows. Using the standard isomorphism $\C^n \cong \R^{2n}$ of real vector spaces, we endow $\R^{2n}$ with a complex structure and note that the $\zp$-action on $(\R^{2n})^p$ is in fact complex-linear. Hence we may consider $E, E^{\zp}$ and $V$ as complex vector bundles of ranks $np, n, n(p-1)$ respectively. In particular the Euler class of $V$ is given by its top Chern class $c_{n(p-1)}(V).$ Considering $(\C^n)^p$ as a complex $\zp$-representation and splitting it into isotypical components, we obtain the isomorphism of complex vector bundles \[V \cong \left( \bigoplus_{\chi \in (\zp)^{\vee}\setminus \{1\}} L_{\chi}\right) ^{\oplus n},\] where $L_{\chi} = \C \times_{\zp} S^{\infty}$ is the complex line bundle over $B(\zp)$ associated to the character $\chi \in (\zp)^{\vee} = \Hom(\zp, \C^{\times}).$ Note that the sum does not include the component of the trivial character, since we took the quotient by the $\zp$-invariants. Now by \cite[Section 8]{Atiyah}, the ($\F_p$-reductions of) $c_1(L_{\chi})$ for $\chi \in (\zp)^{\vee}$ are given by $a_{\chi} u$ for different invertible elements $a_{\chi}$ in $\F_p,$ where $u$ is the standard generator of $H^2(B(\zp),\F_p) \cong \F_p,$ coming from $H^2(\C P^{\infty}, \F_p).$ In view of the Whitney sum formula, we obtain
\[c_{n(p-1)}(V) = \left(\displaystyle\prod_{\chi \in (\zp)^{\vee} \setminus \{1\}} a_{\chi}\right)^n u^{n(p-1)} = (-1)^n u^{n(p-1)},\] the last step being Wilson's theorem, whereby \[\displaystyle\prod_{\chi \in (\zp)^{\vee}\setminus \{1\}} a_{\chi} = -1.\] 

%we have $\cl P_M = \id,$ while $\cl C_M = (-1)^n u^{n(p-1)} \cdot\id.$ Indeed, we note that in view of \cite{Betz-Cohen} $\cl C_M$ is in fact given topologically by multiplication by the Euler class of the vector bundle $(\R^{2n})^p \times_{\zp} S^{\infty}$ over $B(\zp)$ 

%under the natural isomorphisms that hold in our local case, $H(\zp, CM^*(f,B)^{\otimes p}) \cong \rp^*, $ and $H(\zp, CM^{2n-*}(f,B)^{\otimes p})^{\vee} \cong H^{\zp}_*((B,\partial B)^{\times p}) \cong \rp^{\ast},$ the equivariant Poincar\'{e} duality map takes the form \[ PD^{\otimes p}_{\zp} = u^{n(p-1)} \cdot \id.\]

%\red{justify the latter} 

However, we saw in Section \ref{sect: Floer to Morse} that in our particular local case, for $x=0$ a fixed point of the Hamiltonian flow $\phi = \phi^1_H$ of $H(z) = f(z),$ \[\cl Z^{\loc}_x = \cl Z_M.\] %\red{refer to precise lemma}

This finishes the proof.

\section{Proof of Theorem \ref{thm: main}: spectral sequence argument}

First let us consider the case when $\phi$ and $\phi^p$ as above have all fixed points non-degenerate. Consider the equivariant product map 
\[\cP: C^*(\zp; CF^*(\phi)^{\otimes p}) \to CF^*_{\zp}(\phi^p).\] It induces a map on the action spectral sequences associated to the action filtrations on both sides. Tensoring with $\cK = \bK((u))$ over $\bK[[u]],$ we obtain a map of the corresponding spectral sequences for Tate cohomology groups. In view of Lemma \ref{lem:properties} in Section \ref{sec: group_coho} and the description of the action-filtration spectral sequences for the non-equivariant and equivariant Floer cohomology in Sections \ref{subsec: loc FH} and \ref{subsec: Local equiv}, the $E_1$-page on the left is given by \[\bigoplus \wh{H}^*(\zp; HF^{\loc}(\phi,x)^{\oplus p})\] and on the right by  \[ \bigoplus \wh{HF}^{\loc}_{\zp}(\phi^p,x^{(p)}),\] the sum running over all the fixed points $x$ of $\phi,$ and furthermore, by Section \ref{subsec: local prod coprod}, the map $\cP$ induces the map \[\oplus \cP^{\loc}_x\] on these $E_1$ pages. By Section \ref{sec:locally invertible}, this map on the $E_1$ page is invertible, and hence by the spectral sequence comparison argument \cite[Theorem 5.5.11]{Weibel}, we obtain that \[\cP: \wh{H^*}(\zp; HF^*(\phi)^{\otimes p}) \to \wh{HF}^*_{\zp}(\phi^p)\] is an isomorphism. This finishes the proof in the non-degenerate case.

Now, to generalize to the case of local Floer cohomology, we first note that the same argument, after choosing a small isolating neighborhood and a small Hamiltonian perturbation non-degenerate therein, applies to prove that \[\cP^{\loc}_x: \wh{H}^*(\zp; HF^{\loc}(\phi,x)^{\otimes p}) \to \wh{HF}^{\loc}_{\zp}(\phi^p,x^{(p)})\] is an isomorphism for an isolated fixed point of $\phi$ that is isolated as a fixed point of $\phi^p$ as well. 

Finally, we proceed to the general case. For a subset $A \subset \R,$ we introduce the following notation: $A^{+p} = \{a_1 + \ldots + a_p \,|\, a_j \in A,\, 1\leq j \leq p \}.$ Choose an interval $I = (a,b)$ with $pa, pb \in (\R \setminus (\mrm{Spec}(\ul \phi^p) \cup \mrm{Spec}(\ul \phi)^{+p})) \cup \{\pm \infty \}.$ Then the same argument, after choosing sufficiently small non-degerate Hamiltonian perturbations $\phi_1, (\phi_1)^p$ of $\phi,\phi^p,$ we obtain that \[\cP^{p\cdot I}: \wh{H}^*(\zp; CF^*(\phi)^{\otimes p})^{p\cdot I} \to \wh{HF}^*_{\zp}(\phi^p)^{p \cdot I}\] is an isomorphism, where $\wh{H}^*(\zp; CF^*(\phi)^{\otimes p})^{p\cdot I} = \wh{H}^*(\zp; (CF^*(\phi)^{\otimes p})^{p\cdot I}),$ for the quotient complex \[(CF^*(\phi)^{\otimes p})^{p \cdot I} \cong (CF^*(\phi)^{\otimes p})^{< p \cdot b}/ (CF^*(\phi)^{\otimes p})^{<p \cdot a},\] where for $x_1 \otimes \ldots \otimes x_p \in CF^*(\phi)^{\otimes p},$ we set \[\cA_{\phi,\otimes p}(x_1 \otimes \ldots \otimes x_p) = \sum \cA_{\phi}(x_j).\] Observe that there is a natural inclusion of complexes \[(CF^*(\phi)^I)^{\otimes p} \to (CF^*(\phi)^{\otimes p})^{p\cdot I},\] such that the $\zp$-action on the quotient is free. Therefore by Lemma \ref{lem:properties} it induces a natural isomorphism  \[ \wh{H}^*(\zp,(CF^*(\phi)^I)^{\otimes p}) \xrightarrow{\cong}  \wh{H}^*(\zp,(CF^*(\phi)^{\otimes p})^{p\cdot I}).\]

In all cases, we finish the proof by an application of Lemma \ref{lma:quasi_Frob}.

%Suppose that all the fixed points of $\phi$ with action values $(a,b)$ are non-degenerate. 

%Consider the action spectral sequence from Section \ref{sec:local-FH}, in the case when all the fixed points are non-degenerate. Note that 

\section{Applications and discussion}\label{sec: applications}

First, in the exact case, we prove the following version of the Smith inequality for fixed point Floer cohomology, extending the work of Seidel to prime orders $p>2.$

\medskip

%\red{careful about perturbation at infinity: take positive, or require it}

\begin{thm}\label{thm: Smith exact}
Let $\phi$ be an exact symplectomorphism of a Liouville domain $W$ which is cylindrical at infinity. Then the ranks of its fixed point Floer cohomology, and that of its $p$-th iterate $\phi^p$ satisfy the following inequality: \[\dim_{\F_p} HF^*(\phi) \leq \dim_{\F_p} HF^*(\phi^p)^{\zp} \leq \dim_{\F_p} HF^*(\phi^p).\]
\end{thm}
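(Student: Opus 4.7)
The plan is to combine the main isomorphism of Theorem \ref{thm: main} (specifically Corollary \ref{cor: PF isomorphism}), the algebraic spectral sequence bound \eqref{eq: dim Tate bound} from Section \ref{subsec: alg spec sequence}, and a standard classification of $\F_p[\zp]$-modules to read off both inequalities. Throughout, we work in the exact setting of case \ref{case: exact}, taking $I = (-\infty, \infty)$ so that the various action-windowed cohomologies collapse to the full Floer cohomology.

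The first step is to count dimensions on the two sides of the isomorphism
\[
\cP \circ F \colon HF^*(\phi) \otimes_{\F_p} \F_p((u))\left<\theta\right> \xrightarrow{\cong} \wh{HF}^*_{\zp}(\phi^p)
\]
of Corollary \ref{cor: PF isomorphism}, taking dimensions over $\cK = \F_p((u))$. Since $\F_p((u))\langle\theta\rangle$ is $2$-dimensional over $\cK$, the left side has dimension $2 \dim_{\F_p} HF^*(\phi)$, giving
\[
\dim_{\cK} \wh{HF}^*_{\zp}(\phi^p) = 2 \dim_{\F_p} HF^*(\phi).
\]
Combining with inequality \eqref{eq: dim Tate bound} yields
\[
2 \dim_{\F_p} HF^*(\phi) \leq \dim_{\cK} \wh{H}^*(\zp, HF^*(\phi^p)).
\]

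The second step is to analyze the right-hand side via the structure theorem for finitely generated $\F_p[\zp]$-modules. Since $\F_p[\zp] \cong \F_p[t]/(t-1)^p$ is a PID quotient, every finite-dimensional representation decomposes as a direct sum of Jordan blocks $V_k = \F_p[t]/(t-1)^k$ for $1 \leq k \leq p$. A direct calculation (using that the norm $N$ acts as $(t-1)^{p-1}$ in characteristic $p$) shows that $\wh{H}^*(\zp, V_p) = 0$ (consistent with Lemma \ref{lem:properties}\ref{case: Tate}), while for $k < p$ one has $\dim_{\cK} \wh{H}^*(\zp, V_k) = 2$. If we write $HF^*(\phi^p) \cong V_p^{\oplus a} \oplus \bigoplus_{i=1}^m V_{k_i}$ with all $k_i < p$, then
\[
\dim_{\cK} \wh{H}^*(\zp, HF^*(\phi^p)) = 2m, \quad \dim_{\F_p} HF^*(\phi^p)^{\zp} = a + m, \quad \dim_{\F_p} HF^*(\phi^p) = pa + \textstyle\sum_i k_i.
\]

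The third step simply assembles these equalities. The previous displays give
\[
\dim_{\F_p} HF^*(\phi) \leq m \leq a + m = \dim_{\F_p} HF^*(\phi^p)^{\zp},
\]
which is the first (and in fact sharper, cf. Remark \ref{rmk: sharp Smith}) inequality. The second inequality
\[
\dim_{\F_p} HF^*(\phi^p)^{\zp} = a + m \leq pa + \textstyle\sum_i k_i = \dim_{\F_p} HF^*(\phi^p)
\]
is immediate since $k_i \geq 1$ and $p \geq 2$. The only part requiring care is to ensure that the algebraic spectral sequence of Section \ref{subsec: alg spec sequence} is available in the form needed; this is the main structural input and the place where we implicitly use that we work over the ground field $\F_p$ without Novikov coefficients, which is guaranteed by the Liouville hypothesis and the exactness of $\phi$. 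No further geometric input beyond Theorem \ref{thm: main} is required.
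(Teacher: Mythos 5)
Your proposal is correct and follows essentially the same route as the paper's own proof: Corollary \ref{cor: PF isomorphism} plus the algebraic spectral sequence bound \eqref{eq: dim Tate bound} to get $2\dim_{\F_p} HF^*(\phi) \leq \dim_{\F_p((u))}\wh{H}^*(\zp, HF^*(\phi^p))$, followed by the $\F_p[\zp]\cong\F_p[t]/(t^p)$ structure theorem and the computation of Tate cohomology of the Jordan blocks to compare with $\dim_{\F_p} HF^*(\phi^p)^{\zp}$ and $\dim_{\F_p} HF^*(\phi^p)$. Your bookkeeping (with $a=m_p$ and $m=m_1+\cdots+m_{p-1}$) and the observation that the first inequality is in fact sharper match the paper's argument and Remark \ref{rmk: sharp Smith}.
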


\begin{proof}
Observe that by Corollary \ref{cor: PF isomorphism} of the main result, Theorem \ref{thm: main}, we obtain \[2 \dim_{\F_p} HF^*(\phi) = \dim_{\F_p((u))} \wh{H}^*(\zp, HF^*(\phi))^{(1)} = \dim_{\F_p((u))} \wh{H}^*_{\zp}(\phi^p).\] However, by estimate \eqref{eq: dim Tate bound} the latter dimension satisfies \[\dim_{\F_p((u))} \wh{H}^*_{\zp}(\phi^p) \leq \dim_{\F_p((u))} \wh{H}^*(\zp, HF^*(\phi^p)),\] whence we obtain the bound \begin{equation}\label{eq: summary bound}2 \dim_{\F_p} HF^*(\phi) \leq \dim_{\F_p((u))} \wh{H}^*(\zp, HF^*(\phi^p))\end{equation} By the structure theorem for modules over PID, and the observation that $\F_p[\zp] \cong \F_p[t]/(t^p),$ where $t = \sigma - 1,$ we have that $HF^*(\phi^p),$ as a $\F_p[\zp]$-module, splits into a direct sum \[HF^*(\phi^p) = \bigoplus_{1 \leq k \leq p} (\F_p[t]/(t^k))^{\oplus m_k},\] for multiplicities $m_k \geq 0.$ It is easy to calculate that 
$$\dim_{\F_p((u))} \wh{H}^*(\zp, HF^*(\phi^p)) = 2 (m_1 + \ldots + m_{p-1}),$$ 
while $2 \dim_{\F_p} HF^*(\phi^p)^{\zp} =2 (m_1 + \ldots + m_{p-1} + m_p).$ Indeed, the Tate differentials in \eqref{eqn:res_Q} become multiplications by $t$ and $t^{p-1},$ and setting $\F_p[t]/(t^k),$ $1 \leq k \leq p$ for $V$ in \eqref{eq: Tate basic 2} immediately yields the result.
 \end{proof}

\medskip

\begin{rmk}\label{rmk: sharp Smith}
Note that if $m_p > 0,$ then the bound \eqref{eq: summary bound} is strictly stronger than the Smith-type inequalities \eqref{eqn:Smith_ineq}, \eqref{eq: Smith in interval}, which are directly analogous to the classical Smith inequality for $\zp$-actions on, say, manifolds. Indeed, $m_p > 0$ would correspond to a summand of $\F_p[\zp] \cong \F_p[t]/(t^p)$ in $HF^*(\phi^p)$ as a $\F_p[\zp]$-module, which does contribute to $\dim_{\F_p} HF^*(\phi^p)^{\zp}.$ We observe that the algebraic methods used in the proof of this bound work in the classical setting of the Smith inequality (see \cite{Borel-Transformation,Bredon-Transformation,Hsiang-Transformation}), for example when the space in question is a finite simplicial complex and the $\zp$-action is simplicial, and provide a sharpening thereof. We have not found this sharper version in the literature. Of course in the classical setting, quite a lot more than the Smith inequality is known by now. For example, the cohomology of the fixed point set of a $\zp$-action was completely described in terms the associated equivariant cohomology, as a module over both $\rp$ and the Steenrod algebra \cite{Smith-revisited}. It would be very interesting to find an analogue of the latter result in Floer theory. 
\end{rmk}

\medskip

\begin{cor} \label{app:MCG}
In particular, if $\dim_{\F_p} HF^*(\phi) > \dim_{\F_p} H^*(W)$ then $[\phi^{p^k}] \neq 1$ in the symplectic mapping class group of $W$ for all $k\geq 0.$ 
\end{cor}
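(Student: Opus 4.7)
The strategy is to iterate Theorem \ref{thm: Smith exact} $k$ times and then compare with the Floer cohomology of the identity, which equals $H^*(W;\F_p)$.

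First, I would observe that applying Theorem \ref{thm: Smith exact} to $\phi$, then to $\phi^p$, then to $\phi^{p^2}$, and so on, gives the chain of inequalities
\[ \dim_{\F_p} HF^*(\phi) \;\leq\; \dim_{\F_p} HF^*(\phi^p) \;\leq\; \dim_{\F_p} HF^*(\phi^{p^2}) \;\leq\; \cdots \;\leq\; \dim_{\F_p} HF^*(\phi^{p^k}). \]
Each step uses the right-hand inequality of Theorem \ref{thm: Smith exact} applied to $\phi^{p^j}$, whose $p$-th iterate is $\phi^{p^{j+1}}$. This is legitimate because $\phi^{p^j}$ remains an exact symplectomorphism (the primitive $G_{\phi^{p^j}}$ is constructed iteratively as in Section \ref{sec:Floer_coho}).

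Next, I would use the fact that fixed point Floer cohomology is an invariant of the symplectic mapping class group in the following sense: if $[\psi_0] = [\psi_1]$ in the symplectic mapping class group of $W$, then $\psi_0$ and $\psi_1$ are connected by a compactly supported symplectic isotopy $\{\psi_t\}$. Such an isotopy is generated by a family of closed $1$-forms $b_t$ with compact support, hence exact, so its flux lies trivially in the kernel of $\phi^*-\mathrm{id}$ on $H^1_c$. By the invariance statement summarized after \eqref{eqn:flux_cond}, one then has a canonical isomorphism $HF^*(\psi_0)\cong HF^*(\psi_1)$. In particular, if $[\phi^{p^k}] = 1$, then $HF^*(\phi^{p^k}) \cong HF^*(\mathrm{id})$.

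Finally, I would identify $HF^*(\mathrm{id})$ with $H^*(W;\F_p)$: perturbing $\mathrm{id}$ by the time-one map $\phi^1_{H_\epsilon}$ of a $C^2$-small autonomous Morse Hamiltonian $H_\epsilon$ linear in $r$ near $\partial W$ yields a non-degenerate exact symplectomorphism whose fixed points are the critical points of $H_\epsilon$, and for which the Floer complex coincides with the Morse complex of $H_\epsilon$ (a standard Floer-to-Morse reduction as in Lemma \ref{lemma:Floer-to-Morse}). Hence $\dim_{\F_p} HF^*(\mathrm{id}) = \dim_{\F_p} H^*(W;\F_p)$. Combining this with the iterated Smith inequality gives
\[ \dim_{\F_p} HF^*(\phi) \;\leq\; \dim_{\F_p} HF^*(\phi^{p^k}) \;=\; \dim_{\F_p} H^*(W;\F_p), \]
contradicting the hypothesis $\dim_{\F_p} HF^*(\phi) > \dim_{\F_p} H^*(W)$. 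The only nontrivial point is verifying that the iteration hypothesis of Theorem \ref{thm: Smith exact} is preserved, which is immediate in case \ref{case: exact}; no further technical obstacle is anticipated.
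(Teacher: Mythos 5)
Your proposal is correct and follows essentially the same route as the paper: iterate the inequality of Theorem \ref{thm: Smith exact} to get $\dim_{\F_p} HF^*(\phi) \leq \dim_{\F_p} HF^*(\phi^{p^k})$, invoke invariance of $HF^*$ under isotopies of exact symplectomorphisms cylindrical at infinity, and use $HF^*(\id) \cong H^*(W;\F_p)$ to reach a contradiction. The paper simply cites these invariance and computation facts (via \cite{SeidelMCG}) rather than rederiving them, so your extra detail on flux and the Floer-to-Morse identification is fine but not needed; only be careful that compactly supported closed $1$-forms need not be exact in general, though the invariance conclusion you want is exactly what the paper asserts.
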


This corollary is immediate, since $HF^*(\id) \cong HF^*(W)$ and $HF^*(\phi)$ is invariant under isotopies of exact symplectomorphisms of $W$ cylindrical at infinity.
Furthermore, iterating the inequality of Theorem \ref{thm: Smith exact} we obtain that $\dim_{\F_p} HF^*(\phi^{p^k})$ is a non-decreasing function of $k \in \Z_{\geq 0}.$

Furthermore, in both the exact and the symplectically aspherical setting, denoting for an open interval $I \subset \R,$ of the form $(a,b),$ $a,b \in \R,$ or $(-\infty,b),$ $b \in \R,$ by \[HF^*(\phi)^I\] the cohomology of the subcomplex generated by points of action value in $I,$ and by $k \cdot I$ for $k>0,$ the interval $(ka, kb)$ in the first case and $(-\infty, kb)$ in the second case, we obtain the following sharpening of Theorem \ref{thm: Smith exact}. We always assume that the finite endpoints of an interval are not in the spectrum of the associated Hamiltonian diffeomorphism.

\medskip

\begin{thm}\label{thm: Smith exact filtered}
Let $\phi$ be an exact symplectomorphism of a Liouville domain $W$ which is cylindrical at infinity or a Hamiltonian diffeomorphism of a closed symplectically aspherical symplectic manifold. Then the ranks of its fixed point Floer cohomology in action window $I,$ and that of its $p$-th iterate $\phi^p$ in action window $p\cdot I$ satisfy the following inequality: \[\dim_{\F_p} HF^*(\phi)^I \leq \dim_{\F_p} (HF^*(\phi^p)^{p\cdot I})^{\zp} \leq \dim_{\F_p} HF^*(\phi^p)^{p \cdot I}.\]
\end{thm}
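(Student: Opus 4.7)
The plan is to run the same algebraic argument as in the proof of Theorem \ref{thm: Smith exact}, but applied in the filtered setting, taking advantage of the fact that Theorem \ref{thm: main} and Corollary \ref{cor: PF isomorphism} are stated in full filtered generality for any generic action window $I$, and that the algebraic spectral sequence of Section \ref{subsec: alg spec sequence} was also set up in the filtered setting.

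First I would apply Corollary \ref{cor: PF isomorphism} directly to the window $I$, which yields an isomorphism
\[
\cP \circ F : HF^*(\phi)^I \otimes_{\F_p} \F_p((u))\langle \theta \rangle \xrightarrow{\;\cong\;} \wh{HF}^*_{\zp}(\phi^p)^{p\cdot I}
\]
of $\F_p((u))$-vector spaces. Counting dimensions over $\F_p((u))$ on both sides immediately gives the equality
\[
2 \dim_{\F_p} HF^*(\phi)^I \;=\; \dim_{\F_p((u))} \wh{HF}^*_{\zp}(\phi^p)^{p\cdot I}.
\]

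Next, I would invoke the algebraic spectral sequence of Section \ref{subsec: alg spec sequence} applied to the window $p \cdot I$: it has $E_2$-page $H^*(\zp, HF^*(\phi^p)^{p\cdot I}) \otimes \rp$, and converges to $HF_{\zp}^*(\phi^p)^{p\cdot I}$. Tensoring with $\F_p((u))$ over $\F_p[[u]]$ gives the Tate-version bound
\[
\dim_{\F_p((u))} \wh{HF}^*_{\zp}(\phi^p)^{p\cdot I} \;\leq\; \dim_{\F_p((u))} \wh H^*\!\bigl(\zp,\; HF^*(\phi^p)^{p\cdot I}\bigr),
\]
exactly as in \eqref{eq: dim Tate bound}.

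Finally, I would decompose $V = HF^*(\phi^p)^{p\cdot I}$ as an $\F_p[\zp]$-module. Since $\F_p[\zp] \cong \F_p[t]/(t^p)$ is a local principal-ideal ring (with $t = \sigma-1$), the structure theorem gives
\[
V \;\cong\; \bigoplus_{k=1}^{p} \bigl(\F_p[t]/(t^k)\bigr)^{\oplus m_k}
\]
for nonnegative multiplicities $m_k$. As computed in the proof of Theorem \ref{thm: Smith exact}, one has
\[
\dim_{\F_p((u))} \wh H^*(\zp, V) \;=\; 2\,(m_1 + \cdots + m_{p-1}), \qquad \dim_{\F_p} V^{\zp} \;=\; m_1 + m_2 + \cdots + m_p,
\]
and of course $\dim_{\F_p} V = m_1 + 2 m_2 + \cdots + p\, m_p$. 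Assembling these three steps yields
\[
\dim_{\F_p} HF^*(\phi)^I \;\leq\; m_1 + \cdots + m_{p-1} \;\leq\; m_1 + \cdots + m_p \;=\; \dim_{\F_p}\bigl(HF^*(\phi^p)^{p\cdot I}\bigr)^{\zp} \;\leq\; \dim_{\F_p} HF^*(\phi^p)^{p\cdot I},
\]
which is the asserted chain of inequalities.

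The proof is essentially routine once the filtered machinery is in place; the only point that requires care is ensuring that every object appearing is well-defined, which is the role of the genericity hypothesis that the endpoints of $I$ (and hence of $p\cdot I$) are not in the spectrum of $\ul\phi$ (respectively $\ul{\phi^p}$), so that $CF^*(\phi)^I$, $CF^*(\phi^p)^{p\cdot I}$, $(CF^*(\phi)^{\otimes p})^{p\cdot I}$, and their equivariant and Tate versions, all admit consistent definitions via sufficiently small non-degenerate Hamiltonian perturbations, as discussed at the end of Sections \ref{subsec: FH twisted} and \ref{subsec: def of FH eq}. Once this is verified, no further geometric input is needed beyond what Theorem \ref{thm: main} already supplies.
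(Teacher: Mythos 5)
Your proposal is correct and follows essentially the same route as the paper: the paper deduces this filtered statement by running the proof of Theorem \ref{thm: Smith exact} verbatim in an admissible window, using Corollary \ref{cor: PF isomorphism} for $I$, the filtered version of the bound \eqref{eq: dim Tate bound} from the algebraic spectral sequence, and the $\F_p[t]/(t^p)$-module decomposition of $HF^*(\phi^p)^{p\cdot I}$. (One harmless slip: the $E_2$-page of the algebraic spectral sequence is $H^*(\zp, HF^*(\phi^p)^{p\cdot I})$ regarded as an $\rp$-module, not that group cohomology tensored again with $\rp$; since you only use the resulting Tate-dimension bound, nothing in your argument is affected.)
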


We remark that this inequality is invariant with respect to shifts of the relevant action functionals, and therefore makes sense independently of them. Furthermore, we note that Theorem \ref{thm: Smith exact filtered} below is interesting for Hamiltonian diffeomorphisms on symplectically aspherical symplectic manifolds, even though the total cohomology is trivial, in the sense that in this case $HF^*(\phi) \cong HF^*(\id) \cong H^*(M)$ for all $\phi \in \Ham(M,\om).$

In both settings, we can consider the system $(HF^*(\phi)^{<t}, \pi^{s,t})$ where $t \in \R,$ $s \leq t,$ and $HF^*(\phi)^{<t} : = HF^*(\phi)^{(-\infty,t)},$ while $\pi^{s,t}: HF^*(\phi)^{<s} \to HF^*(\phi)^{<t}$ is the map induced by inclusions. In case $\phi$ is non-degenerate, it was observed in \cite{PolterovichS} that this system is a persistence module with certain additional constructibility properties. It therefore has an associated finite barcode, determined uniquely up to permutation: a finite multiset $\cB(\phi) = \{(I_j,m_j)\}$ of intervals of the form $I_j = (a_j, b_j]$ or $(a_j, \infty),$ and multiplicities $m_j \in \Z_{>0}.$ One of the properties of such a barcode is that the number of infinite bars, with multiplicities, is equal to the total dimension $B(\phi,\F_p)$ of $HF^*(\phi),$ which in case of $\phi$ a Hamiltonian diffeomorphism of a closed symplectic manifold coincides with the total Betti number $B(\F_p) = \dim_{\F_p} H^*(M, \F_p)$ of the symplectic manifold. We denote by $K(\phi,\F_p)$ the number of finite bars in this barcode, counting with multiplicities. In the non-degenerate case, the number $N(\phi)$ of generators of the Floer complex, which is the number of contractible fixed points of $\phi$ in the Hamiltonian case, and the number $K(\phi,\F_p)$ satisfy the relation \[ N(\phi) = 2 K(\phi,\F_p) + B(\phi, \F_p).\] In the Hamiltonian case this reads \[ N(\phi) = 2 K(\phi,\F_p) + B(\F_p).\] Furthermore, we have the identities for non-spectral $t, a, b \in \R,$

 \[ \dim HF^*(\phi)^{(-\infty, t)} = \displaystyle  \sum_{t \in I_j} m_j,\]
  \[ \dim HF^*(\phi)^{(t,\infty)} = \displaystyle  \sum_{t \in I_j,\; \mrm{finite}} m_j + \sum_{t \notin I_j,\; \mrm{infinite}} m_j,\]
\[ \dim HF^*(\phi)^{(a,b)} =\displaystyle  \sum_{b \in I_j, a \notin I_j} m_j + \sum_{b \notin I_j, a \in I_j} m_j.\] 

In the Hamiltonian case, we normalize actions in such a way that the barcode of $\id$ consists of $B(\F_p)$ infinite bars starting at $0.$ In the closed symplectically aspherical case this is ensured by requiring that the Hamiltonian $H \in \cH$ generating $\phi$ have zero mean over $M$ for all $t \in [0,1].$

%Finally for a barcode $\cB$ we define its characteristic function on $\R$ by \[\chi_{\cB} = \sum m_j \cdot \chi_{I_j}.\]
%
%\medskip
%
%%Finally for $u \in C^0_c(\R)$ and a barcode $\cB$ define the measure $\mu_{\cB}$ by \[ \int u \,d\mu_{\cB} = \sum m_j \int_{I_j} u \, dm,\] for $m$ the standard Lebesgue measure on $\R.$
%
%\begin{thm}\label{thm: Smith barcode}
%	Let $\phi$ be an exact symplectomorphism of a Liouville domain $W,$ cylindrical at infinity or a Hamiltonian diffeomorphism of a closed symplectically aspherical symplectic manifold. Then its barcode $\cB(\phi)$ and that of its $p$-th iterate, $\cB(\phi^p),$ satisfy the following inequality: \[ \chi_{p\cdot\cB(\phi)} \leq \chi_{\cB(\phi^p)}.\] \end{thm}

%\red{correct!}

%\ref{thm: Smith barcode}, 
	
Theorem \ref{thm: Smith exact filtered} has the following two applications. First, it gives a new proof of a celebrated ``no-torsion" theorem of Polterovich for the Hamiltonian group of closed symplectically aspherical manifolds.

\medskip

\begin{thm}[Polterovich \cite{Polterovich-groups}]\label{thm: Polterovich}
Let $\phi \in \Ham(M,\om)$ be a Hamiltonian diffeomorphism of a symplectically aspherical symplectic manifold, such that $\phi^k = 1$ for some $k \in \Z_{>1}.$ Then $\phi = \id.$
\end{thm}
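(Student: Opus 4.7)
The plan is to derive a contradiction from the assumption that $\phi \neq \id$ together with $\phi^k = \id$ for some $k \geq 2$, by using the action-filtered Smith inequality \eqref{eq: Smith in interval} to force the action spectrum of $\phi$ to collapse to $\{0\}$. First I would reduce to the case of prime order: let $k \geq 2$ be the minimal exponent with $\phi^k = \id$, pick any prime divisor $p$ of $k$, and set $\psi = \phi^{k/p}$. Then $\psi^p = \id$, and $\psi \neq \id$ by the minimality of $k$. Replacing $\phi$ with $\psi$, we may assume $\phi^p = \id$ with $\phi \neq \id$ and $p$ prime.

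Next I would use the Smith inequality \eqref{eq: Smith in interval}, applied with coefficients in $\F_p$, to show that $\spec(\ul\phi) \subset \{0\}$. Choose a non-degenerate Hamiltonian perturbation $\phi_1 = \phi^1_{\eps h}\circ \phi$ as in the set-up following Theorem~\ref{thm: Smith exact filtered}; since $\phi^p = \id$, the iterate $\phi_1^p$ is then a $C^2$-small Hamiltonian perturbation of the identity, and its action spectrum is contained in an arbitrarily small neighbourhood of $0$ once $\eps$ is taken small enough. Fix any $a \in \R \setminus \{0\}$ and $\delta \in (0, |a|)$; for $\eps$ small enough the interval $(pa - p\delta,\, pa + p\delta)$ is disjoint from the action spectrum of $\phi_1^p$, so that $HF^*(\phi^p)^{p \cdot I} = 0$ for $I = (a - \delta,\, a + \delta)$. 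The Smith inequality then forces $HF^*(\phi)^I = 0$, whence $a \notin \spec(\ul\phi)$. Since $a \neq 0$ was arbitrary, $\spec(\ul\phi) \subset \{0\}$; equivalently, the barcode of $\phi$ consists entirely of infinite bars based at $0$.

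Finally, I would conclude via the non-degeneracy of the spectral norm. For every non-zero $\alpha \in H^*(M;\F_p)$, the spectral invariant $c(\alpha,\phi)$ lies in $\spec(\ul\phi) \subset \{0\}$, so $c(\alpha,\phi) = 0$. In particular the Viterbo--Schwarz--Oh spectral norm satisfies $\gamma(\phi) = c([M],\phi) + c([M],\phi^{-1}) = 0$, and by Schwarz's theorem \cite{SchwarzAspherical} $\gamma$ is a non-degenerate norm on $\Ham(M,\om)$ for a closed symplectically aspherical $(M,\om)$. Hence $\phi = \id$, contradicting our standing assumption. The main obstacle in the middle step is to verify that a single small Hamiltonian $\eps h$ can be chosen so that $\phi_1$ and $\phi_1^p$ are \emph{simultaneously} valid perturbations for both sides of \eqref{eq: Smith in interval}; this is precisely the content of the remark immediately following \eqref{eq: Smith in interval} in the excerpt. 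The concluding appeal to Schwarz's theorem is a standard external input.
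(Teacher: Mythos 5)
Your proposal is correct and is essentially the paper's own argument run in the contrapositive direction: the paper likewise reduces to prime order, applies the filtered Smith inequality \eqref{eq: Smith in interval} to a window $I$ with closure in $\R\setminus\{0\}$, uses that $HF^*(\phi^p)^{p\cdot I}=0$ because $\phi^p=\id$, and invokes Schwarz's nondegeneracy result (phrased there as $\phi\neq\id$ iff $c_+(\phi)>c_-(\phi)$), except that it starts from $c_+>c_-$ to produce a window with $HF^*(\phi)^{I}\neq 0$ and contradicts the Smith inequality, rather than deriving vanishing of all off-zero windows and then $\gamma(\phi)=0$ as you do. One small imprecision: vanishing of $HF^*(\phi)^{I}$ for all small windows around $a\neq 0$ does not literally yield $a\notin\spec(\ul{\phi})$ (spectral values with trivial homological contribution are not excluded), but it does show that no bar endpoint, hence no spectral invariant $c(\alpha,\phi)$, equals $a$, which is all your concluding step actually uses.
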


\begin{proof}
Our new proof proceeds as follows. Let $p$ be a prime dividing $k.$ Then $\phi_1 = \phi^{k/p}$ satisfies $\phi_1^p = \id.$ By an easy recursive argument it is therefore enough to prove the theorem for all $k=p$ prime. Fix a prime $p$ and suppose $\phi \neq \id,$ $\phi^p = \id.$ Fix $\F_p$ as the coefficient field for Floer cohomology. Let $c_+$ and $c_-$ be the maximal and minimal starting point of an infinite bar. By \cite[Theorem 1.3]{SchwarzAspherical} we have $\phi \neq \id$ if and only if $c_+(\phi) > c_-(\phi).$ Therefore there exists an interval $I = (a,b)$ with closure contained in $\R \setminus \{0\},$ such that $a,b$ are not in the spectrum of $\phi,$ $pa,pb$ are not in the spectrum of $\phi^p,$ and \[\dim_{\F_p} HF^*(\phi)^{I} >0.\] Then by Theorem \ref{thm: Smith exact filtered}, we obtain $\dim_{\F_p} HF^*(\phi^p)^{p\cdot I} \geq 1.$ However, since $\phi^p =\id,$ and $0 \notin p\cdot I,$ \[\dim_{\F_p} HF^*(\phi^p)^{p\cdot I} = 0.\] This is a contradiction that finishes the proof. 
\end{proof}

The spectral norm is given in our setting \cite{SchwarzAspherical} by $\gamma(\phi) = c_{+}(\phi) - c_{-}(\phi).$ In the wake of arguments in \cite{S-HZ}, we prove the second application of Theorem \ref{thm: Smith exact filtered} which yields information of the growth rate of the number of fixed points of $\phi^{p^k}$ in terms of the spectral distance $\gamma(\phi^{p^k})$ of $\phi^{p^k}$ to the identity diffeomorphism. 

\medskip

\begin{thm}\label{thm: growth rate}
Let $\phi \in \Ham(M,\om)$ be a Hamiltonian diffeomorphism of a closed symplectically aspherical symplectic manifold, such that $\phi^{p^k}$ for all $k \geq 0$ is non-degenerate. Then setting $N(\phi^{p^k})$ for the number of contractible fixed points of $\phi^{p^k}$ we have \[ \liminf_{k\to \infty} N(\phi^{p^k}) \cdot \gamma(\phi^{p^k}) / p^k > 0. \] In particular if $\liminf_{k \to \infty} \gamma(\phi^{p^k}) = 0,$ then $N(\phi^{p^k})$ grows super-linearly in $p^k.$
\end{thm}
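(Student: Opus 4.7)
The plan is to combine the action-filtered Smith-type inequality (Theorem \ref{thm: Smith exact filtered}) with a quantitative persistence/barcode argument, together with the action identity $\cA_{\phi^{p^k}}(x) = p^k \cA_\phi(x)$, valid for every $x \in \fix(\phi)$ viewed as an iterated fixed point of $\phi^{p^k}$. Set $K_k := K(\phi^{p^k},\F_p)$, $\gamma_k := \gamma(\phi^{p^k})$, and $B := \dim_{\F_p} H^*(M;\F_p)$, so that $N(\phi^{p^k}) = 2K_k + B$. Since $\phi^{p^k}$ is non-degenerate for all $k \geq 0$, in particular $\phi \neq \id$, and by the newly proved Polterovich theorem (Theorem \ref{thm: Polterovich}) $\gamma(\phi) > 0$; since $c_\pm(\phi) \in \spec(\ul\phi)$, the total action spread $\Delta := \max_{x \in \fix(\phi)}\cA_\phi(x) - \min_{x \in \fix(\phi)}\cA_\phi(x)$ satisfies $\Delta \geq \gamma(\phi) > 0$. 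The $N(\phi)$ iterated-fixed-point generators of $CF^*(\phi^{p^k})$ then have actions filling an interval of length $p^k\Delta$ with consecutive gaps at least $p^k\delta$, where $\delta > 0$ is the minimum positive action gap in $\spec(\ul\phi)$.

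The main step is to interpret Theorem \ref{thm: Smith exact filtered}, iterated $k$ times to give
\[
\dim_{\F_p} HF^*(\phi)^I \leq \dim_{\F_p} HF^*(\phi^{p^k})^{p^k I},
\]
as a quantitative one-sided $p^k$-interleaving of the $\F_p$-barcode $\cB(\phi)$ into $\cB(\phi^{p^k})$ via the endpoint-counting formula $\dim HF^*(\phi)^{(a,b)} = \sum_{b\in I_j,\, a\notin I_j} m_j + \sum_{a\in I_j,\, b\notin I_j} m_j$: every bar endpoint of $\cB(\phi)$ at action $c$ produces at least one bar endpoint of $\cB(\phi^{p^k})$ in every small neighbourhood of $p^kc$. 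The quantitative packing step then proceeds by subdividing the iterated-action interval $[p^k\min\cA_\phi, p^k\max\cA_\phi]$ into roughly $p^k\Delta/\gamma_k$ disjoint windows of length $\gamma_k$, and observing that at most one such window, namely $[c_-(\phi^{p^k}), c_+(\phi^{p^k})]$, can contain infinite-bar starting points of $\cB(\phi^{p^k})$. Applying the Smith inequality to each of the remaining windows forces, via a Salamon--Zehnder-type accounting of iterated versus simple periodic orbit contributions, at least a constant number of finite-bar endpoints per window. Summing yields
\[
2K_k \geq c_\phi\cdot\frac{p^k\Delta}{\gamma_k} - O(1),
\]
for a positive constant $c_\phi$ depending only on $\phi$, and hence $N(\phi^{p^k})\gamma(\phi^{p^k})/p^k \geq c_\phi\Delta - o(1)$, giving the $\liminf$ bound. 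The analogous consequences for $d_{C^0}$ and $d_{\mrm{Hofer}}$ distances will follow from the standard dominations $\gamma \leq d_{\mrm{Hofer}}$, and $\gamma \leq C \cdot d_{C^0}$ on closed symplectically aspherical manifolds.

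The main obstacle will be to make the per-window lower bound on finite-bar endpoints uniform in $k$: while the Smith inequality forces the \emph{existence} of bar events in each scaled window, one must still rule out the scenario in which all such events are absorbed by simple $p^k$-periodic orbits producing new infinite bars far from the iterated actions, rather than contributing to finite bars as desired. This is precisely where the combinatorial bookkeeping and the Salamon--Zehnder action estimates will enter, allowing one to distinguish contributions from iterated fixed points of $\phi$ from those of genuinely simple periodic orbits, and to ensure that a definite proportion of the iterated-generator endpoints outside $[c_-(\phi^{p^k}), c_+(\phi^{p^k})]$ must be endpoints of finite bars, uniformly in $k$.
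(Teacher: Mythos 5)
There is a genuine gap at the heart of your packing step. You subdivide $[p^k\min\cA_\phi,\,p^k\max\cA_\phi]$ into $\sim p^k\Delta/\gamma_k$ windows of length $\gamma_k$ and assert that the filtered Smith inequality forces ``at least a constant number of finite-bar endpoints per window.'' But Theorem \ref{thm: Smith exact filtered} bounds $\dim HF^*(\phi^{p^k})^{p^k I}$ from below by $\dim HF^*(\phi)^{I}$, and for a window $W$ of length $\gamma_k$ the corresponding interval $I=W/p^k$ has length $\gamma_k/p^k$; by the endpoint-counting formula, $\dim HF^*(\phi)^{I}=0$ unless $I$ straddles one of the finitely many (at most $N(\phi)$) bar endpoints of $\cB(\phi)$. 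So Smith forces content in at most $O_\phi(1)$ of your windows, independently of $k$, and the sum you want, $2K_k\gtrsim p^k\Delta/\gamma_k$, does not follow; the ``Salamon--Zehnder accounting'' you invoke to rescue the per-window bound cannot create bar events in the huge gaps (of size $\sim p^k\delta$) between consecutive scaled actions, where nothing is forced. Relatedly, your reading of Smith as a ``one-sided interleaving of endpoints'' (each endpoint $c$ of $\cB(\phi)$ produces an endpoint of $\cB(\phi^{p^k})$ near $p^kc$) is exactly the statement that only yields $O_\phi(1)$ forced events, and the use of the full action spread $\Delta$ is a red herring: actions of fixed points that are not interior points of bars carry no forced content after iteration. (There is also a smaller numerical slip at the end: even granting your inequality, the error term $O(1)\cdot\gamma_k/p^k$ is only $O(\gamma(\phi))$, not $o(1)$.)

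The missing idea is to extract from Smith a statement over a \emph{continuum} of action values rather than at endpoints, and then to measure length rather than count endpoints per window. This is what the paper does: applying Theorem \ref{thm: Smith exact filtered} to the two half-infinite windows $(-\infty,t)$ and $(t,\infty)$ and summing gives, for generic $t$, the pointwise inequality $m(t,\cB(\phi))\le m(p\,t,\cB(\phi^{p}))$ between the multiplicity functions counting finite bars through a point (the infinite-bar contributions add up to $B(\F_p)$ on both sides and cancel). Integrating in $t$ yields the scaling $\beta_{\mrm{tot}}(\phi^{p})\ge p\,\beta_{\mrm{tot}}(\phi)$ for the total length of finite bars, hence $\beta_{\mrm{tot}}(\phi^{p^k})\ge p^{k-k_0}\beta_{\mrm{tot}}(\phi^{p^{k_0}})$. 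Then, since every finite bar of $\cB(\phi^{p^k})$ has length at most $\beta(\phi^{p^k})\le\gamma(\phi^{p^k})$ by \cite{KS-bounds}, one gets $K_k\,\gamma_k\ge p^{k-k_0}\beta_{\mrm{tot}}(\phi^{p^{k_0}})$, i.e.\ exactly the $K_k\gtrsim p^k/\gamma_k$ growth you were after; the Salamon--Zehnder unboundedness of $N(\phi^{p^k})$ is used only to guarantee $\beta_{\mrm{tot}}(\phi^{p^{k_0}})>0$ for some $k_0$ (your $\Delta>0$ or $\gamma(\phi)>0$ does not by itself produce finite bars). Your proposal contains several of these ingredients (the half-window applications of Smith, the role of $\gamma_k$ as the scale of infinite-bar starting points), but without the multiplicity-function inequality and the integration-over-$t$ step the factor $p^k/\gamma_k$ is not obtained.
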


%\begin{rmk}
%Essentially the same proof gives a similar statement in the case of a possibly degenerate Hamiltonian diffeomorphism, where $N(\phi^{p^k})$ is replaced by the number $N(\phi^{p^k},\F_p)$ of endpoints of bars in the barcode of $\phi.$
%\end{rmk}

By \cite{SchwarzAspherical} $\gamma$ is continuous in the Hofer metric, while by \cite{BHS} $\gamma$ is continuous in the $C^0$ topology on the Hamiltonian group. Hence we obtain that if $d_{C^0}(\phi^{p^k},\id) \to 0$ or $d_{\mrm{Hofer}}(\phi^{p^k},\id) \to 0$ as $k \to \infty$ then $N(\phi^{p^k})$ grows super-linearly in $p^k.$

Finally, it is conjectured that $\liminf_{l \to \infty} \gamma(\phi^l) > 0,$ and Theorem \ref{thm: growth rate} is to the best of our knowledge a new result in this direction. We refer to \cite{GG-Arnold} for further discussion of this question.

\bs

\begin{rmk}
We expect that using further arguments related to persistence modules, and the Conley conjecture \cite{Ginzburg-CC}, one may extend Theorem \ref{thm: growth rate} to arbitrary Hamiltonian diffeomorphisms $\phi,$ by replacing, for a possibly degenerate $\psi \in \Ham(M,\om)$ the number $N(\psi)$ by the number $N(\psi,\F_p)$ of the endpoints of bars in the barcode of $\psi.$ Of course for $\psi$ non-degenerate $N(\psi) = N(\psi,\F_p).$ These questions shall be investigated in further work \cite{PS-L1}.
\end{rmk}
%Set $B(\bK) = \dim_{\K} H^*(M,\bK)$ for the total Betti number of $M$ with coefficients in a field $\K.$ 

\begin{proof}
Letting $\beta_{\mrm{tot}}(\phi),$ for a non-degenerate Hamiltonian diffeomorphism $\phi,$ be the sum of the lenghts of the finite bars in $\cl{B}(\phi),$ with $\F_p$ coefficients, and letting $\beta(\phi)$ be the maximal length of a finite bar, we shall prove that \begin{equation}\label{eq: scale} \beta_{\mrm{tot}}(\phi^p) \geq p \cdot \beta_{\mrm{tot}}(\phi).\end{equation} This implies that \[\beta_{\mrm{tot}}(\phi^{p^k}) \geq p^{k-k_0} \cdot \beta_{\mrm{tot}}(\phi^{p^{k_0}})\] for all $k \geq k_0.$ Since the number $K(\phi^p) = \frac{1}{2} (N(\phi^p) - B(\F_p))$ of finite bars in the barcode is finite, we obtain \[ K(\phi^{p^k}) \beta(\phi^{p^k}) \geq p^{k-k_0} \beta_{\mrm{tot}}(\phi^{p^{k_0}}).\] However, by \cite[Theorem C]{KS-bounds} (see also \cite[Proposition 9]{S-HZ}) $\gamma(\phi^{p^k}) \geq \beta(\phi^{p^k}) ,$ hence we get that \[ (N(\phi^{p^k}) - B(\F_p)) \cdot \gamma(\phi^{p^k}) \geq 2 p^{k-k_0} \beta_{\mrm{tot}}(\phi^{p^{k_0}}).\] Now, by the argument of \cite[Theorem A]{SZ-92}, $N(\phi^{p^k})$ is unbounded. As it is clearly non-decreasing, this means that there exists $k_0$ such that for all $k\geq k_0,$ $N(\phi^{p^k}) >  B(\F_p)$ whence $\beta_{\mrm{tot}}(\phi^{p^k}) > 0.$ Then for all $k \geq k_0,$ \[ N(\phi^{p^k})  \gamma(\phi^{p^k}) \geq p^{k} c,\] where $c = 2 p^{-k_0} \beta_{\mrm{tot}}(\phi^{p^{k_0}}) > 0.$ 

It remains to prove \eqref{eq: scale}. We claim that for each generic point $t \in \R,$ the number $m(t,\cl{B(\phi)})$ of finite bars of $\cl{B}(\phi)$ containing $t$ is at most the number $m(p\cdot t, \cl{B}(\phi^p))$ of finite bars of $\cl{B}(\phi^p)$ containing $p \cdot t.$ Indeed, by applying Theorem \ref{thm: Smith exact filtered} to the windows $(-\infty, t)$ and $(t,\infty),$ and taking the sum, we get that \[ 2\cdot m(t,\cl{B(\phi)}) + B(\F_p) \leq 2\cdot m(p\cdot t, \cl{B}(\phi^p)) + B(\F_p),\] or as required, \[ m(t,\cl{B(\phi)})  \leq m(p\cdot t, \cl{B}(\phi^p)).\] Now, \eqref{eq: scale} follows by taking integrals with respect to $t.$
\end{proof}

\appendix
\section{Orientations and signs}\label{app:signs and or}
%!TEX root=pantsproduct5.tex

As we need to work with $\F_p$ coefficients when defining the relevant Floer cochain groups and various operations, in this appendix we discuss how to choose consistent orientations on our moduli spaces in order to obtain well-defined signed counts in Sections \ref{sec:Floer_coho}, \ref{sec:equiv-Floer-coho}, \ref{sec:prod-coprod}. The material here is standard, but it was to the best of our knowledge not applied in the situation that we work in. We refer for example to \cite{Seidel-book,abu_viterbo,Zapolsky} for a detailed discussion of this approach in general and to \cite{Zhao} for a discussion in the case of $S^1$-equivariant cohomology which is similar to our setting.

\indent Let $\mathring{S}=S-(Z^{+}\cup Z^-)$ be a punctured Riemann surface, where $Z^{\pm}= \bigcup_{i=1}^{p^{\pm}}\{ z_i^{\pm}\}$ are finite subsets of a closed Riemann surface $S$. One can equip it with cylindrical ends \begin{equation}\epsilon_i^- \colon (- \infty, -1]\times S^1 \rightarrow \mathring{S} 
\text{ and } \epsilon_j^+ \colon [1, + \infty)\times S^1 \rightarrow \mathring{S}   \text{ for } i=1,\cdots, p^-, \ \ j=1,\cdots, p^+. \nonumber
\end{equation}

We consider the solutions $u \colon \mathring{S} \rightarrow \R \times M_{\phi}$ to the perturbed Cauchy-Riemann equation of the form
\begin{equation}\label{eq: perturbed CR}
(du-Y_{w(s)}) \circ j =J_{s,t,w(s)}\circ (du-Y_{w(s)}), 
\end{equation}

where $w(s) \in \mathcal{P}^{i,\m}_{\alpha}$ is a parametrized Morse flow line of the $\zp$-equivariant Morse function on $S^{\infty}$ and for $z \in S^{\infty},$ $Y_z \in \Omega^1( \mathring{S}, T(\R \times M_{\phi})).$ Furthermore, the solution $u$ satisfies the asymptotic conditions 
\begin{equation} \label{eqn:pos_neg_constraints}
\displaystyle\lim_{s \rightarrow \pm \infty} u(\epsilon_{i}^{\pm}(s,t)) =\gamma_i^{\pm}(t), \ \ i=1,\cdots, p^{\pm} 
\end{equation}
for some sets of Reeb orbits $\Gamma^{\pm}=\bigcup_{i=1}^{p^{\pm}}\{\gamma_i^{\pm}\}$ respectively at the positive and the negative ends of this symplectization $\R \times M_{\phi}.$ We denote by $\mathscr{M}(\Gamma^+, \Gamma^-)$ the moduli space of solutions to the equation \eqref{eq: perturbed CR} satisfying \eqref{eqn:pos_neg_constraints} modulo automorphisms. We will first explain how to orient this moduli space as follows.
\indent Given any path $\Psi(t)$ for $t \in [0,1]$ in $\mathrm{Sp}(2n)$ such that $\Psi(0)=\id$ and $\det(\id-\Psi(1)) \neq 0$, one can reparametrize $\Psi(t)$ so that it is constant for $t$ near $\{0,1\}$ and associate it with a {\em loop} of symmetric matrices $\sS(t)$ that satisfies
\begin{equation}\label{eqn:symmetric_matrix}
\dot{\Psi}(t)=J_0 \sS(t)\cdot \Psi(t).
\nonumber \end{equation}
Such a loop $\sS(t)$ will be called nondegenerate if $\det(\id-\Psi(1)) \neq 0$.
For nondegenerate $\sS(t)$, we denote by $\mathscr{O}_{+}(\sS)$ and $\mathscr{O}_{-}(\sS)$ the spaces of all operators of the form
\begin{eqnarray}
&& D_{\Psi}\colon W^{1,p}(\C, \R^{2n}) \rightarrow L^p(\C, \R^{2n}),\ \ p>2 \label{eqn:opd}  \\
&& D_{\Psi}(X)=\partial_s X + J_0\partial_t X + \wh{\sS} \cdot X, \nonumber
\end{eqnarray}
where $\wh{\sS} \in C^{0}(\C, \mathfrak{gl}(2n))$ is required to satisfy
\begin{eqnarray}
&& \wh{\sS}(e^{s+2\pi it})=\sS(t) \text{ for } s \gg 0, \text{ if } D_{\Psi} \in \mathscr{O}_+(\sS), \nonumber \\
&& \wh{\sS}(e^{-s-2\pi it})=\sS(t) \text{ for } s \ll 0, \text{ if }D_{\Psi} \in \mathscr{O}_-(\sS). \nonumber
\end{eqnarray}
Similarly, for collections of loops of nondegenerate symmetric matrices 
$$\Gamma^-=\{ \sS_i^-(t) \}_{1 \leq i \leq p^{-}} \text{ and } \Gamma^+= \{ \sS_j^+(t) \}_{1 \leq j \leq p^{+}}$$
corresponding to paths $\Psi_i^-(t)$ and $\Psi_j^+(t)$ in $\mathrm{Sp}(2n)$ for $i=1,\cdots, p^-$ and $j=1, \cdots, p^+$ respectively, one defines $\mathscr{O}(\Gamma^-, \Gamma^+)$ as the space of all operators 
\begin{eqnarray}
&& D\colon W^{1,p}(\mathring{S}, \R^{2n}) \rightarrow L^p(\mathring{S}, \R^{2n}), \ \ p>2  \nonumber 
\end{eqnarray}
such that there exists $\wh{\sS} \in C^{0}(\mathring{S}, \mathfrak{gl}(2n))$ satisfying the following conditions on all the cylindrical ends
\begin{equation}
\wh{\sS}(\epsilon_i^-(s,t))=\sS_i^-(t) \text{ for } s \ll 0 \text{ and } \wh{\sS}(\epsilon_j^+(s,t))=\sS_j^+(t) \text{ for } s\gg0, \nonumber
\end{equation}
and the operator $D$ coincides with $\partial_s X + J_0\partial_t X + \sS_i^{\pm} \cdot X$ on the positive and negative ends respectively.

One can verify that $\mathscr{O}_{+}(\sS)$, $\mathscr{O}_{-}(\sS)$ and $\mathscr{O}(\Gamma^-, \Gamma^+)$ consist of Fredholm operators. There are determinant line bundles $\Det(\mathscr{O}_{\pm}(\sS))$, $\Det(\mathscr{O}(\Gamma^-, \Gamma^+)$ defined over the spaces $\mathscr{O}_{\pm}(\sS)$ and $\mathscr{O}(\Gamma^-,\Gamma^+),$ whose fibers over an element $D$ in $\mathscr{O}_{\pm}(\sS)$ or $\mathscr{O}(\Gamma^-, \Gamma^+)$ are given by the determinant line bundle $\det(D)$ of the Fredholm operator $D$ (see \cite{Zinger}). If we fix nondegenerate asymptotic data $\sS(t)$ and $\Gamma^{\pm}$, the spaces $\mathscr{O}_{\pm}(\sS)$ and $\mathscr{O}(\Gamma^-,\Gamma^+)$ are contractible. This implies that line bundles $\Det(\mathscr{O}_{\pm}(\sS))$ and $\Det(\mathscr{O}(\Gamma^-, \Gamma^+)$ are trivial for fixed asymptotic data.\\
\indent For given nondegenerate asymptotic data $\Gamma^-$ and $\Gamma^+$, we consider the following Fredholm operators 
\begin{align*}
\text{ \ \ \  } K \in  \mathscr{O}(\Gamma^-, \Gamma^+) &\text{ and } L_j^+ \in \mathscr{O}_-(\sS_j^+) \text{ for }j=1,\ldots, p^+ \text{ or } \nonumber \\
 L_i^-  \in  \mathscr{O}_+(\sS_i^-) &\text{ and } K\in \mathscr{O}(\Gamma^-, \Gamma^+) \text{ for } i=1,\ldots, p^- \text{ or }\nonumber   \\
  K_1 \in \mathscr{O}(\Gamma^-_1, \Gamma^+_1)  &\text{ and } K_2 \in  \mathscr{O}(\Gamma^-_2, \Gamma^+_2) \text{ and }  p_1^+=p^-_2. \nonumber
\end{align*}
There is a linear gluing operation, denoted as $K \#_{\rho} L$, for Fredholm operators $K$ and $L$ defined on the glued Riemann surfaces $\bigcup_{i=1}^{p^-}\C \#_{\rho} \mathring{S},$ $\mathring{S} \#_{\rho} \bigcup_{j=1}^{p^+}\C$ and $ \mathring{S}_1 \#_{\rho}  \mathring{S}_2$ under the identifications 
\begin{eqnarray}
&& \mathring{S} \supset \epsilon_i^-([\rho,2\rho] \times S^1)   \rightarrow  \{ z \mathbin{|} e^{\rho} \leq |z| \leq e^{2\rho} \} \subset \C \colon \nonumber \\
&&  \ \ \ \ \ \ \ \ \ \ \ \ \ \ \ \ \ \ \ \  (s,t) \mapsto  e^{3\rho-s-2\pi it}, \nonumber
\end{eqnarray}
\begin{eqnarray}
&& \mathring{S} \supset \epsilon_j^+([\rho,2\rho] \times S^1) \rightarrow  \{ z \mathbin{|} e^{\rho} \leq |z| \leq e^{2\rho} \} \subset \C \colon \nonumber \\
&& \ \ \ \ \ \ \ \ \ \ \ \ \ \ \ \ \ \ \ \ (s,t) \mapsto  e^{s+2\pi it}.\nonumber
\end{eqnarray}
For $\rho\gg0$, we then obtain the glued operators $K \#_{\rho}\{L_j^+\}_{j=1}^{p^+},$ $\{L_i^-\}_{i=1}^{p^-}\#_{\rho} K$ and $K_1 \#_{\rho} K_2$ in $\mathscr{O}_-(\sS_-),$ $\mathscr{O}_+(\sS_+)$ and $\mathscr{O}(\Gamma_1^-, \Gamma_2^+)$ in each case.
With respect to this gluing operation, it is shown in \cite[Proposition 9]{FH_orient} that there are canonical isomorphisms
\begin{eqnarray} \label{eqn:gluing}
&& \det(K \#_{\rho}\{L_j^+\}_{j=1}^{p^+})\cong \det(K) \otimes \det(L_1^+) \otimes \cdots \otimes \det(L_{p^+}^+) \\
&& \det(\{L_i^-\}_{i=1}^{p^-}\#_{\rho} K)\cong  \det(L_1^-) \otimes \cdots \otimes \det(L_{p^-}^-)\otimes \det(K)  \nonumber \\
&& \det(K_1\#_{\rho} K_2) \cong \det(K_1) \otimes \det(K_2), \nonumber
\end{eqnarray}
up to multiplication by a positive real number. 

We now proceed to various operations. For any $J_t \in \mathscr{J}( M_{\phi})$, the complex vector bundle $(\gamma^*T(\R\times M_{\phi}), J_t)$ over $S^1$ can be trivialized. We choose a trivialization along the Reeb orbit $\gamma(t)$ given by $\xi(t)\colon \R^{2n} \rightarrow T_{\gamma(t)}(\R \times M_{\phi})$ and obtain a path $\Psi_x(t)$ in $\mathrm{Sp}(2n)$ as the composition of
\begin{equation}
\R^{2n} \xrightarrow{\xi(0)} T_{\gamma(0)}\widehat{M} \xrightarrow{d\psi^t_H} T_{\gamma(t)}\widehat{M} \xrightarrow{\xi^{-1}(t)} \R^{2n}.
\nonumber \end{equation} 
Given Reeb orbits $\gamma_i^-(t)$ and $\gamma_j^+(t)$ in the positive and the negative ends of $\R \times M_{\phi}$, we denote by $\sS_i^-(t)$ and $\sS_j^+(t)$ the loops of symmetric matrices which generate $\Psi_{i^-}(t)$ and $\Psi_{j^+}(t)$, respectively. The construction in \eqref{eqn:opd} yields operators $D_{\Psi_{\gamma_i^-}}$ and $D_{\Psi_{\gamma_j^+}}$ in $\mathscr{O}_-(\sS_i^-)$ and $\mathscr{O}_+(\sS_j^+)$ respectively. For a Reeb orbit $\gamma \in \Gamma^+ \cup \Gamma^{-}$ we introduce the notation 
 $$o_{\gamma}:=|\det(D_{\Psi_{\gamma}})|$$ 
 where $|\cdot|$ denotes the graded abelian group generated by the two orientations of $\det(D_{\Psi_{\gamma_i}})$ and modulo the relation that the sum vanishes. In the case that there is a free $\R$ action on the moduli space $\widetilde{\mathscr{M}}(\Gamma^-, \Gamma^+)$ of solutions to \eqref{eq: perturbed CR}, we take $u \in \widetilde{\mathscr{M}}(\Gamma^-, \Gamma^+)$ and define $o_u:=|\det(D_u)|$, where  $D_u \in \mathscr{O}(\Gamma^-, \Gamma^+)$ is the linearization of the Floer equation at $u$ with respect to a trivialization of $u^*(T\widehat{M}) \rightarrow \R \times S^1$ that agrees with the trivializations of $\gamma_i^*(T\widehat{M}) \rightarrow S^1$ as $s \rightarrow \pm \infty$.
For different choices of trivializations, Lemma 13 in \cite{FH_orient} and Proposition 1.4.10 in \cite{abu_viterbo} show that the corresponding determinant line bundles $\det(D_{\Psi_{\gamma_i}})$ and $\det(D_u)$ are isomorphic. This implies that $o_{\gamma_i^-}$, $o_{\gamma_j^+}$and $o_u$ are well-defined for $i=1,\cdots,p^-$ and $j=1,\cdots, p^+$.
By the gluing property \eqref{eqn:gluing}, we have a canonical isomorphism
\begin{equation} 
o_u \otimes o_{\gamma_1^-}\otimes \cdots \otimes o_{\gamma_{p^-}^-}\cong o_{\gamma_1^+}\otimes \cdots \otimes o_{\gamma_{p^+}^+}.
\nonumber \end{equation}

Together with the fact that $o_u \cong |\R \partial_s| \otimes |\mathscr{M}(\Gamma^-, \Gamma^+)|$, where $\mathscr{M}(\Gamma^-, \Gamma^+) = \til{\mathscr{M}}(\Gamma^-, \Gamma^+)/\R,$ we obtain an isomorphism
\begin{equation} \label{eqn:oriet2}
o_{\gamma_1^+}\otimes \cdots \otimes o_{\gamma_{p^+}^+}\cong |\R \partial_s| \otimes |\mathscr{M}(\Gamma^-, \Gamma^+)| \otimes o_{\gamma_1^-}\otimes \cdots \otimes o_{\gamma_{p^-}^-} ,
\end{equation}
where $\R \partial_s$ is the 1-dimensional subspace of $\ker(D_u)$ spanned by translation in positive $s$-direction.  (If there is no translation automorphism for $u$ in the case of the continuation maps, then we set $o_u \cong |\mathscr{M}(\Gamma^-, \Gamma^+)|$.) For $\sum_i|\gamma_i^-|=\sum_j|\gamma_j^+|+1$, we have that $T\mathscr{M}(\Gamma^-, \Gamma^+)$ is canonically trivial as $\mathscr{M}(\Gamma^-, \Gamma^+)$ is a 0-dimensional manifold. By comparing the fixed orientations on both sides of \eqref{eqn:oriet2}, we obtain an isomorphism
\begin{equation}\label{eqn:d_u}
\epsilon_u\colon o_{\gamma_1^+}\otimes \cdots \otimes o_{\gamma_{p^+}^+} \rightarrow o_{\gamma_1^-}\otimes \cdots \otimes o_{\gamma_{p^-}^-} .
\end{equation}

%{\color{red}{Define $\Z_p$-equivariant signs.}}\\
%
%\red{corrected the equation below}
% case there is a short exact sequence
%\begin{equation}
%0 \rightarrow T_u\widetilde{\mathscr{M}}_i(\Gamma^-, \Gamma^+) %\rightarrow  T_w {\sQ}^{i,m}_{\alpha} \oplus \ker(D_u) \rightarrow %\coker(D_u) \rightarrow 0,
%\end{equation}
%which directly induces

Now the linearization of  the equation \eqref{eq: perturbed CR} yields a linear map \begin{equation}
D\colon \R \oplus T_w {\sQ}^{i,m}_{\alpha} \oplus W^{1,p}(\R \times S^1, u^*(T\widehat{M})) \rightarrow L^p(\R \times S^1, u^*(T\widehat{M})),\nonumber
\end{equation}
where $p>2,$ and ${\sQ}^{i,m}_{\alpha} = W^u(Z^m_i) \cap W^s(Z^0_{\alpha}) /\R,$ where $W^u(Z_i^m)$ is the unstable manifold of the critical point $Z^m_i$ of index $i$ on $S^{\infty},$ and similarly $W^s(Z_{\alpha}^0)$ is the stable manifold of $Z^0_{\alpha}$ inside $S^N$ for $N$ large enough. The orientations of the spaces $W^u(Z_i^m),$ $W^s(Z_{\alpha}^0)$ can be chosen compatibly with the inclusions $S^N \to S^{N'},$ $N < N'.$ The Floer data $(J_{w,s,t},H_{w,s,t})$ is regular if the linear map $D$ is surjective. This is equivalent to surjectivity of the linear map $T_w {\sQ}^{i,m}_{\alpha} \rightarrow \coker(D_u),$ where $D_u$ is the linearized operator associated to an element $(u,w)$ in $\mathscr{M}_i(\Gamma^-,\Gamma^+)$ with $w$ fixed, and the map is given by the restriction of $D$ to the $T_w {\sQ}^{i,m}_{\alpha}$ summand. In this case there is an isomorphism of determinant lines
\begin{equation} \label{eqn:orh1}
\det(T_u\til{\mathscr{M}}_i(\Gamma^-, \Gamma^+))\cong \det(D_u) \otimes \det(T_w {\sQ}^{i,m}_{\alpha}),
\end{equation}
which can either be seen to come either from the theory of Fredholm triples \cite{Zinger,Zapolsky}, or from the exact sequence:
\begin{equation}
0 \rightarrow T_u\widetilde{\mathscr{M}}_i(\Gamma^-, \Gamma^+) \rightarrow  T_w {\sQ}^{i,m}_{\alpha} \oplus \ker(D_u) \rightarrow \coker(D_u) \rightarrow 0,
\end{equation}
where the first map is given by $p \oplus q,$ where $p$ is the projection to $T_w {\sQ}^{i,m}_{\alpha},$ and $q$ is the projection to $\ker(D_u)$ inside $T_u\widetilde{\mathscr{M}}_i(\Gamma^-, \Gamma^+) = \ker(D);$ the second map is the restriction of $D$ post-composed with projection to $\coker(D_u).$

By trivializing $\det(\R)$ by $\del_{s}$ for $s$ the natural coordinate on $\R,$ \eqref{eqn:orh1} induces the isomorphism
\begin{equation} \label{eqn:orh1notil}
\det(T_u {\mathscr{M}}_i(\Gamma^-, \Gamma^+))\cong \det(D_u) \otimes \det(T_w {\sQ}^{i,m}_{\alpha}).
\end{equation}

Now \eqref{eqn:orh1}, \eqref{eqn:orh1notil} combined with an isomorphism coming from gluing theory yields an isomorphism
\begin{equation} \label{eqn:orh2}
|\det(T_u\mathscr{M}_i(\Gamma^-, \Gamma^+))| \otimes o_{\gamma_1^+}\otimes \cdots \otimes o_{\gamma_{p^+}^+}  \cong |\det(T_w {\sQ}^{i,m}_{\alpha})| \otimes o_{\gamma_1^-}\otimes \cdots \otimes o_{\gamma_{p^-}^-}.
\end{equation}
%\red{maybe better to have canonical orientations here too} 
We  choose a coherent orientation for each unstable manifold $W^u(Z^m_i)$ of a critical point $Z^m_i$ on $S^{\infty},$ and on the unstable manifolds of $Z^0_{\alpha}$ in $S^N$ for various $N,$ compatibly with the inclusions. This induces a coherent system of orientations on the spaces $\sP^{i,m}_{\alpha}$ and ${\sQ}^{i,m}_{\alpha}.$ For $|\gamma_0|=|\gamma_1|-i-1+\alpha$, the moduli space $\mathscr{M}^{i,\m}_{\alpha}(\Gamma^-, \Gamma^+)$ is zero-dimensional and $T_u\mathscr{M}^{i,\m}_{\alpha}(\Gamma^-, \Gamma^+)$ is canonically trivial. Now by \eqref{eqn:orh2}, one obtains an isomorphism
\begin{equation}
\epsilon^{i,\m}_{\alpha}(u)\colon o_{\gamma_1^+}\otimes \cdots \otimes o_{\gamma_{p^+}^+} \rightarrow o_{\gamma_1^-}\otimes \cdots \otimes o_{\gamma_{p^-}^-} 
\nonumber \end{equation}
for each $u$ in $\mathscr{M}^{i,\m}_{\alpha}(\Gamma^-, \Gamma^+)$. When $p^+=p$ and $p^-=1$, then this gives the signs in the definition of the $\zp$-equivariant product map, and similarly when $p^+=1$ and $p^-=p,$ this defines the signed count for the coproduct map. For example, the operations $\mathcal{P}^{i,\m}_{\alpha}$ that define the $\zp$-equivariant product map can be then rewritten as
\begin{eqnarray}
&& \sP^{i,\m}_{\alpha} \colon CF^*(\phi)^{\otimes p} \rightarrow CF^{*-i+\alpha}(\phi^p) \\
&&\sP^{i,\m}_{\alpha}|_{o_{\gamma^+_0}\otimes \cdots \otimes o_{\gamma^+_{p-1}}}=\bigoplus_{|\gamma^-|=\sum|\gamma_i^+|-i+\alpha}\sum_{u \in \mathscr{M}_{\alpha}^{i,\m}(\Gamma^-, \Gamma^+)} \epsilon^{i,\m}_{\alpha}(u).
\end{eqnarray}

As the construction of the isomorphisms was canonical, it is compatible with gluing which can be used to show that differentials square to zero, and that various Leibnitz identities hold. We refer to \cite{Seidel-book,abu_viterbo,Zapolsky,Zhao} for further discussion of canonical orientations and their compatibility properties.

\begin{bibdiv}
\begin{biblist}
	
\bib{abu_viterbo}{incollection}{
	AUTHOR = {Abouzaid, Mohammed},
	TITLE = {Symplectic cohomology and {V}iterbo's theorem},
	BOOKTITLE = {Free loop spaces in geometry and topology},
	SERIES = {IRMA Lect. Math. Theor. Phys.},
	VOLUME = {24},
	PAGES = {271--485},
	PUBLISHER = {Eur. Math. Soc., Z\"{u}rich},
	YEAR = {2015},
}	
	
%\bib{AbouzaidSmith-torus}{article}{
%	AUTHOR = {M. Abouzaid and I. Smith},
%	TITLE = {Homological mirror symmetry for the 4-torus},
%	JOURNAL = {Duke Math. J.},
%	FJOURNAL = {Duke Mathematical Journal},
%	VOLUME = {152},
%	YEAR = {2010},
%	NUMBER = {3},
%	PAGES = {373--440},
%}	
	
%	ISSN = {0012-7094},
%MRCLASS = {53D37 (14J33 53D40)},
%MRNUMBER = {2654219},
%MRREVIEWER = {Richard P. Thomas},
%DOI = {10.1215/00127094-2010-015},
%URL = {https://doi.org/10.1215/00127094-2010-015},	
	
\bib{Atiyah}{article}{
	AUTHOR = {Atiyah, M. F.},
	TITLE = {Characters and cohomology of finite groups},
	JOURNAL = {Inst. Hautes \'{E}tudes Sci. Publ. Math.},
	FJOURNAL = {Institut des Hautes \'{E}tudes Scientifiques. Publications
		Math\'{e}matiques},
	NUMBER = {9},
	YEAR = {1961},
	PAGES = {23--64},
}

\bib{Audin-torus}{book}{,
	AUTHOR = {Audin, Mich\`ele},
	TITLE = {Torus actions on symplectic manifolds},
	SERIES = {Progress in Mathematics},
	VOLUME = {93},
	PUBLISHER = {Birkh\"{a}user Verlag, Basel},
	YEAR = {2004},
}

%
%	PAGES = {viii+325},
%ISBN = {3-7643-2176-8},
%MRCLASS = {53D20 (14M25 57S25)},
%MRNUMBER = {2091310},
%MRREVIEWER = {Catalin Zara},
%DOI = {10.1007/978-3-0348-7960-6},
%URL = {https://doi.org/10.1007/978-3-0348-7960-6},

\bib{BanyagaHurtubise-MB}{article}{
	AUTHOR = {A. Banyaga and D. E. Hurtubise},
	TITLE = {Cascades and perturbed {M}orse-{B}ott functions},
	JOURNAL = {Algebr. Geom. Topol.},
	FJOURNAL = {Algebraic \& Geometric Topology},
	VOLUME = {13},
	YEAR = {2013},
	NUMBER = {1},
	PAGES = {237--275},

}

%	ISSN = {1472-2747},
%	MRCLASS = {57R70 (37D05 37D15 58E05)},
%	MRNUMBER = {3031642},
%	MRREVIEWER = {Janko Latschev},
%	DOI = {10.2140/agt.2013.13.237},
%	URL = {https://doi.org/10.2140/agt.2013.13.237},

\bib{BetzThesis}{thesis}{
	author = {M. Betz},
	title = {Categorical Constructions in Morse Theory and Cohomology Operations},
	type = {Ph.D. Thesis},
	school = {Stanford University},
	year = {1993}}

\bib{BetzCohen}{article}{
	AUTHOR = {M. Betz and R. L. Cohen},
	TITLE = {Graph moduli spaces and cohomology operations},
	JOURNAL = {Turkish J. Math.},
	FJOURNAL = {Turkish Journal of Mathematics},
	VOLUME = {18},
	YEAR = {1994},
	NUMBER = {1},
	PAGES = {23--41},
}

%	ISSN = {1300-0098},
%MRCLASS = {58D29 (55N35 57R57 58E05)},
%MRNUMBER = {1270436},
%MRREVIEWER = {Johan R\aa de},

\bib{Borel-Transformation}{book}{
	AUTHOR = {Borel, Armand},
	TITLE = {Seminar on transformation groups},
	SERIES = {With contributions by G. Bredon, E. E. Floyd, D. Montgomery,
		R. Palais. Annals of Mathematics Studies, No. 46},
	PUBLISHER = {Princeton University Press, Princeton, N.J.},
	YEAR = {1960},
}

\bib{Bredon-Transformation}{book}{
	AUTHOR = {Bredon, Glen E.},
	TITLE = {Introduction to compact transformation groups},
	NOTE = {Pure and Applied Mathematics, Vol. 46},
	PUBLISHER = {Academic Press, New York-London},
	YEAR = {1972},
}

\bib{BHS}{article}{
	author = {L. Buhovsky and V. Humili\`{e}re and S. Seyfaddini},
	title = {The action spectrum and {$C^0$} symplectic topology},
	note = {Preprint arXiv:1808.09790, 2018},
}

%	PAGES = {xiii+459},
%MRCLASS = {57E15},
%MRNUMBER = {0413144},
%MRREVIEWER = {P. Y. Wang},

%	PAGES = {vii+245},
%MRCLASS = {55.00},
%MRNUMBER = {0116341},

\bib{CV stab Ham top}{article}{
	AUTHOR = {K. Cieliebak and E. Volkov},
	TITLE = {First steps in stable {H}amiltonian topology},
	JOURNAL = {J. Eur. Math. Soc. (JEMS)},
	FJOURNAL = {Journal of the European Mathematical Society (JEMS)},
	VOLUME = {17},
	YEAR = {2015},
	NUMBER = {2},
	PAGES = {321--404},
}

%	ISSN = {1435-9855},
%MRCLASS = {53D35 (37J05)},
%MRNUMBER = {3317746},
%MRREVIEWER = {Gabriele Benedetti},
%DOI = {10.4171/JEMS/505},
%URL = {https://doi.org/10.4171/JEMS/505},

\bib{CohenNorbury}{article}{
	AUTHOR = {R. Cohen and P. Norbury},
	TITLE = {Morse field theory},
	JOURNAL = {Asian J. Math.},
	FJOURNAL = {Asian Journal of Mathematics},
	VOLUME = {16},
	YEAR = {2012},
	NUMBER = {4},
	PAGES = {661--711},
}

%	ISSN = {1093-6106},
%MRCLASS = {58E05 (55S05)},
%MRNUMBER = {3004282},
%MRREVIEWER = {John F. Oprea},
%DOI = {10.4310/AJM.2012.v16.n4.a5},
%URL = {https://doi.org/10.4310/AJM.2012.v16.n4.a5},

\bib{DostoglouSalamon}{article}{
	AUTHOR = {S. Dostoglou and D. A. Salamon},
	TITLE = {Self-dual instantons and holomorphic curves},
	JOURNAL = {Ann. of Math. (2)},
	FJOURNAL = {Annals of Mathematics. Second Series},
	VOLUME = {139},
	YEAR = {1994},
	NUMBER = {3},
	PAGES = {581--640},
}

%ISSN = {0003-486X},
%MRCLASS = {58E15 (53C07 57R57 58D15 58D27)},
%MRNUMBER = {1283871},
%MRREVIEWER = {S. Donaldson},
%DOI = {10.2307/2118573},
%URL = {https://doi.org/10.2307/2118573},

\bib{DostoglouSalamon-corr}{article}{
	AUTHOR = {S. Dostoglou and D. A. Salamon},
	TITLE = {Corrigendum: ``{S}elf-dual instantons and holomorphic curves''
		[{A}nn. of {M}ath. (2) {\bf 139} (1994), no. 3, 581--640]},
	JOURNAL = {Ann. of Math. (2)},
	FJOURNAL = {Annals of Mathematics. Second Series},
	VOLUME = {165},
	YEAR = {2007},
	NUMBER = {2},
	PAGES = {665--673},
}
%ISSN = {0003-486X},
%MRCLASS = {58E15 (53C07 57R57 58D15 58D27)},
%MRNUMBER = {2299744},
%DOI = {10.4007/annals.2007.165.665},
%URL = {https://doi.org/10.4007/annals.2007.165.665},

\bib{Smith-revisited}{article}{
	AUTHOR = {W. G. Dwyer and C. W. Wilkerson},
	TITLE = {Smith theory revisited},
	JOURNAL = {Ann. of Math. (2)},
	FJOURNAL = {Annals of Mathematics. Second Series},
	VOLUME = {127},
	YEAR = {1988},
	NUMBER = {1},
	PAGES = {191--198},
}

%	ISSN = {0003-486X},
%MRCLASS = {55M35},
%MRNUMBER = {924676},
%DOI = {10.2307/1971419},
%URL = {https://doi.org/10.2307/1971419},

\bib{Fish-compactness}{article}{
	AUTHOR = {Fish, Joel W.},
	TITLE = {Target-local {G}romov compactness},
	JOURNAL = {Geom. Topol.},
	FJOURNAL = {Geometry \& Topology},
	VOLUME = {15},
	YEAR = {2011},
	NUMBER = {2},
	PAGES = {765--826},}

%	ISSN = {1465-3060},
%	MRCLASS = {53D45 (32Q65 53C15)},
%	MRNUMBER = {2800366},
%	MRREVIEWER = {Antoine F. R. Gournay},
%	DOI = {10.2140/gt.2011.15.765},
%	URL = {h

\bib{Floer-MorseWitten}{article}{
	AUTHOR = {Floer, Andreas},
	TITLE = {Witten's complex and infinite-dimensional {M}orse theory},
	JOURNAL = {J. Differential Geom.},
	FJOURNAL = {Journal of Differential Geometry},
	VOLUME = {30},
	YEAR = {1989},
	NUMBER = {1},
	PAGES = {207--221},
}

\bib{Floer-spheres}{article}{
	AUTHOR = {Floer, Andreas},
	TITLE = {Symplectic fixed points and holomorphic spheres},
	JOURNAL = {Comm. Math. Phys.},
	FJOURNAL = {Communications in Mathematical Physics},
	VOLUME = {120},
	YEAR = {1989},
	NUMBER = {4},
	PAGES = {575--611},
}

\bib{FH_orient}{article}{
	AUTHOR = {A. Floer and H. Hofer},
	TITLE = {Coherent orientations for periodic orbit problems in symplectic geometry},
	JOURNAL = {Math. Z.},
	FJOURNAL = {Mathematische Zeitschrift},
	VOLUME = {212},
	YEAR = {1993},
	NUMBER = {1},
	PAGES = {13--38},
}

%ISSN = {0025-5874},
%MRCLASS = {58E05 (58F05 58F22)},
%MRNUMBER = {1200162},
%DOI = {10.1007/BF02571639},
%URL = {https://doi.org/10.1007/BF02571639},

\bib{Floyd-original}{article} {
	AUTHOR = {Floyd, E. E.},
	TITLE = {On periodic maps and the {E}uler characteristics of associated
		spaces},
	JOURNAL = {Trans. Amer. Math. Soc.},
	FJOURNAL = {Transactions of the American Mathematical Society},
	VOLUME = {72},
	YEAR = {1952},
	PAGES = {138--147},

}

%	ISSN = {0002-9947},
%	MRCLASS = {56.0X},
%	MRNUMBER = {0046039},
%	MRREVIEWER = {P. A. Smith},
%	DOI = {10.2307/1990658},
%	URL = {https://doi.org/10.2307/1990658},

\bib{Fukaya93}{article}{
	author={Fukaya, Kenji},
	title={Morse homotopy and its quantization},
	conference={
		title={Geometric topology},
		address={Athens, GA},
		date={1993},
	},
	book={
		series={AMS/IP Stud. Adv. Math.},
		volume={2},
		publisher={Amer. Math. Soc., Providence, RI},
	},
	date={1997},
	pages={409--440},
	review={\MR{1470740 (98i:57061)}},
}

\bib{Ginzburg-CC}{article}{
	author={V. L. Ginzburg},
	title =    {The {C}onley conjecture},
	journal =  {Ann. of Math.},
	fjournal =  {Annals of Mathematics},
	year =    {2010},
	volume =   {172},
	pages =  {1127--1180},
}

\bib{GG-revisited}{article}{
	AUTHOR = {V. L. Ginzburg and B. Z. G\"{u}rel},
	TITLE = {Conley conjecture revisited},
	JOURNAL = {Int. Math. Res. Not. IMRN},
	FJOURNAL = {International Mathematics Research Notices. IMRN},
	YEAR = {2019},
	NUMBER = {3},
	PAGES = {761--798},
}

\bib{GG-hyperbolic}{article}{
	AUTHOR = {V. L. Ginzburg and B. Z. G\"{u}rel},	
	TITLE = {Hyperbolic fixed points and periodic orbits of {H}amiltonian
		diffeomorphisms},
	JOURNAL = {Duke Math. J.},
	FJOURNAL = {Duke Mathematical Journal},
	VOLUME = {163},
	YEAR = {2014},
	NUMBER = {3},
	PAGES = {565--590},
}

\bib{GG-local-gap}{article}{
	AUTHOR = {V. L. Ginzburg and B. Z. G\"{u}rel},
	TITLE = {Local {F}loer homology and the action gap},
	JOURNAL = {J. Symplectic Geom.},
	FJOURNAL = {The Journal of Symplectic Geometry},
	VOLUME = {8},
	YEAR = {2010},
	NUMBER = {3},
	PAGES = {323--357},
}

%	ISSN = {1527-5256},
%	MRCLASS = {53D40},
%	MRNUMBER = {2684510},
%	MRREVIEWER = {Vincent Humili\`ere},
%	URL = {http://projecteuclid.org/euclid.jsg/1283865586},

%	ISSN = {0012-7094},
%MRCLASS = {53D40 (37J10 70H12)},
%MRNUMBER = {3165423},
%MRREVIEWER = {Cristian L\u{a}zureanu},
%DOI = {10.1215/00127094-2410433},
%URL = {https://doi.org/10.1215/00127094-2410433},

%ISSN = {1073-7928},
%MRCLASS = {53D05 (37J45 58E05)},
%MRNUMBER = {3910472},
%DOI = {10.1093/imrn/rnx137},
%URL = {https://doi.org/10.1093/imrn/rnx137},

\bib{GG-Arnold}{article}{
	AUTHOR = {V. L. Ginzburg and B. Z. G\"{u}rel},
	TITLE = {Approximate identities and {L}agrangian {P}oincar\'{e} recurrence},
	JOURNAL = {Arnold Math. J.},
	FJOURNAL = {Arnold Mathematical Journal},
	VOLUME = {5},
	YEAR = {2019},
	NUMBER = {1},
	PAGES = {5--14},
}

%	ISSN = {2199-6792},
%MRCLASS = {37C27 (37J10 37J45 53D40)},
%MRNUMBER = {3981449},
%MRREVIEWER = {Pawe\l  Urbanski},
%DOI = {10.1007/s40598-019-00097-9},
%URL = {https://doi.org/10.1007/s40598-019-00097-9},

\bib{Hsiang-Transformation}{book}{
	AUTHOR = {Hsiang, Wu-yi},
	TITLE = {Cohomology theory of topological transformation groups},
	NOTE = {Ergebnisse der Mathematik und ihrer Grenzgebiete, Band 85},
	PUBLISHER = {Springer-Verlag, New York-Heidelberg},
	YEAR = {1975},
}

%	PAGES = {x+164},
%MRCLASS = {57EXX},
%MRNUMBER = {0423384},
%MRREVIEWER = {Theodore Chang},

\bib{Kaledin}{incollection}{
	AUTHOR = {Kaledin, Dmitry},
TITLE = {Cartier isomorphism and {H}odge theory in the non-commutative
	case},
BOOKTITLE = {Arithmetic geometry},
SERIES = {Clay Math. Proc.},
VOLUME = {8},
PAGES = {537--562},
PUBLISHER = {Amer. Math. Soc., Providence, RI},
YEAR = {2009},
}

\bib{Hend}{article}{
   author={Hendricks, Kristen},
 	TITLE = {A spectral sequence of the {F}loer cohomology of
 		symplectomorphisms of trivial polarization class},
 	JOURNAL = {Int. Math. Res. Not. IMRN},
 	FJOURNAL = {International Mathematics Research Notices. IMRN},
 	YEAR = {2017},
 	NUMBER = {2},
 	PAGES = {509--528},}
%	ISSN = {1073-7928},
% 	MRCLASS = {53D40},
% 	MRNUMBER = {3658143},
% 	MRREVIEWER = {Sonja Hohloch},
% 	DOI = {10.1093/imrn/rnw061},
% 	URL = {https://doi.org/10.1093/imrn/rnw061}, 
   
 \bib{Hendricksetal}{article}{
 	AUTHOR = {K. Hendricks and R. Lipshitz and S. Sarkar},
 	TITLE = {A flexible construction of equivariant {F}loer homology and
 		applications},
 	JOURNAL = {J. Topol.},
 	FJOURNAL = {Journal of Topology},
 	VOLUME = {9},
 	YEAR = {2016},
 	NUMBER = {4},
 	PAGES = {1153--1236},
 } 
  
%  	ISSN = {1753-8416},
% MRCLASS = {57R58 (53D40 57R91)},
% MRNUMBER = {3620455},
% MRREVIEWER = {Jianfeng Lin},
% DOI = {10.1112/jtopol/jtw022},
% URL = {https://doi.org/10.1112/jtopol/jtw022}, 

\bib{KS-bounds}{article}{
	AUTHOR = {A. Kislev and E. Shelukhin},
	TITLE = {Bounds on spectral norms and barcodes},
	NOTE = {Preprint arXiv:1810.09865, 2018},
}
   
 \bib{ManolescuLidman}{article}{
 	AUTHOR = {T. Lidman and C. Manolescu},
 	TITLE = {Floer homology and covering spaces},
 	JOURNAL = {Geom. Topol.},
 	FJOURNAL = {Geometry \& Topology},
 	VOLUME = {22},
 	YEAR = {2018},
 	NUMBER = {5},
 	PAGES = {2817--2838},
  }  
  
  %	ISSN = {1465-3060},
  % 	MRCLASS = {57R58 (57M10 57M60)},
  % 	MRNUMBER = {3811772},
  % 	MRREVIEWER = {Nikolai N. Saveliev},
  % 	DOI = {10.2140/gt.2018.22.2817},
  % 	URL = {https://doi.org/10.2140/gt.2018.22.2817},
   
\bib{Markl-ideal}{article}{
	AUTHOR = {Markl, Martin},
	TITLE = {Ideal perturbation lemma},
	JOURNAL = {Comm. Algebra},
	FJOURNAL = {Communications in Algebra},
	VOLUME = {29},
	YEAR = {2001},
	NUMBER = {11},
	PAGES = {5209--5232},
}   

%	ISSN = {0092-7872},
%MRCLASS = {16E05 (16E45 18D50)},
%MRNUMBER = {1856940},
%MRREVIEWER = {Michel Van den Bergh},
%DOI = {10.1081/AGB-100106814},
%URL = {https://doi.org/10.1081/AGB-100106814},

\bib{MSbook}{book}{
   author={McDuff, Dusa},
   author={Salamon, Dietmar},
   title={Introduction to symplectic topology},
   series={Oxford Mathematical Monographs},
   edition={2},
   publisher={The Clarendon Press, Oxford University Press, New York},
   date={1998},
   pages={x+486},
   isbn={0-19-850451-9},
   review={\MR{1698616 (2000g:53098)}},
}

\bib{McDuffSalamon-BIG}{book}{
	Author = {D. McDuff and D. Salamon},
	Title = {{$J$-holomorphic curves and symplectic topology. 2nd ed.}},
	FJournal = {{Colloquium Publ. American Mathematical Society}},
	Journal = {{Colloq. Publ., Am. Math. Soc.}},
	Volume = {52},
	Edition = {2nd ed.},
	Pages = {xiv + 726},
	Year = {2012},
	Publisher = {Providence, RI: American Mathematical Society (AMS)},
}
	%    MSC2010 = {53-02 53D05 53D45 53D35 57R17 37J05 58J05},

\bib{McLean-geodesics}{article}{
	AUTHOR = {McLean, Mark},
	TITLE = {Local {F}loer homology and infinitely many simple {R}eeb orbits},
	JOURNAL = {Algebr. Geom. Topol.},
	FJOURNAL = {Algebraic \& Geometric Topology},
	VOLUME = {12},
	YEAR = {2012},
	NUMBER = {4},
	PAGES = {1901--1923},

}

%	ISSN = {1472-2747},
%	MRCLASS = {53D25 (53C22)},
%	MRNUMBER = {2994824},
%	MRREVIEWER = {Umberto Leone Hryniewicz},
%	DOI = {10.2140/agt.2012.12.1901},
%	URL = {https://doi.org/10.2140/agt.2012.12.1901},

\bib{Ono-ICM}{incollection}{
	AUTHOR = {Ono, Kaoru},
	TITLE = {Development in symplectic {F}loer theory},
	BOOKTITLE = {International {C}ongress of {M}athematicians. {V}ol. {II}},
	PAGES = {1061--1082},
	PUBLISHER = {Eur. Math. Soc., Z\"{u}rich},
	YEAR = {2006},
}

%MRCLASS = {53D40 (53D12)},
%MRNUMBER = {2275635},
%MRREVIEWER = {Dragomir L. Dragnev},

\bib{Polterovich-groups}{article}{
	AUTHOR = {Polterovich, Leonid},
	TITLE = {Growth of maps, distortion in groups and symplectic geometry},
	JOURNAL = {Invent. Math.},
	FJOURNAL = {Inventiones Mathematicae},
	VOLUME = {150},
	YEAR = {2002},
	NUMBER = {3},
	PAGES = {655--686},
}

%	ISSN = {0020-9910},
%MRCLASS = {53D35 (53D40)},
%MRNUMBER = {1946555},
%MRREVIEWER = {Karl Friedrich Siburg},
%DOI = {10.1007/s00222-002-0251-x},
%URL = {https://doi.org/10.1007/s00222-002-0251-x},

\bib{PolterovichS}{article}{
	AUTHOR = {L. Polterovich and E. Shelukhin},
	TITLE = {Autonomous {H}amiltonian flows, {H}ofer's geometry and
		persistence modules},
	JOURNAL = {Selecta Math. (N.S.)},
	FJOURNAL = {Selecta Mathematica. New Series},
	VOLUME = {22},
	YEAR = {2016},
	NUMBER = {1},
	PAGES = {227--296},
}

%	ISSN = {1022-1824},
%MRCLASS = {53D05 (37K05)},
%MRNUMBER = {3437837},
%MRREVIEWER = {Karl Friedrich Siburg},
%DOI = {10.1007/s00029-015-0201-2},
%URL = {https://doi.org/10.1007/s00029-015-0201-2},

\bib{PS-L1}{article}{
	AUTHOR = {L. Polterovich and E. Shelukhin},
	TITLE = {Work in progress.},

}

\bib{PSS-stabilization}{article}{
	AUTHOR = {L. Polterovich and E. Shelukhin and V. Stojisavljevi\'c},
	TITLE = {Persistence modules with operators in {M}orse and {F}loer
		theory},
	JOURNAL = {Mosc. Math. J.},
	FJOURNAL = {Moscow Mathematical Journal},
	VOLUME = {17},
	YEAR = {2017},
	NUMBER = {4},
	PAGES = {757--786},
}

\bib{Pozniak}{incollection}{
	AUTHOR = {Po\'{z}niak, Marcin},
	TITLE = {Floer homology, {N}ovikov rings and clean intersections},
	BOOKTITLE = {Northern {C}alifornia {S}ymplectic {G}eometry {S}eminar},
	SERIES = {Amer. Math. Soc. Transl. Ser. 2},
	VOLUME = {196},
	PAGES = {119--181},
	PUBLISHER = {Amer. Math. Soc., Providence, RI},
	YEAR = {1999},
}

%	MRCLASS = {53D40 (57R58)},
%	MRNUMBER = {1736217},
%	MRREVIEWER = {David E. Hurtubise},
%	DOI = {10.1090/trans2/196/08},
%	URL = {https://doi.org/10.1090/trans2/196/08},

\bib{SZ-92}{article}{
	AUTHOR = {D. Salamon and E. Zehnder},
	TITLE = {Morse theory for periodic solutions of {H}amiltonian systems
		and the {M}aslov index},
	JOURNAL = {Comm. Pure Appl. Math.},
	FJOURNAL = {Communications on Pure and Applied Mathematics},
	VOLUME = {45},
	YEAR = {1992},
	NUMBER = {10},
	PAGES = {1303--1360},
}

\bib{Schwarz-book}{book}{
	AUTHOR = {M. Schwarz},
	TITLE = {Morse homology},
	SERIES = {Progress in Mathematics},
	VOLUME = {111},
	PUBLISHER = {Birkh\"{a}user Verlag, Basel},
	YEAR = {1993},
}

%	PAGES = {x+235},
%ISBN = {3-7643-2904-1},
%MRCLASS = {58E05 (55N35 57R70)},
%MRNUMBER = {1239174},
%MRREVIEWER = {Daniel M. Burns, Jr.},
%DOI = {10.1007/978-3-0348-8577-5},
%URL = {https://doi.org/10.1007/978-3-0348-8577-5},

\bib{SchwarzAspherical}{article}{
	author =   {M. Schwarz},
	title =    {On the action spectrum for closed symplectically aspherical manifolds},
	journal =      {Pacific J. Math.},
	year =     {2000},
	volume =   {193},
	number =   {2},
	pages =    {419--461}
}

\bib{SeidelThesis}{thesis}{
	author = {Seidel, Paul},
	title = {Floer homology and the symplectic isotopy problem},
	type = {Ph.D. Thesis},
	school = {University of Oxford},
	year = {1997}}

\bib{SeidelMCG}{article}{
	AUTHOR = {Seidel, Paul},
	TITLE = {Symplectic {F}loer homology and the mapping class group},
	JOURNAL = {Pacific J. Math.},
	FJOURNAL = {Pacific Journal of Mathematics},
	VOLUME = {206},
	YEAR = {2002},
	NUMBER = {1},
	PAGES = {219--229},
}

%ISSN = {0030-8730},
%MRCLASS = {53D40},
%MRNUMBER = {1924827},
%MRREVIEWER = {Kai Cieliebak},
%DOI = {10.2140/pjm.2002.206.219},
%URL = {https://doi.org/10.2140/pjm.2002.206.219},

\bib{Seidel-4d}{incollection}{
	AUTHOR = {Seidel, Paul},
	TITLE = {Lectures on four-dimensional {D}ehn twists},
	BOOKTITLE = {Symplectic 4-manifolds and algebraic surfaces},
	SERIES = {Lecture Notes in Math.},
	VOLUME = {1938},
	PAGES = {231--267},
	PUBLISHER = {Springer, Berlin},
	YEAR = {2008},
}

%MRCLASS = {53D35 (57R17)},
%MRNUMBER = {2441414},
%DOI = {10.1007/978-3-540-78279-7_4},
%URL = {https://doi.org/10.1007/978-3-540-78279-7_4},

%\bib{Seidel-g2}{article}{
%	AUTHOR = {Seidel, Paul},
%	TITLE = {Homological mirror symmetry for the genus two curve},
%	JOURNAL = {J. Algebraic Geom.},
%	FJOURNAL = {Journal of Algebraic Geometry},
%	VOLUME = {20},
%	YEAR = {2011},
%	NUMBER = {4},
%	PAGES = {727--769},
%}

%	ISSN = {1056-3911},
%MRCLASS = {53D37 (14J33 16E45 18E30)},
%MRNUMBER = {2819674},
%MRREVIEWER = {Amin Gholampour},
%DOI = {10.1090/S1056-3911-10-00550-3},
%URL = {https://doi.org/10.1090/S1056-3911-10-00550-3},

\bib{Seidel}{article}{
   author={Seidel, Paul},
	TITLE = {The equivariant pair-of-pants product in fixed point {F}loer
		cohomology},
	JOURNAL = {Geom. Funct. Anal.},
	FJOURNAL = {Geometric and Functional Analysis},
	VOLUME = {25},
	YEAR = {2015},
	NUMBER = {3},
	PAGES = {942--1007},}
%	ISSN = {1016-443X},
%	MRCLASS = {53D40 (55N91 57R58)},
%	MRNUMBER = {3361776},
%	MRREVIEWER = {Darko Milinkovi\'{c}},
%	DOI = {10.1007/s00039-015-0331-x},
%	URL = {https://doi.org/10.1007/s00039-015-0331-x},

%\bib{Seidel-qsurface}{article}{
%	AUTHOR = {Seidel, Paul},
%	TITLE = {Homological mirror symmetry for the quartic surface},
%	JOURNAL = {Mem. Amer. Math. Soc.},
%	FJOURNAL = {Memoirs of the American Mathematical Society},
%	VOLUME = {236},
%	YEAR = {2015},
%	NUMBER = {1116},
%}

%	PAGES = {vi+129},
%ISSN = {0065-9266},
%ISBN = {978-1-4704-1097-1},
%MRCLASS = {53D37 (14D05 14F05 14J33)},
%MRNUMBER = {3364859},
%MRREVIEWER = {Ignasi Mundet-Riera},
%DOI = {10.1090/memo/1116},
%URL = {https://doi.org/10.1090/memo/1116},

\bib{Seidel-book}{book}{
	AUTHOR = {Seidel, Paul},
	TITLE = {Fukaya categories and {P}icard-{L}efschetz theory},
	SERIES = {Zurich Lectures in Advanced Mathematics},
	PUBLISHER = {European Mathematical Society (EMS), Z\"urich},
	YEAR = {2008},

}

%	PAGES = {viii+326},

\bib{SeidelSmith}{article}{
	AUTHOR = {P. Seidel and I. Smith},
	TITLE = {Localization for involutions in {F}loer cohomology},
	JOURNAL = {Geom. Funct. Anal.},
	FJOURNAL = {Geometric and Functional Analysis},
	VOLUME = {20},
	YEAR = {2010},
	NUMBER = {6},
	PAGES = {1464--1501},}
%	ISSN = {1016-443X},
%	MRCLASS = {53D40 (55M35 57M27 57R58)},
%	MRNUMBER = {2739000},
%	MRREVIEWER = {Michael L. Hutchings},
%	DOI = {10.1007/s00039-010-0099-y},
%	URL = {http

\bib{S-HZ}{article}{
	author = {E. Shelukhin},
	title = {On the {H}ofer-{Z}ehnder conjecture},
	note = {Preprint, arXiv:1905.04769, May 2019},
}

\bib{Sikorav}{incollection}{
	AUTHOR = {Sikorav, Jean-Claude},
TITLE = {Some properties of holomorphic curves in almost complex
	manifolds},
BOOKTITLE = {Holomorphic curves in symplectic geometry},
SERIES = {Progr. Math.},
VOLUME = {117},
PAGES = {165--189},
PUBLISHER = {Birkh\"{a}user, Basel},
YEAR = {1994},

}

\bib{Smith-original}{article}{
	AUTHOR = {Smith, P. A.},
	TITLE = {Transformations of finite period},
	JOURNAL = {Ann. of Math. (2)},
	FJOURNAL = {Annals of Mathematics. Second Series},
	VOLUME = {39},
	YEAR = {1938},
	NUMBER = {1},
	PAGES = {127--164},
}

%	ISSN = {0003-486X},
%MRCLASS = {DML},
%MRNUMBER = {1503393},
%DOI = {10.2307/1968718},
%URL = {https://doi.org/10.2307/1968718},

\bib{Tonkonog-commuting}{article}{
	AUTHOR = {Tonkonog, Dmitry},
	TITLE = {Commuting symplectomorphisms and {D}ehn twists in divisors},
	JOURNAL = {Geom. Topol.},
	FJOURNAL = {Geometry \& Topology},
	VOLUME = {19},
	YEAR = {2015},
	NUMBER = {6},
	PAGES = {3345--3403},
}

%	ISSN = {1465-3060},
%MRCLASS = {53D40 (14D05 14F35 37J10 53D05)},
%MRNUMBER = {3447106},
%MRREVIEWER = {Georgios Dimitroglou Rizell},
%DOI = {10.2140/gt.2015.19.3345},
%URL = {https://doi.org/10.2140/gt.2015.19.3345},

\bib{TreumannLipshitz}{article}{
	AUTHOR = {R. Lipshitz and D. Treumann},
	TITLE = {Noncommutative {H}odge-to-de {R}ham spectral sequence and the
		{H}eegaard {F}loer homology of double covers},
	JOURNAL = {J. Eur. Math. Soc. (JEMS)},
	FJOURNAL = {Journal of the European Mathematical Society (JEMS)},
	VOLUME = {18},
	YEAR = {2016},
	NUMBER = {2},
	PAGES = {281--325},
}

%	ISSN = {1435-9855},
%MRCLASS = {57M27 (16E40 53D40 55N91 55U15 57R56)},
%MRNUMBER = {3459952},
%MRREVIEWER = {Dave Auckly},
%DOI = {10.4171/JEMS/590},
%URL = {https://doi.org/10.4171/JEMS/590},

\bib{Weibel}{book}{
	AUTHOR = {Weibel, Charles A.},
	TITLE = {An introduction to homological algebra},
	SERIES = {Cambridge Studies in Advanced Mathematics},
	VOLUME = {38},
	PUBLISHER = {Cambridge University Press, Cambridge},
	YEAR = {1994},
}
%	PAGES = {xiv+450},
%	ISBN = {0-521-43500-5; 0-521-55987-1},
%	MRCLASS = {18-01 (16-01 17-01 20-01 55Uxx)},
%	MRNUMBER = {1269324},
%	MRREVIEWER = {Kenneth A. Brown},
%	DOI = {10.1017/CBO9781139644136},
%	URL = {https://doi.org/10.1017/CBO9781139644136},

\bib{Wilkins}{article}{
   author={ Wilkins, Nicholas},
   title={A Construction of the Quantum Steenrod Squares and Their Algebraic Relations},
   eprint={arXiv:1810.02738},
   year = {2018}
   }

\bib{Wilkins-PSS}{article}{
	author={ Wilkins, Nicholas},
	title={Quantum Steenrod Squares and the Equivariant Pair-of-Pants in Symplectic Cohomology},
	eprint={arXiv:1810.02738},
	year = {2018}
}

\bib{CineliGinzburg}{article}{
	author={E. Cineli and V. Ginzburg},
	title={On the iterated Hamiltonian Floer homology},
	eprint={arXiv:1902.06369},
	year = {2019}
}

\bib{Weibel}{book}{
   author={Weibel, Charles A.},
   title={An introduction to homological algebra},
   series={Cambridge Studies in Advanced Mathematics},
   volume={38},
   publisher={Cambridge University Press, Cambridge},
   date={1994},
}

%   pages={xiv+450},
%isbn={0-521-43500-5},
%isbn={0-521-55987-1},
%review={\MR{1269324}},

\bib{Zapolsky}{article}{
	AUTHOR={F. Zapolsky},
	TITLE={{The Lagrangian Floer-quantum-PSS package and canonical orientations in Floer theory}},
	journal = {Preprint},
	YEAR={2015},
	note = {arXiv:1507.02253 [math.SG]},
}

\bib{Zhao}{article}{
 	AUTHOR = {Zhao, Jingyu},
	TITLE = {Periodic symplectic cohomologies},
	JOURNAL = {J. Symplectic Geom.},
	FJOURNAL = {The Journal of Symplectic Geometry},
	VOLUME = {17},
	YEAR = {2019},
	NUMBER = {5},
	PAGES = {1513--1578},
}

%	ISSN = {1527-5256},
%MRCLASS = {53D42 (53D40)},
%MRNUMBER = {4039816},
%DOI = {10.4310/JSG.2019.v17.n5.a9},
%URL = {https://doi.org/10.4310/JSG.2019.v17.n5.a9},

\bib{Zhang-roots}{article}{
	AUTHOR = {Zhang, Jun},
TITLE = {{$p$}-cyclic persistent homology and {H}ofer distance},
JOURNAL = {J. Symplectic Geom.},
FJOURNAL = {The Journal of Symplectic Geometry},
VOLUME = {17},
YEAR = {2019},
NUMBER = {3},
PAGES = {857--927},
}

%@article {MR4022215,
%	ISSN = {1527-5256},
%	MRCLASS = {53D35 (37J05 55N35)},
%	MRNUMBER = {4022215},
%	MRREVIEWER = {St\'{e}phane Tchuiaga},
%	DOI = {10.4310/JSG.2019.v17.n3.a7},
%	URL = {https://doi.org/10.4310/JSG.2019.v17.n3.a7},
%}

\bib{Zinger}{article}{
	AUTHOR = {Zinger, Aleksey},
	TITLE = {The determinant line bundle for {F}redholm operators:
		construction, properties, and classification},
	JOURNAL = {Math. Scand.},
	FJOURNAL = {Mathematica Scandinavica},
	VOLUME = {118},
	YEAR = {2016},
	NUMBER = {2},
	PAGES = {203--268},
}

%	ISSN = {0025-5521},
%MRCLASS = {58J52 (14D20)},
%MRNUMBER = {3515189},
%DOI = {10.7146/math.scand.a-23687},
%URL = {https://doi.org/10.7146/math.scand.a-23687},

\end{biblist}
\end{bibdiv}

\end{document}